\newtheorem{prop}{Proposition}[section]
\newtheorem{coro}[prop]{Corollary}
\newtheorem{lem}[prop]{Lemma}
\newtheorem{rem}[prop]{Remark}
\newtheorem{exa}[prop]{Example}
\newtheorem{exe}[prop]{Exercise}
\newtheorem{defi}[prop]{Definition}
\newtheorem{theo}[prop]{Theorem}
\newcommand{\B}{\mathbb B}
\newcommand{\R}{\mathbb R}
\newcommand{\Q}{\mathbb Q}
\newcommand{\C}{\mathbb C}
\newcommand{\N}{\mathbb N}
\newcommand{\de}{\delta}
\newcommand{\e}{\varepsilon}
\newcommand{\f}{\varphi}
\newcommand{\p}{\psi}
 \newcommand \al {\alpha}
 \newcommand \la {\lambda}
 \newcommand \Om {\Omega}
 \newcommand \Sub {\Subset}
 \newcommand \sub{\subset}
 \newcommand \bd {\partial}
 \numberwithin{equation}{section}
  \title[A viscosity approach to complex Monge-Amp\`ere equations]{A viscosity approach to degenerate \\ complex Monge-Amp\`ere equations }
  \author{ Ahmed Zeriahi} 
 \date{\today}
\begin{document}
  
 \maketitle

\section*{Content}

\noindent {\bf 1.} {\it Pluripotential solutions to degenerate complex Monge-Amp\`ere equations}

 1.1. {Basic facts from Pluripotential Theory}  
 
 1.2. Statement of the main results
 
 1.3. Proofs
 
 \smallskip
  \noindent {\bf 2.} {\it The viscosity approach to degenerate elliptic non linear $2^{\text{nd}}$ order PDE's} 
  
 2.1. Definitions and examples
 
 2.2.  The comparison principle : Uniqueness of viscosity solutions 
 
 2.3.  The Perron method : Existence of viscosity solutions 
 
 \smallskip
 \noindent{\bf 3.} {\it  Viscosity solutions to degenerate complex Monge-Amp\`ere equations }
 
 3.1. The viscosity approach to complex Monge-Amp\`ere equations 
 
 3.2. The local viscosity  comparison principle 
 
 3.3. The global viscosity comparison principle
 
 3.4 Viscosity solutions  to complex Monge-Amp\`ere equations
 
 \smallskip
\noindent{\bf 4.} {\it Continuous versions of Calabi-Yau and Aubin-Yau theorems} 

 4.1. A weak version of Aubin-Yau Theorem 
   
 4.2. A weak version of Calabi-Yau theorem 
 
 4.3.Singular K\"ahler-Einstein metrics with continuous potentials 
 
 4.4. Concluding remarks

\section*{Abstract} 

This is the content of the lectures given by the author at the winter school KAWA3 held at the University of Barcelona in 2012 from January 30 to  
 February 3. The main goal was to give an account of viscosity techniques and to apply them to degenerate Complex Monge-Amp\`ere equations following recent works of P. Eyssidieux, V. Guedj and the author.

 We will survey the main techniques used in the viscosity approach
and show how to adapt them to degenerate complex Monge-Amp\`ere equations. 
 The heart of the matter in this approach is the "Comparison Principle" which allows 
 us to prove uniqueness of solutions. 
 
 We will prove a global viscosity comparison principle for degenerate complex Monge-Amp\`ere equations on compact K\"ahler manifolds and show how to combine Viscosity methods and Pluripotential methods to get "continuous versions" of  the
 Calabi-Yau and Aubin-Yau Theorems in some degenerate situations. In particular we prove the existence of singular K\"ahler-Einstein metrics with continuous potentials on compact normal K\"ahler varieties with mild singularities and ample or trivial canonical divisor.

\section*{Introduction}

 In the late seventies, E.~Bedford and B.~A.~Taylor (\cite{BT76}) started developing a new method of potential-theoretic nature adapted to the complex structure for solving degenerate complex Monge-Amp\`ere equations in strictly pseudoconvex domains in $\C^n$. They proved a Comparison Principle and, using the Perron method, they were able to solve the Dirichlet problem for degenerate complex Monge-Amp\`ere equation for continuous data. Then, 
 elaborating on this fundamental work, they succeeded in building a complex potential theory, called nowadays "Pluripotential Theory", to study fine properties of plurisubharmonic functions (see \cite{BT82}, \cite{BT87},  \cite{Dem89}, \cite{Kli91}).
 The Dirichlet problem for non degenerate complex Monge-Amp\`ere equations  with smooth bounday data and a smooth positive volume form on a bounded strongly pseudoconvex domains with smooth boundary was solved independently by L. Caffarelli, J.-J. Kohn, L. Nirenberg and J. Spruck in their fundamental work using methods from elliptic non linear PDE's (\cite{CKNS85}).
 
A quite elaborate theory was developed in the local case, thanks to the contributions
of several authors (see among others  \cite{BT88}, \cite{BT89}, \cite{Ceg84}, \cite{Kol95}, \cite{Ceg98}, \cite{Ceg04}, \cite{Bl06}, \cite{BGZ09}). There are good surveys on these last developments (see \cite{Bed93, Kis00, Kol05}).
 
 Pluripotential theory lies at the foundation of the recent approach to degenerate complex Monge-Amp\`ere equations on compact K\"ahler manifolds, as developed by many authors with applications to K\"ahler Geometry (see \cite{Kol98}, \cite{EGZ08}, \cite{KT08}, \cite{EGZ09}, \cite{BBGZ09},  \cite{DP10}, \cite{Zh06}, \cite{BBEGZ12}, \cite{PS10}, \cite{PSS12}). There is a nice and complete survey on the  recent developements in this area (see \cite{PSS12}).

 On the other hand, a standard approach to non linear second order degenerate elliptic equations is the method of viscosity solutions introduced first  by M.G. Crandall and P.-L. Lions (\cite{CL83}) at the begining of the eighties in order to prove existence and uniqueness of "solutions" in a generalized sense for first order non linear equations  of Hamilton-Jacobi type. But it appeared quickly that this method can be used to prove existence and uniqueness of "generalized weak solutions" to certain fully non linear second order degenerate elliptic  PDE's (\cite{CIL92}), especially for those equations for which the notions of "classical" solution (i.e. smooth solution), "generalized" solution (i.e. a solution in the Sobolev space $W^{2,\infty}$) or "weak" solution (i.e. a solution in the sense of distributions) do not make sense.
 The remarkable fact in this approach is that we can define, as in classical (linear) potential theory for the Laplace operator for example, the notions of subsolution and supersolution in a generalized sense (viscosity sense) for these equations. 
 
The main tool in the viscosity approach is the {\it Comparison Principle},  which allows  comparison of subsolutions and supersolutions with given boundary conditions. This implies uniqueness of viscosity solutions for the associated Dirichlet problem. Then Perron's method can be applied, as in the classical case, to construct the unique solution as the upper envelope of all subsolutions, once we know the existence of a subsolution and a supersolution with the given boundary conditions.

Whereas the viscosity approach has been developed for real Monge-Amp\`ere equations (see \cite{IL90}), the complex case has not been studied until recently.
There have been some recent interest in adapting viscosity methods to solve degenerate elliptic equations on compact or complete Riemannian manifolds (see \cite{AFS08}). This theory can be applied to complex Monge-Amp\`ere equations only in very restrictive cases since it requires the Riemannian curvature tensor to be nonnegative.
 There is a viscosity approach to the Dirichlet problem for the complex Monge-Amp\`ere equations on smooth domains in Stein manifolds in \cite{HL09} and \cite{HL11}. These articles however do not contain any new result for degenerate complex Monge-Amp\`ere equations, since that case is used there as motivation to develop a deep generalization of plurisubharmonic functions to Riemannian manifolds with some special geometric structure. 
 In a recent paper \cite{HL13}, the same authors also develop an interesting application to potential theory in almost complex manifolds and solve the Dirichlet problem in this general context.
 
 The most advanced results about the complex Monge-Amp\`ere equations
 were obtained quite recently in \cite{EGZ11}, and we will mostly follow the presentation given there. The main motivation was the problem of continuity of the potentials of the singular K\"ahler-Einstein metric in a compact K\"ahler manifold of general type constructed in \cite{EGZ09}.
 Since this paper appeared, there have been recent applications of viscosity methods to the Dirichlet problem for the complex Monge-Amp\`ere equation
 (see \cite{Wang10}) and more generally for the complex Hessian equation (see \cite{Ch12}). 
 
 It is worth observing  that there is no general comparison principle 
 which can be applied to a large class of degenerate elliptic fully non linear second order PDE's, including the degenerate complex Monge-Amp\`ere equations we are considering here. 

Nevertheless, viscosity methods can be adapted to the complex case and allow us to prove an appropriate Comparison Principle which leads along the same scheme to uniqueness and existence of viscosity solutions. 

The first aim of these notes is to present the fundamental ideas behind the viscosity approach. All the material we need can be found in the well known survey \cite{CIL92} (see also \cite{CC95}). There are also well written papers available in the literature (see \cite{Bar97}, \cite{DI04}, \cite{Car04})  but we will collect here the main ingredients we will need to adapt the viscosity methods to the complex case. 
The main result which we will use from the viscosity approach is what we call the Jensen-Ishii maximum principle which will be stated whithout proof here, referring to \cite{CIL92}. 
 
In order to compare the two approaches, we will start by reviewing the basic tools from Pluripotential theory we will need, namely the comparison principle for the 
complex Monge-Amp\` ere operator.
 We will use the pluripotential comparison principle and the Perron method to show how pluripotential theory provides bounded weak solutions to the degenerate complex Monge-Amp\`ere equations we are considering.
 
The second aim is to show how to adapt the viscosity methods in the context of complex Monge-Amp\`ere equations on domains as well as on compact K\"ahler manifolds following \cite{EGZ11}.
 We will compare viscosity solutions to pluripotential solutions.
The main advantage of the viscosity approach which we will exploit here is, not only that the notion of subsolution makes sense, but that we can also define the notion of supersolution; then a viscosity solution, if it exists, is necessarily continuous. Observe that in the pluripotential theory framework, we can also define the notion of subsolution, but it is not always clear whether a notion of supersolution makes sense and then the continuity of the pluripotential solution, if it exists, is not obtained for free.

Finally the third aim is to show how to combine pluripotential methods and viscosity methods to prove existence and uniqueness of continuous solutions to some degenerate complex Monge-Amp\`ere equations. 
 Moreover using Kolodziej's a priori $C^0-$estimates as extended in \cite{EGZ09}, we can give a soft proof of the continuous version of Yau's theorem solving the Calabi conjecture which applies for singular compact K\"ahler varieties with mild singularities (in the sense of the MPP programme \cite{BCHM10}) and with ample canonical divisor. In particular we prove that potentials of singular K\"ahler-Einstein metrics obtained previously in \cite{EGZ09} are continuous, whereas they only were known to be bounded.
 Surprisingly, this allows us to prove continuity of solutions to complex Monge-Amp\`ere equations in a degenerate situation where Pluripotential theory yields only boundedness. 
 
A previous version of these notes has been published in a special volume of Annales de la Facult\'e des Sciences de Toulouse (see \cite{Ze13}).

 \vskip 0.3 cm
 
 \noindent{\it Acknowledgements:} The author would like to thank the organizers Vincent Guedj, Joaquim Ortega-Cerd\`a and Pascal Thomas for inviting him to give this course at the third edition of the winter school KAWA in Barcelona in January-February 2012. 
The author would like to also thank  Vincent Guedj for useful discussions on the matter of these notes, Chinh H. Lu for careful reading of the previous version and the anonymous referee for his encouranging report.

\section{Pluripotential solutions to degenerate complex Monge-Amp\`ere equations}
\vskip 0.3 cm
\subsection{Basic facts from Pluripotential Theory}

 Pluripotential theory deals with plurisubharmonic (psh) functions. These functions appear naturally in many problems of complex analysis where  
 they play the role of soft objects compared to holomorphic functions which are more rigid. This philosophy led P. Lelong to the fundamental notion 
 of positive current (\cite{Lel68}) which play an important role not only in Complex Analysis but also in K\"ahler Geometry (see \cite{Dem92}, \cite{Dem}). 
 
 \subsubsection{The complex Monge-Amp\`ere operator}
 Let us recall the construction of Bedford and Taylor and state the main results which will be needed later on.
 Let $\Omega \subset \C^n$ be a domain and $PSH (\Omega) \subset L^1_{loc} (\Omega)$ be the set of plurisubharmonic functions in $\Omega$.
 
 Here we denote by $d = \bd + \overline{\bd} $ and $d^c := \frac{i}{2\pi} (\overline{\bd} - \bd)$ so that $dd^c =  \frac{i}{\pi} \bd  
 \overline{\bd}$. 
 
 Following P. Lelong, a current $T$ of bidmension $(p,p)$ on $\Omega$ is by definition a continuous linear form  acting on $C^{\infty}$-smooth differential forms of bidegree $(p,p)$ with compact support in $\Omega$. It is convenient to view a current of bidmension $(p,p)$ on $\Omega$ as a differential form of bidegree $(n-p,n-p)$ with  coefficients given by distributions in $\Omega$ (see \cite{Lel68}). 
 
 By P.Lelong, if $u \in PSH (\Omega)$ then $dd^c u$ is a closed positive current on $\Omega$, hence a differential form of bidegree $(1,1)$ whose coefficients are complex Borel measures in $\Omega$ (see \cite{Lel68}, \cite{Dem}).
   
 Since plurisubharmonic functions are invariant under holomorphic transformations, the notion of plurisubharmonicity makes sense on complex manifolds. Moreover plurisubharmonic functions appear naturally in complex geometry as local weights for singular metrics on holomorphic line bundles with positive curvature (see \cite{Dem}).
 
 We will review some basic facts on Pluripotential theory and refer to the original papers of Bedford and Taylor \cite{BT76, BT82, BT87} (see also  \cite{Dem89}, \cite{Kli91}).

 Let $u_1, \cdots, u_k$ be $C^2$-smooth psh functions in $\Omega$. Then  the following differential $(k,k)$-form
 $dd^c u_1 \wedge \cdots \wedge dd^c u_k$ has continuous coefficients, hence it can be seen as a closed positive current of bidegree $(k,k)$ in $\Omega$ acting by duality on  (smooth) test $(n-k,n-k)$-forms. 
Moreover we have
 $$
  dd^c (u_1 dd^c u_2 \wedge \cdots \wedge  dd^c u_k) = dd^c u_1\wedge \cdots \wedge  dd^c u_k 
  $$ 
 pointwise and weakly in the sense of currents in $\Omega$.
 
 In particular, for any smooth psh function in $\Omega$, we have
 $$
 (dd^c u)^n = c_n \text{det}\left(\frac{\partial^2 u}{\partial z_j \partial \bar z_k}\right) \beta_n,
 $$
 pointwise as a smooth form of top degree, where $\beta_n$ is the euclidean volume form on $\C^n$ and $c_n >0$ is a numerical constant.
 This formula will be used here  to identify $(dd^c u)^n $ to a positive Borel measure on $\Omega$, called the Monge-Amp\`ere measure of $u$ in $\Omega$.
 
 We want to extend this definition to non smooth psh functions. It is natural to use local approximation.
 Observe first that  by localisation and integration by parts, one can easily prove that for any compact sets $K, L$ such that $K \subset L^{\circ} \Subset \Omega$, there exists a positive constant $C > 0$, depending on $(K,L)$, such that for any smooth psh function  $u_1, \cdots, u_n$ in $\Omega$, we have
 \begin{equation} \label{eq:CLN}
 \int_{K}  dd^c u_1\wedge \cdots \wedge  dd^c u_n \leq C \Pi_{1 \leq k \leq n} \Vert u_k\Vert_{L^{\infty} (L)}.
 \end{equation}
 This inequality, called the {\it Chern-Levine-Nirenberg inequality}, allows to extend easily the definition of the Monge-Amp\`ere operator to continuous non smooth psh functions by regularisation.
 Indeed let $u$ be a continuous psh function in $\Omega$ and $u_j := u \star \chi_j$ its regularisation by convolution against a radial approximation of the Dirac unit mass at the origin. Let us prove that the sequence of (smooth) measures $(dd^c u_j)^n$ converges weakly in the sense of Radon measures in $\Omega$. Since $u$ is continuous, by Dini's lemma, the sequence $(u_j)$ decreases  to $u$, locally uniformly in $\Omega$. 
 Since the sequence $(u_j) $ is locally uniformly bounded in $\Omega$, it follows from Chern-Levine-Nirenberg inequality (\ref{eq:UCLN}), that the sequence of measures $(dd^c u_j)^n$ has locally uniformly bounded mass. Therefore it is enough to prove the convergence of the sequence of measures $(dd^c u_j)^n$ against any smooth test function. 
 This will be a consequence of the following observation. The problem of convergence being local,  it is enough to consider a test function with compact support in a small ball $B \Subset \Omega$. Fix such a smooth test function  $h$ with compact support in $B$. Then for any $C^2-$smooth psh functions $\f$ and $\p$ in  $\Omega$, we have by Stokes formula, 
 
 $$
 \int_{B} h ((dd^c \f)^n - (dd^c \p)^n) = \int_{B} (\f - \p) dd^c h \wedge  T,
 $$
 where $T := \sum_{i = 0}^{n - 1}(dd^c \f)^{i} \wedge (dd^c \p)^{n - 1 - i}).$
 Since $h$ is smooth of compact support, it is possible to write it as $h = w_1 - w_2,$ where $w_1, w_2$ are smooth psh functions in $\Omega$.
 Therefore if  $D $ is a neighbourhood of $\overline B$ such that $ B \Subset D \Subset \Omega$, then by Chern-Levine-Nirenberg inequality, there exists a uniform constant $C > 0$, depending only on a bound of the second derivatives of $h$ and on a uniform bound of $\f$ and $\psi$, such that
 \begin{equation} \label{eq:UCLN}
 \left\vert \int_{B} h ((dd^c \f)^n - (dd^c \p)^n)\right\vert \leq C \Vert \f - \p \Vert_{L^{\infty}(D)}. 
 \end{equation}
 Now let $u$ be a continuous psh function on $\Omega$ and $(u_j)$ its regularizing sequence by convolution. Then by Dini's lemma, the convergence is uniform in each compact set. It follows from Chern-Levine-Nirenberg and (\ref{eq:UCLN}) that the sequence of measures $(dd^c u_j)^n $ is a Cauchy sequence of Radon measures. Then it converges to a positive Radon measure on $\Omega$. Moreover again by (\ref{eq:UCLN}), the limit does not depend on the approximating sequence $(u_j)$ which converges to $u$ locally uniformly in $\Omega$. This limit is defined to be the Monge-Amp\`ere measure of $u$ and denoted by $(dd^c u)^n$.
 
 It turns out that the hypothesis of continuity on the  psh function $u$ is a strong condition. Indeed it is not preserved by standard constructions as upper envelopes, regularized limsup of psh functions which arise naturally when dealing with the Dirichlet problem for the complex Monge-Amp\`ere operator. 
 Therefore it is desirable to define the complex Monge-Amp\`ere operator for non continuous psh functions, say e.g. for bounded psh functions.
 As one may see from the previous reasoning, it is not clear how to define the complex Monge-Amp\`ere measure of $u$ by approximating $u$ by a decreasing sequence of smooth psh functions, since the convergence in not locally uniform anymore. However one of the main results in pluripotential theory says that plurisubharmonic functions are actually quasi continuous (\cite{BT82}) and then the convergence is quasi-uniform and the proof above can be extended to the bounded case.
 
 Actually to pass from continuous to bounded psh functions is one of the main problems when dealing with the complex Monge-Amp\`ere operator in contrast to the real Monge-Amp\`ere operator which deals with convex functions which are continuous.
 
  In their first seminal work \cite{BT76}, E.Bedford and B.A.Taylor were able to extend the definition of the complex Monge-Amp\`ere operator to the class of locally bounded psh functions using the notion of closed positive current. Their main observation is the following. Let $T$ be a closed positive current  
 of bidegree $(k,k)$ ($1 \leq k \leq n - 1$) and $u$ a locally bounded psh function in $\Omega$. It is well known that $T$ can be extended as a differential form with complex  
 Borel measure coefficients on $\Omega$. Then the current $u T$ is well defined by duality, since $u$ is a locally bounded Borel function and hence locally integrable with respect to  all the coefficients of $T$. Therefore we can define the current $dd^c (u T)$ in the weak sense. Now the following simple observation is crucial: the current $dd^c (u T)$  is again a closed positive  current on $\Omega$. Indeed, since the problem is local we can assume that the regularizing sequence $u_j \searrow u$ in $\Omega$. Then   $u_j T \rightharpoonup u T$ in the weak sense of measures in $\Omega$ and by continuity of the operator $dd^c$ for the weak topology, we conclude that 
 $ dd^c (u_j T) \rightharpoonup dd^c (u T)$ weakly in the sense of currents in $\Omega$.
 Now since $u_j$ is smooth, we have by Stokes formula for currents that $dd^c (u_j T) = dd^c u_j \wedge T$ is a positive closed currents.
 Therefore $dd^c (u T)$ is also a closed positive current in $\Omega$, which will be denoted by $dd^c u \wedge T$ (see \cite{Dem}).
 
 It is now clear that we can  repeat this construction: if $u_1, \cdots ,u_k$ are locally bounded psh functions, it is possible to define by induction  the current $dd^c u_1\wedge \cdots \wedge  dd^c u_k $ by the formula 
  $$
  dd^c u_1\wedge \cdots \wedge  dd^c u_k := dd^c (u_1 dd^c u_2 \wedge \cdots \wedge  dd^c u_k), 
  $$ 
 weakly in the sense of currents in $\Omega$, the resulting current being a closed positive current in $\Omega$.
 
 In particular if $u$ is a locally bounded psh function in $\Omega$, then the current of bidegree $(n,n)$ given by 
 $(dd^c u)^n =  dd^c u_1\wedge \cdots \wedge  dd^c u_n $, where $u_1 = \cdots = u_n = u$  can be identified to a positive Borel measure 
 denoted by $(dd^c u)^n$ called the Monge-Amp\`ere measure of $u$.
 
 Likely this definition coincides with the previous one when $u$ is a continuous psh function. More generally, using ingenious integration by parts and local approximations, Bedford and Taylor proved the following important convergence theorem (\cite{BT76}).
 \begin{theo} \label{thm:CV} Let $(u_j)$ and $(v_j)$ be decreasing  sequences of locally bounded psh functions in $\Omega$ converging to  locally bounded psh functions $u$ and $v$ respectively in $\Omega$. Then the sequence of measures $u_j (dd^c v_j)^n$ converges to the measure $ u (dd^c v)^n$ weakly in the sense of measures in $\Omega$.
 The same weak convergence still holds if $(u_j)$ or $(v_j)$  increases almost everywhere in $\Omega$ to $u$ or $v$ respectively.
 \end{theo}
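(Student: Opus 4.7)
The plan is to argue by induction on $n$, combining the Chern-Levine-Nirenberg inequality (\ref{eq:CLN}) for compactness with an integration-by-parts scheme to identify weak limits. The starting point is that any monotone, locally uniformly bounded sequence of psh functions converges to its limit in $L^1_{loc}$ (by Lebesgue dominated convergence applied to $u_1 - u_j \ge 0$ in the decreasing case, and directly in the a.e.\ increasing case), hence weakly as Radon measures.

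For the inductive step, assume $(dd^c v_j)^{n-1} \rightharpoonup (dd^c v)^{n-1}$ weakly. By (\ref{eq:CLN}) the measures $u_j (dd^c v_j)^n$ have locally uniformly bounded mass on $\Om$, so every subsequence admits a further subsequence converging weakly to some Radon measure, which we must identify as $u(dd^c v)^n$. Fix a test $\chi \in C^\infty_c(\Om)$ supported in a ball $B \Sub \Om$ and write $\chi = w_1 - w_2$ with $w_i$ smooth psh on a neighbourhood of $\overline B$. Applying Stokes' formula to the convolution regularizations $u_j^\e, v_j^\e$ on $B$ and letting $\e \to 0$ yields an identity of the form
$$\int_B \chi\, u_j (dd^c v_j)^n = \int_B u_j v_j \, dd^c\chi \wedge (dd^c v_j)^{n-1} + (\text{lower-order terms containing } d\chi, d^c\chi),$$
which transfers $dd^c$ from the non-smooth functions $u_j, v_j$ onto the smooth form $\chi$.

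One then lets $j \to \infty$ using the induction hypothesis together with the following key fact: if $f_j \to f$ in $L^1_{loc}$ with $(f_j)$ locally uniformly bounded psh, and $T_j \rightharpoonup T$ weakly as closed positive currents with locally uniformly bounded mass, then $f_j T_j \rightharpoonup f T$ weakly. This product-convergence lemma is the heart of the Bedford-Taylor theorem and is the main obstacle. It rests on the quasi-continuity of plurisubharmonic functions (alluded to in the excerpt): on any compact $K \Sub \Om$ and any $\de > 0$, one finds an open set $U_\de \sub \Om$ of relative capacity less than $\de$ on whose complement all $f_j$ and $f$ are continuous and the convergence $f_j \to f$ is uniform. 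Outside $U_\de$, uniform convergence combined with an estimate of type (\ref{eq:UCLN}) forces the contribution of $f_j T_j - f T$ tested against $\chi$ to tend to zero; inside $U_\de$, the Chern-Levine-Nirenberg bound controls the error by $O(\de)$. Letting $\de \to 0$ then finishes the identification.

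Finally, the a.e.-increasing case follows by the same scheme once we observe that $L^1_{loc}$-convergence of a.e.-increasing, locally bounded psh sequences (and of their subsidiary mixed products built by induction) is again provided by dominated convergence, so the product-convergence lemma applies uniformly in both monotonicity regimes.
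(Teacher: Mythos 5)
First, note that the paper does not actually prove Theorem \ref{thm:CV}: it is quoted from \cite{BT76} as a known result, so the comparison here is with the classical Bedford--Taylor argument. Your skeleton (CLN mass bounds, integration by parts against $\chi=w_1-w_2$, quasicontinuity, induction on the degree) is the right toolbox, but the proof as written has a genuine gap: the ``key fact'' you isolate is false as stated. Take $n=1$, $b_k=2^{-k}$, $u(z)=\sum_k 2^{-k}\log|z-b_k|$ (subharmonic, with $u(0)=-2\log 2$ finite) and $f=\max\{u,-10\}$, a bounded psh function with $f(0)=-2\log 2$ but $f(b_k)=-10$. With $f_j\equiv f$ (so $L^1_{loc}$ convergence and uniform bounds are trivial) and $T_j=\delta_{b_j}$, which are closed positive currents of unit mass with $T_j\rightharpoonup\delta_0$, one gets $f_jT_j=-10\,\delta_{b_j}\rightharpoonup -10\,\delta_0\neq f(0)\,\delta_0=fT$. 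Nor can you rescue the lemma by restricting to Monge--Amp\`ere--type currents while keeping only $L^1_{loc}$ convergence of the potentials: if it held in that setting, your induction would show that $u_j\to u$ in $L^1_{loc}$ with uniform local bounds implies $u_j\,dd^cu_j\rightharpoonup u\,dd^cu$, hence $(dd^cu_j)^k\rightharpoonup(dd^cu)^k$ for all $k$; this contradicts the classical example of Cegrell showing that the complex Monge--Amp\`ere operator is \emph{not} continuous for the $L^1_{loc}$ topology on locally uniformly bounded psh functions. In short, you use monotonicity only to obtain $L^1_{loc}$ convergence and then discard it, and that is exactly the step at which the theorem becomes false.

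Where monotonicity must really enter is in upgrading the convergence of the potentials: monotone, locally uniformly bounded sequences of psh functions converge \emph{in capacity}, i.e.\ quasi-uniformly off open sets of arbitrarily small Monge--Amp\`ere capacity (for decreasing sequences this follows from quasicontinuity of the limit plus a Dini-type argument; for a.e.-increasing sequences it is itself a nontrivial theorem of \cite{BT82}, proved via the comparison principle). Your quasicontinuity paragraph silently asserts such quasi-uniform convergence for merely $L^1$-convergent sequences, which is untrue. Once convergence in capacity is available, the identification of limits does close, but two further repairs are needed: the error on the exceptional set $U_\de$ is not controlled by the sup-norm inequality (\ref{eq:CLN}) or (\ref{eq:UCLN}); one needs the capacitary Chern--Levine--Nirenberg estimate, bounding $\int_K dd^cw_1\wedge\cdots\wedge dd^cw_n$ by the capacity of $K$ for uniformly bounded $w_i$. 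And the induction must be organized, as in \cite{BT82}, over all mixed currents $w^0_j\,dd^cw^1_j\wedge\cdots\wedge dd^cw^k_j$ built from monotone sequences, since the terms $u_jv_j\,dd^c\chi\wedge(dd^cv_j)^{n-1}$ and the first-order cross terms produced by your Stokes identity are not covered by the hypothesis ``$(dd^cv_j)^{n-1}\rightharpoonup(dd^cv)^{n-1}$'' alone.
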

 
 \subsubsection{The Pluripotential Comparison Principle}
 
 From Theorem \ref{thm:CV}, it is possible to derive the following fundamental result, which we will call the (local) maximum principle (\cite{BT87}).
 \begin{theo} (Maximum Principle).
 Let $u,v$ be locally bounded psh functions in $\Omega$. Then we have 
 \begin{equation} \label{eq:MP}
 {\bf 1}_{\{u < v \}} (dd^c \max \{u,v\})^n = {\bf 1}_{\{u < v \}} (dd^c v)^n,
 \end{equation}
 weakly in the sense of Borel measures in $\Omega$.
 \end{theo}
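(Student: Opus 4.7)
The strategy is to reduce the identity to the case where $u$ and $v$ are continuous (indeed smooth) psh functions --- where $\{u < v\}$ is open and the identity follows directly from the locality of the complex Monge-Amp\`ere operator --- and then to pass to the general locally bounded case by decreasing approximation, using the convergence theorem \ref{thm:CV}.

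First I would regularise by convolution: let $u_j := u \star \chi_j$ and $v_j := v \star \chi_j$ be the decreasing sequences of smooth psh approximations constructed above, and set $w_j := \max\{u_j, v_j\}$, which decreases to $w := \max\{u, v\}$. Since $u_j$ and $v_j$ are continuous, the set $G_j := \{u_j < v_j\}$ is open, and on $G_j$ one has $w_j = v_j$ in a neighbourhood of each point. The locality of the Monge-Amp\`ere operator on continuous psh functions --- the fact that two continuous psh functions coinciding on an open subset produce the same Monge-Amp\`ere measure there, a direct consequence of the uniform Chern--Levine--Nirenberg inequality (\ref{eq:UCLN}) used earlier to \emph{define} the operator --- then gives
\begin{equation*}
\mathbf{1}_{G_j} (dd^c w_j)^n \; = \; \mathbf{1}_{G_j} (dd^c v_j)^n \quad \text{on } \Omega.
\end{equation*}

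Next I would pass to the limit by testing against continuous cut-offs $\eta_\delta(v_j - u_j)$, where $\eta_\delta : \R \to [0,1]$ is a smooth non-decreasing function vanishing on $(-\infty, 0]$ and identically $1$ on $[\delta, +\infty)$. Each $\eta_\delta(v_j - u_j)$ is continuous and supported in $G_j$, so the identity above yields, for every $\f \in C_c(\Omega)$ with $\f \geq 0$,
\begin{equation*}
\int \f \, \eta_\delta(v_j - u_j) \,(dd^c w_j)^n \; = \; \int \f \, \eta_\delta(v_j - u_j) \,(dd^c v_j)^n.
\end{equation*}
The weak convergences $(dd^c w_j)^n \rightharpoonup (dd^c w)^n$ and $(dd^c v_j)^n \rightharpoonup (dd^c v)^n$ furnished by Theorem \ref{thm:CV}, together with the Lipschitz regularity of $\eta_\delta$ and the pointwise convergence $v_j - u_j \to v - u$, allow a diagonal extraction to pass the identity to the limit in $j$. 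Letting $\delta \to 0^+$, so that $\eta_\delta(v - u) \uparrow \mathbf{1}_{\{u < v\}}$ pointwise, monotone convergence finally produces the desired equality (\ref{eq:MP}).

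The main obstacle I expect is this double passage to the limit. The set $\{u < v\}$ is not open in general (since $v - u$, being the difference of two upper semi-continuous functions, need not be semi-continuous), which forbids testing directly against continuous functions compactly supported in $\{u < v\}$. The cut-offs $\eta_\delta(v_j - u_j)$ circumvent this but depend on $j$, so making the interchange of the limits $j \to \infty$ and $\delta \to 0^+$ fully rigorous requires either an appeal to the quasi-continuity of plurisubharmonic functions or an inner approximation of $\{u < v\}$ by compact subsets of the open sets $\{v_j - u_j > \delta\}$.
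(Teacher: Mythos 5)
Your reduction to smooth approximants $u_j,v_j$ and the cut-off trick with $\eta_\delta(v_j-u_j)$ is the natural strategy, and everything up to the integral identity for fixed $j$ and $\delta$ is correct. The genuine gap is exactly where you locate it, but it is not a technicality that a ``diagonal extraction'' can repair: to let $j\to\infty$ in $\int \f\,\eta_\delta(v_j-u_j)\,(dd^c w_j)^n$ you invoke only the weak convergence of the Monge-Amp\`ere measures (Theorem \ref{thm:CV}) together with the pointwise convergence and Lipschitz character of the integrands, and that is not enough. Weak convergence of measures allows a \emph{fixed continuous} test function; here the integrands vary with $j$, their pointwise limit $\eta_\delta(v-u)$ is merely a bounded Borel function (a Lipschitz function of a difference of upper semi-continuous functions), and the limit measure may charge precisely the set where the convergence $v_j-u_j\to v-u$ fails to be locally uniform (compare $\delta_{1/j}\rightharpoonup\delta_0$ tested against $\mathbf 1_{(0,\infty)}$). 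Your fallback of exhausting $\{u<v\}$ by compact subsets of $\{v_j-u_j>\delta\}$ does not work as stated either: since $u_j\geq u$ and $v_j\geq v$, there is no monotone inclusion between these sets.

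What actually closes the argument is the Bedford--Taylor capacity machinery from \cite{BT82}: plurisubharmonic functions are quasicontinuous, a decreasing, locally uniformly bounded sequence of psh functions converges in capacity, and the Monge-Amp\`ere measures of such a family put uniformly small mass on sets of small capacity; with these three facts the limit of $\int \f\,\eta_\delta(v_j-u_j)\,(dd^c w_j)^n$ can be justified. These are substantive theorems, not consequences of the Chern-Levine-Nirenberg inequality (\ref{eq:UCLN}) or of Theorem \ref{thm:CV} alone, and they are exactly what the text means when it says that the passage from continuous to bounded psh functions is the main difficulty. Note that the paper itself does not reprove the statement: it observes that the identity is trivial when $v$ is continuous (then $\{u<v\}$ is open) and refers the general case to the plurifine locality of the Monge-Amp\`ere operator established in \cite{BT87} -- which is precisely the step your sketch leaves open.
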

 Observe that the identity (\ref{eq:MP}) is trivial when $v$ is continuous, since the two psh functions $\max \{u,v\}$ and $v$ coincide on the (euclidean) open set $\{u < v \}$. The main difficulty in the proof of (\ref{eq:MP}) is to pass from continuous to bounded psh functions and this is the main feature in Bedford and Taylor work building up a potential theory for plurisubharmonic functions, called Pluripotential Theory (see \cite{BT76}, \cite{BT82}, \cite{BT87}, \cite{Dem89}). It turns out that the set $\{u < v \}$ is actually open for the plurifine topology and it was proved by Bedford and Taylor that the complex Monge-Amp\` ere operator is local in the plurifine topology (see \cite{BT87}).
 
 From the maximum principle, it is easy to deduce its companion, which will be called the Pluripotential Comparison Principle.
 
 \begin{coro} (Comparison Principle). \label{coro:LCP} Let $u,v$ be locally bounded psh functions in $\Omega \Subset \C^n$ such that $u \geq v$ on $\partial \Omega$ i.e. $\{u < v \} = \{z \in \Omega ; u (z) < v (z)\} \Subset \Omega$. Then 
 $$
 \int_{\{u < v \}} (dd^c v)^n \leq \int_{\{u < v \}} (dd^c u)^n.
 $$
 If moreover $(dd^c u)^n \leq (dd^c v)^n$ in the weak sense in $\Omega$ then $ u \geq v$ in $\Omega$.
 \end{coro}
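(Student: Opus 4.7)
The plan is to prove the integral inequality by introducing the envelope $w_\varepsilon := \max(u, v - \varepsilon)$ for small $\varepsilon > 0$, which coincides with $u$ outside a relatively compact subset of $\Omega$, and then combining a Stokes-type identity with the Maximum Principle. I would fix an open set $\Omega'$ with $\{u < v\} \Subset \Omega' \Subset \Omega$. Since $u \geq v > v - \varepsilon$ on the set $\Omega' \setminus \{u < v\}$, which contains a neighbourhood of $\partial \Omega'$, one has $w_\varepsilon = u$ near $\partial \Omega'$. The telescoping identity $(dd^c w_\varepsilon)^n - (dd^c u)^n = dd^c(w_\varepsilon - u) \wedge T$ with $T := \sum_{k=0}^{n-1}(dd^c w_\varepsilon)^k \wedge (dd^c u)^{n-1-k}$, together with the fact that $w_\varepsilon - u$ has compact support in $\Omega'$ and that $T$ is a closed positive current, gives by integration by parts
$$\int_{\Omega'}(dd^c w_\varepsilon)^n = \int_{\Omega'}(dd^c u)^n.$$

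Next I would decompose both measures along the partition $\Omega' = \{u < v - \varepsilon\} \sqcup \{u = v - \varepsilon\} \sqcup \{u > v - \varepsilon\}$. The Maximum Principle gives $(dd^c w_\varepsilon)^n = (dd^c v)^n$ on the open set $\{u < v - \varepsilon\}$, and its symmetric form (exchanging the roles of $u$ and $v - \varepsilon$) gives $(dd^c w_\varepsilon)^n = (dd^c u)^n$ on the open set $\{u > v - \varepsilon\}$. Substituting into the mass equality and cancelling the common contribution produces
$$\int_{\{u < v - \varepsilon\}}(dd^c v)^n - \int_{\{u < v - \varepsilon\}}(dd^c u)^n = \int_{\{u = v - \varepsilon\}}\bigl[(dd^c u)^n - (dd^c w_\varepsilon)^n\bigr].$$
The main obstacle is controlling this level-set term: since the Borel sets $\{u = v - \varepsilon\}$ are pairwise disjoint as $\varepsilon$ varies while the Monge-Amp\`ere masses on $\Omega'$ are finite, one can extract a sequence $\varepsilon_k \downarrow 0$ along which both measures vanish on the corresponding level set. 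Sending $\varepsilon_k \downarrow 0$ and invoking monotone convergence on $\{u < v - \varepsilon_k\} \nearrow \{u < v\}$ then yields the stated inequality.

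For the second statement, I would argue by contradiction via a strictly plurisubharmonic perturbation. Supposing $u(z_0) < v(z_0)$ at some $z_0 \in \Omega$, set $\psi(z) := |z|^2 - R^2$ with $R$ large enough that $\psi \leq 0$ on $\overline \Omega$, and let $v_\delta := v + \delta \psi$. Then $u \geq v \geq v_\delta$ on $\partial \Omega$, while multilinearity of the mixed Monge-Amp\`ere form gives $(dd^c v_\delta)^n \geq (dd^c v)^n + \delta^n (dd^c \psi)^n \geq (dd^c u)^n + c\,\delta^n\,\beta_n$ for an explicit $c > 0$. Applying the first part to $u$ and $v_\delta$ forces the Lebesgue measure of $\{u < v_\delta\}$ to vanish. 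But a sub-mean-value argument using that $u, v \in PSH(\Omega)$ and $\psi$ is continuous shows $|B_r|^{-1}\int_{B_r(z_0)}(v_\delta - u)\, d\beta_n \to v(z_0) - u(z_0) + \delta\psi(z_0)$ as $r \to 0^+$, which is positive for $\delta$ small; hence $\{u < v_\delta\}$ has positive Lebesgue measure near $z_0$, giving the required contradiction.
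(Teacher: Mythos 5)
Your proof is correct and is essentially the deduction the paper intends (it only asserts the corollary "is easy to deduce" from the Maximum Principle): apply the Maximum Principle to $w_\varepsilon=\max(u,v-\varepsilon)$, get equality of total Monge-Amp\`ere masses of $w_\varepsilon$ and $u$ on $\Omega'$ by Stokes since they agree near $\partial\Omega'$, and let $\varepsilon\downarrow 0$; the second part by the strictly psh perturbation $v+\delta(|z|^2-R^2)$ and the sub-mean value inequality is also the standard route. One small caveat: your claim that one can extract $\varepsilon_k\downarrow 0$ with $(dd^c w_{\varepsilon_k})^n\bigl(\{u=v-\varepsilon_k\}\bigr)=0$ is not justified by the disjointness-of-level-sets argument, because that measure varies with $\varepsilon$ (the argument only works for the fixed measure $(dd^c u)^n$); however this is harmless, since that term enters with a favorable sign and can simply be dropped, giving $\int_{\{u<v-\varepsilon\}}(dd^c v)^n\leq \int_{\{u\leq v-\varepsilon\}}(dd^c u)^n\leq \int_{\{u<v\}}(dd^c u)^n$ for every $\varepsilon>0$, after which monotone convergence concludes as you indicate.
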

 This result implies uniqueness of the solution to the Dirichlet problem when it exists. Moreover using Perron's method of upper envelopes of subsolutions, Bedford and Taylor were able  to solve the Dirichlet problem for the complex Monge-Amp\`ere operator. 
 Let us state the following consequence of their result   which will be used here (\cite{BT76}).
 \begin{theo} \label{thm:DirPb} 
 Let $B \Subset \C^n$ be an euclidean ball, $\mu \geq 0$ a continuous volume form on $\overline{B}$ and $\gamma$ a continuous function in $\partial B$.  Then there exists a  unique psh function $U$ in $B $, which extends as a continuous function in  $\overline B$ solving the following Dirichlet problem:
 
$$
\left\{\begin{array}{ll}
 (dd^c U)^n = \mu, &  \text{weakly in } \, B, \\
U = \gamma  & \text{in} \, \,  \partial B.
\end{array} \right.
 $$
 \end{theo}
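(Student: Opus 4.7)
\textbf{Uniqueness} is immediate from Corollary \ref{coro:LCP}. If $U_1, U_2$ are two continuous psh solutions with $U_1 = U_2 = \gamma$ on $\partial B$ and $(dd^c U_i)^n = \mu$ in $B$, applying the comparison principle first to the pair $(U_1, U_2)$ and then to $(U_2, U_1)$ yields $U_1 \geq U_2$ and $U_2 \geq U_1$ in $B$, hence equality.

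For \textbf{existence} I would adapt Bedford--Taylor's Perron envelope construction. Write $B = B(0,R)$. First, produce a continuous psh subsolution $\underline u$ with $\underline u = \gamma$ on $\partial B$ and $(dd^c \underline u)^n \geq \mu$. When $\gamma$ is smooth, take $\underline u(z) := \hat\gamma(z) + A(|z|^2 - R^2)$, where $\hat\gamma$ is the real harmonic extension of $\gamma$ to $B$; for $A$ chosen large enough in terms of an upper bound for the complex Hessian of $\hat\gamma$ and an $L^\infty$-bound for the density of $\mu$ with respect to $\beta_n$, the function $\underline u$ is psh, continuous on $\bar B$, matches $\gamma$ on $\partial B$, and satisfies the Monge-Amp\`ere inequality. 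For a merely continuous $\gamma$, I would uniformly approximate the boundary data by smooth functions.

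Next, let $\mathcal{F}$ be the family of bounded psh $v$ on $B$ with $\limsup_{z \to \zeta} v(z) \leq \gamma(\zeta)$ at every $\zeta \in \partial B$ and $(dd^c v)^n \geq \mu$ weakly on $B$. The family is non-empty (contains $\underline u$) and every $v \in \mathcal{F}$ lies below the harmonic extension $\hat\gamma$ of $\gamma$ by the subharmonic maximum principle, so $U := \sup_{v \in \mathcal{F}} v$ is bounded. A Choquet-lemma argument, combined with Theorem \ref{thm:CV}, shows that $U^*$ is psh, belongs to $\mathcal{F}$, and the sandwich $\underline u \leq U^* \leq \hat\gamma$ forces $U^* = \gamma$ on $\partial B$. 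To upgrade the inequality to the equation $(dd^c U^*)^n = \mu$ I would invoke the Bedford--Taylor \emph{balayage} trick: for any small $B' \Subset B$, solve the Dirichlet problem on $B'$ with boundary data $U^*|_{\partial B'}$ and volume form $\mu|_{B'}$ to obtain $\tilde U$, and glue $\tilde U = U^*$ on $B \setminus B'$; the resulting function belongs to $\mathcal{F}$ and dominates $U^*$ by the Comparison Principle, hence equals $U^*$ by maximality, so $(dd^c U^*)^n = \mu$ on $B'$. Exhausting $B$ by such $B'$ yields the equation throughout $B$.

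The main obstacle is the \emph{continuity of $U$ up to $\bar B$}, beyond the mere boundary identity $U^* = \gamma$ on $\partial B$. I would approximate $(\mu,\gamma)$ by smooth strictly positive data, solve the corresponding non-degenerate regularised problems (which admit smooth solutions by classical elliptic theory), and use a stability estimate derived from the Comparison Principle to show uniform convergence of the approximating solutions to $U$, thereby transferring continuity of the data to the solution on all of $\bar B$. This step is precisely the passage from bounded to continuous psh solutions highlighted earlier in this section, and it relies on the quasicontinuity of psh functions together with the local continuity of the complex Monge-Amp\`ere operator under uniform convergence.
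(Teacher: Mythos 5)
First, a point of comparison: the paper does not prove this statement at all — it is quoted from \cite{BT76} as a known result — so your proposal must stand on its own. Its skeleton is indeed the Bedford--Taylor route: uniqueness from the comparison principle (Corollary \ref{coro:LCP}) is correct, the explicit subsolution barrier for smooth $\gamma$ is fine, and the Perron--Bremermann envelope together with stability of the subsolution family under maxima and increasing limits (Theorem \ref{thm:CV}) is the right framework. The difficulty is that the two steps which carry all the weight of the theorem are not actually established.

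The first gap is circularity in the balayage step: to prove $(dd^c U^*)^n \leq \mu$ you propose to ``solve the Dirichlet problem on $B' \Subset B$ with boundary data $U^*|_{\partial B'}$ and volume form $\mu|_{B'}$'' — but $B'$ is again a ball with continuous boundary data and continuous density, i.e.\ exactly the statement of Theorem \ref{thm:DirPb} that you are trying to prove. In the paper the balayage statement (Corollary \ref{thm:Balayage}) is deduced \emph{from} Theorem \ref{thm:DirPb}, not available before it, so it cannot be invoked here without a genuinely independent local solvability argument. The second gap concerns continuity on $\overline B$ (including \emph{interior} continuity, which your sandwich $\underline u \leq U^* \leq \hat\gamma$ does not give and which is the hardest point of \cite{BT76}, obtained there by exploiting the special geometry of the ball). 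You outsource it to regularised problems with smooth strictly positive data that ``admit smooth solutions by classical elliptic theory''; this is not classical elliptic theory but the Caffarelli--Kohn--Nirenberg--Spruck theorem \cite{CKNS85}, a deep result for this fully nonlinear equation which moreover postdates \cite{BT76}. Note also that if you do allow yourself \cite{CKNS85}, then approximation plus the stability estimate $\Vert u_1 - u_2\Vert_{L^\infty} \leq \Vert \gamma_1-\gamma_2\Vert_{L^\infty} + R^2 \Vert f_1-f_2\Vert_{L^\infty}^{1/n}$ (which follows from Corollary \ref{coro:LCP} and the superadditivity of the Monge-Amp\`ere operator on bounded psh functions) already proves the whole theorem, and the envelope construction becomes superfluous; if you do not, both steps remain open and the proposal does not close them.
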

 We will also need the following fundamental consequence, known as the "balayage method".
  \begin{coro} \label{thm:Balayage} 
 Let  $\Omega \subset \C^n$ be an open set, $B \Subset \Omega$ is a given euclidean open ball and $\mu \geq 0$ a volume form on $\overline{B}$ with bounded measurable density. Then for any psh function $u $ in $\Omega$, bounded in a neighbourhood of $\overline \B$ and satisfying $(dd^c u)^n \geq \mu$ in the weak sense in $\B$,  there exists a  psh function $U$ in $\Omega$ such that  $U = u$ in $\Omega \setminus \overline{B}$, $U \geq u$ in $\Omega$ and $(dd^c U)^n = \mu,  \text{weakly in } \, B$.
 \end{coro}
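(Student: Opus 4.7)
\noindent\textit{Proof plan.} The idea is to replace $u$ inside $B$ by the solution of a Dirichlet problem for $(dd^c\cdot)^n=\mu$ with boundary data $u|_{\bd B}$, producing a bounded psh function $w$ on $B$, and then paste: $U:=w$ on $\ove B$ and $U:=u$ on $\Om\sm\ove B$. The pluripotential comparison principle will force $w\geq u$ in $B$, and this is exactly what makes the pasted function plurisubharmonic on $\Om$.

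\smallskip

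\noindent\textit{Step 1 (inner Dirichlet solution).} Since $\mu$ has only a bounded measurable density $f$ and $u|_{\bd B}$ is only upper semicontinuous, Theorem~\ref{thm:DirPb} does not apply directly, so I approximate in two stages, using Corollary~\ref{coro:LCP} to maintain monotonicity throughout. \emph{Inner stage:} fix a continuous $\g\geq u|_{\bd B}$ on $\bd B$ and approximate $\mu$ by continuous volume forms $\mu_j=f_j\beta_n$ with $f_j\leq f$ and $\mu_j\nearrow\mu$ in $L^1$; Theorem~\ref{thm:DirPb} gives $w_j^{\g}\in PSH(B)\cap C(\ove B)$ with $(dd^c w_j^{\g})^n=\mu_j$ and $w_j^{\g}=\g$ on $\bd B$, and Corollary~\ref{coro:LCP} (RHS increasing, boundary data fixed) forces $(w_j^{\g})$ to be decreasing. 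Its limit $w^{\g}$ is psh and bounded, and Theorem~\ref{thm:CV} yields $(dd^c w^{\g})^n=\mu$. \emph{Outer stage:} choose continuous $\g_k\searrow u|_{\bd B}$; Corollary~\ref{coro:LCP} (RHS fixed at $\mu$, boundary data decreasing) makes $w^{\g_k}$ decrease to some $w\in PSH(B)\cap L^{\infty}(B)$, and a second application of Theorem~\ref{thm:CV} gives $(dd^c w)^n=\mu$ in $B$, while $\limsup_{z\to\zeta}w(z)\leq u(\zeta)$ at every $\zeta\in\bd B$.

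\smallskip

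\noindent\textit{Step 2 (comparison and gluing).} At every stage of the approximation, $w_j^{\g}=\g\geq u$ on $\bd B$ and $(dd^c w_j^{\g})^n=\mu_j\leq\mu\leq(dd^c u)^n$ in $B$, so Corollary~\ref{coro:LCP} applied to the pair $(w_j^{\g},u)$ gives $w_j^{\g}\geq u$ in $B$; passing to the double limit, $w\geq u$ in $B$. Now define $U(z):=w(z)$ for $z\in B$ and $U(z):=u(z)$ for $z\in\Om\sm B$. Then $U=u$ on $\Om\sm\ove B$, $U\geq u$ on $\Om$, and $(dd^c U)^n=\mu$ in $B$. Plurisubharmonicity of $U$ is clear on the open sets $B$ and $\Om\sm\ove B$. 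At any $\zeta\in\bd B$, $U(\zeta)=u(\zeta)$; upper semicontinuity follows from the usc of $u$ on the outside and from $\limsup_{B\ni z\to\zeta}w(z)\leq u(\zeta)$ on the inside, while the sub--mean value property on any complex--line disc of radius $r$ centred at $\zeta$ is automatic from $U\geq u$ in $\Om$ and plurisubharmonicity of $u$:
\begin{equation*}
\tfrac{1}{2\pi}\int_0^{2\pi} U(\zeta+re^{i\theta})\,d\theta\;\geq\;\tfrac{1}{2\pi}\int_0^{2\pi} u(\zeta+re^{i\theta})\,d\theta\;\geq\; u(\zeta)\;=\; U(\zeta).
\end{equation*}
Hence $U\in PSH(\Om)$, and the construction satisfies every requirement of the statement.

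\smallskip

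\noindent\textit{Main obstacle.} The technical heart is Step~1: promoting Theorem~\ref{thm:DirPb} from continuous to bounded measurable right-hand side, \emph{while} keeping all approximating Dirichlet solutions monotone so that Theorem~\ref{thm:CV} can be invoked at each passage to the limit. This forces the double-limit structure (the density first, the boundary data second), and the production of continuous $f_j\leq f$ with $\mu_j\nearrow\mu$ in $L^1$ is the delicate measure-theoretic ingredient (achievable via the Vitali--Carath\'eodory theorem followed by lsc-to-continuous approximation and a diagonal argument). Once this monotone sequence of Dirichlet solutions is in hand, the pluripotential comparison principle does essentially all the remaining work.
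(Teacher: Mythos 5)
Your overall skeleton (solve a Dirichlet problem in $B$ with boundary data $u|_{\bd B}$, use Corollary~\ref{coro:LCP} to get $w\geq u$, then glue) is the right one, and your Step~2 is essentially correct: the comparison argument, the verification of $\limsup_{B\ni z\to\zeta}w(z)\leq u(\zeta)$, and the sub-mean-value check at points of $\bd B$ all work (modulo routine points you leave implicit, such as a lower barrier — e.g.\ the solution with density $\Vert f\Vert_{L^\infty}\beta_n$ and boundary data $\gamma$ — which is needed both to see that your decreasing limits are bounded below and that they actually attain the boundary values $\gamma_k$, and the usual ``$-\delta$'' trick when invoking Corollary~\ref{coro:LCP} for functions that are not continuous up to $\bd B$). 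Note that the paper itself does not prove this corollary: it quotes \cite{BT76} for continuous densities and attributes the extension to bounded densities to Cegrell \cite{Ceg84}; so you are attempting precisely the step the paper outsources.

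And that is where there is a genuine gap. Your inner stage requires continuous $f_j\leq f$ with $\mu_j=f_j\beta_n\nearrow\mu$ in $L^1$, and such an approximation simply does not exist for a general bounded measurable density. Take $f={\bf 1}_K$ with $K\subset \overline B$ compact, of positive Lebesgue measure and empty interior (a fat Cantor set times a polydisc, say): any continuous $h\leq f$ is $\leq 0$ on the dense open set $B\setminus K$, hence $h\leq 0$ everywhere, so $\int h\leq 0$ while $\int f>0$. More generally, if continuous $f_j\leq f$ satisfy $\int f_j\to\int f$, then $\sup_j f_j$ is a lower semicontinuous function equal to $f$ a.e., so the scheme can only reach densities that are a.e.\ lower semicontinuous. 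Vitali--Carath\'eodory gives usc minorants, not continuous ones, and there is no ``lsc-to-continuous'' passage from below, so the ``delicate measure-theoretic ingredient'' you invoke is not achievable. Without it the whole two-stage construction collapses for general $f\in L^\infty$: you cannot produce the monotone family of continuous-data solutions on which Theorem~\ref{thm:CV} is applied. Repairing this requires a genuinely different ingredient for the continuous-to-bounded step — e.g.\ non-monotone approximation $f_j\to f$ in $L^1$ (by convolution) combined with a stability/uniqueness estimate for the local Dirichlet problem in the spirit of Cegrell and Ko\l odziej — which is exactly why the paper cites \cite{Ceg84} rather than arguing as you do. Your argument, as written, does prove the corollary when $\mu$ has continuous (or a.e.\ lsc) density, but not in the stated generality.
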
                                                                                                                                                                                                                                                                                                                                                                                                                                                                                                                                                                                                                                                                                                                                                                                                                                                                                                                                                                                                                                                                                                                                                                                                                                                                                                                                                                                                                                                                                                                                                                                                                                                                                                                                                                                                                                                                                                                                                                                                                                                                                                            
 This result follows directly from \cite{BT76} when the measure has continuous density on $\bar B$ an was extended to the case of a bounded density by U. Cegrell (see \cite{Ceg84}).
 
 \subsubsection{The complex Monge-Amp\`ere operator on compact K\"ahler manifolds}
 
  Now let us explain how to extend the previous tools to compact K\"ahler manifolds following \cite{GZ05}. The notion of psh function makes sense on any complex manifold since it is invariant under holomorphic transformations.
  
 Let $X$ be a (connected) compact K\"ahler manifold of dimension $n$ and let $\omega$ be a closed smooth $(1,1)-$form on $X$.
 Then it is well known that locally in each small coordinate chart $U \subset X$, there exist a smooth function $\rho_U$ such that $\omega = dd^c \rho_U$, the function $\rho_U$ is psh in $U$ and called a local potential of $\omega$ (see \cite{Dem}). Such a local potential is unique up to addition of a pluriharmonic function in $U$
  
  Recall that a function $\f : X \longrightarrow [- \infty, + \infty[$ is said to be $\omega$-plurisubharmonic in $X$ ($\omega$-psh for short) if
  it is upper semicontinuous in $X$ and locally in each small coordinate chart $U$, the function $u = \f + \rho_U$ is psh in $U$, where $\rho_U$ is any local potential of $\omega$ in $U$. 
 
 Let us denote by $PSH (X,\omega) \subset L^1 (X)$ the convex set of $\omega$-psh functions in $X$, where $L^1 (X)$ the Lebesgue space with respect to a fixed smooth non degenerate volume form $\mu_0$ on $X$. 
 
 Then the $(1,1)$-current $\omega_\f := \omega + dd^c \f$ is a closed positive current in the sense of Lelong since locally in $U$ it can be written as $\omega_\f = dd^c u$, where $u = \f + \rho_U$ is psh in $U$  (\cite{Lel68}, \cite{Dem}). 
 It follows  from what was said previously that the complex Monge-Amp\`ere operator is well defined for any bounded $\f \in PSH (X) \cap L^{\infty} (X)$ as a positive $(n,n)$-current on $X$ defined locally in each coordinate chart $U$ as 
 $$
 \omega_\f^n := (dd^c u)^n.
 $$
  This current can be identified to a positive Borel measure on $X$, which will be denoted by $MA (\f) = MA_{\omega} (\f)$ (see \cite{GZ05}).
 Then the maximum principle still holds in this context (see \cite{GZ07}).
 
 \begin{theo} \label{thm:CP} Let $\f, \p \in PSH (X) \cap L^{\infty} (X)$. Then
 $$
  {\bf 1}_{\{\f < \p\}} MA(\max\{\f,\p\}) = {\bf 1}_{\{\f < \p\}} MA(\p),
 $$
 in the sense of positive Borel measures in $X$. \\
 (Maximum Principle).
 
 In particular
 $$
  \int_{\{\f < \p\}} MA(\p) \leq \int_{\{\f < \p\}} MA(\f).
 $$
 (Comparison Principle).
 \end{theo}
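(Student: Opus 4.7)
My plan is to deduce the Maximum Principle from its local counterpart \eqref{eq:MP} by means of local K\"ahler potentials, and then derive the Comparison Principle by integration over the compact manifold $X$.

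\emph{Maximum Principle.} I would cover $X$ by coordinate charts $U_\al$ on each of which $\omega = dd^c \rho_\al$ for a smooth psh local potential $\rho_\al$. On $U_\al$ I set $u_\al := \f + \rho_\al$ and $v_\al := \p + \rho_\al$; these are locally bounded psh functions with $\{u_\al < v_\al\} = \{\f < \p\} \cap U_\al$ and $\max\{u_\al, v_\al\} = \max\{\f,\p\} + \rho_\al$. By the very definition of the global operator $MA$, the three measures $(dd^c u_\al)^n$, $(dd^c v_\al)^n$, and $(dd^c \max\{u_\al, v_\al\})^n$ coincide on $U_\al$ with the restrictions of $MA(\f)$, $MA(\p)$, and $MA(\max\{\f,\p\})$ respectively. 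Applying the local identity \eqref{eq:MP} to $u_\al, v_\al$ in each chart and patching across the open cover then yields the stated global identity.

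\emph{Comparison Principle.} The new ingredient on a compact K\"ahler manifold is the total-mass conservation $\int_X MA(u) = \int_X \omega^n$ valid for every $u \in PSH(X,\omega) \cap L^\infty(X)$. This follows from Theorem \ref{thm:CV} by approximating $u$ from above by smooth $\omega$-psh functions $u_j$, noting that $\int_X \omega_{u_j}^n = \int_X \omega^n$ by Stokes' theorem on the closed manifold $X$, and passing to the weak limit. Applying this to both $\f$ and $\max\{\f,\p\}$, and using the Maximum Principle both as stated and with $\f,\p$ interchanged (so that $MA(\max\{\f,\p\}) = MA(\f)$ on $\{\f>\p\}$), the decomposition $X = \{\f<\p\} \sqcup \{\f>\p\} \sqcup \{\f=\p\}$ gives, after cancellation of the $\{\f>\p\}$ terms,
\[
\int_{\{\f<\p\}} MA(\p) - \int_{\{\f<\p\}} MA(\f) = \int_{\{\f=\p\}} MA(\f) - \int_{\{\f=\p\}} MA(\max\{\f,\p\}).
\]

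The main obstacle will be that the right-hand side of this identity is not obviously nonpositive. I would remove the $\{\f=\p\}$ contribution by the standard perturbation trick: apply the same argument with $\p - \e$ in place of $\p$, using $MA(\p - \e) = MA(\p)$. Since the level sets $\{\f = \p - \e\}_{\e>0}$ are pairwise disjoint and $MA(\f)$ is a finite Borel measure on $X$, one has $\int_{\{\f = \p - \e\}} MA(\f) = 0$ for all but countably many $\e > 0$. For such good $\e$, the right-hand side reduces to $-\int_{\{\f=\p-\e\}} MA(\max\{\f,\p-\e\}) \leq 0$, yielding $\int_{\{\f<\p-\e\}} MA(\p) \leq \int_{\{\f<\p-\e\}} MA(\f)$. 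Letting $\e \downarrow 0$ through good values and applying monotone convergence on both sides (based on $\{\f<\p-\e\}\nearrow\{\f<\p\}$) delivers the Comparison Principle.
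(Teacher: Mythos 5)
Your argument is essentially the standard proof of this theorem (the paper itself offers no proof here, deferring to \cite{GZ05,GZ07}), and apart from one step it is correct. The localization of the Maximum Principle is fine: the identity to be proved is local, the functions $u_\al=\f+\rho_\al$, $v_\al=\p+\rho_\al$ are bounded psh on $U_\al$ (note that this uses only $\omega+dd^c\f\geq 0$, not the pshness of $\rho_\al$ itself, which may fail when $\omega$ is not semi-positive on the chart), and the Bedford--Taylor identity \eqref{eq:MP} then transfers verbatim and patches over the cover. Your deduction of the Comparison Principle --- mass conservation for $\f$ and $\max\{\f,\p\}$, cancellation of the $\{\f>\p\}$ terms via the Maximum Principle with the roles of $\f$ and $\p$ exchanged, and the perturbation $\p\mapsto\p-\e$ together with the observation that $\int_{\{\f=\p-\e\}}MA(\f)=0$ for all but countably many $\e>0$ --- is correct and complete; this is precisely the classical way of disposing of the contact-set term, and the monotone convergence step along good values of $\e$ is legitimate.

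The one step whose justification does not hold up as written is the total mass identity $\int_X MA(u)=\int_X\omega^n$ for $u\in PSH(X,\omega)\cap L^{\infty}(X)$. You invoke a decreasing approximation of $u$ by \emph{smooth $\omega$-psh} functions: such approximation is a delicate theorem (Demailly, B{\l}ocki--Ko{\l}odziej) even when $\omega$ is K\"ahler, and it is not available in the generality in which the theorem is used in this paper, where $\omega$ is merely a smooth closed semi-positive (or big) $(1,1)$-form --- regularization in that setting only produces $(\omega+\e_j\beta)$-psh approximants, with a loss of positivity. The identity itself is true and has a direct proof inside Bedford--Taylor theory: writing $\omega_u^n-\omega^n=dd^c u\wedge T$ with $T=\sum_{k=0}^{n-1}\omega_u^k\wedge\omega^{n-1-k}$, a closed positive current with bounded local potentials, one has $dd^c u\wedge T=dd^c(uT)$ and $\int_X dd^c(uT)=\langle dd^c(uT),1\rangle=\langle uT,dd^c 1\rangle=0$ on the compact manifold $X$; this is how the fact is established in \cite{GZ05}. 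Replace your approximation argument by this (or simply cite \cite{GZ05} for the mass identity) and your proof is complete.
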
 
 There is another important result which will be used later.
 \begin{theo}\label{thm:DP}  Let $\f, \p \in PSH (X) \cap L^{\infty} (X)$. Assume that 
 $\p \leq \f$ almost everywhere in $X$ with respect to the measure  $MA(\f).$ Then $\p \leq \f$ everywhere in $X$ \\
  (Domination Principle).  
 \end{theo}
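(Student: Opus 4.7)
The plan is to set $w := \max\{\f, \p\}$ and show $w = \f$ on $X$; since $w \geq \f$ always, this gives $\p \leq \f$. I will establish $MA(w) = MA(\f)$ as Borel measures on $X$ and invoke the uniqueness of bounded $\omega$-psh potentials with prescribed Monge-Amp\`ere measure, a classical corollary of the Comparison Principle.

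First, applying the Comparison Principle (Theorem \ref{thm:CP}) to the pair $\f \leq w$ and noting that $\{\f < w\} = \{\p > \f\}$, the hypothesis yields
\begin{equation*}
\int_{\{\p > \f\}} MA(w) \;\leq\; \int_{\{\p > \f\}} MA(\f) \;=\; 0,
\end{equation*}
so $MA(w) = 0 = MA(\f)$ on $\{\p > \f\}$. For the complementary set $\{\p \leq \f\}$, I would use the approximations $\tilde w_\e := \max\{\f, \p - \e\}$, $\e > 0$: these are bounded $\omega$-psh, and $\tilde w_\e \nearrow w$ as $\e \to 0^+$. Any closed set $K \subset \{\p \leq \f\}$ is contained in the plurifine-open set $\{\f > \p - \e\}$ for every $\e > 0$, on which $\tilde w_\e = \f$; Bedford--Taylor's plurifine locality of the Monge-Amp\`ere operator then gives $MA(\tilde w_\e) = MA(\f)$ there. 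Testing against a continuous function $\eta$ supported in $K$ and passing to the limit $\e \to 0^+$ via Theorem \ref{thm:CV} yields $\int \eta\, MA(w) = \int \eta\, MA(\f)$; inner regularity of Radon measures then upgrades this to $MA(w) = MA(\f)$ on $\{\p \leq \f\}$. Combining with the previous step, $MA(w) = MA(\f)$ on all of $X$.

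By uniqueness for the complex Monge-Amp\`ere equation in $PSH(X,\omega) \cap L^{\infty}(X)$, $w - \f$ is a constant $c \geq 0$. The hypothesis $MA(\f)(\{\p > \f\}) = 0$ together with $\int_X MA(\f) > 0$ forces the set $\{\f \geq \p\}$ to be non-empty, and on this set $w = \f$; therefore $c = 0$, $w = \f$, and $\p \leq \f$ on $X$. The main obstacle will be justifying the limit argument that transfers the plurifine-local equality $MA(\tilde w_\e) = MA(\f)$ into an equality of Radon measures on the set $\{\p \leq \f\}$ (which is not plurifine-open), together with the need to invoke uniqueness, itself a non-trivial consequence of the Comparison Principle slightly beyond the material explicitly recalled in the excerpt.
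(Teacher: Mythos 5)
The paper does not actually prove Theorem~\ref{thm:DP}: it is stated and the reader is referred to \cite{GZ05,GZ07,Kol98}, where it is deduced directly from the comparison principle by a short perturbation argument. Your route (prove $MA(w)=MA(\f)$ for $w=\max\{\f,\p\}$, then quote uniqueness) is different and, as written, has two genuine gaps. The first is the one you flag yourself: knowing $\int \eta\, MA(w)=\int \eta\, MA(\f)$ for continuous $\eta$ supported in compact subsets of $\{\p\le\f\}$ does not yield ${\bf 1}_{\{\p\le\f\}}MA(w)={\bf 1}_{\{\p\le\f\}}MA(\f)$. The set $\{\p\le\f\}$ is only Borel, restriction of measures to a Borel set is not stable under weak convergence, and a continuous function whose support lies inside $\{\p\le\f\}$ only detects the topological interior of that set; nothing in the hypothesis prevents $\{\p>\f\}$ from being a dense open set (it is merely $MA(\f)$-negligible, and $MA(\f)$ may charge a very small set), in which case your class of test functions reduces to $\{0\}$ and the argument gives nothing. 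The sub-claim $MA(w)=MA(\f)$ is nevertheless true and can be reached without any limiting procedure: the inequality recorded in the proof of Lemma~\ref{lem:UB} gives $MA(w)\ge {\bf 1}_{\{\f\ge\p\}}MA(\f)+{\bf 1}_{\{\f<\p\}}MA(\p)\ge {\bf 1}_{\{\f\ge\p\}}MA(\f)=MA(\f)$ by the hypothesis, and since both measures have total mass $\int_X\omega^n$ they coincide.

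The second gap is more structural: uniqueness, up to an additive constant, of bounded $\omega$-psh functions with a prescribed Monge--Amp\`ere measure is \emph{not} a corollary of the comparison principle. When the right-hand side does not involve the unknown, Theorem~\ref{thm:CP} applied to two functions with $MA(u)=MA(v)=\nu$ only returns the tautology $\nu(\{u<v\})\le \nu(\{u<v\})$; the uniqueness you invoke is Calabi's theorem in the smooth case and, for bounded potentials, a difficult theorem of B\l ocki \cite{Bl03} (see \cite{BEGZ10} for big classes), which is considerably deeper than the domination principle itself. So the key step of your proof rests on an unjustified (indeed misattributed) ingredient, and it inverts the natural order of the theory even if it is not literally circular. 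The standard argument avoids uniqueness altogether: pick a bounded $\omega$-psh function $\rho\le\p$ with $MA(\rho)\ge \delta_0\mu_0$ for some $\delta_0>0$ (a constant works when $\omega$ is K\"ahler; the degenerate case is handled in the cited references), set $\p_t:=(1-t)\p+t\rho\le\p$ for $0<t<1$, and apply Theorem~\ref{thm:CP} to the pair $(\f,\p_t)$: then $t^n\delta_0\,\mu_0(\{\f<\p_t\})\le \int_{\{\f<\p_t\}}MA(\p_t)\le MA(\f)(\{\f<\p\})=0$, so $\p_t\le\f$ Lebesgue-almost everywhere, hence everywhere by the sub-mean value inequality, and letting $t\to 0$ gives $\p\le\f$.
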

 For more details on these matters we refer to \cite{GZ05,GZ07, Kol98}.
 
 One of the main tools in recent applications of Pluripotential theory to K\"ahler Geometry is the a priori uniform estimate du to Kolodziej (\cite{Kol98, Kol03}. We will use here the following version (see \cite{BGZ08,EGZ09,GZ12}).
 \begin{theo} \label{thm:StabEst}
 Let $f \in L^{p} (X,\mu_0)$ with $p > 1$ and  $\p \in PSH (X,\omega) \cap L^{\infty} (X)$ with $\sup_X \p = 0$. Then  there exists a constant depending on $ \Vert \p \Vert_{L^{\infty} (X)})$  such that for any $\f \in PSH (X) \cap L^{\infty} (X)$ satisfying $MA (\f) \leq f \mu_0$ with $\sup_X \f = 0,$ we have the following "weak stability" estimates 
  $$
   \sup_X (\p - \f)^+ \leq C \Vert f \Vert_{L^{p} (X)}^{1/n} \Vert (\p - \f)^+ \Vert_{L^1 (X)}^{\gamma},
  $$
  where $\gamma = 1/(nq + 2)$ and $q = p/(p-1)$.
  
 In particular we have the following uniform $L^{\infty}$-estimate 
 $$ 
 \Vert \f \Vert_{L^{\infty}} \leq  A \Vert f \Vert_{L^{p} (X)}^{1/n},
 $$
  where $A > 0$ is a uniform constant independent on $\f$.
 \end{theo}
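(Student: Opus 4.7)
The plan is to use Kolodziej's capacity/comparison method, as refined by Zeriahi and by Eyssidieux-Guedj-Zeriahi to incorporate $L^p$ right-hand sides. The central tool is the $\omega$-Monge-Amp\`ere capacity
$$
\mrm{Cap}_\omega (E) := \sup\Bigl\{\int_E \omega_u^n : u \in PSH (X,\omega),\ -1 \leq u \leq 0\Bigr\}.
$$
Two auxiliary facts are needed: (i) the Alexander-Taylor-Kolodziej capacity-measure comparison on the compact K\"ahler manifold $X$, which combined with H\"older's inequality yields $\int_E f\, d\mu_0 \leq C_p\,\Vert f\Vert_{L^p}\,\mrm{Cap}_\omega(E)^{1+\al_0}$ for some $\al_0 > 0$ depending only on $(p,n)$, valid whenever $\mrm{Cap}_\omega(E)$ is small; (ii) the quasi-continuity of $\omega$-psh functions on $X$.

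The first step is the key capacity-comparison inequality. For $s > 0$ and $0 < \de < 1$, set $W(s) := \{\f < \p - s\}$, and for $u \in PSH(X,\omega)$ with $-1 \leq u \leq 0$, consider the $\omega$-psh test function $v := (1-\de)\p + \de u - s$. Using $\sup_X \p = 0$ (hence $\p \leq 0$) together with $u \geq -1$, one checks the two inclusions $W(s+\de) \subset \{\f < v\} \subset W(s - \de \Vert\p\Vert_{L^\infty})$. The global Pluripotential Comparison Principle (Theorem \ref{thm:CP}) applied to $\f$ and $v$, together with the elementary mixed-form inequality $\omega_v^n = ((1-\de)\omega_\p + \de\omega_u)^n \geq \de^n \omega_u^n$, gives
$$
\de^n \int_{W(s+\de)} \omega_u^n \leq \int_{\{\f < v\}} \omega_\f^n \leq \int_{\{\f < v\}} f\, d\mu_0 \leq \int_{W(s - \de\Vert\p\Vert_{L^\infty})} f\, d\mu_0.
$$
Taking the supremum over admissible $u$ and invoking fact (i) yields
$$
\de^n \mrm{Cap}_\omega(W(s+\de)) \leq C_p \Vert f\Vert_{L^p}\, \mrm{Cap}_\omega\bigl(W(s - \de\Vert\p\Vert_{L^\infty})\bigr)^{1+\al_0}.
$$
This is the precise point where the dependence of the constant on $\Vert\p\Vert_{L^\infty}$ enters.

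The second step is a De Giorgi--Kolodziej iteration applied to the nonincreasing function $h(s) := \mrm{Cap}_\omega(W(s))$. Choosing $\de$ proportionally to $h(s)^{\al_0/n}$ at each stage, the standard iteration lemma (cf.\ \cite{Kol98, EGZ09}) forces $h(s_\infty) = 0$ for some explicit threshold $s_\infty \leq s_0 + C'(\Vert\p\Vert_{L^\infty})\,\Vert f\Vert_{L^p}^{1/n}\,h(s_0)^{\beta}$ for a suitable $\beta > 0$. Since $h(s_\infty) = 0$ means $\p \leq \f + s_\infty$ almost everywhere, the Domination Principle (Theorem \ref{thm:DP}) shows $\sup_X(\p - \f)^+ \leq s_\infty$.

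The third step is to initialise $h(s_0)$ from $\Vert (\p - \f)^+\Vert_{L^1}$. Chebyshev gives $\mu_0(W(s_0)) \leq s_0^{-1} \Vert (\p-\f)^+\Vert_{L^1}$, and via quasi-continuity of $\omega$-psh functions together with a rescaling of $\p$ into the admissible class $\{-1 \leq u \leq 0\}$ (which costs a factor $1 + \Vert\p\Vert_{L^\infty}$) this upgrades to the capacity estimate $h(s_0) \leq C(\Vert\p\Vert_{L^\infty})\,s_0^{-n}\,\Vert (\p-\f)^+\Vert_{L^1}$. Optimising the free parameter $s_0$ and propagating exponents carefully through the iteration yields the stated value $\g = 1/(nq+2)$. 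Finally, the $L^\infty$ estimate follows by specialising to $\p \equiv 0$ and invoking Hartogs compactness of $\{u \in PSH(X,\omega) : \sup_X u = 0\}$ in $L^1(X)$, which bounds $\Vert \f^-\Vert_{L^1} = \Vert (\p - \f)^+\Vert_{L^1}$ by a universal constant depending only on $(X,\omega)$.

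The main obstacle is auxiliary fact (i): Kolodziej's $L^p$ capacity-measure inequality with exponent strictly greater than $1$. This is precisely what makes the iteration contract rather than merely balance, and it rests on the Alexander-Taylor exponential-type decay of $\mu_0$ in terms of $\mrm{Cap}_\omega$ on compact K\"ahler manifolds. The remaining iterative combinatorics is a delicate but standard real-variable argument.
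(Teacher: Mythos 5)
The paper itself gives no proof of Theorem \ref{thm:StabEst}: it is quoted from \cite{Kol98,BGZ08,EGZ09,GZ12}, and your architecture -- comparison against $(1-\de)\p+\de u-s$ to get $\de^n\mathrm{Cap}_\omega(W(s+\de))\leq C_p\Vert f\Vert_{L^p}\mathrm{Cap}_\omega(W(s-\de\Vert\p\Vert_\infty))^{1+\al_0}$, then a De Giorgi--Kolodziej iteration and the domination principle -- is exactly the scheme of those references; your Steps 1 and 2 are correct as written (the shift by $\de\Vert\p\Vert_\infty$ is absorbed by rescaling $\de$, which is indeed where the dependence on $\Vert\p\Vert_\infty$ enters).

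The genuine gap is Step 3. The initialization you claim, $\mathrm{Cap}_\omega(W(s_0))\leq C(\Vert\p\Vert_\infty)\,s_0^{-n}\Vert(\p-\f)^+\Vert_{L^1(\mu_0)}$, does not follow from Chebyshev plus quasi-continuity plus a rescaling of $\p$: unwinding it, you would need $\sup_u\int_X(\p-\f)^+\,\omega_u^n\leq C\Vert(\p-\f)^+\Vert_{L^1(\mu_0)}$ over all $u\in PSH(X,\omega)$ with $-1\leq u\leq 0$, and this is false in general, since the Monge-Amp\`ere measures $\omega_u^n$ are not uniformly dominated by $\mu_0$ (controlling such mixed integrals by $L^1(\mu_0)$-norms is precisely what costs the $1/2^n$-type losses in the known stability estimates). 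The correct initialization, and the one used in the cited references, reuses your Step 1 inequality once with $\de\sim s_0$: $\mathrm{Cap}_\omega(W(s_0))\lesssim s_0^{-n}\Vert f\Vert_{L^p}\,\mu_0(W(s_0/2))^{1/q}\lesssim s_0^{-n-1/q}\Vert f\Vert_{L^p}\Vert(\p-\f)^+\Vert_{L^1}^{1/q}$, and it is this $1/q$ power, together with choosing $\al_0$ large enough in the capacity--volume domination, that yields an exponent of the shape $\g=1/(nq+2)$; with your claimed (linear, $f$-free) initialization the exponent bookkeeping would come out differently, so the final sentence ``propagating exponents carefully yields $\g=1/(nq+2)$'' is not justified by what precedes it. Two smaller caveats: your auxiliary fact (i) is not the classical Alexander--Taylor inequality, because $\omega$ is only semi-positive and big, and the domination $\mu_0(E)\leq C\exp(-c\,\mathrm{Cap}_\omega(E)^{-1/n})$ for $\mathrm{Cap}_\omega$ in this degenerate setting is itself one of the key technical points of \cite{EGZ09}; and in Step 2, zero capacity gives $\p\leq\f+s_\infty$ outside a pluripolar (hence Lebesgue-null) set, so the everywhere inequality follows already from the submean-value property of ($\omega$-)psh functions -- the Domination Principle as stated requires the inequality $MA(\f)$-a.e., which you have not checked.
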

 \vskip 0.3 cm
\subsection{Solving degenerate complex Monge-Amp\`ere equations}
 
 We are mainly interested here in complex Monge-Amp\`ere equations on compact K\"ahler manifolds related to the Calabi conjecture and the existence of K\"ahler-Einstein metrics. Let $X$ be a compact K\"ahler manifold of dimension $n $ and let $\omega$ be a smooth closed semi-positive form on $X$ such that $\int_X \omega^n > 0$. 
  
 We will consider the following global complex Monge-Amp\`ere equation.
 $$
 (\omega + dd^c \f)^n = e^{\e \f} \mu, \leqno (MA)_{\e,\mu}
 $$
 where $\e \geq 0$ and $\mu = f \mu_0$  is a degenerate volume form on $X$ with a density $0 \leq f \in L^{p} (X,\mu_0)$ ($p > 1$) with respect to a fixed  smooth non degenerate volume form $\mu_0$ on $X$ normalized by the condition

 \begin{equation} \label{eq:Norm}
 \int_X \mu = \int_X \omega^n. 
 \end{equation}
 
 
  In the case $\e = 0,  \omega > 0$ is a K\"ahler form on $X$ and $\mu = f \omega^n > 0$ is a smooth non degenerate volume form, the corresponding equation is known as the Calabi-Yau equation and there is a necessary condition for a solution to exists : $\int_X \mu = \int_X \omega^n$. 
 Moreover adding a constant to a solution gives a new solution. E. Calabi observed that the smooth solution of the equation $(MA)_{0,\mu}$ (if it exists) is unique up to an additive constant (\cite{Cal57}). These two facts  make actually this equation more difficult to handle. It was proved  by Yau  (\cite{Yau78}), answering the celebrated Calabi's conjecture, that this equation has a smooth solution $\f$ on $X$ i.e. there exists $\f \in C^{\infty} (X)$ such that $\omega_\f := \omega + dd^c \f > 0$ is a K\"ahler metric on $X$ satisfying the equation $(MA)_{0,\mu}$. In particular he showed that on compact K\"ahler manifolds for which the first Chern class is zero i.e. $c_1 (X) = 0,$ any K\"ahler class contains a (smooth) Ricci-flat K\"ahler-Einstein metric (see also \cite{Tian00}).
 
 When  $\e > 0, \omega > 0$ is a K\"ahler form and $\mu = f \omega^n > 0$ is a smooth non degnerate volume form, the equation 
 $(MA)_{\e,\mu}$ was considered by Aubin and Yau in connection to the problem of existence of K\"ahler-Einstein metrics on a compact K\"ahler manifolds of negative first Chern class i.e. $c_1 (X) < 0$ (or a positive cananonical class i.e. $K_X > 0$). As we will see this equation is much more simpler than the Calabi-Yau equation. The uniqueness is an easy consequence of the Comparison Principle. The existence  of a smooth solution for the equation $(MA)_{\e,\mu}$ was proved in 1978 independently by Aubin and Yau (\cite{Aub78,Yau78}).  
 
 The approach used by Aubin and Yau relies on the continuity method and a priori estimates of high order. It turns out that the a priori $C^0$ estimate is the main step in their approach.
 
 In 1998, S. Kolodziej (\cite{Kol98})  gave a new proof of the a priori $C^0$-estimate using methods from Pluripotential Theory. Moreover, using Yau's theorem he was able to extend it to a slightly more degenerate situation in the case when $\e = 0$ and  $0 \leq f \in L^{p} (X)$ ($p > 1$), $\omega $ being a K\"ahler form. This allows him to obtain continuous weak solution of the equation $(MA)_{0,\mu}$ in the sense of Bedford and Taylor (a pluripotential solution).
 
 This result was extended in \cite{EGZ09} to a more degenerate situation when $\omega \geq 0$ is a closed smooth and semi-positive $(1,1)-$form on $X$ such that $\int_X \omega^n > 0$. The weak solution obtained there was shown to be a bounded $\omega$-psh function, but the continuity was proved under an extra assumption which is satisfied when $\omega > 0$ is K\"ahler.
 
 We first review the main results obtained in \cite{EGZ09}. However we will give a direct approach using pluripotential techniques as developed recently in \cite{EGZ11}, which do not use  the continuity method and high order a priori estimates of Yau and Aubin.
 Namely  we will prove the following result. 
 \begin{theo} \label{thm:THM1}
 Let $X$ be a compact K\"ahler manifold of dimension $n$ and $\omega \geq 0$ be a smooth closed $(1,1)-$form on $X$ such that $\int_X \omega^n > 0$ and $\mu = f \mu_0$ a volume form on $X$ with density $0 \leq f \in L^p (X;\mu_0)$ ($p > 1$) with respect to a fixed smooth non degenerate volume form $\mu_0 > 0$. Then there is a unique $\f \in PSH (X,\omega) \cap L^{\infty} (X)$ which satisfies the complex Monge-Amp\`ere equation  
 $$
 (\omega + dd^c \f)^n = e^{\f} \mu,
 $$
 in the pluripotential sense in $X$.
 
 Moreover the solution $\f$ is the upper envelope of the family of pluripotential subsolutions of the equation in $X$ i.e. 
 $$
 \f = \sup \mathcal F (X,\omega,\mu),
 $$ 
 where
 $$
 \mathcal F (X,\omega,\mu) := \{\p ; \p \in PSH (X,\omega) \cap L^{\infty} (X), (\omega + dd^c \p)^n \geq e^{\p} \mu\}.
 $$
 
 \end{theo}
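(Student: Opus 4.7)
The approach is to prove uniqueness and the envelope characterization via the Comparison and Domination Principles, and to obtain existence through Perron's method applied to the family $\mathcal F(X,\omega,\mu)$.

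Suppose $\f_1,\f_2$ are two pluripotential solutions. On the coincidence set $A:=\{\f_1<\f_2\}$, the Comparison Principle (Theorem~\ref{thm:CP}) yields
$$
\int_A e^{\f_2}\,d\mu=\int_A MA(\f_2)\le\int_A MA(\f_1)=\int_A e^{\f_1}\,d\mu,
$$
so $\int_A(e^{\f_2}-e^{\f_1})\,d\mu\le 0$. The integrand is strictly positive on $A$, hence $f=0$ $\mu_0$-a.e.\ on $A$, and so $MA(\f_1)=e^{\f_1}f\mu_0$ vanishes on $A$. The Domination Principle (Theorem~\ref{thm:DP}) then forces $\f_2\le\f_1$, and by symmetry $\f_1=\f_2$. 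The identical computation with any $\p\in\mathcal F(X,\omega,\mu)$ in place of $\f_2$ yields $\p\le\f$, so once existence is proven the solution automatically coincides with $\sup\mathcal F(X,\omega,\mu)$.

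For existence I define $\f:=\bigl(\sup\mathcal F(X,\omega,\mu)\bigr)^{*}$. The family $\mathcal F$ is nonempty: by the existence part of the Kolodziej--EGZ theory underlying Theorem~\ref{thm:StabEst}, there is $\chi\in PSH(X,\omega)\cap L^{\infty}(X)$ with $MA(\chi)=\mu$ and $\sup_X\chi=0$, and since $\chi\le 0$ we have $MA(\chi)=\mu\ge e^{\chi}\mu$, so $\chi\in\mathcal F$. A uniform upper bound follows by integration: for $\p\in\mathcal F$ with $M:=\sup_X\p$ and $\tilde\p:=\p-M$,
$$
e^M\int_X e^{\tilde\p}\,d\mu\le\int_X MA(\tilde\p)=\int_X\omega^n,
$$
and because $\{\tilde\p\in PSH(X,\omega):\sup\tilde\p=0\}$ is $L^1(\mu_0)$-compact while $\mu$ has $L^p$ density, H\"older and Jensen's inequalities give a uniform lower bound $\int e^{\tilde\p}\,d\mu\ge c_0>0$, so $M\le\log(\int_X\omega^n/c_0)$. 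Choquet's lemma, together with the closure of $\mathcal F$ under finite maxima, produces an increasing sequence $\p_j\in\mathcal F$ with $\p_j\nearrow\f$ almost everywhere; Bedford--Taylor's convergence Theorem~\ref{thm:CV} and dominated convergence give $MA(\p_j)\to MA(\f)$ and $e^{\p_j}\mu\to e^{\f}\mu$ weakly, so $\f\in\mathcal F$.

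The hard part is upgrading $\f$ from a subsolution to a genuine solution. Suppose for contradiction that $MA(\f)\ne e^{\f}\mu$; then there is a coordinate ball $B\Subset X$ where the strict inequality is witnessed on a set of positive measure. Writing $\omega=dd^c\rho$ on $B$ and $u:=\f+\rho$, the psh function $u$ satisfies $(dd^c u)^n\ge e^u\nu$ with $\nu:=e^{-\rho}\mu$, strictly somewhere in $B$. I would solve the local nonlinear Dirichlet problem $(dd^c V)^n=e^V\nu$ on $B$ with $V|_{\partial B}=u|_{\partial B}$, and glue $\tilde\f:=V-\rho$ on $B$ to $\f$ outside: local comparison forces $V\ge u$ and $V\not\equiv u$, so $\tilde\f\in\mathcal F$ with $\tilde\f>\f$ on a set of positive measure, contradicting the maximality of $\f=(\sup\mathcal F)^{*}$.

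The main obstacle is exactly this local nonlinear Dirichlet problem: Theorem~\ref{thm:DirPb} produces solutions only for a continuous, $V$-independent right-hand side, whereas here the density is merely $L^p$ and the right-hand side depends on $V$. One handles it by a local Perron argument (the envelope of local pluripotential subsolutions with boundary values dominated by $u|_{\partial B}$) combined with Kolodziej's stability estimate (Theorem~\ref{thm:StabEst}), which provides the uniform $L^{\infty}$-control needed to close that Perron argument and to pass to the limit along an iteration $V_{k+1}=$ solution of $(dd^c V_{k+1})^n=e^{V_k}\nu$. This is precisely the point where the $L^p$-integrability hypothesis on $f$ is essential.
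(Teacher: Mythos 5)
Your uniqueness argument and the identification of any solution with $\sup \mathcal F(X,\omega,\mu)$ are exactly the paper's Proposition~\ref{prop:UNIQ} (comparison plus domination principle), and your bound on $\sup_X\p$ for $\p\in\mathcal F$ is essentially Lemma~\ref{lem:UB}. The existence part, however, has a genuine gap, and it sits precisely where the paper does its real work: the non-emptiness of $\mathcal F(X,\omega,\mu)$. You produce a subsolution by invoking ``the existence part of the Kolodziej--EGZ theory underlying Theorem~\ref{thm:StabEst}'', i.e.\ a bounded $\omega$-psh $\chi$ with $MA(\chi)=\mu$ for $\omega$ merely semi-positive and big and $f\in L^p$. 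But Theorem~\ref{thm:StabEst} is purely an a priori estimate and has no existence content, and the existence statement you are quoting is the theorem of \cite{EGZ09} that this section is designed to reprove: it is essentially Theorem~\ref{thm:THM2}, which the paper deduces \emph{from} Theorem~\ref{thm:THM1}, so within this framework your argument is circular (it would also need the normalization $\mu(X)=\int_X\omega^n$, which is not assumed in Theorem~\ref{thm:THM1}). The paper instead \emph{constructs} a subsolution, i.e.\ verifies condition $(\dag)$, by a three-step approximation: first $\omega$ K\"ahler and $f$ bounded, where $\mu\le A\,\omega^n$ makes $(\dag)$ trivial and Corollary~\ref{cor:Dag} applies; then $\omega$ K\"ahler and $f\in L^p$, by truncating $f$ at level $j$, showing with the comparison and domination principles (Theorems~\ref{thm:CP} and~\ref{thm:DP}) that the solutions $\f_j$ decrease, and passing to the limit with Theorem~\ref{thm:StabEst} and Theorem~\ref{thm:CV}; finally $\omega\ge 0$ big, by solving for $\omega+\e\beta$ and letting $\e\downarrow 0$ in the same monotone way. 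This approximation scheme is the core of the proof and is absent from your proposal.

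There is a second, related gap in your upgrade from maximal subsolution to solution: your bump-and-glue contradiction requires solving the local Dirichlet problem $(dd^cV)^n=e^{V}\nu$ with $V$-dependent right-hand side and merely $L^p$ density, which none of the quoted tools provides (Theorem~\ref{thm:DirPb} needs a continuous, $V$-independent density, and Corollary~\ref{thm:Balayage} a bounded $V$-independent one); you acknowledge this as ``the main obstacle'' and only sketch an iteration $V_{k+1}$ without proving solvability of each step or convergence. The paper avoids the issue altogether: in Theorem~\ref{thm:MSUB} the balayage is performed with the \emph{frozen} right-hand side $e^{\p_j}\mu$ coming from an increasing Choquet sequence of subsolutions, so only the fixed-density balayage of Corollary~\ref{thm:Balayage} is needed, and continuity of the Monge--Amp\`ere operator along increasing sequences (Theorem~\ref{thm:CV}) shows the envelope solves the equation on each ball; the degenerate cases ($L^p$ density, non-K\"ahler $\omega$) are then reached only through the monotone global approximations above, never through a nonlinear local Dirichlet problem. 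So your outline is sound, but the two steps carrying the analytic content -- producing one subsolution without assuming EGZ-type existence, and closing the Perron argument without a $V$-dependent local solver -- are missing.
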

 
 This theorem implies that for any $\e > 0$ the complex Monge-Amp\`ere equation $(MA)_{\e,\mu}$ has a unique bounded $\omega-$plurisubharmonic solution $\f_\e$.
  It turns out that the family $(\f_\e)$ is uniformly bounded and converges uniformly on $X$ by Theorem~\ref{thm:StabEst}. More precisely we obtain the following result.
  \begin{theo} \label{thm:THM2}
 Let $\omega \geq 0$ be a smooth closed semi-positive $(1,1)-$form on $X$ such that $\int_X \omega^n > 0$ and $\mu = f \mu_0$ a volume form on $X$ with density $0 \leq f \in L^p (X)$ ($p > 1$) with respect to a smooth non degenerate volume form $\mu_0 > 0$ such that $\int_X \mu = \int_X \omega^n$. Then there is a unique $\f \in PSH (X,\omega) \cap L^{\infty} (X)$ which satisfies the complex Monge-Amp\`ere equation  
 $$
 (\omega + dd^c \f)^n = f \mu_0,  
 $$
 in the pluripotential  sense and normalized by $\int_X \f  \mu = 0$.
  \end{theo}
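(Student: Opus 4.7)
The plan is to obtain $\varphi$ as the vanishing $\varepsilon$ limit of the Aubin--Yau solutions supplied by Theorem~\ref{thm:THM1}. For each $\varepsilon>0$, the substitution $\psi=\varepsilon\varphi$ converts $(\omega+dd^c\varphi)^n=e^{\varepsilon\varphi}f\mu_0$ into $(\varepsilon\omega+dd^c\psi)^n=\varepsilon^{n}e^{\psi}f\mu_0$, so Theorem~\ref{thm:THM1} applied with $\omega$ replaced by $\varepsilon\omega$ and $\mu$ by $\varepsilon^{n}\mu$ yields a unique bounded $\omega$-psh function $\varphi_\varepsilon$ solving
\[
(\omega+dd^c\varphi_\varepsilon)^n \;=\; e^{\varepsilon\varphi_\varepsilon}\,f\mu_0.
\]
Integrating against $X$ and using the hypothesis $\int_X f\mu_0=\int_X\omega^n$ gives $\int_X e^{\varepsilon\varphi_\varepsilon}f\mu_0=\int_X f\mu_0$, which forces $M_\varepsilon:=\sup_X\varphi_\varepsilon\ge 0$.

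The main obstacle is a uniform $L^{\infty}$-bound on $(\varphi_\varepsilon)$. Set $\tilde\varphi_\varepsilon:=\varphi_\varepsilon-M_\varepsilon\le 0$, so $\sup_X\tilde\varphi_\varepsilon=0$ and $(\omega+dd^c\tilde\varphi_\varepsilon)^n=e^{\varepsilon M_\varepsilon}e^{\varepsilon\tilde\varphi_\varepsilon}f\mu_0$. Applying Jensen's inequality for $\exp$ against the probability measure $f\mu_0/\int_X f\mu_0$ in the integrated form of the equation yields
\[
M_\varepsilon \;\le\; -\,\frac{\int_X\tilde\varphi_\varepsilon\,f\mu_0}{\int_X f\mu_0}.
\]
Since the family $\{\psi\in PSH(X,\omega):\sup_X\psi=0\}$ is compact in $L^{1}(X,\mu_0)$ and uniformly bounded in every $L^{q}(X,\mu_0)$ by the standard Skoda-type integrability for $\omega$-psh functions, H\"older's inequality with $f\in L^{p}$ delivers a bound $M_\varepsilon\le C$ independent of $\varepsilon$. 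The Monge--Amp\`ere density of $\tilde\varphi_\varepsilon$ is then dominated by $e^{C}f$ in $L^{p}$, and the uniform estimate in Theorem~\ref{thm:StabEst} gives $\|\tilde\varphi_\varepsilon\|_\infty\le C_0$, hence $\|\varphi_\varepsilon\|_\infty\le C_0+C$.

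To pass to the limit, extract an $L^{1}$-convergent subsequence $(\tilde\varphi_{\varepsilon_k})$ by compactness of normalised $\omega$-psh functions. The stability half of Theorem~\ref{thm:StabEst} applied to pairs $(\tilde\varphi_{\varepsilon_k},\tilde\varphi_{\varepsilon_\ell})$, whose densities are uniformly bounded in $L^{p}$, reads
\[
\|\tilde\varphi_{\varepsilon_k}-\tilde\varphi_{\varepsilon_\ell}\|_\infty \;\le\; C\,\|\tilde\varphi_{\varepsilon_k}-\tilde\varphi_{\varepsilon_\ell}\|_{L^{1}(X)}^{\gamma},
\]
so the subsequence is Cauchy in $L^{\infty}$ and converges uniformly to a continuous $\omega$-psh function $\tilde\varphi$. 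Continuity of the complex Monge--Amp\`ere operator under uniform convergence of bounded psh functions passes the limit in the equation: since $M_{\varepsilon_k}$ is bounded we have $\varepsilon_k M_{\varepsilon_k}\to 0$ and $\varepsilon_k\tilde\varphi_{\varepsilon_k}\to 0$ uniformly, giving $(\omega+dd^c\tilde\varphi)^n=f\mu_0$. Adding an appropriate constant produces the required solution $\varphi\in PSH(X,\omega)\cap L^{\infty}(X)$ normalised by $\int_X\varphi\,\mu=0$.

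For uniqueness, if $\varphi_1,\varphi_2\in PSH(X,\omega)\cap L^{\infty}(X)$ both satisfy the equation, they must differ by an additive constant. This is the classical Calabi--Yau uniqueness in the pluripotential framework, proved for $\omega$ K\"ahler by Ko{\l}odziej and extended to the semi-positive big setting considered here in \cite{EGZ09}; the core ingredient is the integration-by-parts identity
\[
\int_X(\varphi_1-\varphi_2)\,\bigl(MA(\varphi_1)-MA(\varphi_2)\bigr) \;=\; 0
\]
combined with the positivity of each piece of the resulting telescoping sum, which forces the mixed currents to vanish and hence $\varphi_1-\varphi_2$ to be constant. The normalisation $\int_X\varphi\,\mu=0$ then pins down the constant uniquely.
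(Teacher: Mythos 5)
Your proposal is correct and follows the same overall architecture as the paper: solve the perturbed equations $(\omega+dd^c\varphi_\e)^n=e^{\e\varphi_\e}\mu$ via Theorem~\ref{thm:THM1} (the paper leaves the scaling $\psi=\e\varphi$, $\omega\mapsto\e\omega$ implicit, which you usefully spell out), derive $\sup_X\varphi_\e\ge 0$ from the mass normalization, get a uniform upper bound from $\int_X\varphi_\e\,\mu\le 0$ together with the $L^1$--$\sup$ comparison for $\omega$-psh functions, and then a uniform $L^\infty$ bound from Theorem~\ref{thm:StabEst}. Where you genuinely diverge is the limit step: the paper takes an $L^1$-convergent subsequence, forms the decreasing envelopes $\tilde\varphi_j=(\sup_{k\ge j}\varphi_{\e_k})^*$, uses the pluripotential comparison principle to see these are subsolutions of the approximate equations, invokes monotone continuity of the Monge--Amp\`ere operator (Theorem~\ref{thm:CV}) to get $MA(\varphi)\ge\mu$, and concludes $MA(\varphi)=\mu$ by equality of total masses; it also obtains the normalization $\int_X\varphi\,\mu=0$ by integrating the approximate equations in the limit. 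You instead exploit the quantitative stability half of Theorem~\ref{thm:StabEst} to show the normalized family is uniformly Cauchy, pass to the limit by continuity of $MA$ under uniform convergence (a Bedford--Taylor fact not literally contained in Theorem~\ref{thm:CV} as stated, but standard), and normalize afterwards by adding a constant. Your route is the one the paper itself uses later for Theorems~\ref{thm:exp} and~\ref{thm:flat}; it buys uniform (hence continuous) convergence, while the paper's envelope argument needs only monotone continuity and no stability input at this stage.

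One caveat on uniqueness: as written, your displayed identity $\int_X(\varphi_1-\varphi_2)\bigl(MA(\varphi_1)-MA(\varphi_2)\bigr)=0$ is vacuous, since both measures equal $\mu$; the real content is that, after writing $MA(\varphi_1)-MA(\varphi_2)=dd^c(\varphi_1-\varphi_2)\wedge T$ with $T=\sum_j\omega_{\varphi_1}^j\wedge\omega_{\varphi_2}^{n-1-j}$ and integrating by parts, one must still deduce that $\varphi_1-\varphi_2$ is constant when $\omega$ is only semi-positive and big, which is a nontrivial theorem (B\l ocki in the K\"ahler case, \cite{EGZ09} in this setting). Since you cite \cite{EGZ09} for it --- exactly as the paper does, which also does not reprove uniqueness for $\e=0$ --- this is an imprecision in the sketch rather than a gap in the argument.
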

  
  Theorem 1.9 will be proved in subsection 1.3, while Theorem 1.10 will be proved in subsection 1.4.
  These theorems do not say anything about the continuity of the solution. It is however possible to prove continuity and even H\"older continuity of the solution when $\omega > 0$ is a K\"ahler form using pluripotential methods (see \cite{EGZ09}, \cite{Kol08}, \cite{DDHKGZ11}). 
  But these results do not apply in our degenerate situation. 
  
  Nevertheless, in the last section we will show how to combine pluripotential and viscosity techniques to prove continuity in this more general setting.
  
Altogether this will provide an alternative and independent approach to a weak version of  Calabi conjecture \cite{Yau78}:
we will only use upper envelope constructions (both in the viscosity and pluripotential sense), 
a global viscosity and pluripotential comparison principle and Kolodziej's pluripotential 
techniques providing uniform a priori estimates (\cite{Kol98}, \cite{EGZ09}).

This method applies to degenerate equations but yields solutions that are merely continuous (Yau's work yields
smooth solutions, assuming the cohomology class $\{\omega\}$ is K\"ahler and the volume form $\mu$ is
both positive and smooth).

The pluripotential approach applies equally well to a slightly more degenerate situation (see \cite{EGZ11}, \cite{DDHKGZ11}).
 \vskip 0.3 cm
\subsection{The Perron method of upper envelopes}

Before going into the proofs of the results stated in the last section, we will establish a more general result which shows that the Perron method of upper envelopes will provide us with a solution whenever we are able to find a subsolution. This quite general approach might be useful in other situations.

Here we will consider the following degenerate Monge-Amp\`ere equation
\begin{equation} \label{eq:DMAE}
MA_{\omega} (\f) = e^{\f} \mu, 
\end{equation}
where $\omega \geq 0$ is a closed $(1,1)$-form in $X$ with continuous psh local potentials, $MA_{\omega} (\f) := (\omega + dd^c \f)^n$ is the Monge-Amp\`ere measure of $\f$ defined in the weak sense of Bedford and Taylor and $\mu \geq 0$ is a degenerate volume form with $L^1$-density with respect to a fixed smooth volume form.
 
Our aim here is to show that one can
solve this equation in the weak sense of Bedford and Taylor in a rather elementary way, at least when $\omega > 0$ is a K\"ahler form and $\mu$ has a continuous density, by observing that the (unique) solution is the upper envelope of pluripotential subsolutions. 

\subsubsection{Uniqueness of the solution}
Here we will give an easy consequence of the Comparison Principle which will show that the upper envelope of subsolutions of the equation (\ref{eq:DMAE}) is the unique candidate to be a solution.
\begin{prop} \label{prop:UNIQ} Let $\f \in PSH (X,\omega) \cap L^{\infty} (X)$ be a solution to the Monge-Amp\`ere equation (\ref{eq:DMAE}). Then for any 
$\p \in PSH (X,\omega) \cap L^{\infty} (X)$ satisfying the inequality
$  MA_{\omega} (\p) \geq e^{\p} \mu$ in the weak sense of Borel measures on $X$, we have $\p \leq \f $ in $X$.
In particular, the solution of the complex Monge-Amp\`ere equation (\ref{eq:DMAE}) is unique (if it exists).
\end{prop}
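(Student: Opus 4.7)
The natural plan is to exploit the extra exponential factor $e^\varphi$ in the right-hand side: because of it, a strict comparison on the set where $\psi$ beats $\varphi$ can be forced by a small translation. The main tool will be the Comparison Principle of Theorem~\ref{thm:CP}, applied not to $\varphi$ and $\psi$ directly but to the shifted pair $\varphi + \varepsilon$ and $\psi$, and then the Domination Principle of Theorem~\ref{thm:DP} to go from an almost-everywhere statement to a pointwise one.

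Concretely, for $\varepsilon > 0$ I would set $U_\varepsilon := \{\psi > \varphi + \varepsilon\}$. Since $\varphi + \varepsilon$ and $\psi$ are both in $PSH(X,\omega) \cap L^\infty(X)$ and $MA_\omega(\varphi + \varepsilon) = MA_\omega(\varphi)$, the Comparison Principle gives
$$
\int_{U_\varepsilon} MA_\omega(\psi) \;\leq\; \int_{U_\varepsilon} MA_\omega(\varphi).
$$
Using the subsolution inequality $MA_\omega(\psi) \geq e^\psi \mu$ on the left and the equation $MA_\omega(\varphi) = e^\varphi \mu$ on the right, and bounding $e^\psi \geq e^{\varepsilon} e^\varphi$ pointwise on $U_\varepsilon$, I get
$$
e^{\varepsilon} \int_{U_\varepsilon} e^\varphi \, d\mu \;\leq\; \int_{U_\varepsilon} e^\psi \, d\mu \;\leq\; \int_{U_\varepsilon} e^\varphi \, d\mu,
$$
whence $(e^\varepsilon - 1) \int_{U_\varepsilon} e^\varphi \, d\mu \leq 0$, forcing $\int_{U_\varepsilon} e^\varphi \, d\mu = 0$. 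Since $\varphi$ is bounded, $e^\varphi$ is bounded away from zero, so $\mu(U_\varepsilon) = 0$. Letting $\varepsilon \downarrow 0$ along a countable sequence then yields $\mu(\{\psi > \varphi\}) = 0$, and hence also $MA_\omega(\varphi)(\{\psi > \varphi\}) = \int_{\{\psi > \varphi\}} e^\varphi \, d\mu = 0$.

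At this point the inequality $\psi \leq \varphi$ holds only $MA_\omega(\varphi)$-almost everywhere on $X$, which is exactly the hypothesis of the Domination Principle (Theorem~\ref{thm:DP}). Applying it upgrades the conclusion to $\psi \leq \varphi$ everywhere on $X$. Uniqueness of a solution is then immediate: two solutions $\varphi_1, \varphi_2$ are each also subsolutions, so the just-proved comparison applied in both directions yields $\varphi_1 = \varphi_2$.

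The step I expect to be the main conceptual obstacle is the very first one, namely realising that the direct choice $U_0 = \{\psi > \varphi\}$ only produces $\int_{U_0} e^\psi\, d\mu \leq \int_{U_0} e^\varphi\, d\mu$, which is not a contradiction when $\mu(U_0) = 0$. The shift by $\varepsilon$ and the passage through the Domination Principle are precisely what is needed to handle the possibility that $\mu$ vanishes on the bad set; everything else is a mechanical application of Theorems~\ref{thm:CP} and~\ref{thm:DP}.
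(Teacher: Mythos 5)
Your proof is correct and follows essentially the paper's own argument: the comparison principle applied on the set where $\psi$ exceeds $\varphi$, the monotonicity of the exponential to conclude that this set is $\mu$-negligible (hence $MA_{\omega}(\varphi)$-negligible, since $MA_{\omega}(\varphi)=e^{\varphi}\mu$), and the Domination Principle to upgrade the almost-everywhere inequality to $\psi\leq\varphi$ everywhere. The only difference is your $\varepsilon$-shift, which is in fact unnecessary: working directly with $\{\varphi<\psi\}$, the strict pointwise inequality $e^{\psi}>e^{\varphi}$ there combined with $\int_{\{\varphi<\psi\}}e^{\psi}\,d\mu\leq\int_{\{\varphi<\psi\}}e^{\varphi}\,d\mu$ already forces $\mu(\{\varphi<\psi\})=0$ — no contradiction argument is needed, contrary to your closing remark — and the rest proceeds exactly as you wrote, through the Domination Principle.
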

\begin{proof}
We are going to show that the set $\{\f < \p\}$ has zero measure with respect to $\mu$. Indeed by the  comparison principle, it follows that
 \begin{eqnarray*}
 \int_{\{\f < \p\}} e^{\p} \mu  & \leq &
 \int_{\{\f < \p\}} (\omega+dd^c \p)^n \\
 &\leq & \int_{\{\f < \p\}} (\omega+dd^c \f)^n \\
 &= &\int_{\{\f < \p\}} e^{\f} \mu  \leq \int_{\{\f < \p\}} e^{\p} \mu.
 \end{eqnarray*}
 Therefore we conclude that $\int_{\{\f < \p\}} ( e^{\f} - e^{\p}) \mu  = 0$ and since $e^{\f} - e^{\p} \leq 0$ on the set $\{\f < \p\}$, it follows that 
 ${\bf 1}_{\{\f < \p\}} \cdot (e^{\f} - e^{\p}) = 0$ $\mu$-almost everywhere on $X $.
 If we know that $\mu$ has a positive density with respect to a fixed smooth non degenerate volume form on $X$, we will conclude that $ \p \leq \f$ almost everywhere in $X$ and then everywhere in $X$ by submean-value inequality in any local chart. 
 In the general case, since  $ e^{ \f} \mu =  MA (\f)$, it follows that the set $\{\f < \p\}$ has measure $0$ with respect to the Monge-Amp\`ere measure $MA (\f)$ i.e. $\p \leq \f$ almost everywhere with respect to $MA (\f)$. It follows from the Domination Principle Theorem~\ref{thm:DP} that $ \p \leq \f$ on $X$.
 This shows that the equation $(MA)_{1,\mu}$ has at most one  solution. 
\end{proof}
 \subsubsection{ Existence of a solution:}
 The previous subsection suggests a natural candidate to be the solution to the Monge-Amp\`ere equation (\ref{eq:DMAE}): the upper envelope of pluripotential subsolutions, in the spirit of the classical Perron's method used in solving the classical Dirichlet problem.
 
 Therefore it is natural to consider the class ${\mathcal F}= \mathcal F (X,\omega,\mu)$ of all pluripotential subsolutions of the equation $(MA)_{1,\mu})$ defined by 
$$
{\mathcal F}:=\left\{ \f \in PSH(X,\omega) \cap L^{\infty}(X) \, / \, MA (\p) \geq e^{\p} \mu
\text{ in } X \right\}.
$$
Now the problem of the existence of a solution remains to prove that ${\mathcal F} \neq \emptyset$ and its upper envelope $\f := \sup {\mathcal F}$ is again a subsolution.
 
 \begin{lem} \label{lem:UB} The class $\mathcal F (X,\omega,\mu)$ is uniformly bounded from above on $X$ and stable under the regularized supremum.
 Moreover it is  compact in $PSH (X,\omega)$ (for the $L^1(X)$-topology).
 \end{lem}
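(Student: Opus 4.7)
The plan is to handle the three claims in turn; the uniform upper bound comes first since it underpins the compactness statement and the limiting arguments used for regularized suprema.

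\textbf{Uniform upper bound.} Integrating the defining inequality over $X$ gives, by Stokes' formula,
$$ \int_X e^{\f}\, d\mu \leq \int_X (\omega + dd^c \f)^n = \int_X \omega^n =: V. $$
Suppose for contradiction a sequence $(\f_j) \sub \mc F(X,\omega,\mu)$ satisfies $M_j := \sup_X \f_j \to +\infty$. The normalized functions $\p_j := \f_j - M_j \leq 0$ form a family of $\omega$-psh functions with bounded sup, hence are $L^1$-relatively compact in $PSH(X,\omega)$; after extraction $\p_j \to \p$ in $L^1(X,\mu_0)$ and $\mu_0$-almost everywhere, with $\p \in PSH(X,\omega)$. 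Since $\mu$ has $L^1$ density with respect to $\mu_0$, it does not charge the pluripolar set $\{\p = -\infty\}$, so $\f_j = \p_j + M_j \to +\infty$ $\mu$-a.e. Fatou's lemma then yields $\liminf_j \int_X e^{\f_j}\, d\mu = +\infty$, contradicting the bound by $V$.

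\textbf{Stability under regularized supremum.} Given a subfamily $(\f_\al) \sub \mc F$ and $\f := (\sup_\al \f_\al)^*$, Choquet's lemma supplies a countable subfamily $(\f_{\al_k})$ with the same regularized upper envelope. Setting $\tilde \f_n := \max(\f_{\al_1},\ldots,\f_{\al_n})$, one first checks that $\mc F$ is stable under finite maxima: on the open set $\{\f_1 > \f_2\}$ the Maximum Principle (Theorem \ref{thm:CP}) gives $MA_\omega(\max(\f_1,\f_2)) = MA_\omega(\f_1) \geq e^{\f_1} \mu = e^{\max(\f_1,\f_2)} \mu$, and symmetrically on $\{\f_2 > \f_1\}$; Bedford-Taylor plurifine locality of the \MA operator then extends the inequality across the coincidence set. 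Thus $\tilde \f_n \in \mc F$ is increasing with $(\sup_n \tilde \f_n)^* = \f$. Theorem \ref{thm:CV} gives the weak convergence $MA_\omega(\tilde \f_n) \to MA_\omega(\f)$, while the classical monotone (or bounded) convergence theorem gives $e^{\tilde \f_n} \mu \to e^{\f} \mu$ weakly as measures; passing to the limit in $MA_\omega(\tilde \f_n) \geq e^{\tilde \f_n}\mu$ yields $\f \in \mc F$, the $L^\infty$ bound on $\f$ being provided by the uniform upper bound together with $\tilde\f_n \geq \f_{\al_1}$.

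\textbf{Compactness and main obstacle.} Compactness of $\mc F$ in $PSH(X,\omega)$ for the $L^1$-topology is then immediate from the uniform upper bound, together with the classical relative $L^1$-compactness of $\omega$-psh functions with uniformly bounded suprema. The delicate step is the stability under finite maxima on the coincidence set $\{\f_1 = \f_2\}$, where one cannot directly identify $MA_\omega(\max)$ with either $MA_\omega(\f_i)$; this is resolved using Bedford-Taylor plurifine locality to transport the pointwise inequality from the open sets where it is obvious. A secondary point, already used above, is to record that $\mu$ puts no mass on pluripolar sets, which follows from its $L^1$ density assumption.
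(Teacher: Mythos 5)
Your uniform upper bound is correct, and it takes a genuinely different route from the paper: instead of combining the convexity of the exponential with the uniform estimate $\sup_X \p \leq \int_X \p \, d\mu + C_\mu$ for $\omega$-psh functions (the paper's appeal to \cite{GZ05}), you integrate the subsolution inequality to get $\int_X e^{\f} d\mu \leq \int_X \omega^n$ and run a normalization--compactness--Fatou contradiction; this is legitimate since $\mu \ll \mu_0$ charges no pluripolar set and $\mu(X)>0$. Your stability step is essentially the paper's argument (maximum principle on the strict-inequality sets, Choquet, increasing limits via Theorem \ref{thm:CV}); the contact set $\{\f_1=\f_2\}$ is more usually absorbed via the inequality $MA(\max(\f_1,\f_2)) \geq \mathbf{1}_{\{\f_1 \geq \f_2\}} MA(\f_1) + \mathbf{1}_{\{\f_1 < \f_2\}} MA(\f_2)$, proved by perturbing to $\max(\f_1, \f_2 - \e)$ and letting $\e \to 0$, since $\{\f_1=\f_2\}$ is not plurifine open and locality cannot be applied to it directly; but this is at the same level of detail as the paper, which simply quotes that inequality.

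The genuine gap is in the compactness claim, which is not "immediate". First, the classical relative $L^1$-compactness of families of $\omega$-psh functions requires their suprema to be bounded \emph{below} as well as above, and your upper bound provides no lower bound: $\mathcal F$ is invariant under $\p \mapsto \p - c$, so before invoking compactness one must normalize, e.g. fix $\p_0 \in \mathcal F$ and replace $\mathcal F$ by $\{\p \in \mathcal F : \p \geq \p_0\}$ (harmless, by your stability under maxima), or otherwise address the issue. Second, and more seriously, relative compactness is not compactness: you never show that $\mathcal F$ is \emph{closed} under $L^1$ limits, i.e. that if $\p_j \in \mathcal F$ converges in $L^1$ and almost everywhere to $\p \in PSH(X,\omega)$ then $MA(\p) \geq e^{\p}\mu$. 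This is the substantive half of the statement, and the paper proves it by setting $\overline{\p}_j := (\sup_{k \geq j} \p_k)^*$, which belongs to $\mathcal F$ by the stability you established, observing that $(\overline{\p}_j)$ is a non-increasing sequence of uniformly bounded $\omega$-psh functions converging to $\p$, and passing to the limit in $MA(\overline{\p}_j) \geq e^{\overline{\p}_j}\mu$ using the continuity of the Monge--Amp\`ere operator along decreasing sequences (Theorem \ref{thm:CV} again). Without this step, or an equivalent closedness argument, the "compact in $PSH(X,\omega)$" part of the lemma remains unproven in your proposal.
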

 \begin{proof} We can assume that ${\mathcal F} \neq \emptyset$. 
 We show first that ${\mathcal F}$ is uniformly bounded from above. We can assume without loss of generality that $\omega$ is normalized so that  $\int_X \omega^n = 1$ and then $\int_X \mu=1$ since $\mu$ satisfies the condition (\ref{eq:Norm}). Fix $\p \in {\mathcal F}$. It follows from the convexity
 of the exponential that
 $$
 \exp \left( \int_X \p \mu \right) \leq \int_X e^{\p} \mu = \int_X MA(\p) = \int_X \omega^n = 1.
 $$
 We infer
 $$
 \sup_X \p \leq \int_X \p \mu  +  C_\mu  \leq  C_\mu ,
 $$
 where $C_\mu $ is  a uniform constant that only depends on the fact that all $\omega$-psh functions are integrable
 with respect to $\mu $ (see \cite{GZ05}). This shows that ${\mathcal F}$ is uniformly bounded from
 above by a constant that only depends on $\mu $ and since it is not empty, it is also uniformly bounded from below.
 
 Stability under finite suprema is an easy consequence of the Maximum Principle Thorem~\ref{thm:CP}. If $\p_1, \p_2 \in PSH (X,\omega)    \cap L^{\infty} (X)$ we have
 $$
 MA(\sup\{\p_1,\p_2\}) \geq {\bf 1}_{\{\p_1 \geq \p_2\}}  MA(\p_1) + {\bf 1}_{\{\p_1 < \p_2\}} MA(\p_2).
 $$
 For an infinite family $\mathcal S$ of subsolutions,  the same reasoning can be applied since the regularized supremum of $\mathcal S$ can be approximated almost everywhere by a non  decreasing sequence of finite suprema of subsolutions, and the conclusion follows from the continuity of the complex Monge-Amp\` ere operator for increasing sequences of uniformly bounded $\omega-$psh functions (Theorem~ \ref{thm:CV}).
  
 The compactness can be proved as follows. By the previous considerations, the family ${\mathcal F}$ is relatively compact in $L^1 (X)$ (see \cite{GZ05}). It is then enough to show that it is closed.  Let $(\p_j)_{j \in \N}$ be a sequence of ${\mathcal F}$ converging to $\p \in PSH (X,\omega)$. We know that $\p$ is bounded. We can assume that $\p_j$ converges almost everywhere to $\p$ in $X$. Set $\overline{\p}_j := (\sup_{k \geq j} \p_k)^*$. Then $(\p_k)_{k \in \N}$ is a non increasing sequence of $ PSH (X,\omega) \cap L^{\infty}$ wich converges to $\p$.
 From the previous facts it follows that $MA (\overline{\p}_j) \geq e^{\overline{\p}_j} \mu$ for any $j$ and again by  the continuity of the complex Monge-Amp\` ere operator for non increasing sequences, we conclude that $MA (\p) \geq e^{\p} \mu$ weakly in $X$.
 \end{proof}

 Now it remains to prove that the upper envelope of ${\mathcal F}$ is a solution. This is the content of the following result.
 \begin{theo} \label{thm:MSUB} Let $\mu \geq 0$ be a Borel volume form on $X$ normalized by the condition  (\ref{eq:Norm}). Assume that  the class ${\mathcal F} = \mathcal F (X,\omega,\mu) \neq \emptyset$ is not empty i.e. the complex Monge-Amp\`ere equation $(MA)_{1,\mu }$ admits a subsolution. Then  its upper envelope given by
 $$
 \f:=\sup\{ \p \, / \,  \p \in {\mathcal F}\},
 $$
 is the unique solution to the complex Monge-Amp\`ere equation $(MA)_{1,\mu }$ in the weak sense in $X$ (pluripotential solution). \\ 
 \end{theo}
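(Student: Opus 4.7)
The plan is to establish the two inequalities $MA_\omega(\f) \geq e^\f \mu$ and $MA_\omega(\f) \leq e^\f \mu$ separately, so that $\f := \sup \mc F$ becomes the unique pluripotential solution by Proposition~\ref{prop:UNIQ}.

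For the subsolution half, I would invoke Lemma~\ref{lem:UB} directly. Stability of $\mc F$ under the regularized supremum places $(\sup\mc F)^*$ in $\mc F$, so in particular $(\sup\mc F)^* \leq \sup\mc F$ pointwise; combined with the opposite (trivial) inequality, this shows $\f = \sup \mc F = (\sup\mc F)^*$ is USC, bounded, $\omega$-psh and an element of $\mc F$, so $MA_\omega(\f) \geq e^\f \mu$ weakly on $X$.

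The substantive part is the reverse inequality, which I would handle by a Perron-style contradiction argument: if $MA_\omega(\f) \not\leq e^\f\mu$ on some Euclidean ball $B \Sub X$, one should be able to produce an element of $\mc F$ strictly exceeding $\f$ somewhere in $B$, contradicting its maximality. In a coordinate chart where $\omega = dd^c\rho$ for a smooth psh potential $\rho$, the natural starting move is to apply the balayage of Corollary~\ref{thm:Balayage} to $\f + \rho$ with target measure $\nu = e^\f\mu$ (legitimate because $\f \in \mc F$ ensures $MA_\omega(\f) \geq \nu$ on $B$). This produces a bounded $\omega$-psh $\tilde\f$ with $\tilde\f = \f$ off $\overline B$, $\tilde\f \geq \f$ on $X$, and $MA_\omega(\tilde\f) = e^\f\mu$ on $B$.

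The main obstacle, and the subtle point of the proof, is that this raw $\tilde\f$ is not automatically in $\mc F$: on $B$ one has $MA_\omega(\tilde\f) = e^\f\mu$, whereas membership in $\mc F$ requires $\geq e^{\tilde\f}\mu$, and since $\tilde\f \geq \f$ this identity goes the wrong way. To close the loop I would replace the single balayage by the full nonlinear local Dirichlet problem $MA_\omega(h) = e^h\mu$ on $B$ with $h|_{\partial B} = \f|_{\partial B}$, solved by iterating Corollary~\ref{thm:Balayage} and passing to the limit with the Bedford--Taylor monotone convergence Theorem~\ref{thm:CV} together with dominated convergence for the exponential densities. The pluripotential comparison principle Corollary~\ref{coro:LCP}, applied between the subsolution $\f$ and the solution $h$, yields $h \geq \f$ on $B$; gluing $h$ on $B$ with $\f$ outside $\overline B$ then produces a genuine element of $\mc F$ that strictly majorizes $\f$ unless the equation $MA_\omega(\f) = e^\f\mu$ already holds on $B$. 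Running the argument on every ball $B \Sub X$ yields the global equality and concludes the proof.
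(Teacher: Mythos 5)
Your decomposition (subsolution half via Lemma~\ref{lem:UB}, uniqueness via Proposition~\ref{prop:UNIQ}) is fine, and you have correctly isolated the delicate point: a single balayage with target $e^{\f}\mu$ yields a function whose Monge--Amp\`ere measure carries the \emph{old} potential in the exponent, which is the wrong inequality for membership in ${\mathcal F}$. The gap lies in your proposed repair. The existence of a solution $h$ of the local nonlinear problem $MA_\omega(h)=e^{h}\mu$ on $B$ with boundary data $\f$ is precisely the hard step, and ``iterating Corollary~\ref{thm:Balayage}'' does not produce it: that corollary requires the given function to satisfy $(dd^c u)^n\geq \nu$ on $B$ for the target measure $\nu$, so you can run it once (target $e^{\f}\mu$, legitimate since $MA_\omega(\f)\geq e^{\f}\mu$), but at the next step the new target $e^{h_1}\mu$ with $h_1\geq \f$ exceeds the available Monge--Amp\`ere mass $MA_\omega(h_1)=e^{\f}\mu$ on $B$, so the hypothesis of the balayage fails --- exactly the wrong-direction phenomenon you flagged. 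No supersolution barrier or fixed-point argument is supplied to make a monotone scheme converge, and note that here $\mu$ is merely a Borel volume form and $\f$ need not be continuous on $\partial B$, so neither Theorem~\ref{thm:DirPb} nor Corollary~\ref{thm:Balayage} covers the nonlinear local Dirichlet problem: you would in effect be assuming a local version of the very theorem to be proved.

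The paper's proof never solves that nonlinear local problem. By Choquet's lemma and stability under maxima (Lemma~\ref{lem:UB}) one chooses a nondecreasing sequence $\psi_j\in{\mathcal F}$ with $(\sup_j\psi_j)^*=\f$, and applies Corollary~\ref{thm:Balayage} to each \emph{fixed} $\psi_j$ with the frozen right-hand side $e^{\psi_j}\mu$ (legitimate because $MA_\omega(\psi_j)\geq e^{\psi_j}\mu$), obtaining $\tilde\psi_j\geq\psi_j$ with $MA_\omega(\tilde\psi_j)=e^{\psi_j}\mu$ on $B$ and $\tilde\psi_j=\psi_j$ off $B$. The comparison principle Corollary~\ref{coro:LCP} then gives $\tilde\psi_j\leq\f$ on $B$, since $MA_\omega(\f)\geq e^{\f}\mu\geq e^{\psi_j}\mu=MA_\omega(\tilde\psi_j)$ there and $\f\geq\psi_j$ near $\partial B$; thus $\tilde\psi_j$ is squeezed between $\psi_j$ and $\f$ and converges a.e. to $\f$, and Bedford--Taylor continuity (Theorem~\ref{thm:CV}) allows passage to the limit in $MA_\omega(\tilde\psi_j)=e^{\psi_j}\mu$, giving $MA_\omega(\f)=e^{\f}\mu$ on $B$, hence on $X$. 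The essential idea you are missing is this freezing of the exponent at the approximants $\psi_j$, combined with the fact that the approximants and their balayage modifications converge to the same envelope; to salvage your route you would need an independent construction of $h$ (a genuine fixed-point or monotone-from-above scheme, or approximation of the data), none of which follows from the results quoted.
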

\begin{proof}
 Indeed, since the class the class ${\mathcal F}$ is not empty and is compact, it follows that its upper envelope is $\omega-$psh in $X$  (see \cite{Hor94}, Proposition 3.4.4), and then it is a subsolution. 
 Moreover by Choquet's lemma, we can find a sequence $\p_j \in {\mathcal F}$ of bounded $\omega$-psh (pluripotential) subsolutions
 such that 
 $$
 \f = (\sup _{j \in \N } \p_j)^*.
 $$

 Observe that by Lemma~\ref{lem:UB}, the family of bounded pluripotential subsolutions is stable under taking maximum so that we can  assume that the $\p_j$'s  form a non decreasing sequence of subsolutions. 
 To see that $\f$ is a subsolution, we use a local balayage procedure to modify each  $\p_j$ on a given "small ball" $B \subset X$ by constructing a new  bounded $\omega-$psh functions $\tilde \psi_j$ on $X$ so that they satisfy  
 the local Monge-Amp\`ere equation $(\omega + dd^c \tilde \p_j)^n = e^{\p_j} \mu$ on $\B$ and $\tilde \psi_j \geq \psi$ on $X$ and $\tilde \psi_j = \psi_j$ on $X \setminus \B$: this is done using Theorem~\ref{thm:Balayage}. By the comparison principle Corollary~\ref{coro:LCP}, it follows that $(\tilde \psi_j)$ is an non increasing sequence of bounded $\omega-$psh functions which increases almost everywhere in $X$ to the function $\f$.
 Since the Monge-Amp\`ere operator is continuous under increasing sequences by Theorem~\ref{thm:CV},
 it follows  that $\f$ is a pluripotential solution of $(MA)_{1,\mu}$ in $B$, hence in all of $X$, as $B$ was arbitrary.
\end{proof}

 \begin{coro} \label{cor:Dag} Assume that  $\mu$ is a Borel volume form on $X$ normalized by the condition  (\ref{eq:Norm}) and satisfying the following condition: $\exists u \in PSH (X,\omega) \cap L^{\infty} (X) , \exists A >  0$ such that
  $$
  \mu \leq A (\omega + dd^c u)^n, \leqno(\dag)
  $$
 in the weak sense of measures in $X$.
  
 Then the class ${\mathcal F} (X,\omega,\mu)$ is not empty, uniformly upper bounded and its upper envelope $\f := \sup {\mathcal F} (X,\omega,\mu)$ is the unique bounded  pluripotential solution to $(MA)_{1,\mu}$ i.e.
 $$
 (\omega + dd^c \f)^n =  e^\f \mu.
 $$
 \end{coro}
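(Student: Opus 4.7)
My plan is to reduce the corollary directly to Theorem~\ref{thm:MSUB} (the previous theorem on upper envelopes) by exhibiting a single element of $\mathcal F(X,\omega,\mu)$, since once the class is nonempty the theorem already identifies $\sup \mathcal F$ as the unique bounded pluripotential solution.

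To produce a subsolution I will translate the function $u$ supplied by hypothesis $(\dag)$. Set $\p := u - C$ for a constant $C$ to be chosen. Since subtracting a constant does not change the Monge-Amp\`ere measure, $MA_\omega(\p) = MA_\omega(u) = (\omega+dd^c u)^n$. The hypothesis $(\dag)$ gives
\[
MA_\omega(\p) = (\omega+dd^c u)^n \geq A^{-1}\mu
\]
in the weak sense. On the other hand, since $u$ is bounded, $e^{\p} = e^{u-C} \leq e^{\sup_X u - C}$ pointwise. Choosing $C$ so large that $e^{\sup_X u - C} \leq A^{-1}$, i.e. $C \geq \sup_X u + \log A$, yields $e^{\p}\mu \leq A^{-1}\mu \leq MA_\omega(\p)$ in the weak sense, so $\p \in \mathcal F(X,\omega,\mu)$.

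Having established $\mathcal F(X,\omega,\mu)\neq\emptyset$, the uniform upper bound for $\mathcal F$ follows from Lemma~\ref{lem:UB}, and Theorem~\ref{thm:MSUB} identifies $\f := \sup \mathcal F(X,\omega,\mu)$ as a pluripotential solution of $(\omega+dd^c\f)^n = e^\f \mu$. Uniqueness among bounded $\omega$-psh functions is exactly Proposition~\ref{prop:UNIQ}. I do not expect any real obstacle here: the only substantive step is the choice of the translation constant $C$, which only uses boundedness of $u$ and the constant $A$ from $(\dag)$; everything else is a direct citation of results already proved in this subsection.
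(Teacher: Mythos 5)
Your proof is correct and follows essentially the same route as the paper: the paper also sets $\psi_0 := u - C$ with $C$ chosen so that $e^{\sup_X u - C} A \leq 1$ (the same condition as your $C \geq \sup_X u + \log A$), concludes $\psi_0 \in \mathcal F(X,\omega,\mu)$ from $(\dag)$, and then invokes Theorem~\ref{thm:MSUB}. Your explicit citations of Lemma~\ref{lem:UB} and Proposition~\ref{prop:UNIQ} simply spell out what the paper leaves implicit in "apply the previous Theorem."
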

  \begin{proof} Set $M := \sup_X u$ and choose $C > 1$ large constant so that $e^{M - C} A \leq 1$. Then by the condition $(\dag)$, the function   
  $\psi_0 := u - C \in {\mathcal F} (X,\omega,\mu)$ is a pluripotential subsolution to $(MA)_{1,\mu }$. Therefore we can apply the previous Theorem.
  \end{proof}

  \subsubsection{ Proof of Theorem~\ref{thm:THM1} }
  
 We want to apply Corollary~\ref{cor:Dag}.  The fact that the family $\mathcal F (X,\omega,\mu)$ is uniformly upper  
 bounded follows from Lemma~\ref{lem:UB}.
 To prove that it is not empty requires several steps.
 
 {\bf 1.} Assume that $\omega > 0$ is K\"ahler and $\mu = f \mu_0$ has a bounded density i.e. $f \in L^{\infty} (X)$.
 Then for a large constant  $A > 0$ we clearly have $\mu \leq A \omega^n$ and then the condition $(\dag)$ is satisfied. Therefore the conclusion of the Theorem follows from Corollary~\ref{cor:Dag}. 
 
 {\bf 2.} Assume that $\omega  > 0$ and $\mu = f \mu_0$ has a density $f \in L^{p} (X)$.
  Then we approximate $\mu$ by volume forms with bounded densities $\mu_j := \inf\{f,j\} \mu_0$ for $j \in \N$ and apply the previous case to solve the equations
 \begin{equation} \label{eq:MAj}
 (\omega + dd ^c \f_j)^n =  e^{\f_j} \mu_j,
\end{equation}
with $\f_j \in PSH (X,\omega) \cap L^{\infty} (X)$.

Let us prove that $(\f_j)$ is bounded in $L^1 (X)$. By \cite{GZ05}, it is enough to check that the sequence $(\sup_X \f_j)$ is bounded.
By Lemma~\ref{lem:UB}, this sequence is upper bounded. To see that it is lower bounded, observe that
$$
e^{\sup_X \f_{j}} \geq \frac{\int_X \omega^n}{\mu (X)}=\int_X \omega^n
$$
hence the sequence $(\sup_X \f_j)$ is  bounded from below. 

We now assert that $(\f_{j})$ is decreasing as $j$ increases to $+ \infty$. Indeed assume
that $1<j \leq k$ and fix $\delta>0$. It follows from the (pluripotential) comparison principle that
$$
\int_{\{\f_k \geq \f_j+\delta\}} (\omega +dd^c \f_k)^n 
\leq \int_{\{\f_k \geq \f_j+\delta\}} (\omega + dd^c \f_{j})^n .
$$
Then using the equations (\ref{eq:MAj}) and the fact that $\mu_k \geq \mu _j$, we infer
$$
{\bf 1}_{\{\f_k \geq \f_j+\delta\}} (\omega  +dd^c \f_k)^n 
\geq e^{\delta}  {\bf 1}_{\{\f_k \geq \f_j+\delta\}} (\omega+dd^c \f_{j})^n
$$
in the sense of Borel measures on $X$.
Therefore it follows that the set $\{\f_k \geq \f_j+\delta\}$ has zero measure with respect to the Monge-Amp\`ere measure $(\omega+dd^c \f_{j})^n$ i.e. the inequality  $ \f_k - \delta \leq \f_j$ holds $(\omega+dd^c \f_{j})^n$-almost everywhere in $X$. From the domination principle it follows that
$  \f_k - \delta \leq \f_j$ everywhere in $X$. As $\delta>0$ was arbitrary, we infer $\f_k   \leq \f_{j}$ in $X$.

We let $\f=\lim_{j \rightarrow + \infty} \f_{j}$ denote the decreasing limit of the functions $\f_j$.
By construction this is an $\omega$-psh function. It follows from Theorem~\ref{thm:StabEst} that $\f$ is a bounded $\omega$-psh function in $X$. Passing to the limit in  (\ref{eq:MAj}) as $j \to + \infty$, we conclude using Theorem~\ref{thm:CV} $\f$ is a (pluripotential) solution to the Monge-Amp\`ere equation $(\omega+dd^c \f)^n = e^\f \, \mu.
$
This shows that $(\dag)$ is satisfied hence we can use Corollary \ref{cor:Dag} to conclude. 

{\bf 3.} Assume that $\omega \geq 0$ and $\mu = f \mu_0$ with $f \in L^p (X)$. Fix a K\"ahler form $\beta$. 
By the above there exists, for each $0<\e \leq 1$, a unique continuous $(\omega+\e \beta)$-psh function
$u_{\e}$ such that
$$
(\omega+\e \beta+dd^c u_{\e})^n = e^{u_{\e}} \mu.
$$
As in the previous case we see  that $\sup_X u_{\e}$ is bounded, as $0<\e \leq 1$.

We now claim that $(u_{\e})$ is decreasing as $\e$ decreases to $0^+$. The proof goes in the same lines as in the previous case. Indeed assume
that $0<\e' \leq \e$ and fix $\delta>0$. Note that $u_{\e'},u_{\e}$ are both $(\omega+\e \beta)$-plurisubharmonic.
It follows from the (pluripotential) comparison principle Theorem~\ref{thm:CP} that
$$
\int_{\{u_{\e'} \geq u_{\e}+\delta\}} (\omega+\e \beta +dd^c u_{\e'})^n 
\leq \int_{\{u_{\e'} \geq u_{\e}+\delta\}} (\omega+\e \beta +dd^c u_{\e})^n .
$$
Since 
$$
(\omega+\e \beta +dd^c u_{\e'})^n \geq (\omega+\e' \beta +dd^c u_{\e'})^n \geq e^{\delta} (\omega+\e \beta +dd^c u_{\e})^n
$$
on the set $\{u_{\e'} \geq u_{\e}+\delta\}$, this shows that the latter set has zero measure with respect to the measure 
$(\omega+\e \beta +dd^c u_{\e})^n$ hence by the domination principle Theorem~\ref{thm:DP}, it follows that $u_{\e'} \leq u_{\e}+\delta$ everywhere in $X$. As $\delta>0$ was arbitrary, we infer $u_{\e'} \leq u_{\e}$ in $X$.

We let $u=\lim_{\e \searrow  0} u_{\e}$ denote the decreasing limit of the functions $u_{\e}$.
By construction this is an $\omega$-psh function in $X$ and by Theorem~\ref{thm:StabEst}, $u$ is bounded and a (pluripotential) solution of the Monge-Amp\`ere equation $ (\omega+dd^c u)^n = e^u \, \mu.$
This shows that the condition  $(\dag)$ is satisfied hence the conclusion follow from Corollary~\ref{cor:Dag}.
\vskip 0.2 cm
\subsubsection{ Proof of Theorem~\ref{thm:THM2}}. 
 We approximate the equation $(MA)_{0,\mu}$ by the perturbed equations $(MA)_{\e,\mu}$, where $\e \searrow 0$. By Theorem~\ref{thm:THM1}, for each $\e > 0$ we can find $\f_\e \in PSH (X,\omega) \cap L^{\infty} (X)$ such that
 \begin{equation} \label{eq:appr}
 (\omega + dd^c \f_\e)^n = e^{\e \f_\e} \mu,
 \end{equation}
 in the pluripotential sense in $X$.
 By convexity of the exponential function, we conclude that $\int_X \f_\e \mu \leq 0$. Therefore by \cite{GZ05}, it follows that there exists  a constant $M > 0$ independent of $\e$ such $\sup_X \f_\e \leq M$.
 On the other hand from (\ref{eq:appr}), if follows that $\sup_X \f_e \geq 0$. Therefore $(\f_\e)$ in bounded in $L^1 (X)$. Then there exists a subsequence $(\f_{\e_j})$, with $\e_j \searrow 0$, which converges in $L^1 (X)$ to a $\f \in PSH (X,\omega)$ and such that $\f_{\e_j} \to \f$ almost everywhere in $X$. We know that $\f = (\limsup_{j \to + \infty} \f_j)^*$. By Theorem~\ref{thm:StabEst}, if follows that $\f_{\e_j}$ is a bounded sequence in $L^{\infty} (X)$ and then $\f \in PSH (X,\omega) \cap L^{\infty} (X)$. Let us define $\tilde \f_j := (\sup_{k \geq j} \f_{\e_k})^*$. 
 Then $(\tilde \f_j)$ is a non increasing sequence of bounded $\omega$-psh functions which converges to $\f$ in $X$.
 Using the comparison principle as in above we see that for any $j \in \N$, we have
 $$
 MA (\tilde \f_j) \geq \inf_{k \geq j} e^{\e_k \f_{\e_k}} \mu.
 $$
 Since $\e_j \to 0$ and $\f_{\e_j}$  is uniformly bounded, it follows that the right hand side converges weakly  to $\mu$ in $X$, while the left hand side converges weakly to $MA (\f)$  by Theorem~\ref{thm:CV}. Hence  $MA (\f) \geq \mu$ weakly in $X$, which implies $MA (\f) = \mu$, since the two volume forms have the same volume in $X$.
 
 Observe that by integrating the equation (\ref{eq:appr}), we get
 $$
\int_X \f d \mu = \lim_{\e_j \to 0} \int_X \frac{e^{\e_j \f_{\e_j} } - 1}{\e_j} d \mu = 0.
$$
Since the above reasonning can be applyed to any subsequence, this proves by uniqueness that the family $(\f_\e)$ converges to $\f$ in $L^1 (X)$ as $\e \to 0$.  
 
 \begin{rem} The previous considerations suggests the following natural question:
 
 \noindent{\bf Question:} Let $\mu \geq 0$ be a Borel volume form on $X$ such that  $\int_X \mu = \int_X \omega^n$.
 Assume that $\mu$ satisfies the condition $(\dag)$ i.e. there exists $u \in PSH (X,\omega) \cap L^{\infty} (X)$ and a constant $A > 1$ such that 
 $$ 
 \mu \leq A (\omega + dd^c u)^n. \leqno (\dag)
 $$ 
 Does there exists $\f \in PSH (X) \cap L^{\infty} (X)$ such that 
$(\omega + dd^c \f)^n = \mu$ and $\int_X \f d \mu = 0$ ?
 
 Investigating this question, a natural idea is to follow the same strategy as in the proof Theorem~\ref{thm:THM2} above.
 
Indeed,  using the condition $(\dag)$ and Chern-Levine-Nirenberg inequality (se \cite{GZ05}) we can easily get a uniform control on the Monge-Amp\`ere energies of the approximating sequence i.e.  there exists a uniform constant $C > 0$ such that 
 $$
 \int_X \vert \f_\e \vert (\omega + dd^c \f_\e)^n \leq C,
 $$
 for any $\e > 0$. This implies that $\f \in \mathcal E^ 1 (X,\omega)$, the class of potentials of finite Monge-Amp\`ere energy (see \cite{GZ07}).
 Then as in the proof above we can conclude that that $(\omega + dd^c \f)^n = \mu$ weakly on $X$.
 Actually it is possible to show that  $\f \in \mathcal E^p (X,\omega)$ for any $p > 1$ (see \cite{GZ07}).
 Unfortunately we do not know if  $\f$ is bounded in $X$.
 
 Finally observe that by a result of  Ko\l odziej locally in each coordinate chart which is a strongly pseudoconvex domain, such a measure is the Monge-Amp\`ere of a bounded $\omega$-psh function (see \cite{Kol05}).
  It is raisonnable to conjecture that this is also the case globally.
  
  It is interesting to obeserve that when the measure $\mu$ is strongly dominated by the Monge-Amp\`ere capacity, then the solution is continous (see \cite{Kol98}, \cite{Kol03}, \cite{Kol05}, \cite{EGZ09}, \cite{BGZ08}).     
  \end{rem}

\section{The viscosity approach to degenerate non linear PDE's}

Before we introduce the definitions of viscosity sub(super)solutions, let us give as a motivation some examples of degenerate elliptic PDE's to which viscosity methods can be applied. In particular we will give examples where the notion of generalized or weak solution does not make sense.
\subsection{Classical solutions}
Let us start by general considerations.
 A fully non linear second order PDE can be written in the following general form 
 \begin{equation} \label{eq:DEEQ}
 F (x,u,Du,D^2u) = 0,
 \end{equation}
 where $F : \Omega \times \R \times \R^N \times \mathcal S_N \longrightarrow \R$ is a function  satisfying some conditions to be made precise in a   while,  $\Omega \sub \R^N$ is an open set and $\mathcal S_N$ is the space of real symmetric matrices of order $N$. 
 
 We will say that $u : \Om  \longrightarrow \R$ is a classical solution  of the equation (\ref{eq:DEEQ}) if $u$ is $C^2$-smooth in $\Om$ and satisfies the   differential identity  
 $$
 F (x,u (x),Du (x),D^2u (x)) = 0, \forall x \in \Om.
 $$ 
 It is quite natural to split the equation $F = 0$ into the two different inequalities $F \leq 0$ and $F \geq 0$. Then if $u$ satisfies the differential inequality $F (x,u (x),Du (x),D^2u (x)) \leq 0$ (resp. $F (x,u (x),Du (x),D^2u (x)) \geq 0$) pointwise in $\Om$, we will say that $u$ is a classical subsolution (resp. supersolution) of the equation (\ref{eq:DEEQ}).
 Therefore $u$ is a classical solution of the equation (\ref{eq:DEEQ}) iff $u$ is a classical subsolution and a classical supersolution of the equation (\ref{eq:DEEQ})

 In order to apply the viscosity approach to the equation (\ref{eq:DEEQ}), we need to impose the following FUNDAMENTAL condition on $F$. \\
 
 \noindent{\it  \underline{Degenerate ellipticity condition} :} for any $x \in \Omega, s \in \R, p \in R^N, Q_1, Q \in \mathcal S_N $, we have
$$
  Q \geq 0 \Longrightarrow F (x,s,p,Q_1 + Q) \leq F (x,s,p,Q_1). \leqno (DEC)
$$
Here $Q \geq 0$ means that the symmetric matrix $Q $ is semi-positive i.e. all its eigenvalues are non negative.

 The reason why this condition is important for viscosity methods to apply will appear soon. 
 \vskip 0.3 cm
\subsection{Examples} 
 Here we are mainly interested in non linear PDE's. However to enlighten the reader about the necessity of this condition to apply viscosity methods, we will recall some basic facts from the theory of linear elliptic second order  PDE's. 
 
 \vskip 0.2 cm
\noindent{\bf Example 1 : Hamilton-Jacobi equations } \\
These are first order equations of the type
\begin{equation} \label{eq:HJ}
 H (x,u,Du) = 0, \text{in} \, \,  \Om,
\end{equation}
associated to a continuous Hamiltonian function $H : \Om \times \R \times \R^N  \longrightarrow \R$, where $\Om \sub \R^n$ is an open set.

The simplest example to keep in mind is the Eikonal equation corresponding to the Hamiltonian function $H (u) := \vert D u  (x)\vert - 1$ defined on $]- 1,+1[ \times \R$.
This example will help us to understand the viscosity concepts. Let  us consider the Dirichlet problem for the Eikonal equation: 
 \begin{equation} \label{eq:Eik}
  \vert u'(x)\vert - 1 = 0, \ \ u(\pm1) = 0.
  \end{equation}
 It is quite clear that this equation has no classical solution. Indeed, a classical solution to this equation should be a $C^1$-smooth function satisfying the equation $\vert u'(x)\vert = 1$ pointwise in $]-1,1[$ and the
 the boundary condition $u(-1) = u (1) = 0$. Such a function do not exist, since by Rolle's theorem it should have at least a critical point in $]-1,1[$.  
 
 However the differential equation  (\ref{eq:Eik}) has plenty of generalized solutions i.e.  functions $u \in W^{1,\infty} (]-1,1[)$ satisfying the equation $\vert u'(x)\vert = 1$ almost everywhere in $]-1,1[$. Indeed the function $u_0 (x) := 1 - \vert x\vert$ is a generalized solution to the Dirichlet problem (\ref{eq:Eik}). It is easy to cook up piecewise affine functions that satisfies (\ref{eq:Eik}) on $[-1,+1]$, except a given finite set.  Observe that if $u \in W^{1,\infty} (]-1,+1[)$ is a generalized solution to the equation associated to the Hamiltonian function $H (u)$ then $- u$ is  a generalized solution of the equation associated to the Hamiltonian $\tilde H (u) := - H (-u)$. From the point of view of generalized solutions, the two corresponding equations are the same and $u$ and $- u$ are two different solutions to the same Dirichlet problem. However as we will see, from viscosity point of view they should be considered as different since they correspond to different Hamiltonian functions. Namely, we will see in Example 2.11 below that $u_0$ is the unique viscosity solution of the Dirichlet problem for the Hamiltonian function $H$ with boundary values $0$, while $- u_0$ is the unique viscosity solution of the Dirichlet problem for the Hamiltonian function $\tilde H$ with boundary values $0$.

  \vskip 0.2 cm
 \noindent{\bf Example 2 : Elliptic  second order equations: } \\
 An important class of elliptic  second order PDE's are the quasi-linear ones, given by 
\begin{equation} \label{eq:EQL}
 - \sum_{j,k} a^{j,k} (x) \frac{\bd^2 u}{\bd x_j \bd x_k} +  H (x,u,Du) = 0,  \, \, \text{in}  \, \, \Om,
 \end{equation}
 where  $a = (a^{j,k})$ is an $N \times N$ symmetric matrix valued function with continuous entries on $\Om$  satisfying the (uniform) ellipticity condition
  \begin{equation} \label{eq:Elli}
  \sum_{j,k} a^{j,k} (x) \xi_j \xi_k \geq \nu \vert \xi\vert^2, \, \, \forall x \in \Om, \, \, \forall \xi \in \R^N,
  \end{equation}
  where $\nu > 0$ is a uniform constant.
  
  These equations are of the type (\ref{eq:DEEQ}) associated to the following Hamiltonian:
  $$
  F (x,s,p,Q) := - \text{Tr} (A (x) Q) + H (x,s,p),  
  $$
  where  $(x,s,p,Q) \in \Omega \times \R \times \R^N \times \mathcal S_N$. 
  
  Then it is easy to see that the degenerate ellipticity condition for $F$ i.e. the monotonicity property of $F$ with respect to the partial order on symmetric matrices is a generalisation of the ellipticity condition (\ref{eq:Elli}) as the following exercise shows.
 
 \begin{exe}
 Let $A \in \mathcal S_N$ be a real symmetric matrix of order $N$ such that for any $Q \in \mathcal S_N$ with $Q \geq 0$, we have
 $Tr (A\cdot Q) \geq 0$. Then $A \geq 0$.
 \end{exe}
 \vskip 0.2 cm
 When $H (x,p) = <b (x), p> + c (x) s + d (x)$, where $b : \Omega \longrightarrow \R^N$ is continuous vector field and $c, d : \Omega  \longrightarrow \R$ are a continuous functions, the equation is a linear second order PDE.
 
 The simplest and fundamental example is the Laplace equation equation $ - \Delta = f$ or more generally the Helmholtz equation given by $ - \Delta u + c u = f$, where  $c \in \R$ is a constant. 
 
 Denote by  $\Delta_c = := - \Delta + c$ the Helmholtz operator. Then it is  well know that the equation  $\Delta_c u = f$ has a weak solution $u \in W_0^{1,2} (\Om)$ for any $f \in L^2 (\Om)$ iff $- c \notin \Lambda,$  where $\Lambda \sub \R^+$ is the spectrum of the operator $- \Delta$ (for the Dirichlet problem with zero boundary values) which is known to be a discrete sequence of positive real numbers $\la_k  
 \nearrow + \infty$ (this follows from Fredholm's aternative).
 
 In particular when $c \geq 0$, the equation $- \Delta u + c u = f$ has a weak solution $u \in W_0^{1,2} (\Om)$ when $f \in L^2 (\Om)$. Moreover the weak solution  is unique  
 since  the elliptic operator $\Delta_c$ satisfies the maximum principle precisely when $c \geq 0$.
 Recall also that by Schauder's theory for elliptic operators the solutions are smooth whenever $f$ is smooth (see \cite{GT83}).

 A typical example of non linear but quasi-linear second order elliptic equation is the following one
 \begin{equation} \label{eq:HJV}
   - \e \Delta u +  H (x,u,Du) = 0,
 \end{equation}
 where $\e > 0$ is small. This equation can be considered as a small perturbation of the Hamilton-Jacobi equation $H (x,u,Du) = 0$.
 The small perturbation term $- \e \Delta u$ is called a {\it viscosity term} (in Fluid mechanics). In standard cases, the equation is uniformly  
 elliptic and then it's possible to find a unique $C^2$-smooth solution $u_\e$ of the  equation (\ref{eq:HJV}) with suitable boundary conditions  
 and  get uniform $L^{\infty}-$estimates of $u_\e$ and $\nabla u_\e$ independent of $\e > 0$. This implies by Ascoli's theorem that some  
 subsequence will converge uniformly to a continuous function $u$, but the corresponding subsequence $\nabla u_\e$ will converge only weakly in  
 $L^{\infty}$.  This is however not sufficient to pass to the limit in (\ref{eq:HJV}) as $\e \searrow 0$ to get a generalized or weak solution to  
 the Hamilton-Jacobi equation (\ref{eq:HJ}).
 Nevertheless, it is reasonable to consider that the function $u$ should be a solution of the equation (\ref{eq:HJV}) in some sense.
 Indeed, we can show by using an easy stability argument for viscosity solutions, that it will be possible to pass to the limit in the sense of  
 viscosity and get a viscosity solution to the Hamilton-Jacobi equation (\ref{eq:HJ}). This method, known as the "vanishing viscosity method",  
 motivates the introduction of the viscosity concepts and justifies the terminology of viscosity (see \cite{CIL92}).

This method can be applied to the Eikonal equation and explains why we should consider the two Hamiltonians $H (u) = \vert u' \vert - 1$ and $\tilde H (u) = 1 - \vert u'\vert$ as different since the corresponding elliptic perturbations approximating them are different. 
 
 \vskip 0.2 cm
 \noindent{\bf Example 3 :  Degenerate Real Monge-Amp\`ere equations} \\
 This equation is of the following type
 \begin{equation}
 - \text{det} (D^2 u) + f (x,u,Du) = 0, \ \text{in} \, \,  \Om,
 \end{equation}
 where $\Om \sub \R^N$ is a convex domain,  the solution being a convex function $u : \Om \longrightarrow \R$  and $f : \Om \times \R \times \R^N  
 \longrightarrow \R^+$ is a continuous non negative function on $\Om$, non decreasing in $u$.\\
 The above equation in degenerate elliptic if restricted to an appropriate convex subset of the space of symmetric matrices. Namely if we define the Hamiltonian function as follows
 $$
  F (x,s,p,Q) := - \text{det} (Q) + f (x,s,p), \ \text{if} \, Q \geq 0 \  \text{and} \ F (x,s,p,Q) = + \infty, \  \text{if not}. 
 $$
 Then $F$ is lower semi-continuous on $\Om \times \R \times \R^N  \times \mathcal S_n$, continuous on its domain $\{ F < + \infty\}$ and the equation $F (x,u,Du,D^2 u) = 0$ is degenerate elliptic.
 \vskip 0.3 cm
 \noindent{\bf Example 4 : Degenerate Complex Monge-Amp\`ere equations} \\
 
 We will  consider degenerate complex Monge-Amp\`ere equations on open sets $\Omega \subset \C^n$:
 \begin{equation} \label{eq:DMAElocal}
- \text{det} \left(\frac{\bd^2 u}{\bd z_j \bd \bar{z}_k}\right) + f (z,u,Du) = 0, \ \text{in} \ \Om,
 \end{equation}
  the solution should be a bounded plurisubharmonic function $u : \Om \longrightarrow \R$  and $f : \Om \times \R \times \R^{2 n} \longrightarrow \R^+$ is a non negative continuous function, monotone increasing in $u$.
 This equation can be written as
 $ - (dd^c u)^n + f (z,u,Du) \beta^n = 0$.
 As in the real case, this equation is degenerate elliptic when restricted to an appropriate convex subset of the space $\mathcal H_{n}$ of hermitian matrices. More precisely, identifying $\C^n$ with $\R^{2n}$,  let us define the following Hamiltonian function:
 $$
  F (z,s,p,Q) := - \text{det} (Q^{1,1}) + f (z,s,p), \ \text{if} \, Q^{1,1} \geq 0, 
 $$
 and $ \ F (z,s,p,Q) = + \infty, \  \text{if not},$
 where $(z,s,p, Q) \in \Omega \times \R \times \C^n \times \mathcal S_{2n}$ and $Q^{1,1} \in \mathcal H_n$ is the hermitian $(1,1)$-part of
 $Q \in \mathcal S_{2n}$ considered as a real quadratic form $Q $ on $\C^n$.
 
 Then again $F$ is a lower semi-continuous  degnerate elliptic Hamiltonian function  on $\Omega \times \R \times \C^n \times \mathcal S_{2n}$, continuous on its domain $\{ F < + \infty\}$ and the equation can be written as $F (x,u,Du, dd^c u) = 0$ , where $dd^c u (x)$ is the complex hessian of $u$ i.e. precisely the hermitian $(1,1)$-part of the quadratic form $D^2  u (x) \in \mathcal S_{2n}$.
 
 We are mainly interested in degenerate complex Monge-Amp\`ere equations on a compact K\"ahler manifold $X$ of the following type
 $$
 - (\omega + dd^c \f )^n + e^{\e \f} \mu = 0,
 $$
 where $\omega \geq 0$ is a closed real semi-positive $(1,1)-$form  on $X$ such that $\int_X \omega^n > 0$ and $\mu \geq $ is a continuous volume form on $X$ such that $\int_X \mu = \int_X \omega^n$.
 
 Locally this equation can be written as a complex Monge-Amp\`ere equation of the type considered (\ref{eq:DMAElocal}), so the degenerate ellipticity condition will be satisfied in an appropriate sense as we will see in the next section.

\vskip 0.3 cm
 \subsection{Definitions of viscosity concepts}
 We want to consider fully non linear degenerate elliptic equations. As we have seen above, we will mainly consider equations  for  which  we cannot expect in general to find classical solutions (i.e. smooth)  or generalized (i.e. in Sobolev spaces) or   even weak solutions (i.e. distributions). 

 On the other hand, it is well know that the classical Maximum Principle is a fundamental tool in the study of  (uniformly) elliptic and parabolic equations,  when using Schauder theory to get smooth solutions. 
 Indeed the basic idea for solving these equations with prescribed boundary conditions (e.g. in  the Dirichlet problem) lies in the construction of ad hoc barriers i.e. subsolutions and supersolutions satisfying the prescribed boundary conditions and the possibility to compare them by using the Maximum Principle.
 
 Therefore we need to define a new notions of "weak"  subsolution and supersolution and find a substitute for the classical maximum principle which allows  to prove uniqueness of the a solution when subsolutions and supersolutions with appropriate boundary conditions exist. Once the Comparison Principle holds, the existence is usually proved using the Perron method of envelopes of subsolutions.

We will assume in all the rest of this paper that the function $F$ satisfies the following two important conditions which will play a fundamental role in establishing the Viscosity Comparison Principle to get uniqueness of the solution.\\

\noindent{\bf Hypotheses :}

 \noindent{\it 1. \underline {Degenerate ellipticity condition}  :} $\forall  x, \in \Omega, \ s \in \R, \ p \in R^N, \ Q_1, Q \in \mathcal S_N $, 
$$
  Q \geq 0 \Longrightarrow F (x,s,p,Q_1 + Q) \leq F (x,s,p,Q_1). \leqno (DEC)
$$
{\it  2. \underline {Properness condition} :} $\forall x \in \Omega, \forall (s_1, s_2) \in \R^2, \forall p \in R^n, \forall Q \in \mathcal S_N$,
$$
 s_1 \leq s_2  \Longrightarrow F (x,s_1,p,Q) \leq F (x,s_2,p,Q). \leqno (PRC)
$$
Observe that this last condition is satisfied when $F$ does not depend on $u$, but in this case it is sometimes harder to prove a comparison principle.

A function $F$ satisfying the degenerate ellipticity condition $(DEC)$ and the properness condition$(PRC)$ will be called a {\it Hamiltonian function} and the equation (\ref{eq:DEEQ}) will be called the degenerate elliptic equation associated to the Hamiltonian function $F$.

It is important to understand that as in the linear case, when $F$ is a Hamiltonian function in the above sense, the function $- F$ is not unless it does not depend neither on $u$ nor on $D^2 u$. So this means that the methods of viscosity can be applied to $F$ but not to $- F$.
And even when the function $F$ depends only on $Du$ as for the Eikonal example, we should distinguish between the two equations.
 
The fundamental idea behind the notion of viscosity solution is provided by the following elementary result which emphasizes the role of the Maximum principle and  will serve as a motivation for the general defiK\"ahlernition to be introduced below.
\begin{prop} (Smooth solutions). Assume that $F$ is degenerate elliptic and let $u \in C^2 (\Om)$. Then we have the following properties: 

1. The function $u$ is a classical subsolution of the equation (\ref{eq:DEEQ}))  iff the following condition holds : 

{\bf (Sub)}: For any $x_0 \in \Om$ and any $C^2-$smooth function $\f$ in a neighbourhood of $x_0$ such  $u - \f$ takes its local maximum at $x_0$ (we will say that $\f$ touches $u$ from above at $x_0$ and write $u \leq_{x_0} \f$) we have
$$
 F (x_0,u (x_0),D \f (x_0),D^2\f (x_0)) \leq  0.
$$

2. The function $u$ is a classical supersolution of the equation (\ref{eq:DEEQ}))  iff the following condition holds : 

{\bf (Super)}: For any $x_0 \in \Om$ and any $C^2-$smooth function $\p$ in a neighbourhood of $x_0$ such  $u - \p$ takes its local minimum at $x_0$ (we will say that $\p$ touches $u$ from below at $x_0$ and write $u \geq_{x_0} \p$) we have
 $$
 F (x_0,u (x_0),D \p (x_0),D^2\p (x_0)) \geq  0.
 $$
\end{prop}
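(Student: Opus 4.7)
The plan is to prove both directions of each equivalence by exploiting the degenerate ellipticity $(DEC)$ in one direction and the trivial test function $\varphi = u$ in the other. By symmetry it suffices to treat the subsolution case carefully; the supersolution case follows by the analogous argument (or by noting that $u$ is a supersolution of $F = 0$ iff $-u$ is a subsolution of $\tilde F = 0$ for a suitably defined Hamiltonian, with the sign of the inequalities reversed).

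First I would establish the nontrivial implication $\Rightarrow$. Assume $u \in C^2(\Omega)$ is a classical subsolution, so $F(x,u(x),Du(x),D^2u(x)) \leq 0$ pointwise in $\Omega$. Let $\varphi$ be a $C^2$ function in a \nbd of $x_0$ with $u \leq_{x_0} \varphi$, meaning $u - \varphi$ attains a local maximum at $x_0$. From classical calculus on the smooth function $u - \varphi$, the first order condition gives $Du(x_0) = D\varphi(x_0)$, and the second order necessary condition for a local maximum gives $D^2u(x_0) - D^2\varphi(x_0) \leq 0$, i.e.\ $D^2\varphi(x_0) = D^2u(x_0) + Q$ for some $Q \in \mathcal S_N$ with $Q \geq 0$. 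Now I apply the degenerate ellipticity condition $(DEC)$ with $Q_1 = D^2u(x_0)$: this yields
$$
F(x_0, u(x_0), D\varphi(x_0), D^2\varphi(x_0)) = F(x_0, u(x_0), Du(x_0), D^2u(x_0) + Q) \leq F(x_0, u(x_0), Du(x_0), D^2u(x_0)) \leq 0,
$$
which is exactly the condition $(\mathbf{Sub})$.

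For the converse $\Leftarrow$, I simply exploit the fact that $u$ itself is an admissible test function. Assuming $(\mathbf{Sub})$, fix any $x_0 \in \Omega$ and take $\varphi := u$ in a \nbd of $x_0$. Then $u - \varphi \equiv 0$ certainly has a (non-strict) local maximum at $x_0$, so by hypothesis $F(x_0, u(x_0), Du(x_0), D^2u(x_0)) \leq 0$. Since $x_0$ was arbitrary, $u$ is a classical subsolution.

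The supersolution statement is handled symmetrically: if $u - \psi$ has a local minimum at $x_0$, then $D u(x_0) = D\psi(x_0)$ and $D^2 u(x_0) - D^2\psi(x_0) \geq 0$, so $D^2\psi(x_0) = D^2u(x_0) - Q$ with $Q \geq 0$; applying $(DEC)$ in the reverse sense gives $F(x_0, u(x_0), D\psi(x_0), D^2\psi(x_0)) \geq F(x_0, u(x_0), Du(x_0), D^2u(x_0)) \geq 0$. The converse again uses the test function $\psi = u$. There is no real obstacle here: the whole content is that $(DEC)$ allows us to transfer the pointwise differential inequality on $u$ to one on any $C^2$ function that touches $u$ from the correct side, and conversely the choice $\varphi = u$ (or $\psi = u$) recovers the pointwise inequality, which is precisely the motivation for the viscosity definitions introduced in the next section.
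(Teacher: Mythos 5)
Your proof is correct and follows essentially the same route as the paper: the forward direction uses the first and second order conditions at a local maximum of $u-\f$ together with $(DEC)$ applied at $Q_1 = D^2u(x_0)$, and the converse takes $\f = u$ itself as a test function, with the supersolution case handled symmetrically. No gaps.
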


 A $C^2-$smooth function $\f$ in a neighbourhood of $x_0$ satisfying the condition $u \leq_{x_0} \f$ is called an upper test function for $u$ at $x_0$ and a $C^2-$smooth function $\p$ in a neighbourhood of $x_0$ satisfying the condition $u \geq_{x_0} \p$ is called a lower test function for $u$ at $x_0$.
 
 This result shows that the application of the classical maximum principle and the use of the degenerate ellipticity condition allows to transfer the differentiation from $u$ to upper and lower  $C^2$-test functions in a neighbourhood of each point and ask for the differential inequalities $F \leq 0$ and $F \geq 0$ to hold for the corresponding test function at the given point. 

\begin{proof} It is enough to prove the first part. It is clear that he condition {\bf(Sub)} is sufficient for $u$ to be a classical subsolution. Indeed, since $u$ is $C^2$, it can be taken as an upper test function  at any point and then it satisfies the corresponding differential inequality.

Let us prove that the condition {\bf(Sub)} is necessary for $u$ to be a classical solution. Indeed assume that  $\f$ be a $C^2-$smooth function  in a neighbourhood of $x_0$ such that $u \leq_{x_0} \f$. Then $u - \f$ is a $C^2-$smooth function  in a neighbourhood of $x_0$ which attains its local maximum at $x_0$. By the local maximum principle we have $D (u  - \f) (x_0) = 0$ and $D^2 (u  - \f) (x_0) \leq 0$ in the sense of quadratic forms (or symmetric matrices).
 Since $D \f (x_0) = D u (x_0)$ and  $D^2 u (x_0) \leq D^2 \f (x_0) $ in the sense of symmetric matrices and $u$ is a classical subsolution,
it follows from the degenerate ellipticity condition that
$$
 F (x_0,u (x_0),D \f (x_0),D^2\f (x_0)) \leq  F (x_0,u (x_0),D u (x_0),D^2 u (x_0)) \leq 0,
$$
 which proves the condition {\bf (Sub)}.
  \end{proof}

Observe that the main feature of this  characterization is to show that the conditions ${\bf (Sub)}$ and ${\bf (Super)}$  use only the values of $u$ but not its first nor second derivatives. Therefore it can be used as a motivation for the following general definitions.

 \begin{defi} 1. Let $u : \Om \longrightarrow \R$ be an upper semi-continuous (usc) function  in an open set $\Om \subset \R^N$. We say that 
 $u$ is a viscosity subsolution of the equation $F (x,u, D u, D^2 u) = 0$ on $\Om$ if it satisfies the condition {\bf (Sub)}. We will also  say that $u$ satisfies the differential inequality $F (x,u, D u, D^2 u) \leq 0$ in the viscosity sense on $\Om$. \\
 2. Let $u : \Om \longrightarrow \R$ be a lower semi-continuous (lsc) function  in an open set $\Om \subset \R^N$. We say that 
 $u$ is a viscosity supersolution of the equation $F (x,u, D u, D^2 u) = 0$ on $\Om$ if it satisfies the condition {\bf (Super)}. We  
 will also say that 
 $u$ satisfies the differential inequality $F (x,u, D u, D^2 u) \geq 0$ in the viscosity sense on $\Om$. \\
\end{defi}

 To illustrate the importance of the Properness condition $(PRC)$, let us give a simple case where it helps to prove uniqueness.
 \begin{theo} Let  $\Om \Sub \R^N$ be a bounded domain and assume that the Hamiltonian function  $F (x,s,p,Q)$ is strictly increasing in the variable $s$. Then the classical comparison principle holds i.e. if $u \in C^2 (\Om) \cap C^0 (\overline \Om)$ is a classical subsolution of the equation (\ref{eq:DEEQ}) and $v \in C^2 (\Om) \cap C^0 (\overline \Om) $ is a classical supersolution of the equation (\ref{eq:DEEQ}) such that $u \leq v$ on $\bd \Om$ then $u \leq v$ on $\Om$.
 In particular the equation  (\ref{eq:DEEQ}) has  at most one classical solution with prescribed continuous boundary values.
 \end{theo}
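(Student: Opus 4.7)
The plan is a proof by contradiction, built around the classical calculus fact that any interior maximum of a $C^2$ function constrains both its gradient (to vanish) and its Hessian (to be non-positive). The idea is to extract enough information at an interior maximum of $u - v$ so that the degenerate ellipticity $(DEC)$ and the strict monotonicity in $s$ force an incompatibility between the subsolution inequality $F \leq 0$ for $u$ and the supersolution inequality $F \geq 0$ for $v$.

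Concretely, I would suppose for contradiction that $M := \max_{\overline \Om} (u - v) > 0$. Because $u \leq v$ on $\bd \Om$ and both functions are continuous on the compact set $\overline \Om$, the maximum is attained at some interior point $x_0 \in \Om$. At such an interior maximum of the $C^2$ function $u - v$ one has
$$
D u (x_0) = D v (x_0) =: p_0, \qquad D^2 u (x_0) \leq D^2 v (x_0),
$$
the latter inequality in the sense of symmetric matrices. Setting $Q := D^2 v (x_0) - D^2 u (x_0) \geq 0$, the degenerate ellipticity condition $(DEC)$ together with the classical subsolution hypothesis for $u$ gives
$$
F (x_0, u (x_0), p_0, D^2 v (x_0)) \leq F (x_0, u (x_0), p_0, D^2 u (x_0)) \leq 0.
$$
Since $u (x_0) > v (x_0)$, the strict monotonicity of $F$ in the $s$ variable then yields
$$
F (x_0, v (x_0), p_0, D^2 v (x_0)) < F (x_0, u (x_0), p_0, D^2 v (x_0)) \leq 0,
$$
which contradicts the supersolution inequality $F (x_0, v (x_0), D v (x_0), D^2 v (x_0)) \geq 0$. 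Hence $M \leq 0$, i.e. $u \leq v$ on $\Om$. Uniqueness of a classical solution with prescribed continuous boundary values follows by applying this comparison to the pairs $(u, v)$ and $(v, u)$.

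There is no serious obstacle in this argument: it is almost entirely mechanical once the Hessian inequality at an interior maximum is recorded. The only point worth highlighting, and which one should emphasize, is why \emph{strict} monotonicity of $F$ in $s$ is indispensable: the properness condition $(PRC)$ alone would give only the non-strict inequality $F (x_0, v (x_0), p_0, D^2 v (x_0)) \leq F (x_0, u (x_0), p_0, D^2 v (x_0)) \leq 0$, which is perfectly compatible with $F (x_0, v (x_0), D v (x_0), D^2 v (x_0)) = 0$ and therefore fails to produce any contradiction. This is also what foreshadows why the full viscosity comparison principle, where $F$ is assumed only proper and not strictly monotone in $s$, will require the substantially heavier machinery of doubling of variables and the Jensen--Ishii maximum principle introduced in the subsequent sections.
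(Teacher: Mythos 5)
Your proof is correct and follows essentially the same route as the paper: locate the maximum of $u-v$, use the first and second derivative tests at an interior maximum point, apply the degenerate ellipticity condition and then the strict monotonicity of $F$ in $s$ to compare the sub- and supersolution inequalities. The only cosmetic difference is that you argue by contradiction while the paper concludes directly that $u(x_0)\leq v(x_0)$, and your closing remark on why strict monotonicity (rather than mere properness) is needed is accurate.
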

 \begin{proof} Since $u - v$ is continuous on the compact set $\overline \Om$, it attains its maximum at some point $x_0 \in \Om$ i.e.
 $\max_{\overline \Om} (u - v) = u (x_0) - v (x_0)$.
 If $x_0 \in \bd \Om$ then $u (x_0) \leq v (x_0)$ and then we are done.
 Now assume that $x_0 \in \Om$. Since $u$ and $v$ are $C^2$ at $x_0 \in \Om$, it follows from the classical maximum principle that $D u (x_0) = D v (x_0)$ and $D^2 u (x_0) \leq D^2 v (x_0)$.
 Now since $u$ is a classical subsolution we have 
 $$
 F \left(x_0,u (x_0), D u (x_0), D^2 u (x_0)\right) \leq  0 \leq F \left(x_0,v (x_0), D v (x_0), D^2 v (x_0)\right).
 $$
 Therefore by the degenerate ellipticity condition ${\bf (DEC)}$ we have
 $$
 F (x_0,u (x_0), D u_0(x), D^2 u (x_0)) \leq F (x_0,v (x_0), D u_0(x), D^2 u (x_0)).
 $$
 From the Properness condition ${\bf (PRC)}$ it follows that $u (x_0) \leq v (x_0)$.
 \end{proof}K\"ahler
 Observe that the simple reasoning above uses the fact that $F$ is increasing in a crucial way. In the situation where $F$ does not depend on $u$ for example, we cannot conclude so easily. 
 However one can show that the conclusion is still true but the poof requires a more subtle argument based on a more refined Maximum Principle known as the Alexandroff-Backelman-Pucci maximum principle (see \cite{CC95}, \cite{Wang10}). 
 
 \begin{rem}  Observe that in the last result it is enough to assume that only one of the functions is a classical subsolution or a classical supersolution. Indeed assume for example that $u$ is a classical subsolution, while $v$ is a viscosity supersolution with $u \leq v$ on $\bd \Omega$. Then arguing as above, we get the inequality  $ u (x) - u (x_0) + v (x_0) \leq v (x)$ in a neighbourhood of $x_0$; which means that the $C^2-$function $q (x) :=  u (x) - u (x_0) + v (x_0)$ is a lower test function for $v$ at $x_0$. Therefore the VSC inequality for $v$ at $x_0$ implies that
 $$
  F (x_0, v (x_0), D q (x_0),D^2 q (x_0) \geq 0.
 $$
 On the other hand, since $u$ is a classical subsolution, we have 
 $$  F (x_0, u (x_0), D u (x_0),D^2 u (x_0) \leq 0.$$
 Comparing these two inequalities we get 
 $$F (x_0, u (x_0), D u (x_0),D^2 u (x_0) \leq F (x_0, v (x_0), D q (x_0),D^2 q (x_0).$$
 Since $F$ is strictly increasing, it follows that $u (x_0) \leq v (x_0)$.

 \end{rem}
 
 The main goal of the first part of this lecture is to prove a general comparison principle for viscosity solutions. We want to do the same  
 reasoning as above, but our functions are not smooth. We therefore need to approximate them keeping the memory of the viscosity differential inequalities they satisfy. This will be done in the next section.
 \vskip 0.3 cm
 \subsection{Characterization of viscosity concepts by mean of jets}
 
 As we have seen in the previous proofs, the only important thing that matters for the differential inequalities we were proving is the jet of  
 order $2$ of the function $u$ at a given point.
 Therefore to deal with non smooth functions it is useful to develop a sub-differential calculus and define sub(super)-jets of order $2$. This will  lead to a characterization of viscosity concepts by means of sub(super)-jets of order $2$, which is more flexible.

 \begin{defi}
 1. Let $u : \Om \longrightarrow \R$ be an usc function and $x_0 \in \Om$. The super-differential jet of order $2$ of $u$ at $x_0$ is the set
  $J^{2,+} u (x_0)$ of all $(p,Q) \in \R^N \times \mathcal S_N$ such that for any $\xi \in \R^N$ with $\vert \xi\vert << 1,$ the following inequality holds
  $$
   u (x_0 + \xi) \leq u (x_0) + p \cdot\xi + \frac{1}{2} <Q\cdot\xi,\xi> + o (\vert \xi\vert^2).
  $$K\"ahler
  2. Let $u : \Om \longrightarrow \R$ a lsc function and $x_0 \in \Om$. The sub-differential jet of order $2$ of $u$ at $x_0$ is the set
  $J^{2,-} u (x_0)$ of all $(p,Q) \in \R^N \times \mathcal S_N$ such that for any $\xi \in \R^N$ with $\vert \xi\vert << 1,$ the following inequality holds
  $$
   u (x_0 + \xi) \geq u (x_0) + p \cdot\xi + \frac{1}{2} <Q\cdot\xi,\xi> + o (\vert \xi\vert^2).
  $$
  3. If $u$ is continuous we can define the differential jet of order $2$ of $u$ at $x_0$ as the set ${J}^2 u (x_0) := J^{2,+} u (x_0) \cap J^{2,-} u (x_0)$.
 \end{defi}
 Observe that if $u$ is twice differentiable at $x_0$ then 
 $$
 {J}^{2,+} u (x_0)= \{(D u (x_0),Q) ; Q \geq D^2 u (x_0)\},
 $$
 and
 $$
 {J}^{2,-} u (x_0)= \{(D u (x_0),Q) ; Q \leq D^2 u (x_0)\},
 $$
 so that $J^2 u (x_0) = \{(D u (x_0), D^2 u (x_0)\}$.
 
 For an arbitrary upper semi-continuous function, it may happen that the set ${J}^{2,+} u (x_0)$ is empty. However there are many points nearby where this set is not empty as the following remark shows.
 \begin{rem}
 Observe that the function  $ u (x) := \vert x\vert$, which is a convex non negative $\R$, satifies the condition  ${J}^{2,+} u (0) = \emptyset$. However we are giong to see that for an upper semi-continuous function $u$ which is bounded from above, there are many points where $J^{2,+} u (x_0) \neq \emptyset$. 
 Actually the set of super-differentiability of $u$ at second order defined by
 $$
 \mathcal D_{\Omega}^{2,+} u := \{ x \in \Omega ; J^{2,+} u (x) \neq \emptyset\}
 $$ 
 is dense in $\Omega$. 
 
 Indeed, fix a point $x_0 \in \Omega$. Since $u$ is upper semi-continuous at $x_0$, for any  ball $B = B (x_0,r) \Subset \Omega$ with $r > 0$ small enough there exists $A > 0$ such that 
 $ u (x) -  A \vert x - x_0\vert^2 < u (x_0)$ for $\vert x - x_0\vert = r$. Then defining the function $q$ by $q (x) := A \vert x - x_0\vert^2$, we see by upper semi-continuity that the function $u - q$ takes its maximum $M$ in $\bar B$  at some point $\hat x \in \bar B$. Now observe that if $\vert \hat x - x_0 \vert = r$ then  $M = u (\hat x) - q (\hat x) <  u (x_0) = u (x_0) - q_A (x_0)$, which contradicts the fact that $M$ is the maximum of $u$ in the ball $\bar B$. Therefore $\hat x \in B$ and then the function $ \hat q := q - q (\hat x) + u (\hat x)$ is a $C^2$-smooth upper test function for $u$ at the point $\hat x \in B$ which means that  $(D \hat q (\hat x), D^2 \hat q (\hat x)) \in J^{2,+} u (\hat x)$.
 
 As we will see the fundamental theorem of Alexandrov says that for a convex function function, the set $\mathcal D_{\Omega}^{2,+} u$ is not only dense in $\Omega$ but it is of full Lebesgue measure in the sense that its complement in $\Omega$ is  of Lebesgue measure $0$ (see Theorem~\ref{th:A}). 
 The same remarks holds for a lower semi-continuous function which is bounded from below.
 \end{rem}
 Since viscosity sub(super)-solutions are only usc(lsc) functions, it is necessary to extend the previous definitions by introducing the notions of approximate super(sub)-differential jets. 
 \begin{defi}
 Let $u : \Om \longrightarrow \R$ be an usc function and $x_0 \in \Om$ and $(p,Q) \in \R^N \times \mathcal S_N$. 
  We say that $(p,Q) \in  {\bar J}^{2,+} u (x_0)$ if there exists a sequence of points $y_j \to x_0$ in $\Omega$ and a sequence $(p_j,Q_j) \in J^{2,+} u (y_j)$ such that 
 $(p_j,Q_j) \to (p,Q)$.
 In the same way we define ${\bar J}^{2,-} u (x_0)$ for a lower semi-continuous function $u : \Omega \longrightarrow \R$.
 \end{defi}
 Then we have the following important characterization of viscosity solutions which will be useful.
 \begin{theo} 1. Let $u : \Om \longrightarrow \R$ be an usc function and $x_0 \in \Om$. Then
 $u$ is a viscosity subsolution of the equation $F (x,u,Du,D^2 u) = 0$ if and only if for any $x_0 \in \Om$ and any 
  $(p,Q) \in {\bar J}^{2,+} u (x_0)$, we have $F (x_0,u(x_0),p,Q) \leq 0$.
  
  2. Let $u : \Om \longrightarrow \R$ be a lsc function and $x_0 \in \Om$. Then
 $u$ is a viscosity supersolution of the equation $F (x,u,Du,D^2 u) = 0$ if and only if for any $x_0 \in \Om$ and any 
  $(p,Q) \in {\bar J}^{2,-} u (x_0)$, we have $F (x_0,u(x_0),p,Q) \geq 0$.
 \end{theo}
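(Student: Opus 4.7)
The plan is to prove part (1) in two stages: first establish the analogous characterization in terms of the plain jet $J^{2,+}u(x_0)$, then extend it to the closure $\bar J^{2,+}u(x_0)$ by a limiting argument. Part (2) will then follow by running the symmetric argument (equivalently, by applying (1) to $-u$ with the Hamiltonian $\widetilde F(x,s,p,Q):=-F(x,-s,-p,-Q)$, which is again degenerate elliptic and proper).

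For the plain-jet equivalence, I expect both directions to reduce to Taylor expansions. Given $(p,Q)\in J^{2,+}u(x_0)$ and $\varepsilon>0$, the natural upper test function to use is the paraboloid
$$
\varphi_\varepsilon(x) := u(x_0) + p\cdot(x-x_0) + \tfrac{1}{2}\langle (Q+2\varepsilon I)(x-x_0),\,x-x_0\rangle,
$$
for which the defining $o(|x-x_0|^2)$ term in the jet inequality is eventually dominated by $\varepsilon|x-x_0|^2$, so that $u-\varphi_\varepsilon$ has a strict local maximum at $x_0$. The subsolution hypothesis then yields $F(x_0,u(x_0),p,Q+2\varepsilon I)\leq 0$, and continuity of $F$ in $Q$ permits sending $\varepsilon\to 0$ to obtain $F(x_0,u(x_0),p,Q)\leq 0$. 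Conversely, if $\varphi$ is a $C^2$ upper test function at $x_0$, I will normalize by adding the constant $u(x_0)-\varphi(x_0)$ (which leaves derivatives unchanged) so that Taylor's formula at $x_0$ exhibits $(D\varphi(x_0),D^2\varphi(x_0))\in J^{2,+}u(x_0)$; the jet-form hypothesis then immediately gives the required differential inequality for $\varphi$.

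To pass from $J^{2,+}$ to $\bar J^{2,+}$, I will choose, for a given $(p,Q)\in \bar J^{2,+}u(x_0)$, sequences $y_j\to x_0$ and $(p_j,Q_j)\in J^{2,+}u(y_j)$ with $(p_j,Q_j)\to (p,Q)$, together with $u(y_j)\to u(x_0)$ (as in the Crandall--Ishii--Lions convention, understood as implicit in the definition of $\bar J^{2,+}$ above). The plain-jet step applied at each $y_j$ gives $F(y_j,u(y_j),p_j,Q_j)\leq 0$, and joint continuity of $F$ then produces $F(x_0,u(x_0),p,Q)\leq 0$. The reverse direction uses only the tautological inclusion $J^{2,+}u(x_0)\subset \bar J^{2,+}u(x_0)$, combined with the plain-jet equivalence already established.

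The hard point here will be justifying the convergence $u(y_j)\to u(x_0)$ in the closure step: without it, upper semi-continuity alone provides only $\limsup_j u(y_j)\leq u(x_0)$, and along any subsequential limit $\ell\leq u(x_0)$ the properness condition (PRC) yields $F(x_0,\ell,p,Q)\leq F(x_0,u(x_0),p,Q)$, which is the wrong direction to recover $F(x_0,u(x_0),p,Q)\leq 0$ from $F(x_0,\ell,p,Q)\leq 0$. The standard viscosity-theory definition of $\bar J^{2,+}$ builds in the requirement $u(y_j)\to u(x_0)$ precisely for this reason, and the argument above relies on reading the definition in this sense.
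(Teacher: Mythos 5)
Your proof is correct, but it reaches the key implication (viscosity subsolution $\Rightarrow$ jet inequality) by a different mechanism than the paper. The paper's necessity step rests on the touching lemma stated right after the theorem (quoted without proof from \cite{DI04}, \cite{CIL92}): for each $(p,Q)\in J^{2,+}u(x_0)$ there is a $C^2$ function $\phi$ with $u\leq_{x_0}\phi$ and exactly $D\phi(x_0)=p$, $D^2\phi(x_0)=Q$, so the viscosity inequality is obtained at the exact jet with no limiting process in $F$; the passage to $\bar J^{2,+}$ is then dispatched in one line ``by approximation\ldots by lower semi-continuity of $F$''. You instead use the explicit paraboloid with Hessian $Q+2\varepsilon I$, which is an upper test function at $x_0$ by the very definition of $J^{2,+}$, and then let $\varepsilon\to 0$; this trades the unproved lemma for a mild semicontinuity requirement on $F$ in the Hessian variable. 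Note that lower semicontinuity (which the paper assumes anyway, and which is what one has for the Monge--Amp\`ere Hamiltonians) already suffices here, since $F(x_0,u(x_0),p,Q)\leq\liminf_{\varepsilon\to 0}F(x_0,u(x_0),p,Q+2\varepsilon I)\leq 0$, whereas degenerate ellipticity alone goes the wrong way, so some form of semicontinuity is genuinely needed at this point. Your sufficiency direction (Taylor expansion of an upper test function after normalizing $\varphi(x_0)=u(x_0)$) and your treatment of part 2 by the duality $\widetilde F(x,s,p,Q)=-F(x,-s,-p,-Q)$ applied to $-u$ match the paper's ``it is enough to prove the first claim''. Finally, your closing remark is well taken and applies equally to the paper's own one-line closure argument: the limit $F(x_0,u(x_0),p,Q)\leq\liminf_j F(y_j,u(y_j),p_j,Q_j)$ requires $u(y_j)\to u(x_0)$, which is part of the standard Crandall--Ishii--Lions definition of $\bar J^{2,+}$ but is omitted from the definition as stated in the text; upper semicontinuity alone gives only $\limsup_j u(y_j)\leq u(x_0)$, and the properness condition then yields an inequality at a smaller value of the $s$-variable, which cannot be converted back. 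Reading the definition with this extra requirement, as you do, is the correct fix, and with it your argument is complete.
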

 \begin{proof}
 It is enough to prove the first claim.  To prove that the condition is sufficient, it is enough to prove that if $\phi$ is an upper test function for $u$ at some point $x_0$ then $(D \phi (x_0), D^2 \phi (x_0)) \in J^{2,+} u (x_0)$. Indeed by Taylor's formula for $\vert \xi\vert << 1$ and $x = x_0 + \xi \in \Om$, we have
 $$
  \phi (x) = \phi (x_0) +  D \phi (x_0). \xi + \frac{1}{2}  D^2 \phi (x_0)\cdot (\xi,\xi) + o (\vert \xi\vert^2)
 $$
 Since $u \leq_{x_0} \phi$ with $u (x_0) = \phi (x_0)$, it follows that for $\vert \xi\vert << 1$,
 $$
 u (x) \leq u (x_0)  + D \phi (x_0). \xi + \frac{1}{2}  D^2 \phi (x_0)\cdot (\xi,\xi) + o (\vert \xi\vert^2)
 $$
 which proves that $(D \phi (x_0), D^2 \phi (x_0)) \in J^{2,+} u (x_0)$.
 
 To prove the converse it is enough to assume that $(p,Q) \in J^{2,+} u (x_0)$, since by approximation the results will follow by lower semi-continuity of $F$. This is less trivial and follows from the following elementary but non trivial lemma (see \cite{DI04}, \cite{CIL92}).\end{proof}
 \begin{lem} For any $(p,Q) \in J^{2,+} (u)$ there exists a $C^2$ function near $x_0$ such that $D \phi (x_0) = p$, $D^2 \phi (x_0) = Q$ and $ u \leq_{x_0} \phi$ i.e. $J^2 \phi (x_0) = \{(p,Q)\}$.
 \end{lem}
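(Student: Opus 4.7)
The plan is to represent $\phi$ as the sum of the quadratic polynomial carrying the prescribed $2$-jet and a non-negative $C^2$ radial correction whose own $2$-jet at $x_0$ vanishes and which absorbs the $o(|x-x_0|^2)$ error coming from the definition of $J^{2,+} u(x_0)$. After translating so that $x_0=0$ and subtracting $u(0)$, I unfold the definition of the super-jet to produce a non-decreasing function $m : [0,r_0) \to [0,+\infty)$ with $m(r)/r^2 \to 0$ as $r \to 0^+$ such that
$$
u(\xi) \leq p\cdot\xi + \frac{1}{2}\langle Q\xi,\xi\rangle + m(|\xi|), \qquad |\xi| < r_0.
$$
One can take $m$ to be continuous (replace it by a continuous non-decreasing majorant with the same vanishing rate at $0$; this costs nothing).

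The heart of the argument is the construction of a one-variable $C^2$ function $\Psi : [0,r_0) \to [0,+\infty)$ with $\Psi(0) = \Psi'(0) = \Psi''(0) = 0$ and $\Psi(r) \geq m(r)$. For this, I set
$$
\omega(r) := \sup_{0 < s \leq r}\, \frac{m(s)}{s^2}
$$
(continuous non-decreasing after a harmless smoothing, with $\omega(0)=0$), and then define
$$
\Psi(r) := 8\int_0^r (r-u)\,\omega(2u)\, du.
$$
Then $\Psi'(r) = 8\int_0^r \omega(2u)\,du$ and $\Psi''(r) = 8\,\omega(2r)$, so $\Psi$ is $C^2$ with $\Psi(0)=\Psi'(0)=\Psi''(0)=0$. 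Since $\omega$ is non-decreasing, a direct change of variable gives
$$
\Psi(r) \geq 8\int_{r/2}^r (r-u)\,\omega(r)\,du \;=\; r^2\,\omega(r) \;\geq\; m(r).
$$

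It then remains to set $\psi(x) := \Psi(|x|)$ and
$$
\phi(x) := u(0) + p\cdot x + \frac{1}{2}\langle Qx,x\rangle + \psi(x),
$$
and to verify that $\phi$ has all the required properties. The main technical point, which is where I would be most careful, is that a radial function of a $C^2$ profile need not be $C^2$ at the origin in higher dimensions; however, the vanishing $2$-jet of $\Psi$ at $0$ is exactly what forces $\psi$ to be $C^2$ at $0$ with $D\psi(0)=0$ and $D^2\psi(0)=0$. Indeed, for $x\ne 0$ one computes
$$
\bd_j \bd_i \psi(x) \;=\; \Psi''(|x|)\,\frac{x_i x_j}{|x|^2} \;+\; \frac{\Psi'(|x|)}{|x|}\,\Bigl(\delta_{ij} - \frac{x_i x_j}{|x|^2}\Bigr),
$$
and each of the two coefficients tends to $0$ as $x \to 0$: $\Psi''(|x|)\to\Psi''(0)=0$ and $\Psi'(|x|)/|x|\to \Psi''(0)=0$ by l'Hôpital. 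Hence $\phi \in C^2$ near $0$ with $\phi(0)=u(0)$, $D\phi(0)=p$, $D^2\phi(0)=Q$, while $\phi(x) \geq u(x)$ near $0$ with equality at $0$, i.e.\ $u \leq_{x_0} \phi$ and $J^2\phi(x_0)=\{(p,Q)\}$, as required.
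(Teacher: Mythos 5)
Your construction is correct. Note first that the paper does not actually prove this lemma: it is stated without proof, with a pointer to \cite{CIL92} and \cite{DI04}, so there is no internal argument to compare against. What you wrote is essentially the standard construction from those references: extract from the super-jet inequality a non-decreasing error bound $m(r)=\sup_{|\xi|\le r}\bigl(u(\xi)-u(x_0)-p\cdot\xi-\tfrac12\langle Q\xi,\xi\rangle\bigr)^{+}$ with $m(r)=o(r^2)$, integrate a (continuous, non-decreasing) majorant of $m(r)/r^2$ twice to obtain a one-variable profile $\Psi$ with vanishing $2$-jet at $0$ and $\Psi(r)\ge m(r)$, and add the radial correction $\Psi(|x-x_0|)$ to the quadratic polynomial with the prescribed jet. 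Your key steps check out: $\Psi''(r)=8\,\omega(2r)$ is continuous once $\omega$ is replaced by a continuous non-decreasing majorant (e.g. $\tilde\omega(r)=\frac1r\int_r^{2r}\omega$, which costs nothing), the monotonicity trick gives $\Psi(r)\ge 8\int_{r/2}^{r}(r-s)\,\omega(r)\,ds=r^2\omega(r)\ge m(r)$, and the vanishing of $\Psi$, $\Psi'$, $\Psi''$ at $0$ together with $\Psi'(r)/r\to 0$ makes the radial function $C^2$ at the origin with zero gradient and Hessian (to be fully precise one should add the one-line remark that $|D\psi(x)|=|\Psi'(|x|)|=o(|x|)$ gives differentiability of $D\psi$ at $0$, not just convergence of the Hessian entries). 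Two cosmetic points: rename the integration variable (you reuse the letter $u$, which already denotes the function), and restrict to $r<r_0/2$ so that $\omega(2r)$ is defined.
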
 
 Let us come back to the following simple example to show that viscosity concepts are the right ones to ensure uniqueness of the solutions.
 \begin{exa}
 We have already observed the advantage of VSC solutions in exhibiting the solution to the equation $F (x,u,Du,D^2u) = H (u') = \vert u'\vert - 1 = 0$ on $[-1,+1]$ with the the boundary condition $u(\pm1) = 0$. The piecewise affine function on $[-1,1]$ defined by $u_0 (x) = 1 - \vert x\vert$, which satisfies $\vert u_0' (x)\vert = 1$ except at the origin where it is not differentiable, it is a generalized solution. Observe that the equation has infinitely many piecewise affine generalized solutions with the prescribed boundary condition.
  However it is not difficult to see that among these generalized solutions,  $u_0$ is the only one which is a viscosity solution for the equation associated to the Hamiltonian $H (x,u,u') = \vert u'\vert - 1.$  Indeed observe that the only problem is at the origin. It's easy to see that any upper test function $q$ for $u_0$ at $0$ satisfies the condition $\vert q' (0) \vert \leq 1$, while there is no lower test function for $u$ at the origin.
  
  On the other hand, it is also clear that $u_0$ is not a subsolution to the equation $1 - \vert u'\vert = 0$, since any upper test function $q$ at the origin should satisfy
  the inequality $1 \leq \vert q' (0)\vert$, while by the previous observation it has to satisfy the inequality $\vert q' (0) \vert \leq 1$ hence $\vert q' (0)\vert = 1$, which is obviously not the case. The same reasoning as above actually proves that the function $v_0 (x) = \vert x \vert - 1$ is a viscosity solution to the equation $1 - \vert u'\vert = 0$ with boundary values $0$. 
  \end{exa}
 \vskip 0.3 cm
 \subsection{The Jensen-Ishii maximum principle} 

Let us recall some classical definitions and results used in this approach (see \cite{DI04}, \cite{Car04}, \cite{CIL92}).
 As we have seen in the case of Monge-Amp\`ere equations it is necessary to assume that our Hamiltonian function
  $F : \Omega \times \R \times \R^N \times \mathcal S_N \longrightarrow \R \cup \{+\infty\}$ will be a lower semi-continuous function which is continuous on its domain $\{ F < + \infty\}$. 
 \begin{defi} Let $\f : \Omega \longrightarrow \R$ be a function defined in an open set $\Omega \subset \R^N$. The function $\f$ is said to be semi-convex on $\Omega$ if there exists a real number $k > 0$ such that the function $x \longmapsto \f (x) + \frac{k}{2} \vert x \vert^2$ is convex in  (each convex subset of) $\Omega$. In this case we also say that $\f$ a
  $k-$convex function in $\Omega$. 
  The function $\f$ is said to be $k-$concave in $\Omega$ if $- \f$ is $k-$convex in $\Omega$.
 \end{defi}
 The following notion is quite useful in the context of the viscosity approach.
 \begin{defi} We say that a function $\f : \Omega \longrightarrow \R$ is twice  differentiable at some point $x_0$ (in the sense of Alexandrov)  if there exists $p \in \R^N$ and $Q \in \mathcal S_N$ such that for $\xi \vert << 1$,
 $$
 w (x_0 + \xi) = w (x_0) + <p,\xi> + \frac{1}{2} <Q\cdot \xi,\xi> + o (\vert \xi\vert^2). \leqno (A)
 $$
 \end{defi}
 Some remarks are in order.
 \begin{rem}
 1. The condition $(A)$ means that $J^2 w (x_0) = \{(p,Q)\}$. This implies that $w$ is differentiable at $x_0$ and $D w (x_0) = p$, but in general it does not mean that $u$ is twice differentiable in the usual sense at $x_0$. Actually $w$ do not need to be differentiable in a neighbourhood of $x_0$. 
 However the quadratic form $Q$ satisfying the asymptotic expansion $(A)$ at $x_0$ is unique and given by
 $$
 Q (\xi) = \lim_{t \to 0} \,\frac{w (x_0 + t \xi) + w (x_0 - t \xi) - 2 w (x_0)}{t^2},
 $$
 for $ \xi \in \R^N$.
 We will denote the quadratic form $Q$ by $Q = D^2 w (x_0)$ and then $J^2 w (x_0) = \{(D w (x_0),D^2 w (x_0))\}$. \\ 
 2. It follows from the definitions that if $w$ is a $k$-convex function in $\Omega$ which is twice differentiable at $x_0 \in \Omega$ then  $D^2 w (x_0) \geq - k I_N$ in the sense of quadratic forms on $\R^N$.
 \end{rem}
 The following fundamental result will be useful in the proof of the generalized maximum principle.
 \begin{theo} (A.D. Alexandrov \cite{Ale39}). \label{th:A} Let $\f$ be a $k-$convex function in $\Omega$. Then there exists an exceptional Borel set $E \subset \Omega$ of  
 Lebesgue measure $0$ such that $\f$ is twice differentiable at any point $x_0 \in \Omega \setminus E$, hence $D^2 \f (x_0) \geq - k I_N$.
  \end{theo}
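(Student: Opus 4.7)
The plan is to reduce to the convex case and then build the second-order expansion from differentiability properties of the (sub)gradient. First, I would replace $\varphi$ by $\psi(x) := \varphi(x) + \tfrac{k}{2}|x|^2$, which is convex on each convex subset of $\Omega$; since $D^2\psi = D^2\varphi + kI_N$ in the Alexandrov sense, it suffices to prove that an arbitrary convex function is twice differentiable in the sense of $(A)$ off a set of Lebesgue measure zero, the final lower bound on $D^2\varphi(x_0)$ following automatically from the convexity of $\psi$.

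So assume $\psi$ is convex on an open convex set $U\Subset\Omega$. The first ingredient is that $\psi$ is locally Lipschitz on $U$, so by Rademacher's theorem $\psi$ is (first-order) differentiable at every point $x$ of a set $U_1\subset U$ of full measure. Moreover, the subdifferential $\partial\psi$ is a monotone set-valued map, single-valued precisely on $U_1$, and the map $x\mapsto D\psi(x)$ defined on $U_1$ is locally bounded. The second ingredient is to show that $D\psi$ is approximately differentiable almost everywhere on $U_1$: for a.e.\ $x_0\in U_1$ there is a symmetric matrix $A=A(x_0)$ such that
\[
D\psi(x) = D\psi(x_0) + A(x-x_0) + o(|x-x_0|)
\]
holds in the density sense as $x\to x_0$ through the Lebesgue density points of $U_1$. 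The main tool here is a BV/covering argument: because $D\psi$ is monotone and locally bounded, its distributional derivative is a matrix-valued Radon measure whose symmetric part is nonnegative, so $D\psi\in \mathrm{BV}_{\rm loc}(U;\mathbb{R}^N)$; the Lebesgue decomposition together with the Calder\'on--Zygmund theorem on differentiability of BV maps yields an $A(x_0)=:D^2\psi(x_0)$ a.e., which is necessarily symmetric and positive semidefinite (as a limit of symmetrizations of nonnegative difference quotients).

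The third step is to upgrade approximate differentiability of $D\psi$ into a genuine pointwise second-order Taylor expansion for $\psi$. Fix a point $x_0$ where the previous steps apply and $x_0$ is a Lebesgue density point of $U_1$. For $\xi\in\mathbb{R}^N$ with $|\xi|$ small, write
\[
\psi(x_0+\xi)-\psi(x_0)-\langle D\psi(x_0),\xi\rangle = \int_0^1 \langle D\psi(x_0+t\xi)-D\psi(x_0),\xi\rangle\,dt,
\]
valid along lines through a.e.\ $x_0$ by the Lipschitz character of $\psi$. I would then decompose the integrand using the approximate expansion of $D\psi$ at $x_0$ on the good set, bound the contribution of the bad set by convexity (on $U_1^c$ one still controls $D\psi$ by subdifferentials of $\psi$ at nearby good points, using monotonicity), and obtain
\[
\psi(x_0+\xi) = \psi(x_0) + \langle D\psi(x_0),\xi\rangle + \tfrac12 \langle D^2\psi(x_0)\xi,\xi\rangle + o(|\xi|^2),
\]
which is exactly condition $(A)$. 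Undoing the reduction $\psi = \varphi + \tfrac{k}{2}|\cdot|^2$ then gives $D^2\varphi(x_0) = D^2\psi(x_0) - kI_N \geq -kI_N$ on a set of full measure in $\Omega$.

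The hard part is the middle step: extracting a genuine Hessian $D^2\psi(x_0)$ from the monotone gradient map. One has to replace the smooth-calculus notion of differentiability by an $L^1$-density statement and prove it holds a.e., which requires a Besicovitch-type covering argument plus the observation that the singular part of the distributional Jacobian of $D\psi$ is concentrated on a Lebesgue-null set. The final passage from approximate to pointwise second-order Taylor expansion is where convexity is crucial, because it lets us control the one-sided remainders by the monotonicity of $\partial\psi$; without convexity the o$(|\xi|^2)$ estimate would fail.
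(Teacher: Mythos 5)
You should first note that the paper does not prove this statement at all: it is quoted as a classical theorem of Alexandrov, with the reference \cite{Ale39}, and is used as a black box (its modern proofs can be found, e.g., in Evans--Gariepy or in the appendix material behind \cite{CIL92}). So there is no in-paper argument to compare yours with; what can be assessed is only whether your outline would yield a correct proof of the classical result.

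Your overall route is the standard modern one: reduce to the convex case via $\psi=\f+\frac{k}{2}|x|^2$ (and the final bound $D^2\f(x_0)\ge -kI_N$ does follow immediately from $D^2\psi(x_0)\ge 0$), use local Lipschitzness and Rademacher for first-order differentiability a.e., observe that the distributional Hessian of a convex function is a nonnegative symmetric matrix-valued Radon measure so that $D\psi\in \mathrm{BV}_{loc}$, and invoke the Calder\'on--Zygmund differentiability theorem for BV maps to get an approximate differential $A(x_0)$ of $D\psi$ at a.e.\ $x_0$. Up to this point the plan is sound (minor quibble: the distributional Hessian is itself symmetric and nonnegative, not merely of nonnegative symmetric part).

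The genuine gap is in your third step, and it is exactly the step you call hard. Approximate differentiability of $D\psi$ at $x_0$ is a density statement: the set where $D\psi(x)=D\psi(x_0)+A(x-x_0)+o(|x-x_0|)$ fails is only known to have vanishing relative Lebesgue measure in small balls. Its trace on a \emph{fixed} segment $[x_0,x_0+\xi]$ is not controlled at all — it can carry full one-dimensional measure — so the identity
$$
\psi(x_0+\xi)-\psi(x_0)-\langle D\psi(x_0),\xi\rangle=\int_0^1\langle D\psi(x_0+t\xi)-D\psi(x_0),\xi\rangle\,dt
$$
cannot be estimated by simply inserting the approximate expansion of $D\psi$ along that segment, and the phrase ``bound the contribution of the bad set by convexity'' is not yet an argument. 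Moreover condition $(A)$ requires the $o(|\xi|^2)$ bound for \emph{all} small $\xi$, uniformly in the direction, not for a.e.\ direction, so a naive Fubini argument over directions is also insufficient by itself. The standard ways to close this gap are: (i) mollify, compare $\psi$ with $\psi\ast\rho_\e$, prove the second-order expansion for the mollifications with errors controlled in $L^1$-average by the BV structure, and then use convexity/monotonicity of $\partial\psi$ to convert averaged bounds on balls into pointwise two-sided bounds (the one-sided bound from below is where convexity enters, via the subgradient inequality, and the bound from above uses monotonicity to compare $D\psi$ on a ball with its values on a slightly larger ball); or (ii) a Besicovitch/maximal-function argument playing the same role. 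Either of these must be carried out explicitly; as written, your proposal states the conclusion of this step rather than proving it. With that step completed, the rest of your plan (including the reduction and the final inequality $D^2\f\ge -kI_N$ off a null set) is correct.
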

  Let us now state a technical lemma which is one of the main ingredients used in the proof of the comparison  
  principle. It is based on the Alexandrov fundamental theorem. 
  \begin{lem} (R. Jensen \cite{Jen88})
  Let $w$ be a semi-convex function in an open set $\Omega \subset \R^N$. Assume that the function $w $ reaches its local maximum at some point $x_0 \in \Omega$. Then there exists  a sequence $a_j \to x_0$ in $\Omega$ such that $w$ is twice differentiable at each $a_j$  and $(D w (a_j), D^2 w (a_j)) \to (0,Q^+)$ in $\R^N \times \mathcal S_N$  and $Q^+ \leq {\bf 0}$, in particular $(0,Q^+) \in {\bar J}^{2,+} w (x_0)$.
 \end{lem}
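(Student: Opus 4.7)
The plan is to reduce to the strict-maximum case, then use a perturbation argument combined with Alexandrov's theorem to locate the required sequence. I would proceed in the following steps.

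First, I would replace $w$ by $w_\e(x) := w(x) - \e |x - x_0|^2$ for a small parameter $\e > 0$. If $w$ is $k$-convex then $w_\e$ is $(k + 2\e)$-convex (since $w_\e(x) + (k/2 + \e)|x|^2$ equals a convex function plus an affine term), and $x_0$ is now a \emph{strict} local maximum of $w_\e$ in some closed ball $\overline B = \overline B(x_0, r)$. The point of this reduction is that strict maximality will let us localize perturbed maxima near $x_0$.

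Next, for each $p \in \R^N$ with $|p|$ sufficiently small I would look at the perturbed function $w_\e(x) - \langle p, x \rangle$. It is upper semi-continuous on $\overline B$ so it attains its maximum at some point $x_{p,\e} \in \overline B$; by the strict maximality of $x_0$ for $w_\e$, one has $x_{p,\e} \to x_0$ as $p \to 0$, so $x_{p,\e}$ lies in the open ball $B$ once $|p|$ is small enough. The key step is then to show that for \emph{almost every} such $p$ one can choose $x_{p,\e}$ to be a point of twice-differentiability (in the Alexandrov sense) of $w_\e$. Writing $w_\e + \tfrac{k+2\e}{2}|\cdot|^2$ as a convex function, Theorem~\ref{th:A} provides a Borel set $E \subset \Omega$ of Lebesgue measure zero outside of which $w_\e$ is twice differentiable. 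The image of $E$ under the (multivalued) super-gradient of $w_\e$ is again of Lebesgue measure zero, by a standard Lusin/area-type property of the sub-gradient of convex functions. Hence for a.e.\ small $p$ any maximizer $x_{p,\e}$ of $w_\e(\cdot) - \langle p, \cdot\rangle$ must lie in $\Omega \setminus E$.

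At such a point $x_{p,\e}$, first- and second-order conditions for an interior maximum give $Dw_\e(x_{p,\e}) = p$ and $D^2 w_\e(x_{p,\e}) \leq 0$. Translating back to $w$, this means
\[
Dw(x_{p,\e}) = p + 2\e (x_{p,\e} - x_0), \qquad -k\, I_N \leq D^2 w(x_{p,\e}) \leq 2\e\, I_N,
\]
where the lower bound comes from $k$-convexity of $w$. Finally, I would take a diagonal sequence $p_j \to 0$, $\e_j \to 0$ and set $a_j := x_{p_j, \e_j}$. Then $a_j \to x_0$, $Dw(a_j) \to 0$, the sequence $D^2 w(a_j)$ is uniformly bounded and bounded above by $2\e_j I_N$; extracting a convergent subsequence yields a limit $Q^+ \leq 0$ with $(Dw(a_j), D^2 w(a_j)) \to (0, Q^+)$, so that $(0,Q^+) \in \overline{J}^{2,+} w(x_0)$ by definition of the limiting super-jet.

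The routine ingredients are the first- and second-order maximum conditions and the semiconvexity bound. The delicate step I expect to spend the most care on is the measure-theoretic claim used in the second paragraph: namely that, after subtracting the convex correction, the super-gradient sends the Alexandrov-exceptional null set $E$ to a null set. This is the real content of Jensen's argument, and is what forces us to invoke Alexandrov's Theorem~\ref{th:A} rather than mere Rademacher differentiability.
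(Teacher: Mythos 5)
Your overall scheme (reduce to a strict maximum, tilt by a linear function, use Alexandrov's theorem, then extract a diagonal sequence) is the right skeleton, and the first/second-order conditions and the semiconvexity bound $-kI_N \le D^2w \le 2\e I_N$ are handled correctly. However, the step you yourself single out as delicate is not just delicate — as stated it is false, and it is exactly the content that Jensen's argument has to supply by other means. You claim that the (super-)gradient of the semiconvex function maps the Alexandrov-exceptional null set $E$ to a Lebesgue-null set of slopes, so that for a.e.\ small $p$ every maximizer of $w_\e - \langle p,\cdot\rangle$ lies off $E$. There is no such ``standard Lusin/area-type property'': gradients of convex functions do not map null sets to null sets. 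A concrete counterexample: let $c$ be the Cantor--Lebesgue function and $v(x)=\int_0^x c(t)\,dt$, a $C^1$ convex function on $(0,1)$. At every point $x_0$ of the Cantor set $C$ one finds scales $h_n\to 0$ with $\int_{x_0}^{x_0\pm h_n}\bigl(c(t)-c(x_0)\bigr)dt \ge \tfrac12\, 2^{-n}3^{-n}$ while $h_n^2\simeq 3^{-2n}$, so no second-order Taylor expansion of $v$ exists at $x_0$; hence $E\supset C$ with $|C|=0$, yet $v'(E)\supset c(C)$ has full measure. So the set of ``bad'' slopes produced by the exceptional set can be fat, and your ``for a.e.\ $p$'' conclusion does not follow.

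What replaces this in Jensen's proof is a quantitative measure estimate in the opposite direction: for $w$ $k$-semiconvex with a strict local maximum, the set $K_\delta$ of contact points $x$ for which some $|p|\le\delta$ satisfies $w(y)\le w(x)+\langle p,y-x\rangle$ near $x_0$ has Lebesgue measure at least $c\,\delta^N/k^N$. One proves this by mollifying (which preserves the semiconvexity constant), noting that at contact points of the smooth approximants the gradient sweeps out the ball $B_\delta$ while $-kI_N\le D^2\le 0$ bounds the Jacobian by $k^N$, and applying the area formula before passing to the limit. Since $|K_\delta|>0$ and the Alexandrov set $E$ is null, $K_\delta\setminus E\neq\emptyset$: this yields, for each $\delta=1/j$, \emph{some} point $a_j$ of twice differentiability with $|Dw(a_j)|\le 1/j$ (plus the $O(\e)$ correction in your reduction) and $D^2w(a_j)\le 2\e_j I_N$, and your concluding compactness/diagonal argument then goes through verbatim. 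Note also that the paper itself does not prove this lemma but quotes it from Jensen and \cite{CIL92}, so the missing measure estimate cannot be borrowed from elsewhere in the text; without it your proof has a genuine gap at its central point.
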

  Viscosity sub(super)-solution of our equations need not to be even continuous in general. So to be able to extract some informations from the viscosity differential inequalities they satisfy, it is necessary to approximate them by smooth functions in an appropriate way keeping memory of these differential inequalities. This can be done using sup(inf)-convolution.
  
 Let $u : \Omega \longrightarrow \R$ be a bounded upper semi-continuous function. For $\e > 0$ small enough and $ x \in \Omega_\e$, we define the sup-convolutions of $u$ as follows: 
$$
u^{\e}(x):=\sup_{y \in \Omega} \left\{ u(y) -\frac{1}{2\e^2} |y-x|^2 \right\} = 
\sup_{\{\vert y - x\vert \leq A \e \}} \left\{ u(y) -\frac{1}{2\e^2} |y-x|^2 \right\}, 
$$
where $A > 0$ is large enough so that $A^2  > 2 \text{osc}_{\Omega} u$.
 
 In the same way if $v : \Omega \longrightarrow \R$ is a bounded lower semi-continuous function. For $\e > 0$ small enough and $ x \in \Omega_\e$, we define the inf-convolutions of $u$ as follows: 
$$
v_{\e}(x):=\inf_{y \in \Omega_{\e}} \left\{ v(y) +\frac{1}{2\e^2} |y-x|^2 \right\}
= \inf_{\vert y - x\vert \leq \e } \left\{ v(y) +\frac{1}{2\e^2} |y-x|^2 \right\}.
$$
  Then it easy to show the following result (see \cite{CC95}).
 \begin{prop} \label{prop:supconv} 1. Let $u : \Omega \longrightarrow \R$ be a bounded upper semi-continuous function. Then for $0 < \e < 1$ small enough, $u_\e$ is $\e^{-2}$-convex in $\Omega_\e$ and decreases to $u$ in $\Omega$ as $\e\searrow 0$, hence it is twice differentiable at almost every point in $\Omega$.
 Moreover $u_\e$ is a subsolution of the equation $F_\e (x,w,D w,D^2 w) = 0$, where
 $$
 F_\e (x,s,p,Q) := \inf \{F (y,s,p,Q) ; \vert y - x\vert \leq A \e \}.
 $$
 2.  Let $v : \Omega \longrightarrow \R$ be a bounded lower semi-continuous function. Then for $0 < \e <1$ small enough, $v_\e$ is $\e^{-2}$-concave in $\Omega_\e$ and increases to $v$ in $\Omega$ as $\e\searrow 0$, hence twice differentiable at almost every point in $\Omega$. Moreover $v_\e$ is a supersolution of the equation $F^\e (x,w,D w,D^2 w) = 0$, where
 $$
 F^{\e} (x,s,p,Q) := \sup \{F (y,s,p,Q) ; \vert y - x\vert \leq A \e \}.
 $$
 \end{prop}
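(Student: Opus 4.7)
I will only treat the sup-convolution $u^\e$, since the inf-convolution statement follows by symmetry: indeed, if $v$ is bounded lsc, then $-v$ is bounded usc and $v_\e(x) = -(-v)^\e(x)$, while viscosity supersolutions of $F=0$ correspond to viscosity subsolutions of the \emph{opposite} Hamiltonian $\tilde F(x,s,p,Q):=-F(x,-s,-p,-Q)$, so the second assertion is obtained from the first by applying it to $-v$ and $\tilde F$.

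The plan has three steps. First, for the semi-convexity, I would rewrite
$$
u^\e(x)+\frac{1}{2\e^2}|x|^2 \;=\; \sup_{y\in\Omega}\left[\,u(y)-\frac{1}{2\e^2}|y|^2+\frac{1}{\e^2}\langle x,y\rangle\,\right],
$$
so that the right-hand side is a supremum of affine functions of $x$, hence convex on any convex subset of $\Omega_\e$; this gives the $\e^{-2}$-convexity. Combined with Alexandrov's Theorem~\ref{th:A}, this yields twice-differentiability almost everywhere on $\Omega_\e$ and the bound $D^2 u^\e(x_0)\ge -\e^{-2}I_N$ at any point of twice-differentiability.

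Second, for the monotone convergence to $u$: taking $y=x$ in the defining supremum gives $u^\e\ge u$, and since $\frac{1}{2\e^2}|y-x|^2$ is increasing in $1/\e$, the family $u^\e$ is increasing in $\e$, hence decreasing as $\e\searrow 0$. To identify the limit, fix $x_0$ and, using boundedness and upper semi-continuity together with the compactness of $\{y:|y-x_0|\le A\e\}$, pick a maximizer $y_\e$ with $|y_\e-x_0|\le A\e$. Then $y_\e\to x_0$, and
$$
u(x_0)\le u^\e(x_0)=u(y_\e)-\frac{1}{2\e^2}|y_\e-x_0|^2\le u(y_\e),
$$
so $\limsup_{\e\to 0}u^\e(x_0)\le \limsup_{\e\to 0}u(y_\e)\le u(x_0)$ by upper semi-continuity.

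The main obstacle is the third step: showing that $u^\e$ is a viscosity subsolution of $F_\e(x,w,Dw,D^2 w)=0$. The idea is the classical "translation trick". Let $\phi$ be a $C^2$ upper test function for $u^\e$ at a point $x_0\in\Omega_\e$, and let $y_0$ be a maximizer so that $u^\e(x_0)=u(y_0)-\frac{1}{2\e^2}|y_0-x_0|^2$ with $|y_0-x_0|\le A\e$. For $y$ near $y_0$, set $x:=x_0+(y-y_0)$; since $|y-x|=|y_0-x_0|$, the definition of the sup-convolution gives $u(y)-\frac{1}{2\e^2}|y_0-x_0|^2\le u^\e(x)$, and using $u^\e\le\phi+(u^\e(x_0)-\phi(x_0))$ near $x_0$ together with $u^\e(x_0)+\frac{1}{2\e^2}|y_0-x_0|^2=u(y_0)$ yields
$$
u(y)\le \phi(y-(y_0-x_0))-\phi(x_0)+u(y_0),
$$
with equality at $y=y_0$. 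Hence the shifted function $\tilde\phi(y):=\phi(y-(y_0-x_0))-\phi(x_0)+u(y_0)$ is a $C^2$ upper test function for $u$ at $y_0$, with $D\tilde\phi(y_0)=D\phi(x_0)$ and $D^2\tilde\phi(y_0)=D^2\phi(x_0)$. The viscosity subsolution property of $u$ at $y_0$ gives
$$
F\bigl(y_0,u(y_0),D\phi(x_0),D^2\phi(x_0)\bigr)\le 0.
$$
Since $u^\e(x_0)\le u(y_0)$, the properness condition $(PRC)$ yields the same inequality with $u^\e(x_0)$ in place of $u(y_0)$, and then taking the infimum over $|y-x_0|\le A\e$ gives $F_\e(x_0,u^\e(x_0),D\phi(x_0),D^2\phi(x_0))\le 0$, as desired. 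The two delicate points I expect to have to handle carefully are the existence of the maximizer $y_0$ (which uses upper semi-continuity, boundedness, and the choice $A^2>2\,\mathrm{osc}_\Omega u$ ensuring that the sup is attained in the compact ball $|y-x_0|\le A\e$), and the requirement that $\Omega_\e$ be taken so that this ball lies inside $\Omega$ when $x_0\in\Omega_\e$.
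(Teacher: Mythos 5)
Your proposal is correct, and it is essentially the standard argument: the paper does not prove Proposition \ref{prop:supconv} but refers to \cite{CC95}, and your three steps (writing $u^\e(x)+\frac{1}{2\e^2}|x|^2$ as a supremum of affine functions, the monotone convergence via a maximizer $y_\e$ in the ball $|y-x|\le A\e$, and the translation trick combined with the properness condition $(PRC)$ and the definition of $F_\e$ as an infimum) reproduce exactly the classical proof from that reference, with the inf-convolution case correctly reduced to the sup-convolution case by duality.
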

  Observe that $F_\e$ (resp. $F^\e$) is a continuous Hamiltonian in its domain which increases (resp. decreases) to $F$ in $\Omega$ as $\e$ decreases to $0$. 
  
   Using the above result, it is possible to derive a more general maximum principle for upper semi-continuous functions, called Ishii's lemma in the literature. We will refer to it as the Jensen-Ishii's maximum principle, because it is based on a powerful idea of Jensen \cite{Jen88}.
  
  \begin{theo} \label{thm:IJ-MP} Let  $u, v : \Omega \longrightarrow \R$ be two bounded functions defined in a domain $\Omega \subset \R^N$ such that  $u $ an upper semi-continuous function in $\Omega $ and $v$ is a lower semi-continuous function in $\Omega$. Let $\phi : \Omega \times \Omega \longrightarrow \R$ be a $C^2$-smooth function. Assume that the function $w (x,y) := u (x) - v(y) - \phi (x,y)$ has  a local maximum at some point $(a,b) \in \Omega \times \Omega$. Then for any $\alpha > 0$ there exists $Q^+, Q^- \in \mathcal S_N$ such that
 $ (p^+,Q^+) \in \bar{J}^{2,+} u (a)$, $(p_-,Q^-) \in \bar J^{2, -} v (b)$ such that $p^+ =  D_x \phi (a,b), p_- = - D_y \phi (a,b)$ and
 $$
   - (\frac{1}{\alpha} + \Vert A\Vert) I_{2N} \leq M (Q^+, - Q^-) \leq A + \al A^2,
 $$
 where $A = D^2 \phi (a,b)$, and $M (Q^+, - Q^-)$ is defined as a quadratic form on $\R^N \times \R^N$ as follows: if $Z = (X,Y) \in \R^N \times \R^N$, then
  $$
  < M (Q^+, - Q^-)\cdot Z , Z >  = < Q^+\cdot X , X> - <Q^- \cdot Y,  Y>.
  $$
 In particular we have $Q^+ \leq Q^-$ as quadratic forms on $\R^N$ if we choose $\phi$ so that $D^2_x \phi (x,y) = - D^2_y \phi (x,y)$.  
  \end{theo}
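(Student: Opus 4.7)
The plan is to regularize the usc function $u$ and the lsc function $v$ by sup- and inf-convolution respectively, apply Jensen's lemma at a near-maximum point of the regularized problem to reach a point of Alexandrov twice-differentiability, and then pass to the limit. By Proposition~\ref{prop:supconv}, $u^\e$ is $\e^{-2}$-convex and decreases pointwise to $u$ while $v_\e$ is $\e^{-2}$-concave and increases to $v$. Setting $w_\e(x,y) := u^\e(x) - v_\e(y) - \phi(x,y)$, the function $w_\e$ is semi-convex near $(a,b)$ (since $\phi \in C^2$) and converges locally to $w$, so after replacing $\phi$ by $\phi(x,y) + \eta\bigl(|x-a|^4 + |y-b|^4\bigr)$ to make the maximum strict (and later sending $\eta \to 0$), we obtain near-maximizers $(a_\e,b_\e) \to (a,b)$ as $\e \to 0$.

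Next I would apply Jensen's lemma to $w_\e$ at $(a_\e,b_\e)$ to produce points of Alexandrov twice-differentiability $(a_j, b_j) \to (a_\e,b_\e)$ where $D w_\e \to 0$ and $D^2 w_\e \leq 0$. A Fubini-type argument applied to Theorem~\ref{th:A} shows that $u^\e$ and $v_\e$ are separately Alexandrov twice-differentiable at $a_j$ and $b_j$ respectively. Writing $X_j := D^2 u^\e(a_j)$ and $Y_j := D^2 v_\e(b_j)$, the first-order conditions read
$$ D u^\e(a_j) = D_x \phi(a_j,b_j) + o(1), \qquad D v_\e(b_j) = - D_y \phi(a_j,b_j) + o(1), $$
and the nonpositivity of $D^2 w_\e(a_j,b_j)$ translates into $M(X_j,-Y_j) \leq D^2\phi(a_j,b_j)$ as a quadratic form on $\R^N \times \R^N$. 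The semi-convex/concave character enforces $X_j \geq - \e^{-2} I_N$ and $Y_j \leq \e^{-2} I_N$, which controls $M(X_j,-Y_j)$ from below but not yet in the sharp form demanded by the statement.

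The technical heart of the proof is to upgrade this raw information to the two-sided bound $-(1/\alpha + \|A\|)I_{2N} \leq M(Q^+,-Q^-) \leq A + \alpha A^2$. The trick, due to Crandall--Ishii, is to perturb the pair $(X_j,Y_j)$ via the matrix maps $X \mapsto X(I - \alpha X)^{-1}$ and $Y \mapsto Y(I + \alpha Y)^{-1}$, which are well-defined for $\alpha$ small relative to $\e^{-2}$. Algebraically, the scalar identity $t \leq s \Rightarrow t/(1 - \alpha t) \leq s + \alpha s^2$ (valid on the appropriate spectral range) applied in a joint diagonalization of $M(X_j,-Y_j)$ against $A_j := D^2\phi(a_j,b_j)$ promotes the bound $M(X_j,-Y_j) \leq A_j$ to the sharp form with $A_j + \alpha A_j^2$ on the right-hand side, while the same manipulation automatically produces the lower bound $-(1/\alpha + \|A_j\|) I_{2N}$. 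Because the perturbation only modifies the Hessian part and preserves the first-order information, the perturbed data still define elements of $J^{2,+} u^\e(a_j)$ and $J^{2,-} v_\e(b_j)$ in the relaxed sense.

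Finally, one sends $\e \to 0$ and then $\eta \to 0$. The sharp matrix bound keeps the modified Hessians in a compact set, so extracting convergent subsequences and invoking the definition of $\bar J^{2,\pm}$ together with Proposition~\ref{prop:supconv} transfers the limiting jets from $u^\e$ at $a_\e$ to $u$ at $a$, and analogously for $v$ at $b$. The resulting $(p^+,Q^+) \in \bar J^{2,+} u(a)$ and $(p_-,Q^-) \in \bar J^{2,-} v(b)$ satisfy the announced first-order identities $p^+ = D_x\phi(a,b)$, $p_- = -D_y\phi(a,b)$, and the required matrix estimate. The main obstacle is precisely the matrix regularization step: the semi-convex approximation, Jensen's perturbation, and the first-order analysis are by now routine in viscosity theory, but producing a \emph{sharp simultaneous} two-sided bound of the exact form $A + \alpha A^2$ requires the delicate algebraic identity that is the central technical contribution of \cite{CIL92}.
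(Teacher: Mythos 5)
The paper itself offers no proof of Theorem~\ref{thm:IJ-MP}: it is quoted from \cite{CIL92}, with only the remark that the proof rests on sup/inf-convolution regularization and Jensen's maximum principle. Your sketch follows exactly that standard route, so there is agreement in approach; the remarks below concern only whether the sketch could stand on its own. First, the central matrix step is not a ``joint diagonalization of $M(X_j,-Y_j)$ against $A_j$'': these symmetric matrices need not commute, so the scalar identity $t\le s\Rightarrow t/(1-\alpha t)\le s+\alpha s^2$ cannot be applied in a common eigenbasis. In \cite{CIL92} the bound $A+\alpha A^2$ is obtained, after reducing to a quadratic $\phi$, by computing the sup-convolution (in the sense of quadratic forms) of $\frac12\langle A\,\cdot,\cdot\rangle$ --- the ``magic'' matrix lemma of their appendix --- and the companion lower bound $-(1/\alpha+\Vert A\Vert)I_{2N}$ is precisely the semiconvexity constant of that convolution, whose parameter is chosen in terms of $\alpha$ and $\Vert A\Vert$; it does not follow from the $\e^{-2}$ bounds of Proposition~\ref{prop:supconv}, which degenerate as $\e\to 0$. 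Second, transferring the limiting jets from $u^{\e}$ at $a_{\e}$ back to $u$ at $a$ requires the jet-localization property of sup-convolutions (a jet of $u^{\e}$ at $x$ is an approximate jet of $u$ at the point where the supremum defining $u^{\e}(x)$ is attained), which is stronger than the subsolution statement of Proposition~\ref{prop:supconv} that you invoke. Since, like the paper, you ultimately defer this algebraic core to \cite{CIL92}, your outline is a faithful indication of the standard proof, but as written those two steps would not compile into a complete argument.
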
  
 A complete proof is given in \cite{CIL92}. It uses the regularization by sup/inf convolution and the maximum principle of Jensen.  We will see in the next section how it is used to prove the comparison principle.
 \vskip 0.2 cm
 \subsection{The viscosity comparison principle in the local case} 
 
 The main tool for proving uniqueness of solutions with boundary values conditions is the so called (viscosity)  Comparison Principle which we will state now.
 \begin{defi}
  We say that the (viscosity) Comparison Principle holds for the equation $F (x,u,Du,D^2u) = 0$ if for any bounded (viscosity) subsolution $u$ in $\Omega$ and any  bounded (viscosity) supersolution $v$ a in $\Omega$ such that $u \leq v$ on $\partial \Omega$ then $u \leq v$ on $\Omega$.
 \end{defi}
 Under some additional conditions on how the function $F$ depends on the $u$ variable and its gradient , it is possible to prove the comparison principle for the equation $F = 0$ using Jensen-Ishii's Maximum principle (see \cite{CIL92}). Unfortunately there no general satisfactory statement which can be applied in our case. So we will not state any such result here and refer to \cite{IL90, CIL92} for various statements. 
 
 However we will use the same ideas in the next section and rely on Jensen-Ishii's Maximum principle to prove a comparison principle adapted to the complex Monge-Amp\`ere equations we are considering.
 
  Let us mention that the Comparison Principle implies uniqueness of the viscosity solution with prescribed boundary values. 
  Once the comparison principle is valid, it is quite easy to deduce existence of viscosity solutions using the Perron method as far as wecan show the existence of  adequate barriers (see \cite{CIL92}).
  \begin{theo} \label{thm:perron0} Assume that  the family $\mathcal U$ of bounded subsolution of the equation $F (x,u,Du,D^2u) = 0$ in $\Omega$ in non empty and locally upper bounded in $\Omega$. Then the function defined by
   $$
   U := \sup \{ u ; u  \in  \mathcal U\}
   $$
  is the maximal subsolution of the equation $F (x,u,Du,D^2u) = 0$ in $\Omega$.
  
  Moreover if  the viscosity Comparison Principle holds for the equation $F (x,u,Du,D^2u) = 0$ in $\Omega$ and there exists a subsolution $\underline u$ and a supersolution $\overline u$ such that $\underline u_* = \overline u^*$ in $\bd \Omega$, then $U$ is the unique viscosity solution de the equation   $F (x,u,Du,D^2u) = 0$ with boundary values $U = \underline u_* = \overline u^*$ in $\bd \Omega$.
   \end{theo}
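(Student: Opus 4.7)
The plan is to carry out the classical Perron construction in the viscosity framework in three stages, following the scheme presented in \cite{CIL92}.

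\textbf{Stage 1 (the envelope is a subsolution).} Set $U := \sup\{u : u \in \mathcal U\}$ and let $U^*$ denote its upper semicontinuous envelope, which is finite in $\Omega$ by the local upper bound assumption. To show $U^*$ is a viscosity subsolution, I would take an upper test function $\phi$ for $U^*$ at a point $x_0$ and reduce to the strict case by replacing $\phi$ with $\phi + \eta|x-x_0|^2$. By the definition of $U^*$ there exist sequences $x_n \to x_0$ and $u_n \in \mathcal U$ with $u_n(x_n) \to U^*(x_0)$; a standard compactness argument in a small closed ball shows that $u_n - \phi$ attains its maximum over this ball at some point $y_n \to x_0$ with $u_n(y_n) \to U^*(x_0)$. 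Applying the subsolution property of each $u_n$ with upper test function $\phi + c_n$ (adjusted to touch at $y_n$) and passing to the limit using lower semicontinuity of $F$ and continuity of $D\phi, D^2\phi$ yields $F(x_0, U^*(x_0), D\phi(x_0), D^2\phi(x_0)) \leq 0$. Maximality of $U^*$ among (USC) subsolutions follows by construction.

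\textbf{Stage 2 (the lower envelope is a supersolution).} Assume the comparison principle holds. I would argue by contradiction: if $U_*$ is not a supersolution, there exist $x_0 \in \Omega$ and a lower test function $\psi$ at $x_0$ such that $F(x_0, U_*(x_0), D\psi(x_0), D^2\psi(x_0)) < 0$. By continuity of $F$ on its domain, for $\delta, r > 0$ sufficiently small the perturbed function $\widetilde\psi(x) := \psi(x) + \delta - \delta|x-x_0|^2$ still satisfies $F(x, \widetilde\psi(x), D\widetilde\psi(x), D^2\widetilde\psi(x)) < 0$ on the ball $B(x_0, r)$, and $\widetilde\psi \leq U$ on the annulus near $\partial B(x_0, r/2)$. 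I would then form
$$
V(x) := \begin{cases} \max\{U^*(x), \widetilde\psi(x)\} & x \in B(x_0, r/2), \\ U^*(x) & \text{otherwise,} \end{cases}
$$
and verify that $V$ is an upper semicontinuous bounded subsolution (the subsolution property at points where $\widetilde\psi > U^*$ uses the strict inequality for $\widetilde\psi$; at the other points it is inherited from $U^*$). Choosing a sequence $z_n \to x_0$ realizing $U_*(x_0) = \lim U(z_n) = \psi(x_0)$ and using the strict bump $\widetilde\psi - \psi \geq \delta/2$ near $x_0$, I obtain $V(z_n) > U(z_n)$ for large $n$, contradicting the definition of $U$.

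\textbf{Stage 3 (boundary values and conclusion).} Since $\underline u$ is a subsolution, $\underline u \leq U$; since every $u \in \mathcal U$ is a subsolution and $\overline u$ is a supersolution, the comparison principle gives $u \leq \overline u$, hence $U \leq \overline u$. In particular $\underline u_* \leq U_* \leq U \leq U^* \leq \overline u^*$ in $\overline\Omega$, and the hypothesis $\underline u_* = \overline u^*$ on $\partial\Omega$ forces $U^* = U_*$ with the prescribed boundary values there. Applying the viscosity comparison principle to the subsolution $U^*$ and the supersolution $U_*$ yields $U^* \leq U_*$ on $\Omega$, and combined with the trivial $U_* \leq U^*$ we get $U_* = U = U^*$. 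Thus $U$ is continuous and is the unique viscosity solution attaining the prescribed boundary values.

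\textbf{Main obstacle.} The delicate step is Stage 2, the bump argument: one must simultaneously ensure that $V$ remains upper semicontinuous and a subsolution (which relies on the strict viscosity inequality surviving a small perturbation, thanks to continuity of $F$ on its domain) and that the bumped function strictly exceeds $U$ somewhere (which requires a careful choice of sequences $z_n \to x_0$ realizing $U_*(x_0)$ together with the strict margin provided by $\delta$). Stage~1 is fairly routine once the test function is made strict, and Stage~3 is then a direct application of the comparison principle.
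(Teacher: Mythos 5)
Your proposal is correct and follows essentially the same route as the paper: the upper semicontinuous envelope is a subsolution, Ishii's bump construction shows the lower semicontinuous regularization is a supersolution, and the comparison principle together with the barriers $\underline u$, $\overline u$ forces $U_*=U=U^*$ and uniqueness. The paper only sketches this (deferring the bump argument to the complex-case Theorem on Perron's method and to \cite{CIL92}), and your Stage 2 matches that detailed argument.
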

  \begin{proof}
  A complete proof is given in \cite{CIL92}. Let us just give an idea of the proof. 
  The fact that $U$ is a subsolution is a standard fact: on shows that the upper semi-continuous regularization $U^*$ is actually a subsolution of the equation $F = 0$, which implies that $U^* \in \mathcal U $ and then $U^* = U$ is a subsolution. Now  the powerful idea of Ishii is to consider the lower semi-continuous regularization $\f_*$ of $\f$ and to show that it is a supersolution of the equation $F = 0$. This is done by contradiction using a bump construction argument (see \cite{Ish89}, \cite{CIL92}). We will give it in details in the complex case in the next section (see Theorem~\ref{thm:perron1}). Then by the comparison principle $ \underline u \leq U \leq \overline u$ in $\Omega$, hence $ \underline u_* \leq U_*  \leq \overline u$ in $\Omega$.  Then at the boundary we will have $\underline u_* \leq U_* $ and $U^* \leq \overline u^* = \underline u_*$, which implies that $U\leq U_*$ at the boundary $\partial \Omega$. Again by the comparison principle we can conclude that $U\leq U_*$ in $\Omega$, which finally implies that $U = U_*$ is a viscosity solution of the equation $F (x,u,Du,D^2u) = 0$ with boundary values $U = \underline u_* = \overline u^*$ in $\bd \Omega$. Uniqueness follows from the comparison principle.
 \end{proof}

\section{The viscosity approach to degenerate complex Monge-Amp\`ere equations}
 
The purpose of this section is to make the connection between the pluripotential theory for the complex Monge-Amp\`ere
operators, as founded by Bedford-Taylor \cite{BT76, BT82}, and the viscosity approach developed by P.L.~Lions and all
(see \cite{IL90,CIL92}). 
\vskip 0.3 cm
\subsection{Viscosity subsolutions in the complex case }

Let $X$ be a (connected) complex manifold of dimension $n$ and $\mu \geq 0$ a semi-positive volume form with continuous density with respect to a fixed smooth non degenerate volume form $\mu_0 > 0$ form on $X$.
In this section $B$ will denote the unit ball of $\C^n$ or its image under a coordinate chart in $X$.

We will consider the following general complex Monge-Amp\`ere type equations 
 $$
    - (dd^c u)^n + e^{g (z,u) + h(D u)} \mu = 0, \leqno (MAE).
  $$
 where $\Omega \Subset \C^n$ is bounded domain, $g$ is a continuous function on $\Om \times \R$ increasing in the $u$ variable, $h$ is continuous function in $X$ and $\mu$ is a continuous positive volume form on $X$. 
 
 To fit in with the viscosity point of view, we identify $\C^n \simeq \R^{2 n}$ and define the Hamiltonian function for $(z,s,p,Q) \in \Omega \times \R \times \C^n \times \mathcal S_{2 n}$ by the formula
 $$
 F (z,s,p,Q) = 
 \left\{\begin{array}{ll}
- (dd^c Q)^n + e^{g (z,s) + h(p)} \mu (z) & \text{ if } Q \ge 0 \\
+ \infty & \text{otherwise}.
\end{array}\right\},
 $$
 where $dd^c Q$ is the hermitian $(1,1)-$part of the (real) quadratic form $Q$ on $\C^n \simeq \R^{2 n}$. Here we identify a hermitian form with the real $(1,1)$-form associated to it.
 
 With this notation, we see that our Hamiltonian is lower semi-continuous in $(z,s,p,Q) \in \Omega \times \R \times \C^n \times \mathcal S_{2 n}$, continuous in its domain and satisfy the degenerate ellipticity condition as well as the properness condition stated in the previous section. Therefore we can use the notions of subsolutions  as in the previous section.
 
 However since the function is not upper semi-continuous in $\Omega \times \R \times \C^n \times \mathcal S_{2 n}$, the definition of supersolutions using this Hamiltonian function will not be dual to the previous one i.e. when $v$ is a supersolution to $F = 0$, it is not clear whether $- v$ is a subsolution.

  Let us denote by $(dd^c Q)_+ = dd^c Q $ if $dd^c Q \geq 0$ and $(dd^c Q)_+ = 0$ if not.
 Then observe that for a lower test function $q$ for $\f$ at $x_0$ i.e. $\f \geq_{x_0} q$, the condition $(dd^c q(x_0))^n_+ \leq \mu(x_0)$ is always satisfied when $dd^c q(x_0)$ is not semi-positive as well as the condition $F (x_0,\f (x_0), D q(x_0),dd^c q (x_0)) \geq 0$. Hence the condition $F (x,s,Q) \geq 0$ is consistent only when $dd^c Q \geq 0$. 
 
 Therefore to define supersolutions, it is natural to introduce the following Hamiltonian function
 $$
  F_+ (z,s,p,Q) := 
- (dd^c Q)_+^n + e^{g (z,s) + h(p)} \mu (z).
 $$
 and then the general definition of a supersolution can be formulated in the following equivalent way:
\begin{defi} \label{def:super}
A supersolution of $(dd^c \f)^n= e^{\e \f} \mu$ is a lower semicontinuous  function 
$\p: \Omega \to \R \cup \{+\infty\}$ such that $\p \not \equiv +\infty$
and the following property is satisfied:
if for any $z_0\in \Omega$ and any $q \in {\mathcal C}^2 (z_0)$, defined in a neighborhood of $z_0$ such that $\p(z_0)=q(z_0)$ and
$
 \p -q \  \text{ has a local minimum at} \ z_0,
 $
 then 
 $$
 (dd^c q(z_0))^n_+ \leq  e^{g (z_0,\p (z_0)) + h(D q (z_0))} \mu (z_0).
 $$
\end{defi}
Observe that the Hamiltonian function $F_+$ is upper semi-continuous everywhere, degenerate elliptic and proper and coincide with $F$ in its domain. Moreover, as we will see later, it turns out that the notion of subsolution is the same for the two Hamiltonians (see Proposition 3.2). 

Observe that the viscosity differential inequality (given by the supersolution property) for a lower test function $q$ at a point $z_0$ does not tell anything about the sign of  $dd^c q (x_0)$ and is certainly satisfied whenever  $dd^c q (x_0)$ is not semi-positive.  
 However the condition is natural since when $v$ is a smooth function which is a supersolution in the classica sense, then for any lower test function $q$ at a given point  $x_0$ we have $dd^c v (x_0) \geq dd^c q (x_0)$ by the classical maximum principle. No if we know that  $ dd^c q (x_0) \geq 0$ we can obtain a consistent estimate that is
 $(dd^q)^n_{x_0} \leq (dd^c v)^n_{x_0} \leq e^{g (z_0,v(z_0)) + h(D q (z_0))} \mu (z_0)$.  But if do not know the signe of  $ dd^c q (x_0)$ we cannot conclude. This means in particular that any smooth function $v$ such that its complex hessian admits at least one negative eigenvalue at any point is a supersolution. In particular any plurisuperhamonic function in $\Omega$ is a supersolution to the above equation.
 
 Actually the only way we will use this definition in the sequel is as follows. If $\f$ is not a supersolution of the equation then there exists a point $z_0$ and a lower test function $q$ at $z_0$ such that $(dd^c q(z_0))^n_+ > \mu(z_0) \geq 0.$ Therefore  $dd^c q (z_0) \geq  0$ and $(dd^c q(z_0))^n > 0$ which implies that $dd^c q(z_0) > 0$. 
 
 Note that if $\mu\ge \mu'$ then a subsolution for $ \mu$ is a subsolution for $\mu'$. This holds
in particular  if $\mu'=0$. 

\subsubsection{Subsolutions of the equation $(dd^c u)^n = \mu$}
Here we restrict ourselves to the special case where $g \equiv 0$ and $h \equiv 0$ and first observe that a function $\f$ satisfies $(dd^c \f)^n \geq 0$ in the viscosity sense if and only if it is plurisubharmonic in $X$.

\begin{prop}  \label{prop:vsc=plp}
The viscosity subsolutions of the complex Monge-Amp\`ere equation $(dd^c\f)^n=0$ are precisely the plurisubharmonic functions on $X$. 
\end{prop}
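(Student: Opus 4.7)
The plan is to prove the two implications separately. The viscosity subsolution condition for $(dd^c\f)^n = 0$ reduces, by the very definition of the Hamiltonian $F$ (with $\mu = 0$), to the following: at every $z_0$ and every $C^2$ upper test function $q$ at $z_0$, one must have $dd^c q(z_0) \geq 0$; this is just the requirement that $D^2 q(z_0)$ lie in the finite-value domain of $F$, the inequality $-(dd^c q(z_0))^n \leq 0$ being then automatic.

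For the easy direction ($\f$ psh $\Rightarrow$ $\f$ is a viscosity subsolution), I would fix an upper test function $q$ at $z_0$ and restrict everything to an arbitrary complex line $\zeta \mapsto z_0 + \zeta v$. The slice $\zeta \mapsto \f(z_0 + \zeta v)$ is subharmonic while $\zeta \mapsto q(z_0 + \zeta v)$ is $C^2$ and dominates it near $\zeta = 0$. Integrating the sub-mean inequality for $\f$ against the Taylor expansion of $q$ on the circle $|\zeta| = r$ kills the holomorphic and antiholomorphic contributions and leaves only the mixed second-order term, yielding $0 \leq r^{2}(dd^c q)_{z_0}(v,\bar v) + o(r^2)$. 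Letting $r \to 0$ and varying $v$ gives $dd^c q(z_0) \geq 0$.

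For the converse, my plan is to regularize by sup-convolution. Working locally in a coordinate ball, and assuming $\f$ bounded (otherwise I would replace $\f$ by $\max\{\f,-N\}$, which is still a viscosity subsolution since the constant $-N$ is one), I form $\f^\e$ as in Proposition~\ref{prop:supconv}. Since the Hamiltonian $F$ at hand depends only on $D^2 u$, the infimum $F_\e$ from that proposition coincides with $F$, so $\f^\e$ remains a viscosity subsolution of $(dd^c u)^n = 0$. Moreover $\f^\e$ is continuous, $\e^{-2}$-convex, and decreases to $\f$ as $\e \searrow 0$. By Alexandrov's theorem (Theorem~\ref{th:A}), $\f^\e$ is twice differentiable at almost every point, and at every such point the jet $(D\f^\e, D^2\f^\e)$ lies in $J^{2,+}\f^\e$; the viscosity inequality then forces $dd^c \f^\e \geq 0$ pointwise almost everywhere.

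The main obstacle, as I see it, is to upgrade this almost-everywhere pointwise inequality to the distributional positivity $dd^c \f^\e \geq 0$ of $(1,1)$-currents that plurisubharmonicity actually requires. Here I would exploit semi-convexity: the function $\p := \f^\e + (2\e^2)^{-1}|z|^2$ is convex, hence plurisubharmonic, so $dd^c \p$ is a non-negative $(1,1)$-current. In its Lebesgue decomposition into absolutely continuous and singular parts, both parts are non-negative, and the absolutely continuous density agrees a.e.\ with the Alexandrov Hessian of $\p$; subtracting the smooth form $dd^c((2\e^2)^{-1}|z|^2)$ realizes $dd^c \f^\e$ as the sum of an a.e.\ non-negative absolutely continuous measure and a non-negative singular measure. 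Therefore $dd^c \f^\e \geq 0$ as a current, and since $\f^\e$ is continuous, it is plurisubharmonic. Finally $\f = \lim_{\e \searrow 0} \f^\e$ is a decreasing limit of plurisubharmonic functions, hence plurisubharmonic.
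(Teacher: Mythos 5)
Your proof is correct, and for the hard implication it follows a genuinely different route from the paper. The easy direction (psh $\Rightarrow$ subsolution) is essentially the paper's argument: you slice along complex lines and average over circles, where the paper averages over balls and complex ellipsoids; same idea. For the converse, the paper does not regularize at all: it shows $dd^c q(x_0)\ge 0$ for every upper test $q$ by perturbing $q$ with semipositive hermitian quadratic terms and invoking Lemma~\ref{lem:semipositive}, deduces that $\f$ is a viscosity subsolution of every constant-coefficient Laplacian $\Delta_H u=0$, applies H\"ormander's Proposition 3.2.10' of \cite{Hor94} to get $\Delta_H\f\ge 0$ in the sense of distributions, and then lets $H$ degenerate to a rank-one matrix. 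Your route -- sup-convolution (Proposition~\ref{prop:supconv}, with $F_\e=F$ because $\mu=0$), Alexandrov points, then the Lebesgue decomposition of the distributional Hessian of the convex function $\f^\e+(2\e^2)^{-1}|z|^2$ -- is exactly the ``alternative proof by sup-convolutions'' that the remark immediately following the proposition alludes to. The paper's route needs nothing beyond the bare statement of Theorem~\ref{th:A} (in fact not even that) and, since it derives rather than assumes semipositivity of $dd^cq(x_0)$, it covers the weaker reading of ``subsolution'' in which only the determinant inequality is imposed at upper tests, which is how one sees that $F$ and $F_+$ have the same subsolutions. Your route avoids the Bellman-type linearization through the operators $\Delta_H$ and the appeal to \cite{Hor94}, stays entirely inside the viscosity toolkit already set up in Section 2, and is the template that adapts to the inhomogeneous equations treated afterwards.

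Two points to tighten. First, your opening reduction (subsolution iff every upper test satisfies $dd^cq(z_0)\ge 0$) is immediate for the Hamiltonian with the $+\infty$ extension, but if one starts only from $\det\bigl(dd^cq(z_0)\bigr)\ge 0$ you should add the one-line argument: $q$ plus any semipositive hermitian quadratic is again an upper test, so $\det\bigl(dd^cq(z_0)+H\bigr)\ge 0$ for all $H\ge 0$, and Lemma~\ref{lem:semipositive} gives $dd^cq(z_0)\ge 0$. Second, the identification of the Alexandrov Hessian with the density of the absolutely continuous part of the distributional Hessian of a convex function is a refinement of Theorem~\ref{th:A} as stated in the paper; it is classical (see Evans--Gariepy) but is the hinge turning your a.e.\ pointwise inequality into an inequality of currents, so cite it explicitly. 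The truncation $\max\{\f,-N\}$ and the final decreasing limits are fine (a maximum of subsolutions is a subsolution, constants are subsolutions), at the same level of care about the $-\infty$ locus as the paper's own proof.
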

\begin{proof} 
Let $\f$ be a subsolution of $(dd^c\f)^n=0$.  Let $x_0\in X$ such that $\f(x_0)\not= -\infty$. 
The problem is local so we can assume that $X$ is a domain in $\C^n$.
Let $q\in {\mathcal C}^2(V_{x_0})$ such that $\f-q$ has a local maximum at $x_0$. Then the hermitian matrix $Q=dd^c q_{x_0}$
satisfies  $\det(Q)\ge 0$. Moreover for every hermitian semipositive matrix $H$, we also have $\det(Q+H)\ge 0$ since,
a fortiori for  
$q_H=q+H(x-x_0)$, $\f-q_H$ has a local maximum at $x_0$ too.

It follows from Lemma \ref{lem:semipositive} below that $Q=dd^cq_{x_0}$ is actually semi-positive.
We infer that for every positive definite hermitian matrix $(h^{i \bar j})$
$\Delta_H q (x_0) := h^{i\bar j} \frac{\partial^2 q}{\partial z_i \partial \bar z_j} (x_0) \ge 0$, i.e. $\f$ is a viscosity subsolution 
of the equation $ - \Delta_H \f =0$. In appropriate complex coordinates 
this constant coefficient differential operator is nothing but the Laplace operator.  Hence (\cite{Hor94} Proposition 3.2.10' p. 147) applies to the effect that
$\f$ is $\Delta_H$-subharmonic hence is in $L^1_{loc}(V_{x_0})$ and satisfies 
$\Delta_H  \f \ge 0$ in the sense of distributions. Let $(w^i)$ be any vector in $\C^n$. Consider a 
positive hermitian matrix $(h^{i\bar j})$ degenerating to the rank one matrix $(w^i\bar w^j)$. By continuity, we have $\sum w^i\bar w^j \frac{\partial^2 \f}{\partial z_i \partial \bar z_j}\ge 0$ in the sense of distributions.  Thus $\f$ is plurisubharmonic. 

\smallskip

Conversely, assume $\f$ is plurisubharmonic. Fix $x_0\in X$, $q\in {\mathcal C}^2({V_{x_0}})$ such that
 $
  \f-q \  \text{ has a local maximum at} \ x_0 .
  $
 Then, for every small enough ball $B\subset V_{x_0}$ centered at $x_0$, we have 
 $$
 \f(x_0)-q(x_0) \ge \frac{1}{V(B)} \int_{B} (\f-q) \, dV,
 $$ 
 hence
 $$
 \frac{1}{V(B)} \int_{B} q \, dV -q(x_0) \ge \frac{1}{V(B)} \int_{B} \f  \, dV-\f(x_0) \ge 0. 
 $$
 Letting the radius of $B$ tend to $0$ it follows, since $q$ is ${\mathcal C}^2$ that $\Delta q_{x_0} \ge 0$.
  Using complex ellipsoids instead of balls\footnote{This amounts to a linear change of complex coordinates.},  we conclude that $\Delta_H q (x_0)  \ge 0$ for every positive definite hermitian matrix.
Thus $dd^c q _{x_0} \ge 0$ and  $(dd^c\f)^n \ge 0$ in the viscosity sense. 
\end{proof}

The following lemma is easily proven by diagonalizing $Q$:

\begin{lem} \label{lem:semipositive}
Let $Q$ be an hermitian matrix such that, for every semipositive hermitian matrix $H$, $\det(Q+H)\ge 0$ then $Q$ is semipositive.
\end{lem}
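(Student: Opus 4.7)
The plan is a straightforward diagonalization argument. Since $Q$ is Hermitian, there exists a unitary matrix $U$ such that $U^* Q U = D$, where $D = \mathrm{diag}(\lambda_1,\dots,\lambda_n)$ with $\lambda_i \in \R$. The key observation is that the map $H \mapsto U^* H U$ is a bijection of the cone of semipositive Hermitian matrices onto itself, and preserves determinants in the sense that $\det(Q+H) = \det(D + U^*HU)$. Thus, without loss of generality, I may assume $Q = D$ is diagonal and the hypothesis becomes: for every semipositive Hermitian $H$, $\det(D+H) \geq 0$.

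I will argue by contradiction: suppose some eigenvalue, say $\lambda_k$, is strictly negative. The idea is to build a semipositive $H$ that boosts all diagonal entries of $D$ except the $k$-th one, so that the resulting matrix has exactly one negative eigenvalue and hence negative determinant. Concretely, for $t > 0$, let $H_t$ be the diagonal matrix with $(H_t)_{ii} = t$ for $i \neq k$ and $(H_t)_{kk} = 0$. Clearly $H_t$ is semipositive Hermitian. Then $D + H_t$ is diagonal with entries $\lambda_i + t$ for $i \neq k$ and $\lambda_k$ in position $k$, so
$$
\det(D + H_t) = \lambda_k \prod_{i \neq k}(\lambda_i + t).
$$
For $t$ sufficiently large, each factor $\lambda_i + t$ is strictly positive, so the product on the right is strictly positive while $\lambda_k < 0$. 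Hence $\det(D+H_t) < 0$, contradicting the hypothesis.

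Therefore every eigenvalue $\lambda_i$ of $Q$ must satisfy $\lambda_i \geq 0$, which means $Q$ is semipositive. There is no real obstacle in this proof; the only thing to check is that the construction of $H_t$ in the eigenbasis transports back to a genuinely semipositive matrix in the original coordinates, which is automatic from unitary invariance of positive semidefiniteness.
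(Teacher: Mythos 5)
Your proof is correct and follows exactly the route the paper indicates (the paper only remarks that the lemma "is easily proven by diagonalizing $Q$"): diagonalize $Q$ by a unitary change of basis and test against a diagonal semipositive $H$ that is large on every eigendirection except a putative negative one, forcing a negative determinant. Nothing to add.
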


Recall that when $\f$ is plurisubharmonic and locally bounded, its Monge-Amp\`ere measure
$MA(\f)=(dd^c \f)^n$ is well defined \cite{BT76} (as the unique limit of the smooth measures $MA(\f_j)$,
where $\f_j$ is any sequence of smooth psh functions decreasing to $\f$).
Our next result makes the basic connection between this pluripotential notion and its viscosity counterpart.

\begin{prop} \label{pro:visc=pluripot}
Let $\f$ be a locally bounded upper semi-continuous function in $X$.
It satisfies $(dd^c\f)^n \ge \mu$ in the viscosity sense iff 
it is plurisubharmonic and its Monge-Amp\`ere measure satisfies 
$MA(\f) \ge \mu$ in the pluripotential sense.
\end{prop}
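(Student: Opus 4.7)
My plan is to prove the two implications by entirely different techniques: the easier direction $(\Leftarrow)$ via the pluripotential comparison principle (Corollary~\ref{coro:LCP}), and the harder direction $(\Rightarrow)$ via sup-convolutions combined with Alexandrov's theorem~\ref{th:A} and the Bedford--Taylor convergence (Theorem~\ref{thm:CV}).

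For $(\Leftarrow)$, suppose $\f$ is psh with $MA(\f)\ge\mu$ pluripotentially but fails to be a viscosity subsolution. Then there exist $x_0\in X$ and an upper test function $q\in C^2$ with $q(x_0)=\f(x_0)$, $\f\le q$ near $x_0$, at which the viscosity inequality is violated. Since $\f$ is psh, Proposition~\ref{prop:vsc=plp} forces $dd^c q(x_0)\ge 0$, so the failure means $(dd^c q(x_0))^n<\mu(x_0)$ as continuous densities. After replacing $q$ by $q+\eta|z-x_0|^2$ for small $\eta>0$ (which preserves the test-function property and shifts the determinant only slightly), I may assume $q$ is strictly psh in a neighborhood of $x_0$ and, by continuity of $\mu$ and $(dd^c q)^n$, that $(dd^c q)^n<\mu$ pointwise on a small ball $B=B(x_0,r)$. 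I then set $\tilde q(z):=q(z)+\delta(|z-x_0|^2-r^2)$ for small $\delta>0$. Three properties hold: $\tilde q=q\ge\f$ on $\partial B$; $\tilde q(x_0)=q(x_0)-\delta r^2<\f(x_0)$; and, by continuity, $(dd^c\tilde q)^n<\mu$ on $B$ for $\delta$ small enough. Applying Corollary~\ref{coro:LCP} to the locally bounded psh functions $\tilde q$ and $\f$ on $B$, with $\tilde q\ge\f$ on $\partial B$ and $(dd^c\tilde q)^n\le\mu\le MA(\f)$ in $B$, I obtain $\tilde q\ge\f$ on $B$, contradicting $\tilde q(x_0)<\f(x_0)$.

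For $(\Rightarrow)$, since $\mu\ge 0$ the function $\f$ is a fortiori a viscosity subsolution of $(dd^c u)^n\ge 0$, so Proposition~\ref{prop:vsc=plp} gives that $\f$ is psh. To upgrade to the pluripotential inequality I would introduce the sup-convolutions $\f^\e$. By Proposition~\ref{prop:supconv}, $\f^\e$ is $\e^{-2}$-semi-convex, decreases to $\f$ as $\e\searrow 0$, and is a viscosity subsolution of the relaxed equation $(dd^c u)^n\ge\mu_\e$, where $\mu_\e(x):=\inf_{|y-x|\le A\e}\mu(y)$. Alexandrov's theorem~\ref{th:A} then yields twice differentiability (in the Alexandrov sense) a.e.\ on the domain of $\f^\e$; at each such point the Alexandrov jet lies in $J^{2,+}\f^\e(x_0)\subset\bar J^{2,+}\f^\e(x_0)$, so the jet characterization of viscosity subsolutions forces $dd^c\f^\e(x_0)\ge 0$ and $\det_\C(dd^c\f^\e(x_0))\ge\mu_\e(x_0)$ a.e. Combined with semi-convexity, the a.e.\ inequality $dd^c\f^\e\ge 0$ promotes $\f^\e$ itself to a psh function (the distributional $dd^c$ coincides with the Alexandrov complex Hessian), and the Bedford--Taylor measure $MA(\f^\e)$ coincides with the absolutely continuous measure $\det_\C(dd^c\f^\e)\cdot c_n\beta_n$. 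Hence $MA(\f^\e)\ge\mu_\e$ as Borel measures. Letting $\e\searrow 0$, Theorem~\ref{thm:CV} gives the weak convergence $MA(\f^\e)\rightharpoonup MA(\f)$, while $\mu_\e\to\mu$ locally uniformly by continuity of the density of $\mu$; testing against non-negative smooth compactly supported functions yields $MA(\f)\ge\mu$ in the pluripotential sense.

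The main obstacle is the identification, in the $(\Rightarrow)$ step above, of the pluripotential Monge--Amp\`ere measure of a semi-convex psh function with its Alexandrov a.e.\ determinant (no singular part, density equal to the a.e.\ complex Hessian determinant). This is a non-trivial auxiliary lemma, provable by regularizing $\f^\e$ via standard convolution to obtain smooth psh approximants whose classical Monge--Amp\`ere densities converge a.e.\ to the Alexandrov density (using the uniform lower bound on the Hessian from semi-convexity and Alexandrov's theorem), while at the same time converging weakly to $MA(\f^\e)$ by Bedford--Taylor.
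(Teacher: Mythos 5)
Your $(\Leftarrow)$ direction is essentially the paper's own argument: test function, $dd^cq(x_0)\ge 0$ via Lemma~\ref{lem:semipositive}, a quadratic perturbation, and the pluripotential comparison principle to reach a contradiction. (One small point you should make explicit: the hypothesis of Corollary~\ref{coro:LCP} is $\{\tilde q<\f\}\Subset B$, not just $\tilde q\ge\f$ on $\partial B$; this does hold in your construction, but only because of the prior replacement $q\mapsto q+\eta|z-x_0|^2$, which gives $\tilde q\ge \f+\eta|z-x_0|^2-\delta(r^2-|z-x_0|^2)\ge\f$ on an interior annulus -- exactly the role of the $\e\|x-x_0\|^2-\e r^2/2$ device in the paper.) Your $(\Rightarrow)$ direction, by contrast, is a genuinely different route: the paper reduces to linear equations via Gaveau's Bellman-type formula $\det(Q)^{1/n}=\inf\{\mathrm{tr}(HQ)\}$, invokes H\"ormander's result to pass from viscosity to distributional subharmonicity for each $\Delta_H$, and then regularizes by convolution; you instead use sup-convolution, Alexandrov's theorem and the jet characterization to get the pointwise inequality a.e., and then transfer it to the Bedford--Taylor measure. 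This sup-convolution route is viable (the paper itself alludes to it as an alternative) and avoids the H\"older-continuity and strict-positivity reductions on $\mu$ that the paper needs in intermediate steps.

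However, the pivotal auxiliary claim as you state it is false: for a semi-convex psh function the Bedford--Taylor measure does \emph{not} in general coincide with $\det_{\C}$ of the Alexandrov Hessian times Lebesgue. Take $u(z)=|\mathrm{Re}\,z_1|+|z_2|^2$ on $\C^2$: it is convex (hence semi-convex) and psh, its Alexandrov complex Hessian has determinant $0$ almost everywhere, yet $(dd^cu)^2=2\,dd^c|\mathrm{Re}\,z_1|\wedge dd^c|z_2|^2$ is a nonzero measure singular with respect to Lebesgue; already for $n=1$, $\Delta|\mathrm{Re}\,z|$ is purely singular. Accordingly, your sketched proof of the identity cannot work: a.e.\ convergence of the smooth Monge--Amp\`ere densities together with weak convergence of the measures only yields, via Fatou, the one-sided inequality $MA(\f^{\e})\ge \det_{\C}\bigl(dd^c\f^{\e}\bigr)_{A}\,c_n\beta_n$ (mass may concentrate on a singular set in the limit, as the example shows). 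The good news is that this inequality is all your argument actually uses: the singular part of $MA(\f^{\e})$ only helps, so $MA(\f^{\e})\ge\mu_{\e}$ still follows from the a.e.\ viscosity/jet inequality, and the rest of your limiting argument ($\mu_\e\to\mu$ locally uniformly, Theorem~\ref{thm:CV} for the decreasing family $\f^\e$) goes through. So the proof is reparable, but you must restate the lemma as an inequality (Bedford--Taylor measure dominates the a.e.\ Alexandrov determinant, the a.c.\ part of the distributional Hessian of a semi-convex function being the Alexandrov Hessian a.e.) rather than an identity.
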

\begin{proof}
Assume $\f \in PSH\cap L^{\infty} (B)$ satisfies $MA(\f) \ge \mu$. Consider $q$ a ${\mathcal C}^2$ function such that
$\f-q$ achieves a local maximum at $x_0$ and $\f(x_0)=q(x_0)$. Since $\f$ satisfies $(dd^c\f)^n\ge 0$ in the 
viscosity sense,  $(dd^cq)_{x_0}^n \ge 0$ and 
$dd^cq_{x_0} \ge 0$ by lemma 	\ref{lem:semipositive}. Assume $(dd^cq)_{x_0}^n < \mu_{x_0}$. 
Let $q^{\e}:=q+ \e\| x-x_0 \|^2 $. Choosing $\e>0$ small enough, we have  $0<(dd^cq^{\e}_{x_0})^n < \mu_{x_0}$. 
Since $\mu$ has continuous density, we can chose a small ball $B'$ containing $x_0$ of radius $r>0$ such that
$\bar q^{\e}=q^{\e}-\e \frac{r^2}{2} \ge \f$ near $\partial B'$ and $MA(\bar q^{\e}) \le MA(\f)$. 
The comparison principle (Theorem~\ref{thm:CP})
yields $\bar q^{\e} \ge \f$ on $B'$. But this fails at $x_0$. Hence $(dd^c q )^n_{x_0} \ge \mu_{x_0}$ and $\f$ 
is a viscosity subsolution. 

Conversely assume $\f$ is a viscosity subsolution.  Fix $x_0\in M$ such that $\f(x_0)\not= -\infty$
and $q\in {\mathcal C}^2$ such that $\f-q$ has a local maximum at $x_0$. Then the hermitian matrix $Q=dd^cq_{x_0}$
satisfies      $\det(Q)\ge \mu_{x_0}$.

Recall that the classical trick (due to Krylov) of considering the complex Monge-Amp\`ere equation as a Bellmann equation 
relies on the following:
\begin{lem} \cite{Gav77}
Let $Q$ be a $n\times n $ non negative hermitian matrix, then
$$
 \det(Q)^{1/n}= \inf \{ \rm{tr}(H Q)  \, | \, H \in H_n^+ \text{ and } \det(H)=n^{-n} \},
$$ 
where $H_n^+$ denotes the set of positive hermitian $n \times n$ matrices.
\end{lem}

Applying this to our situation, it follows that for every positive definite hermitian matrix 
$H = (h_{i\bar j})$ with $\det(H)=n^{-n}$, 
$$
\Delta_H q (x_0) := \sum h_{i\bar j} \frac{\partial^2 q}{\partial z_i \partial \bar z_j} (x_0) \ge \mu^{1/n}(x_0),
$$
 i.e. $\f$ is a viscosity subsolution of the linear equation $\Delta_H \f = \mu^{1/n}$.
 
 This is a constant coefficient linear partial differential
equation.  Assume $\mu^{1/n}$ is $C^{\alpha}$ with $\alpha>0$ and choose a ${\mathcal C}^2$ solution of 
$\Delta_H \f = \mu^{1/n}$ in a neighborhood of $x_0$ (see \cite{GT83}). Then $u=\f-f$ satisfies $\Delta_H u\ge 0$ in the viscosity sense. 
Once again, (\cite{Hor94} prop 3.2.10' p. 147) applies to the effect that
$u$ is $\Delta_H$-subharmonic hence  
$\Delta_H \f \ge \mu^{1/n}$ in the weak sense of positive Radon measures. 

Using convolution to regularize $\f$ and setting $\f_{\e}=\f * \rho_{\e}$ we see that 
 $\Delta_H \f_{\e}\ge (\mu^{1/n})_{\e}$. 
 Another application of the above lemma yields 
 $$
 (dd^c\f_{\e})^n \ge ( (\mu^{1/n})_{\e})^n.
 $$ 
 Since $\f_\e$ is decreasing with $\e$,  continuity of $MA(\f)$ with respect to such a sequence  yields $MA(\f) \ge \mu$ by Theorem~\ref{thm:CV}. 
 
 This settles the case when $\mu>0$ and $\mu$ is H\"older continuous. In case $\mu>0$ is merely continuous
 we observe that $\mu=\sup\{\nu | \nu \in {\mathcal C}^{\infty}, \ \mu\ge \nu >0 \}$. 
  Taking into account the fact that
 any subsolution of $(dd^c\f)^n=\mu$ is a subsolution of $(dd^c\f)^n =\nu$ provided $\mu\ge \nu$ we conclude $MA(\f) \ge \mu$. 
 
 In the general case when $\mu \geq 0$, we observe that $\psi_{\e}(z)=\f(z)+\e \| z \|^2$ satisfies $(dd^c\psi_{\e})^n \ge \mu+ \e^n \lambda$ in the viscosity sense with $\lambda$ the euclidean volume form. Hence $MA(\psi_{\e})\ge \mu$, 
 from which we conclude that $MA(\f) \ge \mu$. 
 \end{proof}
 
 \begin{rem}
  The proof actually works in any class of plurisubharmonic functions in which the Monge-Amp\`ere
 operator is continuous by decreasing limits of locally bounded functions and the
 comparison principle holds. When $n \geq 2$, these are  precisely the finite energy classes 
 studied in \cite{Ceg98,GZ07,BGZ09}.\\
 
 The basic idea of the proof is closely related to the method in \cite{BT76} and is the topic treated in \cite{Wi04}. An alternative proof by using sup-convolutions will be given in the next section.
 \end{rem}
 
 We now relax the assumption that $\f$ being bounded and connect viscosity subsolutions to
 pluripotential subsolutions through the following:
 
 \begin{theo} \label{thm:visc=pluripot}
 Assume that there exists a bounded psh function $\rho$ on $ X$ such that $(dd^ c \rho)^n \geq \mu$ in the weak sense in $X$.
 Let $\f$ be an upper semicontinuous function such that $\f \not \equiv -\infty$ on any connected component. The
 following are equivalent:
 
 $(i)$ $\f$ satisfies $(dd^c\f)^n\ge \mu$ in the viscosity sense on $X$;
 
 $(ii)$ $\f$ is plurisubharmonic and for all $c>0$,
 $(dd^ c\sup[\f, \rho-c])^ n \ge \mu$ in the pluripotential sense on $X$. 
 \end{theo}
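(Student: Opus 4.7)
The overall plan is to bootstrap from Proposition~\ref{pro:visc=pluripot}, which handles the locally bounded case, by replacing $\f$ with its truncations $\sup[\f,\rho-c]$. The assumption on $\rho$ is exactly what guarantees that these truncations are themselves viscosity subsolutions of $(dd^c u)^n\geq \mu$, so that the bounded-case proposition may be applied.

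For (i)$\Rightarrow$(ii), I would proceed in three short steps. First, since $\mu \geq 0$, assumption (i) implies a fortiori $(dd^c\f)^n\geq 0$ in the viscosity sense, so Proposition~\ref{prop:vsc=plp} gives $\f\in PSH(X)$. Second, because $\rho$ is bounded and $(dd^c\rho)^n\geq \mu$ pluripotentially, Proposition~\ref{pro:visc=pluripot} yields that $\rho-c$ is a viscosity subsolution. Third, I would verify the ``max of two subsolutions is a subsolution'' principle: if $q\in{\mathcal C}^2$ is an upper test function for $\sup[u,v]$ at $x_0$ with $\sup[u,v](x_0)=q(x_0)$, then whichever of $u$ or $v$ attains the maximum at $x_0$ also admits $q$ as an upper test function at $x_0$, so its viscosity inequality transfers to $\sup[u,v]$. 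Applied to $(u,v)=(\f,\rho-c)$, this gives that $\sup[\f,\rho-c]$ is a viscosity subsolution; it is locally bounded (bounded below by $\rho-c$ and locally bounded above as an $\omega=0$ psh function), so Proposition~\ref{pro:visc=pluripot} converts this back into $(dd^c\sup[\f,\rho-c])^n\geq\mu$ in the pluripotential sense.

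For the converse (ii)$\Rightarrow$(i), I would take an arbitrary ${\mathcal C}^2$ upper test function $q$ for $\f$ at $x_0$ with $q(x_0)=\f(x_0)$ (so $\f(x_0)>-\infty$) and choose $c$ large, depending on the test function, so that $q$ remains an upper test function for $\sup[\f,\rho-c]$ at $x_0$. Concretely, with $M:=\sup_X\rho<+\infty$, any $c>M-\f(x_0)+1$ works: one then has $\rho(x_0)-c<\f(x_0)=q(x_0)$, and by continuity of $q$ at $x_0$ the inequality $\rho(y)-c\leq M-c<q(y)$ holds throughout some neighbourhood $V$ of $x_0$, while $\f\leq q$ on $V$ by hypothesis. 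Hence $\sup[\f,\rho-c]\leq q$ on $V$ with equality at $x_0$. Since $\sup[\f,\rho-c]$ is locally bounded, assumption (ii) combined with Proposition~\ref{pro:visc=pluripot} tells us it is a viscosity subsolution, so the inequality forces $dd^c q(x_0)\geq 0$ and $(dd^c q(x_0))^n\geq\mu(x_0)$. This is exactly the viscosity subsolution condition for $\f$ at $x_0$.

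The step I expect to require the most care is the calibration of $c$ in (ii)$\Rightarrow$(i): it is crucial that $c$ be allowed to depend on $x_0$ and on $q$, and the boundedness of $\rho$ is essential here, as otherwise $\rho-c$ could not be dominated by $q$ near $x_0$ no matter how large $c$ is taken. The remaining ingredients---upper semi-continuity of $\sup[\f,\rho-c]$, its plurisubharmonicity, and the elementary ``max of subsolutions'' lemma---are routine.
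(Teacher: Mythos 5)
Your proof is correct and follows essentially the same route as the paper: reduce to the locally bounded truncations $\sup[\f,\rho-c]$, use Proposition~\ref{prop:vsc=plp} and Proposition~\ref{pro:visc=pluripot} to pass between the viscosity and pluripotential formulations, and for the converse choose $c$ large (depending on $x_0$ and $q$) so that the upper test function for $\f$ also tests $\f_c=\sup[\f,\rho-c]$ at $x_0$, the case $\f(x_0)=-\infty$ being vacuous. The only cosmetic difference is that you establish the subsolution property of the truncation by an explicit ``max of viscosity subsolutions'' argument, where the paper invokes the pluripotential maximum principle as in Lemma~\ref{lem:UB}; both yield the same conclusion.
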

 
 Observe that these properties are local and that it is possible to find a local strictly psh function such that  locally $(dd^ c \rho)^n \geq \mu$.

 \begin{proof}   
 Assume first that  $\f$ is a viscosity subsolution of $- (dd^c \rho)^n + \mu = 0$.
 Since $\rho-c$ is also a subsolution, it follows from the maximum principle as in the proof of Lemma \ref{lem:UB} that $\sup(\f, \rho-c)$ is a pluripotential subsolution,
 hence Proposition \ref{pro:visc=pluripot} yields $MA(\sup(\f, \rho-c)) \ge \mu$ in the viscosity sense.    

Conversely, fix $x_0\in X$ and assume i) holds. If $\f$ is locally bounded near $x_0$, Proposition \ref{pro:visc=pluripot}
implies that $\f$ is a pluripotential subsolution near $x_0$. 

Assume $\f(x_0)\not= -\infty $ but $\f$ is not locally bounded near $x_0$. Fix 
$q\in {\mathcal C}^2$ such that
$q\ge \f$ near $x_0$ and $q(x_0)=\f(x_0)$. Then for $c>0$ big enough we have $q\ge \f_c=\sup(\f, \rho-c)$ and 
$q(x_0)=\f_c(x_0)$, hence $(dd^cq) _{x_0}^n \ge \mu_{x_0}$ by Proposition \ref{pro:visc=pluripot} again.

Finally if $\f(x_0)=-\infty$ there are no $q$ to be tested against the differential inequality, hence it holds for every test function $q$.
  \end{proof}

 Condition $(ii)$ might seem a bit cumbersome. The point is that
 the Monge-Amp\`ere operator can not be defined on the whole space of plurisubharmonic functions. 
When $\f$ belongs to its
 domain of definition, condition $(ii)$ is equivalent to $MA(\f) \geq \mu$ in the pluripotential sense.
To be more precise, we have:

\begin{coro}
 Let $\Omega\Subset \C^n$ be a hyperconvex domain. Then $\f\in \mathcal{E}(\Omega)$, see \cite{Ceg04} for the notation,
satisfies $(dd^c\f)^n\ge \mu$ in the viscosity sense iff its Monge-Amp\`ere measure $MA(\f)$ satisfies  $MA(\f)\ge \mu$. 
\end{coro}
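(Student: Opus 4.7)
My strategy is to deduce the corollary from Theorem~\ref{thm:visc=pluripot} by a truncation argument in Cegrell's class. First I would produce a bounded plurisubharmonic function $\rho$ on $\Omega$ with $(dd^c\rho)^n \geq \mu$: since $\Omega$ is hyperconvex and $\mu$ has continuous density, such a $\rho$ is supplied by Cegrell's solution of the Dirichlet problem with zero boundary values and right-hand side $\mu$. Normalising so that $\f \leq 0$ and $\rho \leq 0$, I introduce the bounded truncations
$$
\f_c := \max\{\f,\rho-c\}, \qquad c > 0,
$$
which lie in $\mathcal{E}(\Omega) \cap L^{\infty}(\Omega)$ by stability of $\mathcal{E}$ under taking maxima with bounded psh functions, and which decrease pointwise to $\f$ as $c \to +\infty$. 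With $\rho$ in hand, Theorem~\ref{thm:visc=pluripot} asserts that $(dd^c\f)^n \geq \mu$ in the viscosity sense is equivalent to $MA(\f_c) \geq \mu$ in the pluripotential sense for every $c > 0$, so it suffices to show that the latter condition is equivalent, for $\f \in \mathcal{E}(\Omega)$, to $MA(\f) \geq \mu$ as Borel measures on $\Omega$.

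One direction is a passage to the limit. By Cegrell's continuity theorem for the complex Monge-Amp\`ere operator along decreasing sequences in $\mathcal{E}(\Omega)$ (see \cite{Ceg04}) one has $MA(\f_c) \rightharpoonup MA(\f)$ weakly as $c \to +\infty$, so the uniform bound $MA(\f_c) \geq \mu$ forces $MA(\f) \geq \mu$. For the reverse implication I would apply the maximum principle valid in $\mathcal{E}(\Omega)$, namely
$$
MA(\max\{u,v\}) \geq \mathbf{1}_{\{u \geq v\}} MA(u) + \mathbf{1}_{\{u<v\}} MA(v),
$$
to $u = \f$ and $v = \rho - c$. Since $MA(\f) \geq \mu$ by hypothesis and $MA(\rho-c) = MA(\rho) \geq \mu$ by construction, the right-hand side dominates $\mathbf{1}_{\{\f\geq\rho-c\}}\mu + \mathbf{1}_{\{\f<\rho-c\}}\mu = \mu$, so $MA(\f_c) \geq \mu$ on all of $\Omega$, as needed.

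The main technical input is Cegrell's machinery for unbounded psh functions on a hyperconvex domain: stability of $\mathcal{E}(\Omega)$ under taking maxima with bounded psh functions, the plurifine locality of the Monge-Amp\`ere operator on $\mathcal{E}(\Omega)$ (which yields the max-principle decomposition above in its fine form), and continuity along decreasing sequences. These go beyond the Bedford--Taylor theory for locally bounded psh functions recalled in \S 1; once they are invoked, the corollary reduces to two short manipulations grafted onto Theorem~\ref{thm:visc=pluripot}, where the substantive work has already been done.
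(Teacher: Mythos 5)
Your proposal is correct and follows essentially the route the paper intends: the corollary is stated there without a separate proof, as an immediate consequence of Theorem~\ref{thm:visc=pluripot} together with the observation that, for $\f$ in the domain of definition $\mathcal{E}(\Omega)$, condition $(ii)$ is equivalent to $MA(\f)\ge\mu$, and your truncation argument (continuity of the Monge-Amp\`ere operator along decreasing limits in $\mathcal{E}(\Omega)$ for one direction, the maximum-principle decomposition $MA(\max\{u,v\})\ge {\bf 1}_{\{u\ge v\}}MA(u)+{\bf 1}_{\{u<v\}}MA(v)$ for the other) is exactly the standard way to supply that equivalence. The only cosmetic remark is that producing the bounded subsolution $\rho$ does not really require Cegrell's Dirichlet problem: since the density of $\mu$ is continuous, hence locally bounded, a large multiple of $|z|^2$ works locally, and the properties in question are local, as the paper itself notes after Theorem~\ref{thm:visc=pluripot}.
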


We do not want to recall the definition of the class ${\mathcal E}(\Omega)$ (see \cite{Ceg04}). It suffices to say that $\mathcal{E}(\Omega)$ coincides with the domain of definition of the complex Monge-Amp\`ere operator (see \cite{Bl06}) and when
$n=2$,  $\mathcal{E}(\Omega) = PSH (\Omega) \cap W^{1,2}_{loc} (\Omega)$ \cite{Blo04}.
 \vskip 0.2 cm
 \subsubsection{Viscosity subsolutions for $(dd^c u)^n = e^{\e \f} \mu$}
 

We shall first consider the complex Monge-Amp\`ere equations
$$
 - (dd^c \f)^n + e^{\e \f} \mu = 0,
$$
where $\mu$ is continuous volume form on $X$. Viscosity techniques actually
mainly apply to the case $\e>0$ and we are going to treat the previous
case $\e=0$ by a limiting process.

\smallskip

When $\f$ is continuous, so is the density of ${\tilde \mu}=e^{\e \f} \mu$:
these definitions are then equivalent to the above ones and the first
basic properties can be applied. When $\f$ is not assumed to be continuous,
one needs to carefully check that subsolutions (resp. supersolutions)
can still be understood equivalently in the pluripotential or viscosity sense.

\begin{prop}
Let $\f: X \rightarrow \R$ be a locally  bounded u.s.c. function. 
Then $\f$ satisfies $(dd^c \f)^n \geq e^{\e \f} \mu$ in the viscosity sense in $X$
if and only if it is plurisubharmonic and it  does  in the pluripotential sense in $X$.
\end{prop}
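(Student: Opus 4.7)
The plan is to adapt the proof of Proposition~\ref{pro:visc=pluripot}, the new feature being that the right-hand side $e^{\e \f}\mu$ depends on $\f$, which is merely u.s.c. As a preliminary observation useful in either direction, I would note that a viscosity subsolution of $(dd^c \f)^n = e^{\e \f}\mu$ is a fortiori a viscosity subsolution of $(dd^c \f)^n = 0$ (since $e^{\e \f}\mu \geq 0$), so $\f$ is automatically plurisubharmonic by Proposition~\ref{prop:vsc=plp}; the converse direction assumes $\f$ psh from the outset. In particular Lemma~\ref{lem:semipositive} ensures in both directions that $dd^c q(x_0) \geq 0$ at every $C^2$ upper test $q$.

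For the implication pluripotential $\Rightarrow$ viscosity, I would argue by contradiction along the lines of Proposition~\ref{pro:visc=pluripot}. Assuming $(dd^c q(x_0))^n < e^{\e q(x_0)}\mu(x_0)$ at some upper test $q$, I would set $\bar q := q + \alpha\|\cdot-x_0\|^2 - \beta$ on a small ball $B = B(x_0, r)$, fixing parameters in the order $\alpha$, then $r$, then $\beta$: $\alpha > 0$ small so that the strict Monge-Amp\`ere inequality at $x_0$ survives the Hessian perturbation; $r > 0$ small so that $q \geq \f$ on $\bar B$ and, by continuity of $dd^c q$, $q$ and $\mu$ (uniformly in $\beta$ in a bounded range), the strict inequality $(dd^c \bar q)^n < e^{\e \bar q}\mu$ propagates across $B$; finally $\beta > 0$ small with $\beta < \alpha r^2$, giving $\bar q > \f$ on $\partial B$ while $\bar q(x_0) < \f(x_0)$. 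A local version of the comparison argument in Proposition~\ref{prop:UNIQ} then delivers the contradiction: Corollary~\ref{coro:LCP} applied to $\bar q$ and $\f$ on $B$, combined with the two Monge-Amp\`ere inequalities, yields $\int_{\{\bar q < \f\}}(e^{\e \f} - e^{\e \bar q})\mu \leq 0$. Since $\e > 0$, the integrand is strictly positive on the set, forcing $\mu(\{\bar q < \f\}) = 0$ and (since $\mu$ is positive and continuous) Lebesgue measure zero; the submean-value property of the psh function $\f$ tested against the continuous $\bar q$ then forces $\f \leq \bar q$ everywhere on $B$, contradicting $\bar q(x_0) < \f(x_0)$.

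For the reverse implication viscosity $\Rightarrow$ pluripotential, the plan is to regularize $\f$ by sup-convolution and reduce to the case of a continuous right-hand side, i.e., to Proposition~\ref{pro:visc=pluripot}. Writing $\f^\delta(x) = \sup_{|z| \leq A\delta}[\f(\cdot-z) - |z|^2/(2\delta^2)]$ exhibits $\f^\delta$ as a sup of translates of the psh function $\f$, so $\f^\delta$ is psh; by Proposition~\ref{prop:supconv} it is also semi-convex, locally Lipschitz (hence continuous), and decreases to $\f$ as $\delta \searrow 0$. Because our Hamiltonian splits as $F(x,s,p,Q) = -(dd^c Q)^n + e^{\e s}\mu(x)$, the relaxed Hamiltonian $F_\delta$ is exactly $-(dd^c Q)^n + e^{\e s}\mu_\delta(x)$ with $\mu_\delta(x) := \inf_{|y-x| \leq A\delta}\mu(y)$ continuous, so $\f^\delta$ is a viscosity subsolution of $(dd^c u)^n = e^{\e u}\mu_\delta$. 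Because $\f^\delta$ is continuous, $\nu_\delta := e^{\e \f^\delta}\mu_\delta$ is a continuous volume form, and the viscosity subsolution condition $(dd^c q(x_0))^n \geq e^{\e \f^\delta(x_0)}\mu_\delta(x_0) = \nu_\delta(x_0)$ at upper tests is exactly the subsolution condition for the fixed continuous volume form $\nu_\delta$; Proposition~\ref{pro:visc=pluripot} then yields $MA(\f^\delta) \geq e^{\e \f^\delta}\mu_\delta$ in the pluripotential sense.

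Passing to the limit $\delta \searrow 0$, Theorem~\ref{thm:CV} gives $MA(\f^\delta) \rightharpoonup MA(\f)$ weakly, while $e^{\e \f^\delta}\mu_\delta \to e^{\e \f}\mu$ pointwise with a uniform bound (since $\f$ is locally bounded and $\mu$ is continuous), so dominated convergence applied against any continuous $h \geq 0$ of compact support transfers the inequality to the limit: $\int h\, MA(\f) \geq \int h\, e^{\e \f}\mu$, i.e., $MA(\f) \geq e^{\e \f}\mu$ as measures. The main obstacle I anticipate is the parameter balancing in the forward direction: one must construct a smooth strict classical supersolution $\bar q$ of $MA(u) = e^{\e u}\mu$ on a small ball that beats both the Hessian perturbation $\alpha$ and the exponential shift factor $e^{-\e\beta}$ induced by lowering by $\beta$, while simultaneously realizing $\bar q > \f$ on $\partial B$ and $\bar q(x_0) < \f(x_0)$. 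The hypothesis $\e > 0$ is used essentially in the final sign argument $e^{\e \f} > e^{\e \bar q}$ on $\{\bar q < \f\}$, consistent with the author's remark that viscosity techniques primarily apply when $\e > 0$.
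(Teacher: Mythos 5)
Your proposal is correct, and one half of it follows the paper while the other half takes a genuinely different route. For the implication ``viscosity $\Rightarrow$ pluripotential'' you do exactly what the paper does: sup-convolution, Proposition~\ref{prop:supconv} to get the relaxed inequality with $\mu_\delta$, the frozen continuous right-hand side $e^{\e\f^\delta}\mu_\delta$ so that Proposition~\ref{pro:visc=pluripot} applies, and Theorem~\ref{thm:CV} to pass to the decreasing limit. For ``pluripotential $\Rightarrow$ viscosity'', however, the paper does \emph{not} argue directly at the test point: it first proves Lemma~\ref{convol} (sup-convolutions of pluripotential subsolutions remain pluripotential subsolutions of the relaxed equation, via Choquet's lemma and continuity of the Monge--Amp\`ere operator along increasing sequences), converts $\f^\delta$ into a viscosity subsolution by Proposition~\ref{pro:visc=pluripot}, and then runs an explicit stability argument (perturbed tests $q_\e$, maximum points $x_\delta\to x_0$) to transfer the viscosity inequality from $\f^\delta$ to $\f$. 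Your direct contradiction argument --- build the strict classical supersolution $\bar q=q+\alpha\|\cdot-x_0\|^2-\beta$, apply Corollary~\ref{coro:LCP} on $\{\bar q<\f\}\Subset B$, and use the monotonicity of $t\mapsto e^{\e t}$ as in Proposition~\ref{prop:UNIQ} to force $\mu(\{\bar q<\f\})=0$ and then $\f\le\bar q$ on $B$ --- is valid and shorter: it bypasses both Lemma~\ref{convol} and the limiting procedure, and it makes transparent where $\e>0$ enters. Two points should be made explicit to close it cleanly: (i) after adding $\alpha\|\cdot-x_0\|^2$ you must shrink $r$ so that $dd^c\bar q>0$ on $B$, since Corollary~\ref{coro:LCP} is a statement about psh functions (this is the same move the paper makes in Propositions~\ref{pro:visc=pluripot} and~\ref{prop:super}); (ii) you need not assume $\mu>0$ globally: the contradiction hypothesis $(dd^cq(x_0))^n<e^{\e q(x_0)}\mu(x_0)$ forces $\mu(x_0)>0$, so by continuity $\mu>0$ on a small ball, which is all the passage from $\mu$-null to Lebesgue-null requires. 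What the paper's longer route buys is reusable machinery (Lemma~\ref{convol} and the stability-under-limits argument serve elsewhere in the text), whereas your route is the more economical proof of this particular equivalence.
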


\begin{proof}
 We can assume without loss of generality that $\e = 1$ and $X = \Omega$ is a domain in $\C^n$. 
 When $\f$ is continuous, so is the density $\tilde \mu := e^ \f v$ and Proposition\ref{pro:visc=pluripot} above implies that 
$\f$ is a viscosity subsolution of the equation $(dd^c \f)^n = e^{\f} \mu$ iff it is a pluripotential subsolution of the same equation.

The general case can be handled by approximation. First assume that $\f$ is a viscosity subsolution and set $\mu = f \beta_n,$ where $f > 0$ is the continuous density of the volume form $\mu$ w.r.t. the euclidean volume form on $\C^n$.
We approximate $\f$ by sup-convolutions defined for $\de > 0$ small enough, by
$$
\f^{\de}(x):=\sup_y \left\{ \f(y)-\frac{1}{2\de^2} |x-y|^2 \right\}, \ \ x \in \Omega.
$$
Observe that if 
$A > 1$ is a large constant so that $A^2 > 2 \text{osc}_{\Omega} \f$, then
\begin{equation}
\f^{\de}(x) = \sup_{\vert y \vert \leq A \de} \left\{ \f(x-y)-\frac{1}{2\de^2} |y|^2 \right\},
\end{equation}
for $\de > 0$ small enough, $ x \in \Omega_{\de}$,
 where $\Omega_{\de} := \{ x \in \Omega ; dist (x,\partial \Omega) > A \de\}$.
 
Thus  $(\f^{\de})$ is a family of psh (and semi-convex) functions on $\Omega_{\de}$, that decrease towards $\f$
as $\de$ decreases to zero. Furthermore, by Proposition~\ref{prop:supconv},   $\f^{\de}$ satisfies the following inequality in the sense of viscosity on $\Omega_{\de}$
$$
(dd^c \f^{\de})^n \geq e^{\f^{\de}} f_{\de} \beta_n,
\text{ with } f_{\de}(x)=\inf \{f(y) \, / |y-x| \leq A \de \}.
$$
Since $\f^{\de}$ is psh and {\it continuous}, we can invoke Proposition \ref{pro:visc=pluripot}
and get that
$$
(dd^c \f^{\de})^n \geq e^{\f^{\de}} f_{\de} \, \beta_n
\geq e^{\f} f_{\de} \, \beta_n,
$$ 
holds in the pluripotential sense.
Since $f_{\de}$ increases towards $f$ and the complex Monge-Amp\`ere operator is continuous along decreasing sequences of bounded
psh functions (see Theorem~\ref{thm:CV}),  we finally
obtain the inequality $(dd^c \f)^n \geq e^{\f} \mu$ in the pluripotential sense.

We now treat the other implication. 
Let $\f$ be a psh function satisfying the inequality
$$
(dd^c \f)^n \geq e^{\f} \mu,
$$
in the pluripotential sense on $\Omega$.
We want to prove that $\f$ satisfies the above differential inequality in the sense of viscosity on $\Omega$. If $\f$ were continuous then we could use \ref{pro:visc=pluripot}. 
But since $\f$ is not necessarily continuous we first approximate $\f$ using sup-convolution $\f^{\de}$ as above.
Lemma \ref{convol} below yields the following :
\begin{equation}
\label{eq:conv-ineq}
(dd^c \f^{\de})^n \geq e^{\f^{\de}} f_{\de} \beta_n
\end{equation}
in the sense of pluripotential theory  in $\Omega_{\de}$.

Since $\f^{\de}$ is continuous we can apply Proposition~\ref{pro:visc=pluripot}  to  conclude that $\f^{\de}$ is a viscosity subsolution of the equation 
 $(dd^c u)^n = e^{ u} f_{\de} \beta_n$ on $\Omega_\de$.
 
 From this we want to deduce that $\f$ is a viscosity subsolution of the equation $(dd^c \f)^n = e^{\f} f \beta_n$ by passing to the limit as $\de$ decreases to $0$.
This is certainly a well know fact in viscosity theory, but let us give a proof here for convenience.

 Let $x_0 \in \Omega$, $q$ be a quadratic polynomial such that $\f (x_0) = q (x_0)$ and $\f \leq q$ on a neighbourhood of $x_0$ say on a ball $2 B$, where $B := B (x_0,r) \Subset \Omega$.
Since $\f$ is psh on $\Omega$, it satisfies $(dd^c \f)^n \geq 0$ in the viscosity sense on $\Omega$ by Proposition~\ref{prop:vsc=plp} and then by lemma~\ref{lem:semipositive}, it follows that $dd^c q (x_0) \geq 0$.  Replacing $q$ by $q (x) + \e \vert x- x_0\vert^2$ and taking $r > 0$ small enough, we can assume that $q$ is psh on the ball $2 B$.
We want to prove that $(dd^c q (x_0))^n \geq e^{\f (x_0)} f (x_0) \beta_n$.

Fix $\e > 0$ small enough.  For $x \in B$, set
 $$
  q_{\e} (x) := q (x) +  2 \e(\vert x - x_0\vert^2 - r^2) + \e r^2.
$$
Observe first that since $\f \leq q$ on $2 B$, we have   the following properties :\\
- if $x \in \partial B,$  $\f^{\de} (x) - q_\e (x) = \f^{\de} (x) - q (x) - \e r^2 < 0$ on $B$, for $\de > $ small enough. \\
- If $x = x_0$, we have $\f_{\de} (x_0) -  q^{\e} (x_0) = \f_{\de} (x_0) - q (x_0) + \e r^2 $. \\
Since $\f_{\de} (x_0) - q (x_0) \to \f (x_0) - q (x_0) + \e r^2 = \e r^2$ as $\de \to 0$, it follows that for $\de$ small enough, the function
$\f^{\de} (x) - q_{\e} (x)$ takes it maximum on $\bar B$ at some interior point $x_{\de} \in B$ and this maximum satisfies the inequality
\begin{equation}
\lim_{\de \to 0} \max_{\bar B} (\f_{\de} - q^{\e}) = \lim_{\de \to 0} (\f_{\de} (x_{\de}) -  q^{\e} (x_{\de})) \geq  \e r^2.
\label{eq:max-minoration}
\end{equation}
Moreover we claim that  $x_{\de} \to x_0$ as $\de \to 0$. Indeed we have
\begin{eqnarray*}
\f^{\de} (x_{\de}) -  q_{\e} (x_{\de}) &= & \f^{\de} (x_{\de}) - q (x_{\de}) - 2 \e (\vert x_{\de} - x_0\vert^2 - r^2) - \e r^2 \\
& = &  q^{\de} (x_{\de}) - q (x_{\de})  - 2 \e \vert x_{\de} - x_0\vert^2 + \e r^2.
\end{eqnarray*}
 Since $q^{\de} (x_{\de}) - q (x_{\de})$ converges to $0$, it follows that if $x'_0$ is a limit point of the family $(x_{\de})$ in $\bar B$, then $\max_{\bar B} (\f_{\de} - q^{\e})$ will converge to a limit  which is less or equal to $- 2 \e \vert x'_{0} - x_0\vert^2 + \e r^2$. By the inequality (\ref{eq:max-minoration}), this limit is $\geq \e r^2$. Therefore we obtain the inequality $- 2 \e \vert x'_{0} - x_0\vert^2  \geq 0$ which implies that $x'_0 = x_0$ and our claim is proved.
 
Since $\f^{\de} - q_{\e}$ takes it maximum on $\bar B$ at the point $x_{\de} \in B$ and $\f^{\de}$ is a viscosity subsolution of the equation $ (dd^c u) \geq e^u f_{\de} \beta_n$, it follows that
$$
 (dd^c q_{\e} (x_{\de}))^n \geq  e^{\f^{\de} (x_{\de})}   f_{\de} (x_{\de})  \beta_n =  e^{\f^{\de} (x_{\de}) - q_{\e} (x_{\de})} e^{q_{\e} (x_{\de})}  f_{\de} (x_{\de}) \beta_n.
$$
Now observe that  
$$\f^{\de} - q_{\e} = (\f^{\de} - q) + (q - q_{\e}) $$ 
and by Dini's lemma 
$$
\limsup_{\de \to 0} \max_{\bar B} (\f^{\de} - q) = \max_{\bar B} (\f - q) = 0.
$$
 Therefore
 $$
 \limsup_{\de \to 0} (\f^{\de} (x_{\de}) - q_{\e} (x_{\de})) \geq \liminf_{\de \to 0} \min_{\bar B}(q - q_{\e}) = \min_{\bar B}(- 2 \e \vert x- x_0\vert^2 + \e r^2) = - \e r^2.
 $$
It follows immediately that
$$
(dd^c  q_{\e} (x_{0}))^n \geq  e^{q (x_0) - 2 \e r^2} f (x_0) \beta_n.
$$
In the same way, we obtain  the required inequality 
$(dd^c q (x_0))^n \geq e^{\f (x_0)} f (x_0) \beta_n$, since $q (x_0) = \f (x_0).
$
\end{proof}

 \begin{lem} \label{convol}
 Let $\f$ be a bounded plurisubharmonic function in a domain $\Omega \Subset \C^n$ such that
$$
(dd^c \f)^n \geq  e^{\f} f \beta_n,
$$ 
in the pluripotentiel sense in $\Omega$,
where $f \geq 0$ is a continuous density. Then the sup-convolutions $(\f^{\de})$  satisfy 
$$
(dd^c \f^{\de})^n \geq  e^{\f^{\de}} f_{\de} \beta_n,
$$
 in the pluripotentiel sense in $\Omega_{\de}$, where $f_{\de} (x) := \inf \{ f (y) ; \vert y - x\vert \leq A \de\}$.
\end{lem}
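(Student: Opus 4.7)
My approach is to write $\f^{\de}$ as a pointwise supremum of translates of $\f$, each of which satisfies an explicit pluripotential Monge--Amp\`ere inequality, and then to propagate the inequality through finite maxima and an increasing countable limit, using the Bedford--Taylor maximum principle and the continuity theorem (Theorem~\ref{thm:CV}) respectively. For $|y| \leq A\de$ set $\f_y^{\de}(x) := \f(x-y) - |y|^2/(2\de^2)$, a bounded psh function on $\Om_{\de}$. Translation invariance of plurisubharmonicity, of the complex Monge--Amp\`ere operator, and of Lebesgue measure gives
\[
(dd^c \f_y^{\de})^n \,\geq\, e^{\f(\cdot - y)}\, f(\cdot - y)\, \beta_n \,\geq\, e^{\f_y^{\de}}\, f_{\de}\, \beta_n
\]
in the pluripotential sense on $\Om_{\de}$, since $\f(\cdot - y) \geq \f_y^{\de}$ and $f(\cdot - y) \geq f_{\de}$ on $\Om_{\de}$ whenever $|y| \leq A\de$. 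So each $\f_y^{\de}$ is a pluripotential subsolution of $(dd^c u)^n = e^{u} f_{\de} \beta_n$.

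\emph{Finite maxima.} The subsolution property survives under finite maxima: for bounded psh functions $u_1, u_2$ with $(dd^c u_i)^n \geq e^{u_i} f_{\de}\beta_n$, the local Bedford--Taylor maximum principle (equation (\ref{eq:MP})) gives $(dd^c \max(u_1, u_2))^n = (dd^c u_i)^n$ on the open set $\{u_i > u_{3-i}\}$, whence $(dd^c \max(u_1, u_2))^n \geq e^{\max(u_1,u_2)} f_{\de}\beta_n$ on the complement of the diagonal $\{u_1 = u_2\}$; the diagonal is absorbed by a short perturbation $u_1 \mapsto u_1 - \eta$, $\eta \searrow 0$, combined with Theorem~\ref{thm:CV}. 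By induction each finite max $v_N := \max_{k \leq N} \f_{y_k}^{\de}$ is a pluripotential subsolution of the same equation.

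\emph{From finite to countable.} Choose a countable dense $(y_k) \subset \overline{B}(0, A\de)$. The key observation is that $\f^{\de}(x) = \sup_k \f_{y_k}^{\de}(x)$ pointwise on $\Om_{\de}$: the supremum in the definition of $\f^{\de}(x)$ is attained at some $y^* \in \overline{B}(0, A\de)$, and because $\f$ is psh the submean value inequality forces $\f(x - y^*) = \limsup_{y \to y^*,\, y \neq y^*} \f(x - y)$, so the value $\f^{\de}(x)$ is realized along any dense subsequence converging to $y^*$. Hence $v_N \nearrow \f^{\de}$ pointwise on $\Om_{\de}$. By Theorem~\ref{thm:CV}, $(dd^c v_N)^n \to (dd^c \f^{\de})^n$ weakly, while monotone convergence gives $e^{v_N} f_{\de} \beta_n \to e^{\f^{\de}} f_{\de} \beta_n$ weakly; the inequality $(dd^c v_N)^n \geq e^{v_N} f_{\de} \beta_n$ passes to the limit, yielding $(dd^c \f^{\de})^n \geq e^{\f^{\de}} f_{\de} \beta_n$ in the pluripotential sense on $\Om_{\de}$.

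\emph{Main obstacle.} The most delicate step is the countable reduction: for a generic upper semicontinuous function the supremum over a dense subset can be strictly less than the supremum over the whole compact parameter set. It succeeds here only because $\f$ is plurisubharmonic, which by the submean value property rules out isolated peaks of $y \mapsto \f(x - y)$; this is precisely what lets us reach the constrained supremum along a countable dense family and then invoke the continuity theorem for increasing sequences.
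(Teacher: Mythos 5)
Your overall scheme---the translates $\psi_y(x)=\f(x-y)-\tfrac{1}{2\de^2}|y|^2$ are pluripotential subsolutions of $(dd^c u)^n=e^{u}f_{\de}\beta_n$ by translation invariance, finite maxima remain subsolutions via the maximum principle, and one concludes by continuity of the Monge--Amp\`ere operator along increasing sequences---is exactly the paper's. The genuine gap is your ``key observation'': it is not true in general that a countable dense family $(y_k)\subset \overline B(0,A\de)$ chosen in advance satisfies $\f^{\de}(x)=\sup_k\psi_{y_k}(x)$ pointwise. The submean value inequality does show that, near a maximizer $y^*$, the set of $y$ with $\f(x-y)>\f(x-y^*)-\e$ has \emph{positive Lebesgue measure}, but a countable dense set has measure zero and can miss it entirely; density gives no access to the values of the merely usc function $y\mapsto\f(x-y)$, since $\{\f(x-\cdot)>c\}$ need not be open. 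Concretely, fix $x_0$ and take $\f=\max\bigl(\sum_k c_k\log(|z-(x_0-y_k)|/R),-1\bigr)$ with $R\geq \mathrm{diam}(\Omega)$ and $c_k>0$ small and summable: this is a bounded psh function equal to $-1$ at every point $x_0-y_k$, yet $\geq-\e$ off a set of arbitrarily small measure, so $\sup_k\psi_{y_k}(x_0)\leq -1$ while $\f^{\de}(x_0)$ is close to $0$. So the inference ``realized along any dense subsequence converging to $y^*$'' does not follow, and the pointwise identity it is meant to justify is false; your subsequent limit argument therefore starts from an unproved (indeed wrong) premise.

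The repair is precisely the device used in the paper: Choquet's lemma provides \emph{some} countable subfamily $(y_j)$---adapted to the family $\{\psi_y\,;\,|y|\leq A\de\}$, not an arbitrary dense set---such that $\f^{\de}=(\sup_j\psi_{y_j})^*$, hence $\sup_j\psi_{y_j}=\f^{\de}$ outside a negligible set (quasi-everywhere, in particular almost everywhere). This weaker, a.e.\ identification is all you need, because Theorem~\ref{thm:CV} allows increasing convergence that holds only almost everywhere: setting $\theta_j:=\max_{0\leq k\leq j}\psi_{y_k}$, your finite-maxima step (which is sound, and is the argument of Lemma~\ref{lem:UB}) gives $(dd^c\theta_j)^n\geq e^{\theta_j}f_{\de}\beta_n$, and passing to the increasing a.e.\ limit yields $(dd^c\f^{\de})^n\geq e^{\f^{\de}}f_{\de}\beta_n$ on $\Om_{\de}$, as claimed.
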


\begin{proof}
Fix $\de > 0$ small enough. For $y \in B (0,A \de)$, denote by $ \psi_y (x) :=   \f(x-y)-\frac{1}{2\de^2} |y|^2,$ $x \in \Omega_{\de}$ and observe that $\psi_y$ is a bounded psh function on $\Omega_{\de}$ which satisfies the following inequality in the pluripotential sense on  $\Omega_{\de}$ 
$$
 (dd^c \psi_y)^n \geq e^{\psi_y} f_{\de} \beta_n,
$$
thanks to the invariance of the complex Monge-Amp\` ere operator by translation.

Since $\f$ is the upper envelope of the family $\{\psi_y ; y \in B (0,A \de)\}$, it follows from a well known topological lemma of Choquet that there is a sequence 
of points $(y_j)_{j \in \N}$ in the ball $ B (0,A \de)$ such that
$\f^{\de} = (\sup_{j} \psi_{y_j})^*$ on $\Omega_{\de}$. For $j \in \N$, denote by $\theta_j := \sup_{0 \leq k \leq j} \psi_{y_k}$. 
Then $(\theta_j)$ is an increasing sequence of bounded psh functions on $\Omega_{\de}$ which converges a.e. to $\f^{\de}$ on $\Omega_{\de}$.
 Itfollows from the maximum principle Theorem~\ref{thm:CP} as in the proof of Lemma~\ref{lem:UB}that $\theta_j$ is also a pluripotential subsolution of the same equation i.e. 
\begin{equation} \label{eq:SUBSOL}
  (dd^c \theta_j)^n \geq e^{\theta_j} f_{\de} \beta_n, 
\end{equation}
in the pluripotential sense in $\Omega_\de$. 

 Now by  continuity of the complex Monge-Amp\`ere operator along increasing sequences of bounded psh functions and the fact that $\sup_j \theta_j = \f^{\de}$ quasi everywhere (see \cite{BT82}), it follows from (\ref{eq:SUBSOL}) that $ (dd^c \f^{\de})^n \geq e^{\f^{\de}} f_{\de} \beta_n$ in the pluripotential sense on $\Omega_\de$.  
 \end{proof}

\vskip 0.3 cm
\subsection{Viscosity supersolutions}
The definition of supersolutions is more delicate. In the sequel, we use two references on viscosity solutions \cite{CIL92} and \cite{IL90} since both articles contain some technical points not made in the other one. 
 The outline of the real theory given in \cite{IL90}, sect. V.3, although it suggests
 a natural definition for supersolutions in the complex case, seems to rely heavily on 
 the continuity of convex functions. 
 Hence, we will introduce a different notion,
in the spirit of Definition \ref{def:super}.

We will first consider the complex Monge-Amp\`ere equation
$$
 (dd^c \f)^n =  \mu,
$$
 where $\mu \geq 0$ is a continuous volume form on some open set  $\Omega \subset \C^n$ and $\e \geq 0$. 
 
 As we have already seen, to fit in the viscosity formalism presented in previous sections, we define the Hamiltonian function as
$$
 F (x,s,Q) := - (dd^c Q)^n  + \mu (x) = 0 \text{ if} \, \, dd^c Q \geq 0,
 $$
 and $ F (x,s,Q) = + \infty $ if not,
where $dd^c Q$ is the $(1,1)-$form associated to the hermitian $(1,1)-$part of the (real) quadratic form $Q$ on $\C^n$.
 Observe that this Hamiltonian function is lower semi-continous in $\Omega \times \R \times \mathcal S_{2 n}$ and continuous in its domain $\{ F < + \infty\}$.
 
 Let us denote by $(dd^c Q)_+ = dd^c Q $ if $dd^c Q \geq 0$ and $(dd^c Q)_+ = 0$ if not.
 Then observe that for a lower test function $q$ for $\f$ at $x_0$ i.e. $\f \geq_{x_0} q$, the condition $(dd^c q(x_0))^n_+ \leq \mu(x_0)$ is always satisfied when $dd^c q(x_0)$ is not semi-positive as well as the condition $F (x_0,\f (x_0),dd^q (x_0)) \geq 0$. Hence the condition $F (x,s,Q) \geq 0$ is consistent only when $dd^c Q \geq 0$. 
 
 Therefore the general definition of a supersolution can be formulated in the following equivalent way:
\begin{defi} \label{def:super}
A supersolution of $(dd^c \f)^n= e^{\e \f} \mu$ is a lower semicontinuous  function 
$\f: \Omega \to \R \cup \{+\infty\}$ such that $\f\not \equiv +\infty$
and the following property is satisfied:
if for any $x_0\in \omega$ and any $q \in {\mathcal C}^2 (x_0)$, defined in a neighborhood of $x_0$ such that $\f(x_0)=q(x_0)$ and
$
 \f-q \  \text{ has a local minimum at} \ x_0,
 $
 then 
 $$
 (dd^c q(x_0))^n_+ \leq  \mu(x_0).
 $$
\end{defi}

 As we said before, the viscosity differential inequality (given by the supersolution property) for a lower test function $q$ at $x_0$ do not tell anything about the sign of  $dd^c q (x_0)$ and is certainly satisfied whenever  $dd^c q (x_0)$ is not semi-positive.  
 However the condition is natural since when $u$ is a smooth function which is a supersolution, then for any lower test function $q$ at a given point  $x_0$ we have $dd^c u (x_0) \geq dd^c q (x_0)$ by the classical maximum principle. No if we assume that  $ dd^c q (x_0) \geq 0$ we can conclude that 
 $(dd^q)^n_{x_0} \leq (dd^c u)^n_{x_0} \leq \mu (x_0)$.  But if do not assume that  $ dd^q (x_0)$ is non negative we cannot conclude.
 
 The only way we will use this definition in the sequel is as follows. If $\f$ is not a supersolution of the equation then there exists a point $x_0$ and a lower test function $q$ at $x_0$ such that $(dd^c q(x_0))^n_+ > \mu(x_0) \geq 0.$ Therefore  $dd^c q (x_0) \geq  0$ and $(dd^c q(x_0))^n > 0$ which implies that $dd^c q(x_0) > 0$. 
 
 Supersolutions are less classical objects and are not going to live on the same footing as 
subsolutions. Whereas subsolutions are automatically plurisubharmonic, this is not necessarily
the case of supersolutions. Observe that any plurisuperharmonic function in $\Omega$ is a supersolution to the  equation $(dd^c \p)^n = \mu$.spirit

Given  a bounded function $h$, it is natural to consider its plurisubharmonic projection
$$
P(h)(x)= P_{\Omega}(h)(x) := \left(\sup \{ \p(x) \, / \, \p \text{ psh on $\Omega$ and } \p \leq h \}\right)^*,
$$
which is the greatest psh function that lies below $h$ on $\Omega$. Observe that if $h$ is upper semi-continuous on $\Omega$ there is no need of upper regularization and the upper envelope is psh and $\leq h$ in $\Omega$.


 We will see below that in the previous definition, the lower test function $q$ satisfies 
$(dd^c q)_+^n \leq \mu$ if
and only if 
$$
(dd^c P(q))^n \leq \mu.
$$

This can be deduced from the fact that if $q$ is $C^2$ in an euclidean ball $B = B (x_0,r)$ then  the Monge-Amp\`ere measure $(dd^c P(q))^n$ of its projection $P (q) = P_B (q)$ is concentrated on the set where $P(q)=q$, with
$$
(dd^c P(q))^n={\bf 1}_{\{P(q)=q\}} (dd^c q )^n.
$$
This formula can be easily derived from the (more involved) fact that $P(q)$ is
a ${\mathcal C}^{1,1}$-smooth function (see \cite{BT76}, \cite{BD09}).

 Now we can prove the following statement which gives the relationship between the two notions of supersolutions.

 \begin{prop} \label{prop:super} 
 Let  $\Omega \Subset \C^n$ be an open set. 
 
 1. Let $\psi$ be a bounded plurisubharmonic function in $\Omega$ satisfying $(dd^c \psi)^n \leq \mu$ in the
 pluripotential sense in $\Omega$. Then its lower semi-continuous regularization $\psi_*$ is a viscosity supersolution of the equation  $(dd^c  
 \f)^n=\mu$ in $\Omega$. 
 
 2. Let $\f$ be a continuous and bounded viscosity supersolution of the equation  $ - (dd^c u)^n + \mu = 0$ in $\Omega$. Then for any euclidean ball $\B \Subset \Omega$, $ \psi := P_{\B} (\f)$ is  a continuous 
 plurisubharmonic viscosity supersolution of the equation $- (dd^c u)^n + \mu = 0$ in $\B$. 
 
 3. Let $\f$ be a $C^2$-smooth and viscosity supersolution of the equation  $- (dd^c u)^n + \mu = 0$ in $\Omega$. Then for any euclidean ball $\B \Subset \Omega$, we have  $(dd^c P_{\B} (\f))^n  
 \leq  \mu$ in the pluripotential sense in $\B$.
 \end{prop}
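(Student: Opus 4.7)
My plan for part~1 is to argue by contradiction using a quadratic bump and the pluripotential comparison principle. Suppose $q$ is a $\mathcal C^2$ lower test function for $\psi_*$ at some $x_0$ but $(dd^c q(x_0))_+^n > \mu(x_0)$. As noted following Definition~\ref{def:super}, this forces $dd^c q(x_0) > 0$ strictly and $(dd^c q(x_0))^n > \mu(x_0)$, so by continuity both strict inequalities persist on a small ball $B = B(x_0, r) \Subset \Omega$. I would then introduce the perturbation $\tilde q(x) := q(x) + \e(r^2 - |x-x_0|^2)$, choosing $\e, r > 0$ small enough that $\tilde q$ remains strictly psh on $B$ and still satisfies $(dd^c \tilde q)^n > \mu$ there. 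Since $\psi \geq \psi_* \geq q$ in a neighborhood of $x_0$, after shrinking $r$ we have $\psi \geq q = \tilde q$ on $\bd B$, so Corollary~\ref{coro:LCP} applied to $(\psi, \tilde q)$ on $B$ yields $\psi \geq \tilde q$ on $B$. Passing to the $\liminf$ as $x \to x_0$ produces $\psi_*(x_0) \geq q(x_0) + \e r^2 > \psi_*(x_0)$, a contradiction.

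For part~2, I would dichotomize on whether $\psi(x_0) = \f(x_0)$ or $\psi(x_0) < \f(x_0)$, where $\psi := P_B(\f)$ and $x_0 \in B$ is the test point. In the first case, $\psi \leq \f$ together with equality at $x_0$ gives $\f - q \geq \psi - q \geq 0$ near $x_0$, so any lower test function $q$ for $\psi$ at $x_0$ is also a lower test function for $\f$ there, and the supersolution property of $\f$ transfers directly. In the second case, $x_0$ lies in the open set $\{\psi < \f\}$ on which $\psi$ is maximal plurisubharmonic in the sense of Bedford--Taylor, i.e.\ $(dd^c \psi)^n = 0$ in the pluripotential sense locally; part~1 applied with vanishing right-hand side then shows that $\psi$ is a viscosity supersolution of $(dd^c u)^n = 0$ at $x_0$, so $(dd^c q(x_0))_+^n \leq 0 \leq \mu(x_0)$. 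The continuity of $P_B(\f)$ for continuous $\f$ is a classical consequence of solvability of the Dirichlet problem on the ball (Theorem~\ref{thm:DirPb}).

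For part~3, I would combine the cited identity $(dd^c P_B(\f))^n = \mathbf{1}_{\{P_B(\f) = \f\}} (dd^c \f)^n$, valid because $P_B(\f)$ is $\mathcal C^{1,1}$ when $\f \in \mathcal C^2$, with a touching argument. At almost every point $x_0$ where $P_B(\f)(x_0) = \f(x_0)$, Alexandrov's theorem (Theorem~\ref{th:A}) ensures that $P_B(\f)$ is twice differentiable at $x_0$, and since $P_B(\f) - \f$ attains its maximum value $0$ there, we obtain $0 \leq dd^c P_B(\f)(x_0) \leq dd^c \f(x_0)$. Using $\f$ itself as the lower test function at $x_0$, the viscosity supersolution hypothesis gives $(dd^c \f(x_0))^n \leq \mu(x_0)$; combining the two inequalities yields $(dd^c P_B(\f))^n \leq \mu$ as Borel measures on $B$.

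The step I expect to require the most care is part~1: one must consistently distinguish the psh function $\psi$ from its lower semi-continuous envelope $\psi_*$, applying the comparison principle to $\psi$ and invoking the $\liminf$ definition of $\psi_*$ only at the very end, while simultaneously engineering $\tilde q$ to be strictly psh (so that Corollary~\ref{coro:LCP} applies), to satisfy the strict Monge--Amp\`ere inequality $(dd^c \tilde q)^n > \mu$, and to match $q$ on $\bd B$.
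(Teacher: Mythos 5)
Your argument follows the paper's proof almost step for step: part 1 by a quadratic bump plus the pluripotential comparison principle, part 2 by the dichotomy at the contact set of the envelope, and part 3 via the identity $(dd^c P_{\B}(\f))^n=\mathbf{1}_{\{P_{\B}(\f)=\f\}}(dd^c\f)^n$ (using the ${\mathcal C}^{1,1}$ regularity of the envelope) together with a touching argument at points of twice differentiability. The one place where you are weaker than the paper is the bump in part 1: your $\tilde q$ coincides with $q$ exactly on $\partial B$, so the hypothesis of Corollary~\ref{coro:LCP} — namely $\{\psi<\tilde q\}\Subset B$ — is not literally verified, since you only control $\psi\ge q$ near $\partial B$ and $\tilde q - q\to 0$ there, so the bad set may accumulate on the boundary. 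The paper's choice $q^\e=q-2\e(|x-x_0|^2-r^2)-\e r^2$ lies below $\psi_*\le\psi$ by the fixed margin $\e r^2$ near $\partial B$, which is exactly what makes the comparison principle applicable as stated; your version is repaired in one line by comparing $\psi$ with $\tilde q-\delta$ for $\delta>0$ and letting $\delta\to 0$. In the second case of part 2, your shortcut of invoking part 1 with $\mu\equiv 0$ on the open set $\{P_{\B}(\f)<\f\}$, where the envelope is maximal and hence has vanishing Monge--Amp\`ere measure, is legitimate and slightly cleaner than the paper, which repeats the bump construction there; everything else, including using $\f$ itself as lower test function in part 3, matches the intended argument.
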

 \begin{proof}
 1. We use the same idea as in the proof of Proposition\ref{pro:visc=pluripot}. Assume $\psi \in PSH\cap L^{\infty} (\Omega)$ satisfies $MA(\psi) \le \mu$ in the  
 pluripotential sense on $\Omega$. Consider $q$ a ${\mathcal C}^2$-smooth function  near $x_0$ such that $\psi_*(x_0)=q(x_0)$ and
 $\psi_* - q$ achieves a local minimum at $x_0$. We want to prove that
 $(dd^cq (x_0))_+^n  \le \mu (x_0)$.   Assume that $(dd^c q (x_0))_+^n > \mu_{x_0}$. Then $dd^c q (x_0) \geq 0$ and $(dd^c q (x_0))^n > \mu_{x_0} >  0$ which implies that $dd^c q (x_0) > 0$.  
 Let $q^{\e}:=q - 2 \e (\| x-x_0 \|^2- r^2) - \e r^2$. Since $\mu$ has continuous density, we can choose $\e > 0$ small enough and a small ball $B  
 (x_0,r)$ containing $x_0$ of radius $r>0$ such that  $dd^c q^{\e} > 0$ in $B (x_0,r)$ and $(dd^c q^{\e})^n > \mu$ on the ball $B (x_0,r)$. Thus we  have $q^{\e} = q - \e r^2 <  \psi_* \leq \psi $ near $\partial B (x_0,r)$ while $MA(q^{\e}) \ge \mu \geq MA(\psi)$ in the pluripotential sense on  
  $B (x_0,r)$. 
 The comparison principle Theorem~\ref{thm:CP}
 yields $q^{\e} \le \psi$ on $B (x_0,r)$ hence $q^\e (x_0) = \liminf_{x \to x_0} q^\e (x) \leq \liminf_{x \to x_0} \psi (x) = \psi_* (x_0)$ i.e. $q
 (x_0) + \e r^2 \leq \psi_* (x_0) = q (x_0),$ which is a contradiction.  Hence $(dd^c q)^n_{x_0} \leq \mu_{x_0}$ and $\psi_*$ is a viscosity supersolution. 
 
2. Set $\psi : = P (\f)$. Then $\p$ is a continuous psh function by \cite{Wal68}.  Fix a point $x_0 \in \Omega$ and consider a super test function $q$ for $\psi$ at $x_0$ i.e. $q$ is a $C^2$ function on a small ball $B (x_0,r) \subset \Omega$ such that  $\psi (x_0) = q (x_0)$ and $\psi - q$ attains its  minimum  at $x_0$.
We want to prove that $(dd^c q (x_0))_+^n \leq \mu (x_0)$. Since $\psi \leq \f$, there are two cases:

- if $\psi (x_0) = \f (x_0)$ then $q$ is also a super test function for $\f$ at $x_0$ and then  $(dd^c q (x_0))^n_+ \leq \mu (x_0)$ since $\f$ is a supersolution of the same equation, 

- if $\psi (x_0) < \f (x_0)$, by continuity of $\f$ there exists a ball $B (x_0,s)$ $0 < s < r$ such that $\psi = P (\f) < \f$ on the ball $B (x_0,s)$ and then
$ (dd^c \psi)^n = 0$ on $B (x_0,s)$  since $(dd^c P(\f))^n$ is supported on the contact set $\{P (\f) = \f\}$. Therefore $\psi$ is a continuous psh function satisfying the inequality  $(dd^c \psi)^n = 0 \leq \mu $ in the sense of pluripotential theory on the ball $B (x_0,s)$. 
Assume that $(dd^c q (x_0))_+^n > \mu (x_0)$. Then by definition, $dd^c q (x_0) > 0$ and $(dd^c q (x_0))^n > \mu (x_0)$. Taking $s > 0$ small enough and $\e > 0$ small enough we can assume that $q^\e := q - \e (\vert x - x_0\vert^2 - s^2) $ is psh on $B (x_0,s)$ and $(dd^c q^\e)^n > \mu \geq (dd^c \psi)^n$ on the ball $B (x_0,s)$ while $q^\e = q \leq \psi$ on $\partial B (x_0,s)$. By the pluripotential comparison principle for the complex Monge-Amp\`ere operator, it follows that $q^\e \leq \psi$ on $B (x_0,s)$, thus $q (x_0) + \e s^2 \leq \psi (x_0)$, which is a contradiction.

 3. This follows from the observation made before using the argument by Berman and Demailly (\cite{BD09}).
\end{proof} 
 
 \vskip 0.3 cm
 \subsection{The Comparison Principle in the local case}
 
 We will consider the following more general complex Monge-Amp\`ere type equations 
  \begin{equation} \label{eq:CMAE}
    G (u_{j,\bar k}) + e^{g (z,u) + h(D u)} = 0, 
  \end{equation}
 where $\Omega \Subset \C^n$ is bounded domain, $G$ is a degenerate elliptic  continuous function on the cone $H_n^+$ of semi-positive hermitian forms  on $\C^n$, $g$ is a continuous function on $\Om \times \R$ increasing in the $u$ variable and $h$ is a continuous function on $\C^{n}$. 
 
   Then we will prove the following result.
  \begin{theo} Let $u$ be a subsolution of (\ref{eq:CMAE}) and $v$ a supersolution of (\ref{eq:CMAE}). Assume that $u \leq v$ on $\bd \Om$ then $u \leq v$ on $\Om$.
  \end{theo}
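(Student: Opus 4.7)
The plan is the standard viscosity doubling-of-variables argument based on the Jensen--Ishii maximum principle (Theorem~\ref{thm:IJ-MP}), adapted to the lower-semicontinuous Hamiltonian of complex Monge--Amp\`ere type. I argue by contradiction: assume $M := \sup_{\overline{\Om}}(u - v) > 0$. Since $u$ is usc, $v$ is lsc, and $u \leq v$ on $\bd \Om$, this positive supremum is attained at some interior point $z_0$.

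For small $\e > 0$, double the variables by considering
\[
\Phi_\e(z, w) := u(z) - v(w) - \frac{1}{2\e}\, |z - w|^2
\]
on $\overline{\Om} \times \overline{\Om}$ and let $(z_\e, w_\e)$ be a maximizer. Routine penalization estimates yield $\e^{-1}|z_\e - w_\e|^2 \to 0$, $\Phi_\e(z_\e, w_\e) \to M$, and, up to a subsequence, $z_\e, w_\e \to z_0$, $u(z_\e) \to u(z_0)$, $v(w_\e) \to v(z_0)$; for small $\e$ the point $(z_\e, w_\e)$ lies in a fixed compact subset of $\Om \times \Om$. Applying Theorem~\ref{thm:IJ-MP} to the test function $\phi(z, w) = (2\e)^{-1}|z - w|^2$ and setting $p_\e := \e^{-1}(z_\e - w_\e)$ produces symmetric matrices $Q^+_\e, Q^-_\e \in \mathcal S_{2n}$ with $(p_\e, Q^+_\e) \in \overline{J}^{2,+}u(z_\e)$, $(p_\e, Q^-_\e) \in \overline{J}^{2,-}v(w_\e)$, and $Q^+_\e \leq Q^-_\e$ (with $A = D^2\phi$ one has $A^2 = (2/\e)A$, so the Jensen--Ishii parameter can be tuned to force the ordering). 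In particular, the Hermitian $(1,1)$-parts satisfy $Q^{+,1,1}_\e \leq Q^{-,1,1}_\e$.

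Now invoke the sub/supersolution inequalities. The subsolution property of $u$ forces
\[
G\!\left(Q^{+,1,1}_\e\right) + e^{g(z_\e,\, u(z_\e)) + h(p_\e)} \leq 0,
\]
and in particular $Q^{+,1,1}_\e \geq 0$ (otherwise the Hamiltonian equals $+\infty$ and the inequality fails). Hence $Q^{-,1,1}_\e \geq 0$ as well, and the supersolution property of $v$ gives the nontrivial inequality
\[
G\!\left(Q^{-,1,1}_\e\right) + e^{g(w_\e,\, v(w_\e)) + h(p_\e)} \geq 0.
\]
Subtracting and using the degenerate ellipticity $G(Q^{-,1,1}_\e) \leq G(Q^{+,1,1}_\e)$ yields
\[
g(z_\e, u(z_\e)) \leq g(w_\e, v(w_\e)).
\]
Letting $\e \to 0$, continuity of $g$ and the convergences above give $g(z_0, u(z_0)) \leq g(z_0, v(z_0))$. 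Since $u(z_0) > v(z_0)$ this contradicts the strict monotonicity of $s \mapsto g(z_0, s)$.

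The main obstacle is the simultaneous production of second-order jets on an usc subsolution and an lsc supersolution with the crucial ordering $Q^+_\e \leq Q^-_\e$; this is exactly what the Jensen--Ishii machinery, via sup/inf convolutions and Alexandrov's theorem, is designed to deliver. A secondary point to check is that the subsolution inequality alone forces the jet on the $u$-side into the domain $H_n^+$ of $G$, so that the supersolution inequality on the $v$-side can actually be brought to bear nontrivially. If $g$ is only non-strictly increasing in $u$, one first perturbs the supersolution to $v_\eta := v + \eta$ (which remains a supersolution by monotonicity) and derives the contradiction after passing $\eta \to 0^+$.
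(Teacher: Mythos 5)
Your proof is correct and follows essentially the same route as the paper's: doubling of variables with a quadratic penalty, the Jensen--Ishii maximum principle to produce jets with $Q^+\leq Q^-$, the subsolution inequality forcing the Hermitian $(1,1)$-part into $H_n^+$ so that the supersolution inequality applies nontrivially, degenerate ellipticity of $G$ to cancel the second-order terms, and strict monotonicity of $g$ in $s$ to conclude in the limit. One caveat: your closing remark for the case where $g$ is merely non-decreasing does not work as stated, since replacing $v$ by $v+\eta$ only shifts the argument of $g$ and produces no strict inequality when $g$ is not strictly increasing; as noted in Remark~\ref{rem:vscdegen}, that degenerate case requires a genuinely different argument (e.g.\ the Alexandroff--Bakelman--Pucci maximum principle) and is not covered by this scheme.
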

 \begin{proof}
  
The proof is an adaptation of arguments in \cite{CIL92}. The main idea is to apply the maximum principle to the usc function $u - v$. But since this functions are not smooth, we will apply Jensen-Ishii's maximum principle. Since the function $u - v$ is usc on $\overline{\Omega}$, then  its maximum  in $\overline{\Omega}$, defined as 
$$
 M := \sup_{\overline{\Omega}} (u - v).
$$ 
 is attained at some point in $\overline{\Omega}$.
 We want to prove that $M \leq 0$. Since $u \leq v$ on $\partial \Omega,$ we can assume that $S := \{x \in \overline{\Omega} ; u (x) - v (x) = M\} \subset \Omega$. 
 To apply Jensen-Ishii's maximum principle, we need to double the variable and add a penalty term to make the maximum reached asymptotically on the diagonal. Indeed for $\e > 0$, define the function 
 $$
  \p_\e (x,y) := u (x) - v (y) - \frac{1}{2 \e^2} \vert x -y\vert^2,
 $$
 which is upper semi-continuous on $\overline{\Omega} \times  \overline{\Omega}$. Then it takes its maximum on $\bar{\Om} \times \bar {\Omega}$ at some point $(x_\e,y_\e) \in \bar{\Om} \times \bar{\Omega}$ i.e.
 $$
  M_\e := \max_{(x,y) \in \overline{\Omega}^2} \p_\e (x,y) = u (x_\e) - v (y_\e) - \frac{1}{2 \e^2} \vert x_\e -y_\e\vert^2.
 $$
 It is quite easy to prove that (see \cite{CIL92})
 
 $$
 \lim_{\e \to 0^+} \frac{1}{2 \e^2} \vert x_\e -y_\e\vert^2 = 0
 $$
 and there exists a subsequence $(x_{\e_j},y_{\e_j}) \to (\bar x,\bar x) \in \overline{\Omega}^2$ such that 
 $$
 \lim_{\e \to 0} M_\e = M = u (\bar x) - v (\bar x).
 $$
 Then $\bar x \in S.$ Since $ S \subset \Omega$ from our assumption, it follows that $j >> 1$,  $(x_{\e_j},y_{\e_j}) \in \Omega^2$.  Therefore we can apply
Jensen-Ishii  maximum principle. Fix $j >> 1$ and set $p = p (\e_j) := \frac{1}{\e_j^2} (x_{\e_j} - y_{\e_j})$, there exists $Q^{\pm} \in \mathcal S_{2 n}$ such that
 $(p,Q^+) \in \bar{J}^{2,+} u (x_{\e_j}), (p,Q^-) \in \bar{J}^{2,-} v (y_{\e_j})$ and $Q^+ \leq Q^-$. It follows from the fact that $(p,Q^+) \in \bar{J}^{2,+} u (x_\e)$ and the definition of viscosity subsolution that the hermitian $(1,1)-$part $H^+$ of the quadratic form $Q^+$  is semi-positive hence so is  the hermitian $(1,1)-$part $H^-$ of $Q^-$ since $0 \leq H^+ \leq H^-$. Then by the degenerate ellipticity condition on $G$, we get $ - G (H^+) \leq - G (H^-)$. Therefore applying the viscosity inequalities we obtain
 $$
 e^{g (x_{\e_j},u (x_{\e_j})) + h (p)} \leq e^{g (y_{\e_j},v (y_{\e_j})) + h (p)},
 $$ 
 which implies that for $j >> 1$,
 $$
 g (x_{\e_j},u (x_{\e_j}))  \leq g (y_{\e_j},v (y_{\e_j})).
 $$
 Now recall that $(x_{\e_j},y_{\e_j}) \to (\bar x, \bar x) \in \Omega^2$ and $u (x_{\e_j}) - v (y_{\e_j}) \to M$.
 We can always assume that $\lim_j v (x_{\e_j}) = \ell \in \R$ exists and then $\lim_j u (y_{\e_j} = \ell + M$.
  
  Passing to the limit, we get $ g (\bar x, \ell + M) \leq g (\bar x, \ell)$, which implies that $M \leq 0$, since $g$ is increasing in the second  
 variable.

\end{proof} 
\begin{rem} \label{rem:vscdegen}
 The last result cannot be applied when $g$ does not depend on $s$ i.e; the equation do not involve the function $u$ itself. We do not know if the result is still true in this case. However if the function does not depend on $p$ and is only assumed to be non decreasing, it is possible to prove the comparison principle using instead, the so called Alexandroff-Backelman-Pucci maximum principle (see \cite{CC95}, \cite{Wang10}, \cite{Ch12}).

\end{rem}
Let us give the following application of the local comparison principle.
\begin{prop} If $\mu > 0$ is a continuous volume form on a complex manifold $X$ of dimension $n$, then viscosity solutions of the equation $(dd^c \f)^n = e^{g (x,\f)} \mu$ in $X$ are precisely the continuous psh functions $\f$ solutions of the equation $(dd^c \f)^n = e^{g (x,\f)} \mu$ in the pluripotential sense in $X$.
\end{prop}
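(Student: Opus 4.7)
My plan is to prove the equivalence in both directions, treating the right-hand side as a frozen continuous volume form in each case. For the direction from pluripotential to viscosity, assume $\f$ is a continuous psh pluripotential solution and set $\nu(x):=e^{g(x,\f(x))}\mu(x)$, which is a continuous positive volume form since $\f$, $g$ and $\mu>0$ are continuous. The equation then reads $(dd^c\f)^n=\nu$ in the pluripotential sense, and I would invoke the subsolution equivalence of Proposition~\ref{pro:visc=pluripot} together with the supersolution direction of Proposition~\ref{prop:super}(1) applied to the frozen equation $(dd^c u)^n=\nu$ to conclude that $\f$ is simultaneously a viscosity subsolution and a viscosity supersolution of this frozen equation. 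The crucial remark is that at any test point $x_0$ the test function $q$ satisfies $q(x_0)=\f(x_0)$, so $\nu(x_0)=e^{g(x_0,q(x_0))}\mu(x_0)$, and the viscosity inequalities for the frozen problem are literally the viscosity inequalities for the original $u$-dependent problem. Hence $\f$ is a viscosity solution of $(dd^c u)^n=e^{g(x,u)}\mu$.

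Conversely, assume $\f$ is a viscosity solution. Being upper semicontinuous as a subsolution and lower semicontinuous as a supersolution, $\f$ is continuous on $X$. The subsolution property combined with Proposition~\ref{pro:visc=pluripot} immediately gives that $\f\in PSH(X)$ and $(dd^c\f)^n\geq e^{g(\cdot,\f)}\mu$ in the pluripotential sense. The reverse inequality is the delicate point and would be established locally: on each small ball $B\Subset X$ I would solve the pluripotential Dirichlet problem
\begin{equation*}
(dd^c v)^n=e^{g(x,v)}\mu \text{ in } B,\qquad v=\f \text{ on }\partial B,
\end{equation*}
by combining the Perron upper-envelope construction of Section~1 with the local Bedford-Taylor Dirichlet theory (Theorem~\ref{thm:DirPb}) and a Kolodziej-type uniform estimate, obtaining a continuous psh pluripotential solution $v$ on $\bar B$. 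By the first direction, $v$ is then a viscosity solution on $B$, and the local viscosity comparison principle proved earlier in this section, applied to the pair $(\f,v)$ in both orderings on $B$, forces $\f\equiv v$ there. Thus $(dd^c\f)^n=e^{g(\cdot,\f)}\mu$ pluripotentially on $B$, and since $B$ was arbitrary, on all of $X$.

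The main obstacle lies in this local Perron step of the converse: the envelope arguments of Section~1 were formulated for the compact K\"ahler setting, and adapting them to a ball with prescribed continuous psh boundary data and a $u$-dependent right-hand side requires both Bedford-Taylor's local Dirichlet solvability and a Kolodziej-type continuity estimate to promote the envelope to an actual continuous pluripotential solution. The comparison step moreover presupposes strict monotonicity of $g$ in the $u$ variable, the standing hypothesis of this section; without it one would have to perturb $g$ by $\e u$ and pass to the limit.
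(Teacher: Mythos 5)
Your forward direction (pluripotential $\Rightarrow$ viscosity) is essentially the paper's: freeze the right-hand side at the continuous function $\f$, apply Proposition~\ref{pro:visc=pluripot} and Proposition~\ref{prop:super}, and observe that the frozen and unfrozen viscosity inequalities coincide at contact points because the test function touches $\f$ there. That part is fine.

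The converse contains a genuine gap, precisely at the step you yourself flag as the main obstacle. You propose to solve, on each small ball $B \Subset X$, the Dirichlet problem for the $u$-dependent equation $(dd^c v)^n = e^{g(x,v)}\mu$ with continuous boundary data $\f$, and to produce a continuous psh pluripotential solution $v$ by "adapting" the Perron construction of Section 1 together with Theorem~\ref{thm:DirPb} and a Kolodziej-type estimate. No result in the paper provides this local existence statement: the envelope arguments of Section 1 are set up on a compact manifold (no boundary data), Theorem~\ref{thm:DirPb} handles only a fixed continuous density, and continuity up to $\partial B$ for the $u$-dependent problem would itself require barriers and a stability argument. As written, this step is of comparable difficulty to the proposition you are proving, so the argument is incomplete. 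The paper sidesteps it by freezing the right-hand side in the converse direction as well: since $\f$ is already known to be continuous, $e^{g(x,\f(x))}\mu$ is a fixed continuous positive volume form, so the balayage result (Corollary~\ref{thm:Balayage}) directly yields a psh function $\psi$ with $(dd^c\psi)^n = e^{g(x,\f)}\mu$ in $B$, $\psi = \f$ outside $\overline B$ and $\psi \geq \f$. Then only one viscosity comparison is needed: $\psi$ is a viscosity subsolution of the frozen equation by Proposition~\ref{pro:visc=pluripot}, $\f$ is a viscosity supersolution of it (again because lower test functions touch $\f$ at the contact point), and equality of boundary values gives $\psi \leq \f$, hence $\psi = \f$ and $(dd^c\f)^n = e^{g(x,\f)}\mu$ on $B$; since $B$ is arbitrary the equality holds on $X$. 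Replacing your local Dirichlet step by this frozen balayage step closes the gap using only tools already established; your closing caveat about monotonicity of $g$ then concerns only the comparison step, which is the standing hypothesis of the section.
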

\begin{proof} We already know by Proposition~\ref{pro:visc=pluripot} and Proposition~\ref{prop:super}  that continuous psh (pluripotential) solutions of the equation $(dd^c \f)^n = e^{g (x,\f} \mu$ on $X$ are viscosity solutions of the equation.  To prove the converse,
assume that $\f$ is a viscosity solution of the equation $(dd^c \f)^n = e^{g (x,\f)} \mu$.
Then by Proposition~\ref{pro:visc=pluripot},  $\f$ is a continuous psh function in $X$ which satisfies the inequality $(dd^c \f)^n \geq e^{g (x,\f} \mu$ in the pluripotential sense in $X$.  To prove equality assume that $B \Subset X$ is a small coordinate chart in $X$ biholomorphic to an euclidean ball in $\C^n$ and use the balayage construction to find a psh function $\p$ such that  $(dd^c \psi)^n = e^{g (x,\f)} \mu$,  $\psi = \f$ on $X \setminus \overline B$ and $\p \geq \f$ using Theorem\ref{thm:Balayage}. Then by the pluripotential comparison principle it follows that $\f \leq \psi$ on $B$. On the other hand, by Proposition~\ref{pro:visc=pluripot}, $\psi$ is a viscosity subsolution of the equation $(dd^c \psi)^n = e^{g (x,\f)} \mu$. Since $\f$ is a viscosity (super)-solution of the $(dd^c \f)^n = e^{g (x,\f)} \mu$ on $B$ and $\f = \psi$ on $\partial B$, it follows from the viscosity comparison principle that $\psi = f$ in $B$. Hence $\f =\psi$ on $B$ and  satisfies the equation $(dd^c \f)^n = \mu$ on $B$. Since $B$ is arbitrary, it follows that $\f$ is a pluripotential solution of the equation $(dd^c \f)^n = e^{g (x,\f)} \mu$ on $\Omega$.
\end{proof} 
 
\vskip 0.3 cm
\subsection{Viscosity solution : the Perron's method}

Once the global comparison principle holds, one easily constructs continuous  solutions
by Perron's method as we now explain.
Consider the following general equation
\begin{equation} \label{eq:GCMAE}
 F(x,u,Du,dd^c u) = 0, 
\end{equation}
on a domain $\Omega \subset \C^n$, where $F : \Omega \times \R \times \R^{2 n} \times H_n^+ \longrightarrow \R $ and extend it as usual to a real quadratic forms as usual by $F (x,s,p,Q) = F (x,s,p,Q^{1,1})$ if the hermitian $(1,1)-$part of $Q$  is semi-positive and by $+ \infty$ if not.
  
\begin{theo} \label{thm:perron1}
Assume the comparison principle holds for the complex Monge-Amp\`ere type equation (\ref{eq:GCMAE}) and the family $\mathcal U$ of bounded subsolutions of the equation (\ref{eq:GCMAE}) is non empty and locally upper bounded in $\Omega$.
Then the following properties: \\
1. The upper envelope 
$$
\f=\sup\{ u \, | \,  u \in \mathcal U  \} 
$$
is the maximal subsolution of the equation (\ref{eq:GCMAE}). 

Let $\gamma$ be a continuous function on $\partial \Omega$ and assume that the equation (\ref{eq:GCMAE}) has a subsolution $\underline u$ and a supersolution $\overline u$ such that $\underline u_* = \gamma = \overline u^*$ on $\partial \Omega$. Then $\f$ is the unique viscosity solution of (\ref{eq:GCMAE}) such that $u = \gamma$ on $\partial \Omega$. 
\end{theo}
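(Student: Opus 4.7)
The plan is to follow the classical Ishii--Perron scheme adapted to the complex Monge--Amp\`ere setting, proceeding in four steps. First I would verify that $\f^{*}$, the upper semicontinuous regularisation of $\f$, is again a bounded subsolution of \eqref{eq:GCMAE}. Fix $x_{0}\in\Omega$ and a $C^{2}$ upper test function $q$ for $\f^{*}$ at $x_{0}$; by a standard argument one extracts a sequence $u_{j}\in\mathcal U$ and points $y_{j}\to x_{0}$ at which $u_{j}-q_{j}$ (with $q_{j}:=q+\eta_{j}|x-x_{0}|^{2}$ a small quadratic perturbation making the maximum strict) attains a local maximum with $u_{j}(y_{j})\to\f^{*}(x_{0})$. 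The subsolution inequalities for $u_{j}$ combined with the lower semicontinuity of $F$ and the continuity of $F$ on its domain pass to the limit and yield $F(x_{0},\f^{*}(x_{0}),Dq(x_{0}),dd^{c}q(x_{0}))\leq 0$. Since $\f^{*}$ is then a bounded subsolution it belongs to $\mathcal U$, so $\f^{*}\leq\f$; combined with $\f\leq\f^{*}$ this gives $\f=\f^{*}$.

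Next, and this is the heart of the matter, I would show that $\f_{*}$ is a viscosity supersolution by a bump argument. Suppose for contradiction that $\f_{*}$ fails the supersolution inequality at some $x_{0}\in\Omega$: there exists a $C^{2}$ lower test function $q$ with $\f_{*}(x_{0})=q(x_{0})$ and $F(x_{0},q(x_{0}),Dq(x_{0}),dd^{c}q(x_{0}))<0$, in particular $dd^{c}q(x_{0})>0$. By the continuity of $F$ on its domain, there exist small $r,\delta,\gamma>0$ (with $\delta=\gamma r^{2}/4$) such that the perturbation $w(x):=q(x)+\delta-\gamma|x-x_{0}|^{2}$ satisfies $dd^{c}w>0$ and $F(x,w(x),Dw(x),dd^{c}w(x))<0$ throughout $B(x_{0},r)$, i.e.\ $w$ is a classical subsolution there. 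On $\partial B(x_{0},r)$ one has $w\leq q-3\gamma r^{2}/4\leq\f_{*}-3\gamma r^{2}/4<\f$, so the function
\[
U(x):=\begin{cases}\max(\f(x),w(x))&x\in B(x_{0},r),\\ \f(x)&x\in\Omega\setminus B(x_{0},r),\end{cases}
\]
is a well-defined bounded USC function that coincides with $\f$ near $\partial B(x_{0},r)$ and hence is a viscosity subsolution globally (as a local maximum of two subsolutions and as $\f$ outside). By the definition of $\f_{*}$ pick $y_{k}\to x_{0}$ with $\f(y_{k})\to\f_{*}(x_{0})=q(x_{0})$; then $w(y_{k})\to q(x_{0})+\delta>\f(y_{k})$ for $k$ large, so $U(y_{k})>\f(y_{k})$. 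This contradicts the maximality $U\leq\f$ guaranteed by Step~1, proving $\f_{*}$ is a supersolution. I expect this bump construction to be the main obstacle, precisely because one must simultaneously arrange the strict subsolution property of $w$ in $B(x_{0},r)$, the inequality $w<\f$ near $\partial B(x_{0},r)$, and the strict increase $w(y_{k})>\f(y_{k})$ at approximating points.

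For the boundary behaviour in the second assertion, I would use the comparison principle twice: the subsolution $\underline u\in\mathcal U$ gives $\underline u\leq\f$; and any subsolution $u\in\mathcal U$ with $u^{*}\leq\gamma=\overline u^{*}$ on $\partial\Omega$ satisfies $u\leq\overline u$ in $\Omega$, so in particular $\f\leq\overline u$ after restricting to the natural subfamily. Taking lower/upper envelopes at $\partial\Omega$ yields $\gamma=\underline u_{*}\leq\f_{*}\leq\f^{*}\leq\overline u^{*}=\gamma$, hence $\f_{*}=\f^{*}=\gamma$ on $\partial\Omega$.

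Finally I would apply the global comparison principle (assumed in the hypothesis) to the subsolution $\f=\f^{*}$ and the supersolution $\f_{*}$: since $\f^{*}\leq\f_{*}$ on $\partial\Omega$, it follows that $\f^{*}\leq\f_{*}$ on $\overline\Omega$, forcing equality $\f=\f_{*}=\f^{*}$. Thus $\f$ is continuous, is simultaneously a sub- and supersolution, and hence a viscosity solution with boundary values $\gamma$. Uniqueness is then immediate from one further application of the comparison principle to any two viscosity solutions sharing the boundary data.
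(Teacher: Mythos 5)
Your proposal is correct and follows essentially the same route as the paper: the stability of subsolutions under upper envelopes (which the paper simply quotes from Lemma 4.2 of \cite{CIL92} and you prove directly by the standard perturbed-test-function argument), Ishii's bump construction showing $\f_*$ is a supersolution (your bump $w=q+\delta-\gamma|x-x_0|^2$ is a cosmetic variant of the paper's $q_\delta$), the boundary squeeze $\gamma=\underline u_*\leq\f_*$ and $\f\leq\overline u^*=\gamma$ on $\partial\Omega$, and a final application of the comparison principle to get $\f=\f_*$ and uniqueness. Your parenthetical caveat about restricting to subsolutions dominated by $\overline u$ to justify $\f\leq\overline u$ is the same implicit normalization the paper makes, so there is no substantive divergence.
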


\begin{proof} We argue as in
\cite{CIL92} p. 22-24.  Then 
lemma 4.2 there implies that the upper 
envelope $\f$ of the subsolutions of (\ref{eq:GCMAE}) is a 
subsolution of (\ref{eq:GCMAE}) since $F$ is lsc. Hence $\f$ is a subsolution of (\ref{eq:GCMAE}). 

The Ishii's trick is now to consider the lsc regularisation  $\f_*$ of $\f$. We are going
to show that $\f_*$ is  a supersolution of (\ref{eq:GCMAE}).    We argue by contradiction using a bump construction. Assume the converse is true. Then we can find $x_0 \in \Omega$ and  a lower test function $q$ for $\f_*$ at $x_0$ such that  
 $F^+ (x_0,\f_* (x_0), d q (x_0), dd^c q (x_0)) < 0$. This implies that $Q := dd^c q (x_0) \geq 0$ and  $ F (x_0,\f_* (x_0), D q (x_0),Q) < 0$.
 Let $(z^1, .., z^n)$ be a coordinate 
system centered at $x_0$ giving a local isomorphism with the complex unit ball. Define for $\delta > 0$, $r > 0$ small enough and $\vert z\vert < 2 r,$
 $$
  q_{\delta} (z) := q (z) - \delta (\vert z-x_0\vert^2 -  r^2).
 $$ 
 Then 
 \begin{eqnarray*}
  & q_{\delta} (x_0) = \f_* (x_0) + \delta r^2,\\
 & D q_{\delta} (x_0) = D q (x_0),\\
& D^2 q_{\delta} (x_0) =  Q - 2 \delta I_n.
 \end{eqnarray*}
 Then since $ F (x_0,\f_* (x_0), D q (x_0),Q) < 0$, it follows by continuity of $F$ in its domain that for $\delta > 0$ small enough we can find $r > 0$  
 small enough so that for $\vert z\vert < 2 r$,
 $$
  F (z,q_{\delta} (z),D q_{\delta} (z), Q_\delta) < 0,
 $$
 which means that $ q_{\delta}$ is a subsolution of our equation in the ball $\vert z\vert < 2 r$.
Now observe that for $\vert z- x_0\vert = 2 r$, $ q_{\delta} (z) = q (z) - \delta r^2 \leq \f_* - 3 \delta r^2 \leq \f - 3 \delta r^2$.
Therefore the new function defined by $\p (z) := \max \{\f,q_\delta\}$ on the ball $B_r (x_0) : \vert z - x_0\vert < 2 r$ and $\p = \f$ in $\Omega \setminus B_r (x_0)$ is a subsolution of the equation in $\Omega$.
Since $\f$ is the maximal subsolution of the equation on $\Omega$, we conclude that $U \leq \f$ in $\Omega$, which implies that $q_\delta \leq \f$ on the ball $B_r (x_0)$.
On the other hand, since $u_\delta (x_0) - \f_* (x_0) =   \delta r^2$,  there is a sequence $(y_j)$ converging to $x_0$ such that $\lim_{j \to + \infty} q_\delta (y_j)- \f (y_j) = q_\delta (x_0) - \f_* (x_0)$. Then for $j >> 1$ we have  $y_j \in B_r (x_0)$ and $q_\delta (y_j)- \f (y_j) > \delta r^2\slash 2 > 0$, which contradicts the inequality $q_\delta \leq \f$ on the ball $B_r (x_0)$.

Since $\underline u \leq \f \leq \overline u$ it follows that $\underline u_* \leq \f_* \leq \overline u$ in $\Omega$. Then $\f \leq \overline u^* = \gamma$ on $\partial \Omega$, while $ \gamma = \underline u_* \leq \f_*$ on $\partial \Omega$ which implies that $\f \leq \f_*$ in $\partial \Omega$. By the comparison principle it implies that $\f \leq \f_*$ in $\Omega$, hence $\f = \f_*$ is a viscosity solution of the equation (\ref{eq:GCMAE}).
\end{proof}

\begin{coro} Let  $\mu > 0$ be is a continuous volume fom $\Omega \Subset \C^n$ and $g$ is continuous and increasing in the second variable. Assume that the family $\mathcal U$ of bounded viscosity subsolutions of the following complex Monge-Amp\`ere equation 
\begin{equation} \label{eq:MA}
- (dd^c u)^n + e^{g (x,u)} \mu = 0, 
\end{equation}
 is non empty and locally upper bounded in $\Omega$. 
Then the maximal viscosity subsolution  
$$
\f=\sup\{ u \, | \,  u \in \mathcal U  \}, 
$$ 
is a viscosity solution of (\ref{eq:MA}). 
Moreover it is a continuous $\omega$-plurisubharmonic function on $\Omega$ and is also a solution of (\ref{eq:MA})
in the pluripotential sense.
\end{coro}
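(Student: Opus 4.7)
The plan is to apply the Perron-type Theorem~\ref{thm:perron1} to the Hamiltonian associated with the equation in the statement. The local viscosity comparison principle proved earlier in this section applies to~(\ref{eq:MA}) because the factor $e^{g(x,u)}\mu(x)$ is strictly increasing in the $u$-variable ($g$ is increasing and $\mu>0$). Consequently, by the standard Perron envelope lemma (\cite[Lemma 4.2]{CIL92}, as reproduced in the proof of Theorem~\ref{thm:perron1}), the upper semicontinuous regularization $\f^*$ of $\f$ is a viscosity subsolution of~(\ref{eq:MA}); since Proposition~\ref{prop:vsc=plp} forces every viscosity subsolution to be plurisubharmonic, hence usc, one has $\f^*=\f$ and $\f\in\mathcal U$.

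The next step is Ishii's bump construction, exactly as in the proof of Theorem~\ref{thm:perron1}: I would show that $\f_*$ is a viscosity supersolution of~(\ref{eq:MA}). A failure of the supersolution inequality at some point $x_0$ with a lower test function $q$ would force $dd^c q(x_0)>0$ and $(dd^c q(x_0))^n>e^{g(x_0,\f_*(x_0))}\mu(x_0)$; a small downward perturbation $q_\delta(z)=q(z)-\delta(|z-x_0|^2-r^2)$ on a small ball $B_r(x_0)$ would then be a classical subsolution there with $q_\delta<\f$ near $\partial B_r(x_0)$, so the glued function $\max\{\f,q_\delta\}$ on $B_r(x_0)$ extended by $\f$ outside would belong to $\mathcal U$ and be strictly above $\f$ somewhere---contradicting maximality.

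To pass from the ``discontinuous'' pair $(\f,\f_*)$ to a continuous viscosity solution, I would argue locally on each small ball $B\Subset\Omega$ by constructing a continuous pluripotential barrier. Specifically, produce a continuous psh function $w$ on $\bar B$ with $w=\f$ on $\partial B$ solving $(dd^c w)^n=e^{g(x,\f)}\mu$ pluripotentially on $B$. By the Proposition immediately preceding this corollary (viscosity solutions of $(dd^c \f)^n=e^{g(x,\f)}\mu$ with $\mu>0$ continuous coincide with continuous pluripotential solutions), $w$ is also a viscosity solution on $B$. Extending $w$ by $\f$ outside $\bar B$ gives an element of $\mathcal U$ that is $\geq\f$, so $w\leq\f$ on $B$; applying the viscosity comparison principle on $B$ to $\f$ (subsolution) and $w$ (supersolution) with matching boundary values yields $\f\leq w$, hence $\f=w$ on $\bar B$. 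As $B$ was arbitrary, $\f$ is continuous on $\Omega$, a viscosity solution, and---by the same Proposition---a pluripotential solution.

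The main obstacle is the construction of the continuous barrier $w$ in the last step: the density $e^{g(x,\f)}\mu$ is a priori only upper semicontinuous, since $\f$ itself is usc but not yet known to be continuous, so Theorem~\ref{thm:DirPb} does not apply directly. I expect this to be resolved by approximating $\f$ monotonically from above by continuous psh functions (e.g.\ sup-convolutions $\f^\delta$, as in Lemma~\ref{convol}), solving the resulting Dirichlet problems via Theorem~\ref{thm:DirPb}, and passing to the limit using the Kolodziej-type stability estimate Theorem~\ref{thm:StabEst} and the monotone convergence result Theorem~\ref{thm:CV}.
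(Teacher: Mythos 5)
Your first two steps (the Perron envelope lemma giving $\f^*=\f\in\mathcal U$, and Ishii's bump construction showing that $\f_*$ is a viscosity supersolution) are exactly the ingredients of Theorem~\ref{thm:perron1}, and your final balayage argument for the pluripotential statement is essentially the paper's. The genuine gap is in how you try to obtain continuity. The paper does not prove continuity by a local construction with the usc envelope as data: it gets continuity directly from Theorem~\ref{thm:perron1}, i.e.\ from the comparison principle applied to the subsolution $\f$ and the supersolution $\f_*$ \emph{using the boundary barriers} $\underline u,\overline u$ with $\underline u_*=\overline u^*$ on $\partial\Omega$, and only \emph{afterwards} runs the balayage: once $\f$ is known to be continuous, the density $e^{g(x,\f)}\mu$ and the boundary data $\f|_{\partial B}$ are continuous, so Theorem~\ref{thm:DirPb} (or Corollary~\ref{thm:Balayage}) applies, and two comparisons (pluripotential on $B$, viscosity on $\Omega$) give $\f=\psi$ on $B$.

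Your step 3 reverses this order and becomes circular: you ask for a continuous psh $w$ on $\bar B$ with $w=\f$ on $\partial B$, but such a $w$ cannot exist unless $\f|_{\partial B}$ is already continuous, which is precisely what you are trying to prove; likewise the boundary matching needed for the viscosity comparison "$\f\le w$ on $\partial B$" presupposes this continuity. The proposed repair does not close the gap: Theorem~\ref{thm:StabEst} is a global stability estimate on a compact K\"ahler manifold and says nothing about the Dirichlet problems on the ball $B$; and even using Theorem~\ref{thm:DirPb} for the sup-convolutions $\f^\delta$ and Theorem~\ref{thm:CV} to pass to the limit, the decreasing limit of the approximate solutions is only usc, so you obtain neither a continuous barrier nor uniform convergence, hence no continuity of $\f$. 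More fundamentally, without some boundary hypothesis (the barriers of Theorem~\ref{thm:perron1}, which the corollary tacitly inherits), nothing in your argument forces $\f\le\f_*$ anywhere, so the passage from the pair $(\f,\f_*)$ to a continuous solution is not achieved. To fix the proof, establish continuity first exactly as in Theorem~\ref{thm:perron1} (comparison of $\f$ with $\f_*$ via the barriers), and only then perform your balayage step, which then works verbatim.
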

\begin{proof}
It remains to see that $\f$ is also a solution of (\ref{eq:MA}) in the pluripotential sense. Since this is a local property, it is enough to prove it locally. We argue by balayage. Let $B \Subset X$ be a small coordinate neighbourhood which is biholomorphic to an euclidean ball in $\C^ n$ such that $\omega$ has a local potential on a neighbourhood of $\overline B$. Since $\f$ is continuous on $\overline B$, we can solve the complex Monge-Amp\` ere equation $(dd^c \psi)^ n = e^{ (x,\f)} \mu $ on $ B$ with boundary values equal to $\f$ on $\partial B$ by Theorem~\ref{thm:Balayage}. Then by the pluripotential comparison principle we have $\psi \geq \f$ on $B$. Therefore the function $u := \psi$ on $B$ and $u = \f$ on $\Omega \setminus B$ is a continuous $\omega-$psh function on $\Omega$ and by Proposition~\ref{prop:vsc=plp}, it is a viscosity subsolution of the equation (\ref{eq:MA}).
Therefore by the global comparison principle $u \leq \f$ on $\Omega$, which proves that $\f = \psi$ on $B$ and then $\f$ satisfies the complex Monge-Amp\`ere equation $(\omega + dd^c \f)^n = e^{ (x,\f)} \mu $ in the pluripotential sense on $B$ which proves our statement.
\end{proof}
 \begin{rem} As we observed in Remark~\ref{rem:vscdegen}, the comparison principle is valid in a more general situation where  $\mu \geq 0$ and $g (x,s)$ is non decreasing in $s$. Therefore the Theorem above is still valid in this general situation. For a different proof of this last statement see \cite{Wang10}.
 
 \end{rem}

 \section{The viscosity approach in the compact case}
 
 We now set the basic frame for the viscosity approach to the following degenerate complex Monge-Amp\`ere equation 
 $$
  (\omega+dd^c \f)^n =e^{\e \f} \mu,
 \leqno{(DMA)_{\e,\mu}}
 $$
 where $\omega$ is a closed smooth $(1,1)$-form on a $n$-dimensional connected compact 
complex manifold $X$,
$\mu$ is a volume form with nonnegative continuous density and $\e \in \R_{+}$. 
 
 \smallskip

As we have seen in the last section, the  comparison principle lies at the heart of the viscosity approach. 
Once it is established, Perron's method
can be applied to produce viscosity solutions. Our main goal in this section is to establish the global comparison
principle for the equation $(DMA)_{\e,\mu}$. We only assume $X$ is compact (and $\e>0$): the structural feature of 
$(DMA)_{\e,\mu}$ allows us to avoid any restrictive curvature assumption on $X$ (unlike e.g. in \cite{AFS08}).

 \vskip 0.3 cm
 \subsection{Definitions for the compact case}

 To fit in with the viscosity point of view, we rewrite the Monge-Amp\`ere equation as
$$
 -(\omega+ dd^c \f)^n +  e^{\e \f} \mu = 0 .
$$

Let $x\in X$. If $\kappa \in \Lambda^{1,1} T_{x} X$ we define $\kappa_+^n$ to be $\kappa^n$ if $\kappa \ge 0$ and $0$ otherwise.

We let $PSH(X,\omega)$ denote the set of all $\omega$-plurisubharmonic ($\omega$-psh for short)
 functions on $X$: these are integrable  functions
$\f:X \rightarrow \R \cup \{-\infty\}$ such that $dd^c \f \geq -\omega$
in the sense of currents.

\smallskip

\begin{lem} \label{hyp}
Let $\Omega \subset X$ be an open subset and $z:\Omega\to \C^n$ be a holomorphic coordinate chart. 
Let $h$ be a smooth local potential for $\omega$ defined on $\Omega$.
Then $(MA_{\e,\mu})$ reduces in these $z$-coordinates to the scalar equation
$$
 e^{\e u} W - \det(u_{z\bar z})=0 \leqno{(MA_{\e,\mu|z})}
$$
where $u=(\f+h)|_{\Omega} \circ z^{-1}$, $z_*\mu=e^{\e h_{| \Omega} \circ z^{-1}}W d\lambda$ and $\lambda$ is the Lebesgue measure on $z(\Omega)$. 
\end{lem}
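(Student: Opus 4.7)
The plan is essentially bookkeeping: the lemma is a change-of-coordinates statement, reading the intrinsic equation $-(\omega+dd^c\f)^n + e^{\e\f}\mu = 0$ in the chart $z$ after the local potential $h$ has been added in. There is no analytic obstacle; the only point to watch is that the universal constant coming from the convention $dd^c = \tfrac{i}{\pi}\partial\bbd$ has to be absorbed consistently into the definition of $W$.

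For the left-hand side, I would use the defining property $dd^c h = \omega$ on $\Omega$ to write $\omega + dd^c\f = dd^c(\f+h)$ there. With $u := (\f+h)_{|\Omega}\circ z^{-1}$, the invariance of $dd^c$ under biholomorphisms gives $(z^{-1})^*(\omega+dd^c\f)^n = (dd^c u)^n$ on $z(\Omega)\subset\C^n$. The pointwise identity recalled in Section 1.1 then identifies this, as a positive Radon measure, with $c_n \det(u_{z_j\bar z_k})\,d\lambda$ for a numerical constant $c_n$ depending only on $n$ and on the convention chosen for $dd^c$.

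For the right-hand side, $\f = u\circ z - h$ on $\Omega$ yields $e^{\e\f}\mu = e^{-\e h}\,e^{\e u\circ z}\,\mu$. Pushing forward by $z$, one gets
$$
z_*(e^{\e\f}\mu) \;=\; e^{\e u}\,e^{-\e h_{|\Omega}\circ z^{-1}}\, z_*\mu \;=\; e^{\e u}\,W\,d\lambda,
$$
where the second equality is exactly the defining identity $z_*\mu = e^{\e h_{|\Omega}\circ z^{-1}}W\,d\lambda$ for $W$. Absorbing the constant $c_n$ from the previous paragraph into $W$, the pushed-forward equation reduces to the pointwise scalar identity $e^{\e u}W - \det(u_{z\bar z}) = 0$ on $z(\Omega)$, which is $(MA_{\e,\mu|z})$. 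This absorption of $c_n$ into $W$ is the only bookkeeping subtlety; once it is agreed upon, the lemma is a direct consequence of the invariance of the $dd^c$ operator under holomorphic change of coordinates and of the fact that $h$ is a local potential for $\omega$.
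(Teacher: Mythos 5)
Your proposal is correct and is exactly the direct change-of-coordinates computation the paper has in mind (the paper omits the argument, declaring it straightforward): writing $\omega+dd^c\f=dd^c(\f+h)$, using invariance of $dd^c$ under the chart, and cancelling the factor $e^{\e h\circ z^{-1}}$ built into the definition of $W$ against $e^{\e\f}=e^{\e u}e^{-\e h\circ z^{-1}}$. Your remark about absorbing the dimensional constant from the convention $dd^c=\frac{i}{\pi}\partial\bbd$ into $W$ (equivalently, into the normalization of $d\lambda$) is the right way to handle the only bookkeeping subtlety.
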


The proof is straightforward. 


In order to deal with degenerate elliptic non linear equations and be able to apply results from \cite{CIL92}, we introduce as in the local case the following Hamiltonian function.

If $\f^{(2)}_x$ is the $2$-jet at $x\in X$ of a ${\mathcal C}^2$ real valued function $\f$
we set
$$
F(x,\f (x),\f^{(2)}_x)=
\left\{
\begin{array}{ll}
e^{\e \f(x)} \mu_x-(\omega_x+ dd^c \f_x)^n & \text{ if } \omega+dd^c\f_x\ge 0 \\
+\infty & \text{otherwise}.
\end{array}
\right.
 $$
Then $F$ satisfies the degenerate ellipticity condition as well as the properness condition, but it is only lower semi-continuous.
However it is continuous on its domain (i.e. where it is finite).
 \vskip 0.3 cm
\subsubsection{Subsolutions}



  
Recall now the following definition from previous sections:

\begin{defi}
 A subsolution of $(DMA)_{\e,\mu}$ is an upper semi-continuous function $\f: X\to \R \cup \{-\infty\}$ such that
$\f\not \equiv -\infty$
and the following property is satisfied:
if $x_0\in X$ and  $q \in {\mathcal C}^2$, defined in a neighborhood of $x_0$, is such that $\f(x_0)=q(x_0)$ and
$$
 \f-q \  \text{ has a local maximum at} \ x_0,
 $$ 
 then $F(x_0,\f(x_0),q^{(2)}_{x_0})\le 0$. 
 \end{defi}

\vskip 0.3 cm
\subsubsection{(Super)solutions}
The definition of supersolutions follows the one given in the local setting:

\begin{defi}
A supersolution of $(DMA)_{\e,\mu}$ is a lower semicontinuous  function 
$\f: X\to \R \cup \{+\infty\}$ such that $\f\not \equiv +\infty$
and the following property is satisfied:
if $x_0\in X$ and  $q \in {\mathcal C}^2$, defined in a neighborhood of $x_0$, is such that $\f(x_0)=q(x_0)$ and
$
 \f-q \  \text{ has a local minimum at} \ x_0,
 $
 then $F_+(x_0,\f(x_0),q^{(2)}_{x_0}) \geq 0$. 
\end{defi}
Here $F_+(x_0,\f(x_0),q^{(2)}_{x_0}) := F(x_0,\f(x_0),q^{(2)}_{x_0})$ if $dd^c q(x_0) \geq 0$ and $0$ otherwise.
 
\begin{defi}
A viscosity solution of $(DMA)_{\e,\mu}$ is a function that is both a sub-and a supersolution. In particular, viscosity solutions
are automatically continuous.
Classical sub/supersolutions are ${\mathcal C}^2$ viscosity sub/supersolutions. 

A pluripotential solution of $(DMA)_{\e,\mu}$ is an usc function $\f \in L^{\infty}\cap PSH(X,\omega)$
such that for every local potential $\psi$ of $\omega$ we have $MA(\psi+\f)= e^{\e\f} \mu$ in the weak sense of currents.

\end{defi}

In this setting, the discussion after Theorem \ref{thm:visc=pluripot}
yields the following:

\begin{coro}
Let $X$ be a compact K\"ahler manifold and $\omega$ a smooth closed $(1,1)$ form 
whose cohomology class $[\omega]$ is big. Let $\f$ be any continuous $\omega$-psh function.
Then
$\f$ satisfies $(\omega+dd^c \f)^n \ge e^{\e \f} \mu$ in the viscosity sense iff $\langle (\omega+dd^c \f)^n \rangle \ge e^{\e \f} \mu$,
where $\langle (\omega+dd^c \f)^n \rangle$ is the non-pluripolar Monge-Amp\`ere measure \cite{BEGZ10}. 
\end{coro}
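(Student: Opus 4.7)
The plan is to reduce the statement to the local viscosity/pluripotential equivalence already established in the previous subsections, namely Proposition~\ref{pro:visc=pluripot} and its extension to equations with right-hand side of the form $e^{\e\f}\mu$. Both conditions appearing in the corollary are local properties, so it suffices to verify the equivalence on each chart of a well-chosen finite open cover of $X$.

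First I would cover $X$ by finitely many holomorphic coordinate balls $(U_\al, z_\al)$ on which $\omega$ admits a smooth local potential $h_\al$, i.e.\ $\omega = dd^c h_\al$ on $U_\al$. Since $\f$ is continuous and $\omega$-psh, the function $\psi_\al := \f + h_\al$ is a bounded continuous psh function on $U_\al$. The viscosity inequality $(\omega + dd^c \f)^n \ge e^{\e \f}\mu$ at a point $x_0 \in U_\al$ is, by the translation performed in Lemma~\ref{hyp}, equivalent to the local viscosity inequality $(dd^c \psi_\al)^n \ge e^{\e \f} \mu$ at $x_0$, where the right-hand side is viewed as a continuous nonnegative volume form (continuity being guaranteed since $\f$ is continuous).

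Next I would apply the local equivalence of viscosity and pluripotential subsolutions to $\psi_\al$ on $U_\al$. Since $\psi_\al$ is continuous and bounded and $e^{\e \f}\mu$ is a continuous nonnegative volume form, the results of Section~3.1 (the continuous-density case of Proposition~\ref{pro:visc=pluripot}) show that $\psi_\al$ is a viscosity subsolution of $(dd^c u)^n = e^{\e \f}\mu$ on $U_\al$ if and only if it is a pluripotential subsolution, i.e.\ $(dd^c \psi_\al)^n \ge e^{\e \f}\mu$ in the Bedford--Taylor sense on $U_\al$.

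Finally I would identify the local Bedford--Taylor measures $(dd^c \psi_\al)^n$ with the global non-pluripolar Monge--Amp\`ere measure $\langle(\omega + dd^c \f)^n\rangle$. Because $\f$ is continuous, hence locally bounded, $\omega + dd^c \f$ has locally bounded potentials, so the standard local Bedford--Taylor product $(\omega + dd^c \f)^n$ is a well-defined positive Radon measure on $X$ which on each $U_\al$ coincides with $(dd^c \psi_\al)^n$; moreover, the non-pluripolar construction of \cite{BEGZ10} agrees with this measure in the bounded-potential setting, so $\langle(\omega + dd^c \f)^n\rangle = (\omega + dd^c \f)^n$. Collecting the local equivalences across the finite cover yields the global equivalence. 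The only subtle point, which is essentially the only place where bigness of $[\omega]$ enters, is the identification between the non-pluripolar product and the Bedford--Taylor product for continuous potentials; once this is granted, the statement is a direct translation of the local theory from Section~3.1.
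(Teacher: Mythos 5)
Your proof is correct and follows essentially the same route the paper has in mind, which settles this corollary by a one-line appeal to the local results of Section 3.1: one localizes with smooth potentials of $\omega$, treats $e^{\e\f}\mu$ as a fixed continuous volume form (legitimate since $\f$ is continuous), applies Proposition \ref{pro:visc=pluripot}, and then identifies the local Bedford--Taylor measures with $\langle(\omega+dd^c\f)^n\rangle$, which for a bounded (here continuous) potential coincides with the Bedford--Taylor product. The only small imprecision is your closing remark: that identification does not actually use bigness of $[\omega]$ -- for locally bounded potentials the non-pluripolar product of \cite{BEGZ10} is the Bedford--Taylor product by construction -- so bigness merely provides the ambient setting in which the non-pluripolar measure is considered, not an ingredient of the argument.
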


\vskip 0.3 cm
\subsection{The global viscosity comparison principle}

Since our conditions ($X$ compact, $\mu\ge 0$, $\e>0$) are invariant under dilation, we can always reduce to the
case $\e=1$, a normalisation that we shall often make in the sequel.

We now come to the main result of this section:

\begin{theo}\label{qcp}
The global viscosity comparison principle for $(DMA)_{1,\mu}$ holds, 
provided $\omega$ is a closed $(1,1)$-form on $X$, $\mu>0$, and $X$ is compact.
\end{theo}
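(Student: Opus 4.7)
I plan to argue by contradiction via the doubling-of-variables technique of \cite{CIL92}, localised near a maximum point of $\f_1-\f_2$; positivity of $\mu$ together with the strict monotonicity in $\f$ (i.e.\ $\e=1>0$) will supply the contradiction.

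Suppose $\f_1$ is a subsolution and $\f_2$ a supersolution with $M:=\max_X(\f_1-\f_2)>0$. By compactness of $X$ and upper semicontinuity of $\f_1-\f_2$, this maximum is attained at some $\bar x\in X$. I would fix a holomorphic chart $z\colon\Omega\to\C^n$ centred at $\bar x$ together with a smooth local potential $h$ of $\omega$ on $\overline\Omega$, and set $u_i:=(\f_i+h)\circ z^{-1}$. By Lemma \ref{hyp}, $u_1$ is a viscosity subsolution and $u_2$ a viscosity supersolution on $z(\Omega)$ of the local scalar equation $\det(u_{z\bar z})=e^u\hat W$, with $\hat W>0$ continuous.

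For $\alpha>0$ and a sufficiently small closed ball $\overline B\subset z(\Omega)$ around $z(\bar x)$, consider
$$
\Phi_\alpha(\zeta,\eta):=u_1(\zeta)-u_2(\eta)-\tfrac{\alpha}{2}\vert\zeta-\eta\vert^2 \qquad \text{on } \overline B\times\overline B.
$$
Since $u_1-u_2=\f_1-\f_2$ on the diagonal and attains $M$ at $z(\bar x)$ while being strictly smaller on $\partial B$ for $B$ small enough, the standard argument of \cite{CIL92} produces interior maximizers $(\zeta_\alpha,\eta_\alpha)$ satisfying $\zeta_\alpha,\eta_\alpha\to z(\bar x)$, $\alpha\vert\zeta_\alpha-\eta_\alpha\vert^2\to 0$, and $u_1(\zeta_\alpha)-u_2(\eta_\alpha)\to M$ as $\alpha\to+\infty$. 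The Jensen--Ishii maximum principle (Theorem \ref{thm:IJ-MP}) then yields $p_\alpha\in\C^n$ and $Q_\alpha^+,Q_\alpha^-\in\mathcal S_{2n}$ with $(p_\alpha,Q_\alpha^+)\in\bar J^{2,+}u_1(\zeta_\alpha)$, $(p_\alpha,Q_\alpha^-)\in\bar J^{2,-}u_2(\eta_\alpha)$, and $Q_\alpha^+\leq Q_\alpha^-$; letting $H_\alpha^\pm$ denote their hermitian $(1,1)$-parts gives $H_\alpha^+\leq H_\alpha^-$. The subsolution inequality at $\zeta_\alpha$ reads
$$
\det(H_\alpha^+)\geq e^{u_1(\zeta_\alpha)}\hat W(\zeta_\alpha)>0,
$$
which forces $H_\alpha^+>0$, and hence $H_\alpha^-\geq H_\alpha^+>0$. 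The supersolution inequality at $\eta_\alpha$, now genuinely informative since $H_\alpha^-\in H_n^+$, yields $\det(H_\alpha^-)\leq e^{u_2(\eta_\alpha)}\hat W(\eta_\alpha)$. Monotonicity of $\det$ on $H_n^+$ gives $\det(H_\alpha^+)\leq\det(H_\alpha^-)$, so
$$
e^{u_1(\zeta_\alpha)-u_2(\eta_\alpha)}\leq\frac{\hat W(\eta_\alpha)}{\hat W(\zeta_\alpha)}.
$$
Sending $\alpha\to+\infty$, the left side tends to $e^M>1$ while the right tends to $1$ by continuity of $\hat W$, a contradiction; hence $M\leq 0$.

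The chief obstacle is ensuring that the hermitian parts $H_\alpha^\pm$ actually lie in $H_n^+$: only then does the supersolution definition produce a non-trivial bound on $\det(H_\alpha^-)$ (otherwise $F_+$ reduces to the strictly positive term $e^{u}\hat W$ which gives no information), and only then does monotonicity of $\det$ propagate from $H_\alpha^+\leq H_\alpha^-$ to the determinants. This is resolved by the subsolution inequality itself, which exploits $\mu>0$ to force $H_\alpha^+>0$; the ordering $H_\alpha^+\leq H_\alpha^-$ then transports positivity to $H_\alpha^-$. The remaining hypotheses---compactness of $X$ to produce $\bar x$, smoothness of $\omega$ to provide a smooth local potential $h$, and the strict monotonicity $\e=1>0$ to convert $u_1(\zeta_\alpha)-u_2(\eta_\alpha)\to M>0$ into the contradiction $e^M\leq 1$---are each essential features of the statement.
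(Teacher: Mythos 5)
Your argument is essentially the paper's determinant/eigenvalue scheme, but it has one genuine gap, and it is exactly the point where the compact case differs from the local Dirichlet case: the claim that $u_1-u_2$ ``attains $M$ at $z(\bar x)$ while being strictly smaller on $\partial B$ for $B$ small enough'', from which you deduce that the doubled maximizers $(\zeta_\alpha,\eta_\alpha)$ are interior points of $B\times B$. Nothing forces this. The set $\{\f_1-\f_2=M\}$ is merely a nonempty compact subset of $X$; it may contain a continuum through $\bar x$ reaching $\partial B$ for every small ball, and a priori (before the comparison principle is proved) $\f_1-\f_2$ could even be constant. In the local theorem of Section 3.3 interiority comes from the boundary hypothesis $u\leq v$ on $\partial\Omega$, which confines the maximum set; on a compact $X$ there is no boundary condition, so after localizing to a chart you have no control on where the penalized maximum sits, and the Jensen--Ishii maximum principle (Theorem \ref{thm:IJ-MP}) cannot be invoked at points of $\partial(B\times B)$. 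Note also that the obvious repair --- adding a term like $-\delta|\zeta-z(\bar x)|^2$ or a fixed bump to force a strict interior maximum --- perturbs the matrix inequality by $O(\delta)$, and since the eigenvalues of $H_\alpha^{\pm}$ may blow up like $\alpha$ (in your normalization), the resulting error in the determinants is of size $\delta\alpha^{\,n-1}$, which does not disappear when you send $\alpha\to+\infty$ first; so strictness cannot be bought for free.

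This is precisely what the paper's proof is engineered to handle. It constructs a global penalty $\theta\in\mathcal C^\infty(X^2)$ with $\theta\geq 0$, $\theta>3C$ outside $B_2^2$ (which, by the $L^\infty$ bounds on $\f,\p$, forces the maximizers of $u(x)-v(y)-\theta(x,y)-\frac{1}{2\al}|x-y|^2$ into the interior ball $B_2^2$), and, crucially, $\theta$ vanishing to order $2n+4$ along the relevant piece of the diagonal, so that $D^2\theta(x_\al,y_\al)=O(|x_\al-y_\al|^{2n})=o(\al^n)$. With this, Jensen--Ishii gives $Q_*\leq Q^*+o(\al^n)$ instead of $Q_*\leq Q^*$, and the extra bookkeeping you omit becomes necessary: the subsolution inequality and the bound $O(\al^{-1})$ on the eigenvalues force the smallest eigenvalue of $H_*$ to be $\gtrsim\al^{\,n-1}$, which beats the $o(\al^n)$ error, yields $H^*>0$, and gives $\det H_*\leq\det H^*+o(\al^n)$; only then does your final limit $e^{u(\hat x)}\leq e^{v(\hat x)}$ go through. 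The remainder of your proposal (positivity of $\mu$ forcing $H^+_\al>0$ and hence $H^-_\al>0$, monotonicity of $\det$ on $H_n^+$, the use of $\e=1$ and continuity of the density to reach $e^M\leq 1$) coincides with the paper's argument once interiority is secured; it is the localization step that needs the penalty construction, and as written your proof does not provide it.
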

 Observe that we do not assume $X$ to be K\"ahler nor $\omega$ to be semi-positive.
\begin{proof}
 We choose a constant $C>0$ such that $\f$ and $\p$ both are $\le C/ 4$ in $L^{\infty}$-norm. Since $\f - \p$
is upper semicontinuous on the compact manifold $X$, it follows that its maximum is achieved at some point $x_0 \in X$. 
 Choose complex coordinates $z = (z^1, \ldots, z^n)$ near $x_0$
defining a biholomorphism identifying an open neighborhood of $x_0$ to the complex ball $B_4 := B(0,4) \subset \C^n$ of radius $4$ sending $x_0$ to the origin in $\C^n$. 

We define $h_{\omega}\in {\mathcal C}^2(\overline{B_4}, \R)$ to be a local potential smooth up to the boundary  for $\omega$ and extend it smoothly to $X$.
We may without lost of generality assume that $\| h_{\omega} \|_{\infty} < C/ 4$.
In particular $dd^c h_{\omega}= \omega$ on $B_4$ and the usc function $u := \f \circ z^{- 1} + h_{\omega} \circ z^{- 1}$ is a viscosity subsolution of 
$$
(dd^c u)^n = e^{u} f \cdot \beta_n
\text{ in } B_4,
\leqno (\star)
$$ 
with $f :=  z^{*} (\mu) \slash \beta_n > 0$ is a positive and continuous volume form on $B_4$. 

On the other hand the lsc function $v := \p \circ z^{- 1} +h_{\omega} \circ z^{- 1}$ is a viscosity supersolution of the same equation.
 
 This is a crucial point:
the modified equation still has the same form as the original one.

We want to estimate $ \max_X (\f  - \p) =  \max_{\bar B_4}(u - v) =  u (0) - v (0) \leq 0$ by applying the classical maximum principle as in the local case. Observe that if the functions $u$ and $v$ were twice differentiable at $x_0$ the inequality follows from the maximum principle and the differential sub/super inequalities satisfied by $u$ and $v$ at $x_0$ respectively.

 In the general case we proceed as in \cite{CIL92} using the penalty method consisting in doubling the variable and adding a penalty function, but we will be adding two penalty functions. We consider  the function $x \longmapsto u (x) - v (x)$ as the restriction to the diagonal in the product $B_3 \times B_3$ of the function $(x,y) \longmapsto u (x) - v  (y) - \theta (x,y) - (1 \slash 2 \de) \vert x - y\vert^2 $ where   $\theta (x,y)$ is the first penality function which vanishes highly on the diagonal near the origin $(0,0)$ and is large enough on the boundary of the ball $B_3 \times B_3$ to force the maximum to be attained at an interior point; the second penalty function forces the maximum to be asymptotically attained  along the diagonal. The fact that
 the second derivative of the penalty function is a quadratic form  on $\R^{2n} \times \R^{2 n}$ which vanishes on the diagonal,  will be crucial.

 \smallskip
 
 We now proceed to the construction of the first penalty function $\theta$. 
We want to construct a smooth function $\theta \in {\mathcal C}^{\infty}(X^2,\R)$ satisfying the following conditions
\begin{itemize}
\item $\theta \ge 0$,
\item $\theta^{-1}(0)= \Delta \cap \{ \theta_2 \le -\eta \}$,
\item $\theta|_{X^2 \setminus B_2^2} > 3C$,
\end{itemize}
where $\eta > 0$ is small enough (see below for the definition of $\theta_2$) and $C > 0$.
 
 First we construct a Riemannian metric on $X$ which coincides with the flat K\"ahler metric $\frac{\sqrt{-1}}{2} dz^k \wedge d\bar z^k$ on the ball of center
$0$ and radius $3$. For $(x,y)\in X\times X$ define $d(x,y)$ to be the corresponding Riemannian distance function. The  continuous function $d^2$ is of class ${\mathcal C}^2$ near the diagonal and $>0$ outside the diagonal $\Delta \subset X^2$.

Next we construct a smooth non negative function $\theta_1$ on $X\times X$ by the following formula:
$$ \theta_1(x,y)=\chi (x,y). \sum_{i=1}^n |z^i(x)-z^i(y)|^{2n+4},$$
where $\chi$ smooth non negative cut off function with $0 \le \chi \le 1$,
$\chi\equiv 1$ on $B_3^2$ $\chi=0$ near $\partial B_4^2$. 

Then we construct  a second smooth function on $X\times X$ with 
$\theta_2|_{B_2^2} <-1$,  $\theta_2|_{M^2 \setminus B_2^2} > 3 C$.

Choose $1\gg \eta >0$ such that  $-\eta$ is a regular value of both 
$\theta_2$ and $\theta_2|_{\Delta}$.

We perform convolution of $(\xi,\xi')\mapsto \max(\xi, \xi')$ by a smooth semipositive function $\rho$ such that $B_{\R^2}(0,\eta) =\{\rho>0\}$ and get a smooth function on $\R^2$
$\max_{\eta}$ such that: 
\begin{itemize}
\item $\max_{\eta} (\xi, \xi')=\max(\xi, \xi')$ if $|\xi-\xi'|\ge\eta$, 
\item $\max_{\eta} (\xi, \xi')>\max(\xi, \xi')$ if $|\xi-\xi'|<\eta$. 
 \end{itemize}
 Then the function $\theta$ defined by $\theta :=\max_{\eta}(\theta_1,\theta_2)$ satisfies our requirements.
 
 Fix $\al > 0$. We want to apply the Jensen-Ishii's maximum principle to the functions $u , v$ and $\phi = \theta - \frac{1}{2 \al} \vert x-y\vert^2$.
 
For ${\al}>0$ small enough, consider $(x_{{\al}},y_{\al}) \in  \bar B_3 \times \bar B_3$ such that 
\begin{eqnarray*}
m_{{\al}}& := &\sup_{(x,y) \in \bar B_3^2} \left\{u (x)- v (y) -\frac{1}{2 \al} \vert x - y\vert^2 - \theta(x,y)\right\}\\
&=& u (x_{{\al}})- v  (y_{{\al}}) -\theta(x_{{\al}},y_{{\al}})-\frac{1}{2\al} \vert x_{\al} - y_{\al}\vert^2.
\end{eqnarray*}

 The supremum is achieved since we are maximizing an usc function on the compact set $\bar B_3^2$. We also have 
 \begin{equation} \label{eq:min}
   m_{{\al}} \ge u (0)- v(0)  = \f(x_0)- \p (x_0)  \geq -  C \slash 2 , 
 \end{equation}
for $\al > 0$ small enough.

 By construction, for $(x,y) \in   B_3^2 \setminus B_2^2$, we also have 
 \begin{equation} \label{eq:max}
 u (x)- v (y) - \theta(x,y)-\frac{1}{2\al} \vert x - y\vert^2 \leq  - 2 C < -  C,
 \end{equation}
 which implies that $(x_{{\al}}, y_{{\al}}) \in B_2^2$.

The following result follows easily from the above properties (see \cite[Proposition 3.7]{CIL92}):

 \begin{lem} \label{asymp}
 For ${\al} > 0$ small enough we have $ \vert x_{\al} - y_{\al}\vert^2  = o ({\al})$. Every limit
point $(\hat x, \hat y)$ of $(x_{{\al}}, y_{{\al}})$ satisfies $\hat x= \hat y$, $(\hat x , \hat x) \in \Delta \cap \{  \theta_2 \le -\eta \}$
and 
\begin{eqnarray*}
\lim_{{\al} \to 0}  (u (x_{{\al}})- v (y_{{\al}})) 
&=&  u (\hat x)- v (\hat x) \\
& = & \f(x_0)- \p (x_0.
\end{eqnarray*}
\end{lem}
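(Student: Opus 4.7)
The plan is to proceed by the standard compactness-penalization argument from Crandall--Ishii--Lions, exploiting three key inputs: the lower bound for $m_\alpha$ obtained by testing at a well-chosen point on the diagonal, an upper bound coming from upper/lower semicontinuity, and the special geometric properties of the penalty function $\theta$ built before the lemma.

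First I would establish a lower bound on $m_\alpha$. Since $(0,0)\in \Delta$ and $\theta_2(0,0)<-1<-\eta$, the construction of $\theta$ via $\max_\eta(\theta_1,\theta_2)$ gives $\theta(0,0)=0$, so testing the supremum at $(x,y)=(0,0)$ yields
\begin{equation*}
m_\alpha \;\ge\; u(0)-v(0) \;=\; \f(x_0)-\p(x_0) \;=\; \max_X(\f-\p).
\end{equation*}
Since $u,v$ are uniformly bounded, $\theta \ge 0$, and $|x_\alpha-y_\alpha|^2/(2\alpha)\ge 0$, this lower bound forces both $\theta(x_\alpha,y_\alpha)$ and $|x_\alpha-y_\alpha|^2/(2\alpha)$ to remain bounded uniformly in $\alpha$.

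Next I would extract a subsequence $\alpha_k\to 0$ with $(x_{\alpha_k},y_{\alpha_k})\to (\hat x,\hat y)$ in $\bar B_2^{\,2}$. The bound $|x_\alpha-y_\alpha|^2 \le 2\alpha\cdot\mathrm{const}$ immediately gives $\hat x=\hat y$. Using upper semicontinuity of $u$ and lower semicontinuity of $v$, one has $\limsup_{\alpha\to 0} \bigl(u(x_\alpha)-v(y_\alpha)\bigr)\le u(\hat x)-v(\hat x)$, while continuity of $\theta$ gives $\theta(x_\alpha,y_\alpha)\to \theta(\hat x,\hat x)$. Combining this with the lower bound above,
\begin{equation*}
\f(x_0)-\p(x_0) \;\le\; m_\alpha \;\le\; u(x_\alpha)-v(y_\alpha)-\theta(x_\alpha,y_\alpha),
\end{equation*}
and passing to the limit yields
\begin{equation*}
\f(x_0)-\p(x_0) \;\le\; u(\hat x)-v(\hat x)-\theta(\hat x,\hat x).
\end{equation*}
Because $\f(x_0)-\p(x_0)=\max_X(\f-\p)\ge u(\hat x)-v(\hat x)$ and $\theta(\hat x,\hat x)\ge 0$, all inequalities are in fact equalities: $\theta(\hat x,\hat x)=0$ and $u(\hat x)-v(\hat x)=\f(x_0)-\p(x_0)$. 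By the defining property $\theta^{-1}(0)=\Delta\cap\{\theta_2\le -\eta\}$, the first equality places $(\hat x,\hat x)$ in $\Delta\cap\{\theta_2\le -\eta\}$.

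Finally, to obtain $|x_\alpha-y_\alpha|^2 = o(\alpha)$, I would exploit the sandwich
\begin{equation*}
\frac{1}{2\alpha}|x_\alpha-y_\alpha|^2 \;\le\; \bigl(u(x_\alpha)-v(y_\alpha)\bigr)-\bigl(u(0)-v(0)\bigr)-\theta(x_\alpha,y_\alpha),
\end{equation*}
whose right-hand side tends to $\bigl(u(\hat x)-v(\hat x)\bigr)-\bigl(\f(x_0)-\p(x_0)\bigr)-0=0$ along the chosen subsequence, and in fact for every limit point. As this bound holds for every such convergent subsequence with the same limit $0$, a standard compactness argument gives $|x_\alpha-y_\alpha|^2/\alpha\to 0$ as $\alpha\to 0^+$. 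The same squeeze forces $\limsup\bigl(u(x_\alpha)-v(y_\alpha)\bigr)$ and $\liminf\bigl(u(x_\alpha)-v(y_\alpha)\bigr)$ to coincide with $u(\hat x)-v(\hat x)$, yielding the announced convergence. The only delicate point in this argument is verifying that $\theta(0,0)=0$, which relies on the specific way $\max_\eta$ was constructed to coincide with ordinary $\max$ when the two arguments are separated by more than $\eta$; everything else is a routine application of semicontinuity and compactness.
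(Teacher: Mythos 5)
Your argument is correct, and it is exactly the standard penalization/compactness argument that the paper invokes without proof via the citation to \cite[Proposition 3.7]{CIL92}: lower-bound $m_\al$ by testing at the diagonal point (using $\theta(0,0)=0$, which you rightly flag as the one point needing the $\max_\eta$ construction and $\theta_1|_\Delta=0$, $\theta_2(0,0)<-1<-\eta$), then use semicontinuity of $u,-v$, continuity of $\theta\ge 0$, and the maximality of $\f-\p$ at $x_0$ to squeeze out $\theta(\hat x,\hat x)=0$, the identification of limit points via $\theta^{-1}(0)=\Delta\cap\{\theta_2\le-\eta\}$, the convergence of $u(x_\al)-v(y_\al)$, and $|x_\al-y_\al|^2=o(\al)$ by a subsequence argument. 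So you have in effect supplied the proof the paper delegates to the reference, with no gaps.
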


Next, we use Jensen-Ishii's  maximum principle with
 $\phi =\frac{1}{2\alpha} d^2 +\theta$. 
For $0 < \alpha << 1$, everything is localized to $B(0,2)$ hence
$d$ reduces to  the euclidean distance function.
Using the usual formula for the first and second derivatives 
of its square,   
we get the following:

\begin{lem}\label{main}
$\forall \e >0$, we can find $(p_*, Q_*), (p^*, Q^*)\in \C^n \times Sym_{\R}^2 (\C^n)$
s.t.
\begin{enumerate}
 \item $(p_*, Q_*)\in \overline{J}^{2+} u (x_{\alpha})$,
\item  $(p^*, Q^*)\in \overline{J}^{2-} v(y_{\alpha})$,
where $p^* = D_x \theta (x_\al,y_\al) + \frac{1}{2 \al} (x_\al - y_\al)$ and $p_* = - D_y \theta (x_\al,y_\al) + \frac{1}{2 \al} (x_\al - y_\al)$
\item The block diagonal matrix with entries $(Q_*, Q^*)$ satisfies:
$$
-(\e^{-1}+ \| A \| ) I \le 
\left(
\begin{array}{cc}
Q_* & 0 \\
0 & -Q^*
\end{array}
\right)
\le A+\e A^2, 
$$
where $A=D^2\phi(x_{\alpha}, y_{\alpha})$, i.e.
$$A =\alpha^{-1}
\left(
\begin{array}{cc}
I &  -I\\
-I &  I
\end{array}
\right) +D^2\theta (x_{\alpha}, y_{\alpha})$$
and $\| A \|$ is the spectral radius of $A$ (maximum of the absolute values for the eigenvalues of this symmetric
matrix). 
\end{enumerate}
\end{lem}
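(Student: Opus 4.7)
The plan is to recognize Lemma \ref{main} as a direct instance of Theorem \ref{thm:IJ-MP} (the Jensen-Ishii maximum principle), applied to the auxiliary function $\phi := \theta + (2\al)^{-1}|x-y|^2$ and the pair $(u,v)$ built in the preceding paragraphs. All of the ingredients have been carefully arranged so that the conclusion drops out once the hypotheses of the general theorem are checked and the derivatives of $\phi$ at $(x_\al, y_\al)$ are computed.

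First I would verify that the hypotheses of Theorem \ref{thm:IJ-MP} are satisfied. The function $u$ is bounded and upper semi-continuous on $B_3 \subset \C^n$, the function $v$ is bounded and lower semi-continuous on $B_3$, and $\phi$ is $\mathcal C^2$-smooth on $B_3 \times B_3$ because $\theta$ was explicitly constructed to be smooth on $X \times X$. By Lemma \ref{asymp}, the function $w(x,y) := u(x) - v(y) - \phi(x,y)$ attains its supremum on the compact set $\overline{B_3}{}^2$ at an interior point $(x_\al, y_\al) \in B_2 \times B_2$; hence this supremum is a genuine local maximum in the open subset $B_3 \times B_3$ of $\C^n \times \C^n$, which is exactly the setting required by the Jensen-Ishii maximum principle.

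Next I would compute the first and second derivatives of $\phi$ at $(x_\al, y_\al)$. Straightforward differentiation of $\theta + (2\al)^{-1}|x-y|^2$ yields
\[
D_x\phi = D_x\theta + \al^{-1}(x_\al - y_\al), \qquad D_y\phi = D_y\theta - \al^{-1}(x_\al - y_\al),
\]
while the real Hessian splits into a quadratic block plus the Hessian of $\theta$:
\[
A := D^2\phi(x_\al, y_\al) = \al^{-1}\begin{pmatrix} I & -I \\ -I & I \end{pmatrix} + D^2\theta(x_\al, y_\al).
\]
Theorem \ref{thm:IJ-MP} then produces, for every $\e>0$, symmetric matrices $Q_*, Q^* \in \mathcal S_{2n}$ and jets $(p^*, Q_*) \in \bar J^{2,+} u(x_\al)$, $(p_*, Q^*) \in \bar J^{2,-} v(y_\al)$ with $p^* = D_x\phi(x_\al, y_\al)$ and $p_* = -D_y\phi(x_\al, y_\al)$, satisfying the block-matrix sandwich $-(\e^{-1} + \|A\|)I \leq \mathrm{diag}(Q_*, -Q^*) \leq A + \e A^2$. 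This is exactly the stated conclusion.

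I do not anticipate any substantive obstacle at this step; it is essentially a bookkeeping exercise confirming that our data fit the template of Theorem \ref{thm:IJ-MP}. The only points requiring mild care are the sign convention in the formula for $p_*$ (reflecting the asymmetric way the sub-jet of $v$ enters Jensen-Ishii) and the observation that the penalty function $\theta$ contributes to $A$ entirely through $D^2\theta(x_\al, y_\al)$, whose operator norm is controlled uniformly in $\al$ because $\theta$ is globally smooth on the compact manifold $X \times X$. The genuine difficulty lies beyond this lemma, in combining the jets it produces with the viscosity sub- and supersolution inequalities at $x_\al$ and $y_\al$, and then letting $\e \to 0$ followed by $\al \to 0$, in order to derive the contradiction that completes the proof of Theorem \ref{qcp}.
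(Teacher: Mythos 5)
Your proposal is correct and takes essentially the same route as the paper: the lemma is obtained by applying the Jensen--Ishii maximum principle (Theorem \ref{thm:IJ-MP}) to $u$, $v$ and the smooth penalty $\phi=\theta+\tfrac{1}{2\alpha}|x-y|^2$ at the interior maximum point $(x_\alpha,y_\alpha)$ guaranteed by Lemma \ref{asymp}, and then computing $D\phi$ and $D^2\phi$. The only discrepancies are notational (the paper's statement slightly garbles the labels $p_*$, $p^*$ and the factor in the gradient formula, which should be $\alpha^{-1}(x_\alpha-y_\alpha)$ as in your computation), and these do not affect the argument.
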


By construction, the Taylor series of $\theta$ at any point in
 $\Delta \cap \{ \theta_2 < -\eta \}$ vanishes up to order $2n$. By transversality, 
$\Delta \cap \{  \theta_2 < -\eta \}$
is dense in $\Delta \cap \{  \theta_2 \le -\eta \}$, and this
 Taylor series  vanishes up to order $2n$ on $\Delta \cap \{  \theta_2 \le -\eta \}$.
In particular, 
$$
 D^2\theta (x_{\alpha}, y_{\alpha}) =O(d(x_{\alpha}, y_{\alpha})^{2n})
=o(\alpha^{n}).$$
This implies $\|A \|\simeq 1 \slash \alpha$. We choose $\alpha = \e$ and deduce
$$
-(2\alpha^{-1} ) I \le 
\left(
\begin{array}{cc}
Q_* & 0 \\
0 & -Q^*
\end{array}
\right)
\le \frac{3}{\alpha} \left(
\begin{array}{cc}
I &  -I\\
-I &  I
\end{array}
\right) + o(\alpha^{n})
$$

Looking at the upper and lower diagonal terms we deduce that the eigenvalues
of $Q_*, Q^*$ are $O(\alpha^{-1})$. Evaluating the inequality on vectors of the form 
$(Z,Z)$ we deduce from the $\le$ that the eigenvalues
of $Q_{*}-Q^{*}$ are $ o(\alpha^{n})$.

\smallskip

For a fixed $Q\in Sym_{\R}^2 (\C^n)$, denote by $H = Q^{1,1}$ its $(1,1)$-part. It is a hermitian matrix. 
 Obviously the eigenvalues of $H_* := Q_*^{1,1}, H^* := Q^{*1,1}$ are $O(\alpha^{-1})$ but those
of $H_{*} - H^*$ are $o(\alpha^{n})$. 
Since $(p_*, Q_*)\in \overline{J^{2+} }w_*(x_{\alpha})$ 
we deduce from the definition of viscosity solutions 
that $H_*$ is positive definite and that the product of its $n$
eigenvalues is $\ge c>0$ uniformly in $\alpha$. In particular its 
smallest eigenvalue is $\ge c\alpha^{n-1}$. The relation $ H_{*} + o(\alpha^{n})\le H^{*}$
forces $H^{*} >0$ for $\al > 0$ small enough. Then we have
$ \text{det} H_* \leq \text{det} H^*  + o(\alpha^{n})$.

By viscosity inequalities, we get
 \begin{eqnarray*} 
 e^{ u (x_{\alpha})}  \leq  \text{det} (H_*) 
 & \leq & \text{det} (H^*)  + o(\alpha^{n}) \\
 & \leq & e^{v (y_{\alpha})} + o(\alpha^{n}).
\end{eqnarray*} 
 Passing to the limit as $\al \to 0$, we obtain the inequality $e^{u (\hat x)} \leq e^{v (\hat x)}$, which implies that $u (\hat x) \leq   
 v (\hat x)$. 
\end{proof}

\begin{rem}
  The miracle with the complex Monge Amp\`ere equation we are studying is that
 the equation does not depend on the gradient in complex coordinates. In fact, it  takes the 
form $F(Q)-f(x)=0$. The localisation technique would fail without this structural feature. 
\end{rem}

\begin{rem}
In the global case when $\e=0$, i.e. for $(\omega+dd^c \f)^n=\mu$ on a compact K\"ahler manifold, 
 a subsolution is already a solution and then this method seems to be of no help. However the global comparison principle whould imply uniqueness even in this case.
 Indeed assume that $\f_1, \f_2$ are bounded viscosity solutions to  the equation $(\omega+dd^c \f)^n=\mu $. Then $\f_1, \f_2$ are continuous psh functions on $X$. Let $x_0 \in X$ such that $\f_1 (x_0) - \f_2 (x_0) = \max_X (\f_1 - \f_2)$. Then the function $\p_1 := \f - \f_1 (x_0)$ and $\p_2 = \f_2 - \f_2 (x_0)$ are viscosity solutions of the same equation such that $\p_1 \leq \p_2$. We want to prove equality. Assume that at some point $y \in X$ we have $\p_1 (y) < \p_2 (y)$. Then the open set $ \Omega := \{ \p_1 < \p_2\}$ is not empty and $\p_1 \geq \p_2$ on the boundary $\bd \Omega$.
 By the comparison principle we have $\p_1 \geq \p_2$ on $\Omega$, which is a contradiction.
 \end{rem}

\vskip 0.3 cm
\subsection{Perron's method}

Once the global comparison principle holds, one easily constructs continuous (viscosity=pluripotential) solutions
by Perron's method as we explained in the last section.

\begin{theo} \label{thm:perron2}
Assume the global comparison principle holds for $ (DMA)_{\e,\mu}$ and that $(DMA)_{\e,\mu}$  has a bounded subsolution $\underline{u}$
and a bounded supersolution $\overline{u}$. 
Then the maximal subsolution, 
$$
\f=\sup\{ w \, | \,  \underline{u} \le w
\le \overline{u} \ \text{and} \ w \ \text{is a viscosity subsolution of } (DMA)_{\e,\mu} \} 
$$
is the unique viscosity solution of $(DMA)_{\e,\mu}$. 

In particular, it is a continuous $\omega$-plurisubharmonic function in $X$ which is also a solution of $(DMA)_{\e,\mu}$
in the pluripotential sense.
\end{theo}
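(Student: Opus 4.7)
My plan is to mirror the local Perron argument of Theorem~\ref{thm:perron1}, replacing the local viscosity comparison principle by its global counterpart Theorem~\ref{qcp}. Set
$$ \mathcal U := \{ w : \underline u \leq w \leq \overline u,\ w\ \text{is a viscosity subsolution of}\ (DMA)_{\e,\mu}\}, $$
which is nonempty ($\underline u \in \mathcal U$) and uniformly bounded above by $\overline u$. The proof will then go through three standard steps: (i) show $\f$ is an usc subsolution of $(DMA)_{\e,\mu}$ and coincides with its usc regularization $\f^*$, (ii) Ishii's trick: the lsc regularization $\f_*$ is a supersolution, (iii) conclude via Theorem~\ref{qcp} that $\f=\f_*$ is the unique continuous viscosity solution; a final local balayage step identifies it with the pluripotential solution.

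\textbf{Executing the steps.} Step (i) is standard: since the Hamiltonian $F$ associated with $(DMA)_{\e,\mu}$ is lower semicontinuous and degenerate elliptic, the usc regularization of a locally upper bounded supremum of subsolutions is a subsolution (cf.\ Lemma 4.2 in \cite{CIL92}); since $\f^* \leq \overline u$ and $\f^* \geq \f \geq \underline u$, we have $\f^* \in \mathcal U$, whence $\f^* = \f$. For step (ii), I argue by contradiction. Suppose $\f_*$ is not a supersolution: there exist $x_0\in X$ and a $C^2$ lower test function $q$ at $x_0$ with $q(x_0) = \f_*(x_0)$ and $F_+(x_0, \f_*(x_0), q^{(2)}_{x_0}) < 0$. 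This forces $\omega_{x_0}+dd^c q_{x_0}>0$ and the strict inequality $(\omega+dd^c q)^n_{x_0} > e^{\e q(x_0)} \mu_{x_0}$. If $\f_*(x_0) = \overline u(x_0)$, then $q$ is also a lower test function for $\overline u$ at $x_0$, directly contradicting the supersolution property of $\overline u$. Otherwise $\f_*(x_0) < \overline u(x_0)$, and one builds a bump exactly as in the proof of Theorem~\ref{thm:perron1}: after a small strict-maximum perturbation of $q$, the lifted function $q_\de := q + \de$ is a classical subsolution of $(DMA)_{\e,\mu}$ on a small ball $B(x_0,r)$ with $q_\de < \f$ on an annular collar near $\partial B(x_0,r)$ (using $\f_* \geq q + 2\eta$ there) and $q_\de \leq \overline u$ on $B(x_0,r)$ (by continuity of $q_\de$ and lsc of $\overline u$). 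The glued function $w := \max(\f, q_\de)$ on $B(x_0,r)$, extended by $\f$ outside, then lies in $\mathcal U$ and strictly exceeds $\f$ at some $z_k$ near $x_0$ (since $\f_*(x_0) = \liminf \f$ yields a sequence with $\f(z_k) \to q(x_0) < q_\de(z_k)$), contradicting the maximality of $\f$. Step (iii) is immediate: Theorem~\ref{qcp} applied to the subsolution $\f^* = \f$ and the supersolution $\f_*$ gives $\f^* \leq \f_*$; the reverse inequality is trivial, so $\f = \f_* = \f^*$ is continuous and is therefore a viscosity solution, and uniqueness is another direct application of Theorem~\ref{qcp}.

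\textbf{Identification with the pluripotential solution and main obstacle.} To upgrade $\f$ to a pluripotential solution I will argue locally by balayage on any small coordinate ball $B\Subset X$ on which $\omega = dd^c h$ for a smooth potential $h$. Continuity of $\f$ together with Proposition~\ref{pro:visc=pluripot} gives that $\f+h$ is psh on $B$ and $(dd^c(\f+h))^n \geq e^{\e\f}\mu$ in the pluripotential sense. Corollary~\ref{thm:Balayage}, applied to the continuous density $e^{\e\f}$ times that of $\mu$, produces a psh $\psi_0$ in a neighborhood of $\overline B$ with $(dd^c\psi_0)^n = e^{\e\f}\mu$ on $B$, $\psi_0 \geq \f+h$ on $B$, and $\psi_0 = \f+h$ on $\partial B$. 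Setting $\tilde\f := \psi_0 - h$ on $B$ and $\tilde\f := \f$ elsewhere gives a continuous $\omega$-psh function with $\tilde\f\geq\f$ and $(\omega+dd^c\tilde\f)^n = e^{\e\f}\mu \leq e^{\e\tilde\f}\mu$ on $B$. By Proposition~\ref{pro:visc=pluripot} again, $\tilde\f$ is a viscosity subsolution on $B$, and the continuous gluing with $\f$ along $\partial B$ extends this globally. The global comparison principle, now applied to $\tilde\f$ (subsolution) and $\f$ (supersolution), forces $\tilde\f\leq\f$, hence $\tilde\f=\f$ on $B$, so $(\omega+dd^c\f)^n = e^{\e\f}\mu$ in the pluripotential sense on $B$; since $B$ was arbitrary, $\f$ is a pluripotential solution. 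The main obstacle throughout is the bump construction in step (ii): arranging that $q_\de$ be simultaneously a strict classical subsolution, stay below $\overline u$, and strictly beat $\f$ at some nearby point. The case split on whether $\f_*(x_0) = \overline u(x_0)$ and the careful use of $\f_* = \liminf \f$ are where the argument is most delicate; the remaining steps are essentially bookkeeping combinations of Proposition~\ref{pro:visc=pluripot}, Corollary~\ref{thm:Balayage} and Theorem~\ref{qcp}.
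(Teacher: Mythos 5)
Your proposal is correct and takes essentially the same route the paper intends for Theorem~\ref{thm:perron2}: the Perron envelope together with Ishii's bump argument as in Theorem~\ref{thm:perron1}, the global comparison principle of Theorem~\ref{qcp} to get $\f=\f_*$ and uniqueness, and the local balayage identification with the pluripotential solution via Proposition~\ref{pro:visc=pluripot} and Corollary~\ref{thm:Balayage}. The only detail to make explicit is that $\f^*\le \overline{u}$ (needed for $\f^*\in\mathcal U$) does not follow from pointwise upper regularization alone since $\overline{u}$ is merely lsc, but it is an immediate consequence of the assumed global comparison principle applied to the subsolution $\f^*$ and the supersolution $\overline{u}$; your case distinction at points where $\f_*=\overline{u}$ correctly handles the sandwich constraint in the bump step.
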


\begin{exa}
Assume $X$ is a complex projective manifold such that $K_X$ is ample. \
Let $\omega>0$ be a K\"ahler representative of $[K_X]$ and $\mu$
a smooth non degenerate volume form on $X$ with $Ric(\mu)=-\omega$. Then the Monge-Amp\`ere equation 
$ (\omega+dd^c\f)^n =e^{\f} \mu$ satisfies all the hypotheses 
of Theorem \ref{thm:perron2} and has a unique (viscosity=pluripotential) solution $\f$. 
On the other hand, the Aubin-Yau theorem \cite{Aub78},\cite{Yau78} 
implies that it has a unique smooth solution $\f_{KE}$ (and $\omega+dd^c\f_{KE}$ is 
the canonical K\"ahler-Einstein metric on $X$). 
Uniqueness of the pluripotential solution insures $\f=\f_{KE}$
hence the potential of the canonical KE metric on $X$ is the 
envelope of the subsolutions to  $ (\omega+dd^c\f)^n =e^{\f} \mu$. 
\end{exa}

  \section{Weak versions of Calabi-Yau and Aubin-Yau theorems}
  
In this section we apply the viscosity approach to show that the canonical 
singular K\"ahler-Einstein metrics
constructed in \cite{EGZ09} have continuous potentials.

\subsection{Manifolds of general type}

Assume $X$ is compact K\"ahler and $\mu$ is a continuous volume form with semi-positive density. 
Fix $\beta$ a K\"ahler form on $X$.  

\begin{coro} \label{cor:ContSol} Assume that $\omega \geq 0$ is a closed $(1,1)-$form and $\mu > 0$ is a continuous positive volume form. Then
$(DMA)_{\e,\mu}$ has a unique viscosity solution $\f$, 
which is also the unique solution in the pluripotential sense. Hence it is a continuous $\omega-$psh function. 
\end{coro}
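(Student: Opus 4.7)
The strategy is to invoke Perron's method in the viscosity framework (Theorem~\ref{thm:perron2}), together with the global viscosity comparison principle (Theorem~\ref{qcp}). Continuity of the resulting solution and its identification with the pluripotential solution then follow from the equivalence between the two notions of sub/supersolutions (Propositions~\ref{pro:visc=pluripot} and~\ref{prop:super}) and the pluripotential uniqueness of Proposition~\ref{prop:UNIQ}.

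To supply Perron's method with barriers, I would take $u \equiv M$ for $M$ large enough so that the pointwise inequality $\omega^n \leq e^{\e M} \mu$ holds on $X$; such an $M$ exists because $\omega^n$ is a smooth form and $\mu>0$ is continuous on compact $X$, so $\inf_X \mu > 0$. Since $u$ is smooth, this classical supersolution is automatically a viscosity supersolution of $(DMA)_{\e,\mu}$. For a subsolution I would invoke the pluripotential existence Theorem~\ref{thm:THM1}: assuming $\int_X \omega^n > 0$ (which is forced, otherwise integrating the equation contradicts $\mu > 0$), after reducing to $\e = 1$ by the dilation invariance used in the proof of Theorem~\ref{qcp}, it produces a bounded $\omega$-psh function $\f_0$ solving $(\omega + dd^c \f_0)^n = e^{\f_0} \mu$ in the pluripotential sense; by Proposition~\ref{pro:visc=pluripot} applied in each coordinate chart through the local reduction of Lemma~\ref{hyp}, $\f_0$ is a viscosity subsolution.

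Perron's method (Theorem~\ref{thm:perron2}) then yields a continuous viscosity solution $\f$ that is simultaneously a pluripotential solution. Uniqueness in the viscosity category comes directly from Theorem~\ref{qcp}, and uniqueness in the pluripotential category from Proposition~\ref{prop:UNIQ} (which applies because $\mu$ has positive continuous density). Since $\f$ lies in both classes, the viscosity and pluripotential unique solutions coincide, giving at once continuity and the $\omega$-plurisubharmonicity asserted in the corollary.

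The main obstacle is the globalisation of the viscosity--pluripotential correspondence for subsolutions on compact $X$ when $\omega$ is only semi-positive: this requires expressing the equation on each chart via a smooth local potential $h_\omega$ (Lemma~\ref{hyp}) to reduce to the scalar local Monge-Amp\`ere equation $\det(u_{z\bar z}) = e^{\e u} W$, applying Proposition~\ref{pro:visc=pluripot} locally, and patching across charts. The semi-positive case requires care because $\omega^n$ may vanish on a set of positive measure, but the argument relies only on smooth local potentials and Bedford-Taylor Monge-Amp\`ere measures, which are insensitive to this degeneracy, so no further input beyond the tools of Section 3 is needed.
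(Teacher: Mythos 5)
Your proposal is correct and follows essentially the same route as the paper: the paper's own proof simply invokes the global comparison principle (Theorem~\ref{qcp}) together with Theorems~\ref{thm:MSUB} and~\ref{thm:perron2}, which is exactly your combination of viscosity Perron plus pluripotential existence, with you merely making explicit the barriers (a large constant as classical supersolution, the bounded pluripotential solution from Theorem~\ref{thm:THM1} as viscosity subsolution) that the paper leaves implicit. The only small imprecision is the citation for passing from the pluripotential to the viscosity subsolution: since the pluripotential solution is a priori only bounded, the density $e^{\e\f_0}f$ need not be continuous, so the relevant statement is the proposition of Section~3.1.2 for subsolutions of $(dd^c u)^n=e^{\e u}\mu$ (proved via sup-convolution) rather than Proposition~\ref{pro:visc=pluripot} itself; this result is in the paper, so no genuine gap.
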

\begin{proof}
Indeed the global comparison principle holds in this case and Theorem~\ref{thm:MSUB} and Theorem~\ref{thm:perron2} enable us to conclude. 
\end{proof}

 We are now ready to establish that the (pluripotential) solutions of some Monge-Amp\`ere
 equations considered in the first section are continuous.

\begin{theo} \label{thm:exp}
Assume $X$ is a compact K\"ahler manifold, $\omega$ is a semipositive $(1,1)$-form with 
$\int_{X}\omega^n>0$  and $\mu \geq 0$ is a semi-positive  continuous 
volume form  on $X$ normalized by $\mu (X) = 1$. Then  there exists a unique
continuous $\omega$-plurisubharmonic function $\f$ which is
the viscosity (equivalently pluripotential) solution to the 
degenerate complex Monge-Amp\`ere equation
$$
(\omega+dd^c\f)^n =e^{\f} \mu 
$$
\end{theo}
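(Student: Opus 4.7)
The plan is to obtain $\f$ as the monotone limit of continuous solutions to the same equation with strictly positive regularizations of $\mu$, and then to upgrade the pointwise convergence to uniform convergence by means of Kolodziej's stability estimate (Theorem~\ref{thm:StabEst}); continuity of the limit will then be automatic.

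Fix a smooth strictly positive volume form $\mu_0$ on $X$ and set $\mu_\e := \mu + \e\mu_0$ for $\e > 0$, a continuous strictly positive volume form. By Corollary~\ref{cor:ContSol} there exists a unique continuous $\omega$-plurisubharmonic function $\f_\e$ solving
$$
(\omega + dd^c \f_\e)^n = e^{\f_\e} \mu_\e.
$$
The family $(\f_\e)$ is uniformly bounded in $L^\infty(X)$: an upper bound on $\sup_X \f_\e$ follows from Lemma~\ref{lem:UB} since $\mu_\e(X)$ stays bounded, and Theorem~\ref{thm:StabEst} applied to the normalization $\f_\e - \sup_X \f_\e$, whose Monge-Amp\`ere measure has density uniformly bounded in $L^\infty(X,\mu_0)$, then yields a uniform $L^\infty$-bound on the whole family.

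Next I would establish monotonicity. For $\e < \e'$ one has $\mu_\e \leq \mu_{\e'}$, and a standard comparison principle argument (Theorem~\ref{thm:CP}) combined with the domination principle (Theorem~\ref{thm:DP}), exactly as in step 2 of the proof of Theorem~\ref{thm:THM1}, gives $\f_{\e'} \leq \f_\e$. Thus $\f_\e$ increases pointwise as $\e \searrow 0$ to a bounded function whose usc regularization $\f$ is $\omega$-psh. Theorem~\ref{thm:CV} together with dominated convergence on the right-hand side (using that $\mu$ charges no pluripolar set, as it has continuous density with respect to $\mu_0$) shows that $\f$ solves $(\omega + dd^c \f)^n = e^\f \mu$ in the pluripotential sense; Proposition~\ref{prop:UNIQ} gives uniqueness.

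The heart of the matter is continuity, which I would derive from uniform convergence $\f_\e \to \f$. Applying Theorem~\ref{thm:StabEst} with $\psi := \f - \sup_X \f$ and $\varphi := \f_\e - \sup_X \f_\e$ (both sup-normalized $\omega$-psh, with $MA(\varphi)$ having density uniformly bounded in $L^\infty(X,\mu_0)$), one obtains
$$
\sup_X (\psi - \varphi)^+ \leq C \, \Vert (\psi - \varphi)^+ \Vert_{L^1(X)}^{\g}.
$$
Since $\f_\e \to \f$ in $L^1(X)$ and $\sup_X \f_\e \nearrow \sup_X \f$ by monotone convergence, the right-hand side tends to $0$. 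Then $\sup_X(\f - \f_\e) \leq \sup_X(\psi - \varphi)^+ + (\sup_X \f - \sup_X \f_\e) \to 0$, hence $\f_\e \to \f$ uniformly and $\f$ is continuous as a uniform limit of continuous functions. The main obstacle I anticipate is the uniform control of the constant in the stability estimate as $\e \to 0$; this is addressed by the observation that $\mu_\e$ has density uniformly bounded in $L^\infty(X,\mu_0)$ independently of $\e$.
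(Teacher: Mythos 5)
Your proposal is correct and follows the same overall strategy as the paper: regularize $\mu$ by $\mu+\e\,(\text{positive volume form})$, solve the perturbed equations by Corollary~\ref{cor:ContSol}, obtain uniform $L^{\infty}$-bounds, and use Kolodziej's stability estimate (Theorem~\ref{thm:StabEst}) to upgrade $L^1$-convergence to uniform convergence, whence continuity. Where you genuinely differ is in how convergence of the family $(\f_\e)$ is organized: you first prove monotonicity of $\f_\e$ in $\e$ via the comparison and domination principles (Theorems~\ref{thm:CP} and~\ref{thm:DP}, as in step 2 of the proof of Theorem~\ref{thm:THM1}), which gives a.e.\ and $L^1$-convergence of the whole family for free and lets you pass to the limit in the equation directly through Theorem~\ref{thm:CV}; the paper instead dispenses with monotonicity and applies the stability estimate to pairs $\f_\e,\f_{\e'}$ (both have Monge--Amp\`ere densities uniformly bounded in $L^{\infty}$) to show the family is uniformly Cauchy, arguing through $L^1$-convergent subsequences and uniqueness of cluster values. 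Your route buys a cleaner identification of the limit and avoids the subsequence bookkeeping; the paper's route avoids the monotonicity step and is the one that generalizes when no monotone approximation is available. Your attention to the normalization $\sup_X\f_\e\to\sup_X\f$ when invoking Theorem~\ref{thm:StabEst} is a point the paper glosses over.

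One small omission: the statement asserts that $\f$ is the \emph{viscosity} (equivalently pluripotential) solution, and your argument only delivers the continuous pluripotential solution. This is easily repaired: once $\f$ is continuous, Proposition~\ref{pro:visc=pluripot} shows it is a viscosity subsolution, and the argument of Proposition~\ref{prop:super} (with the continuous right-hand side $e^{\f}\mu$) shows it is a viscosity supersolution; alternatively one can argue, as the paper does, via Theorem~\ref{thm:perron2} and the stability of viscosity solutions under uniform limits (Remark 6.3 in \cite{CIL92}). You should add a sentence to this effect, but it does not affect the substance of your proof.
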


\begin{proof} 
Observe that if moreover $\mu$ has positive density, the result is an immediate consequence of 
Corollary~\ref{cor:ContSol} together with the unicity statement Proposition~\ref{prop:UNIQ}.  

It remains to relax the positivity assumption made on $\mu$. From now on
$\omega$ is semi-positive and big and $\mu$ is a probability measure
with semi-positive continuous density.
We can solve 
$$
(\omega+dd^c \f_{\e})^n = e^{\f_{\e}} [\mu+\e \beta^n]
$$
where $\f_{\e}$ are continuous $\omega$-psh functions and $0<\e \leq 1$. As we already observed in the firs section this implies that
 the family $M_{\e}:=\sup_X \f_{\e}, \e \in ]0,1]$ is  bounded.

We infer that $(\f_{\e})$ is relatively compact in $L^1(X)$. 
It follows from Theorem~\ref{thm:StabEst} that $(\f_{\e})$ is actually uniformly bounded,
as $\e$ decreases to zero. 

Using again the stability estimates Theorem~\ref{thm:StabEst}, we get
$$
|| \f_{\e}-\f_{\e'} ||_{L^{\infty}} \leq C 
\left( || \f_{\e}-\f_{\e'}||_{L^1} \right)^{\frac{1}{n+2}}.
$$
Thus, if $(\epsilon_j)$ is a sequence decreasing
to zero as $j$ goes to $+\infty$ such that $(\f_{\e_j})_j$ converges in $L^1$,
 $(\f_{\e_j})$ is actually a Cauchy sequence of continuous functions,
 hence it uniformly converges,
 to the unique continuous pluripotential solution $\f$
of $(DMA)_{1,\mu}$. From this, it follows that
$(\f_{\e})$ has a unique cluster value 
in $L^1$ when $\epsilon$ decreases to $0$
hence converges in $L^1$. The preceding argument yields uniform convergence. 

Theorem \ref{thm:perron2}
insures that  $\f$ is also a viscosity subsolution. Remark
6.3 p. 35 in \cite{CIL92} actually enables one to conclude
 that $\f$ is indeed a viscosity solution. 

\end{proof}

\begin{coro}
If $X^{can}$ is a canonical model of a general type projective manifold then the canonical singular K\"ahler-Einstein metric
on $X^{can}$ constructed in \cite{EGZ09} has continuous potentials. 
\end{coro}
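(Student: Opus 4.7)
The plan is to reduce the statement to a direct application of Theorem~\ref{thm:exp}. Let $X^{can}$ be the canonical model of a projective manifold of general type, so by definition $X^{can}$ has at worst canonical singularities and $K_{X^{can}}$ is an ample $\Q$-Cartier divisor. First I would fix a resolution of singularities $\pi: \tilde X \to X^{can}$, where $\tilde X$ is a smooth compact K\"ahler manifold, and choose a smooth K\"ahler form $\omega_{can}$ on (a suitable embedding of) $X^{can}$ representing $c_1 (K_{X^{can}})$. Pulling back, the form $\omega := \pi^{*} \omega_{can}$ is a smooth closed semi-positive $(1,1)$-form on $\tilde X$ with $\int_{\tilde X}\omega^n = (K_{X^{can}})^n > 0$, but $\omega$ degenerates along the exceptional locus $E$ of $\pi$.

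The second step is to exhibit the Monge-Amp\`ere equation whose solution corresponds to the singular K\"ahler-Einstein metric. Writing $K_{\tilde X} = \pi^{*} K_{X^{can}} + \sum a_i E_i$ with $a_i \geq 0$ (canonical singularities), one constructs, exactly as in \cite{EGZ09}, a measure $\mu$ on $\tilde X$ of the form $\mu = f \mu_0$ where $\mu_0$ is a smooth non-degenerate volume form and the density $f \geq 0$ is \emph{continuous} on $\tilde X$, semi-positive, vanishing along the exceptional divisor to the order prescribed by the discrepancies $a_i$, and normalized so that $\int_{\tilde X}\mu = \int_{\tilde X}\omega^n$. The K\"ahler-Einstein condition $\mathrm{Ric}(\omega_{KE}) = -\omega_{KE}$ on $X^{can}_{\mathrm{reg}}$ then translates, via the $dd^c$-lemma, into the degenerate complex Monge-Amp\`ere equation
\begin{equation*}
(\omega + dd^c \varphi)^n = e^{\varphi} \mu
\end{equation*}
on $\tilde X$, with the singular K\"ahler-Einstein potential being precisely this $\varphi$ (pushed down by $\pi$).

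The third step is to invoke Theorem~\ref{thm:exp} directly: since $\omega$ is a smooth closed semi-positive $(1,1)$-form on the compact K\"ahler manifold $\tilde X$ with $\int_{\tilde X}\omega^n > 0$, and $\mu$ is a semi-positive continuous volume form with $\mu(\tilde X) = \int_{\tilde X}\omega^n$, the theorem provides a unique continuous $\omega$-plurisubharmonic function $\varphi$ on $\tilde X$ that is simultaneously the viscosity and the pluripotential solution of the above equation. By the uniqueness result in \cite{EGZ09}, this $\varphi$ coincides with the (a priori only bounded) $\omega$-psh potential of the singular K\"ahler-Einstein metric previously constructed there. Pushing $\varphi$ down to $X^{can}$ via $\pi$, which is a biholomorphism over $X^{can}_{\mathrm{reg}}$ and is such that $\varphi$ is constant on fibres (by uniqueness and $\pi$-invariance of the equation, or equivalently by the fact that $\varphi$ is $\omega$-psh with $\omega$ pulled back from $X^{can}$), one obtains a continuous potential for the K\"ahler-Einstein metric on $X^{can}$.

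The main obstacle is not analytic but rather the bookkeeping of the second step: one has to verify that the adapted measure $\mu$ built from the canonical singularities of $X^{can}$ has genuinely continuous (not merely $L^p$ or bounded) density on $\tilde X$, so that Theorem~\ref{thm:exp} applies as stated. For canonical singularities this is a standard computation in local analytic coordinates using a local generator of $K_{X^{can}}$, and it is in fact exactly the setup used in \cite{EGZ09}; the novelty here is that once continuity of the density is in place, our viscosity machinery upgrades the previously known \emph{bounded} solution to a \emph{continuous} one, for free.
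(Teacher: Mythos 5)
Your argument is correct and is essentially the paper's own proof, just written out in more detail: pass to a (log) resolution, pull back a K\"ahler form representing $c_1(K_{X^{can}})$ to get a semi-positive big $\omega$, note that the adapted measure has continuous semi-positive density precisely because the singularities are canonical ($a_i \geq 0$), and apply Theorem~\ref{thm:exp} together with the uniqueness statement to identify the continuous solution with the potential of \cite{EGZ09}. The descent of the potential to $X^{can}$ via constancy on the connected fibres is also the standard step implicit in the paper.
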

\begin{proof}
This is a straightforward consequence of the above theorem, working in  a log resolution of $X^{can}$,
where $\omega=c_1(K_X,h)$ is the pull-back of the Fubini-Study form from $X^{can}$
and $v=v(h)$ has continuous semi-positive density, since $X^{can}$ has canonical
singularities.
\end{proof}

\vskip 0.3 cm
\subsection{Continuous Ricci flat metrics}
 
We now turn to the study of  the degenerate equations $(DMA)_{0,\mu}$
$$
(\omega+dd^c \f)^n=\mu
$$
on a given compact K\"ahler manifold $X$. 
Here $\mu = f \mu_0$ is a degenerate volume form
with  density $f \in L^p (X)$ ($p > 1$) and $\omega$ is a smooth semi-positive closed $(1,1)$ form on $X$.
We assume that $\mu$ is normalized so that 
$$
\mu(X)=\int_X \omega^n.
$$
This is an obvious necessary condition in order to solve the equation
$$
(\omega+dd^c \f)^n=\mu, \leqno (DMA)_{0,\mu}
$$
on $X$. Bounded solutions to such equations have been provided in \cite{EGZ09} when 
$\mu$ has $L^p$-density, $p>1$, by adapting the arguments of \cite{Kol98}.
Our aim here is to show that these are actually {\it continuous}.

\begin{theo} \label{thm:flat} Let $\mu = f \mu_0$ be a degenerate volume form
with  density $0 \leq f \in L^p (X)$ ($p > 1$) and $\omega \geq 0$ is a smooth semi-positive closed $(1,1)$ form on $X$.
We assume that $\mu$ is normalized so that 
$$
\mu(X)=\int_X \omega^n.
$$
Then the complex Monge-Amp\`ere equation $(DMA)_{0,\mu}$ has a unique continuous pluripotential solution $\f$ such that $\int_X \f \mu = 0$.
\end{theo}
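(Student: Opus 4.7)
The overall strategy is to reduce the $L^p$-density case to the setting of Corollary~\ref{cor:ContSol} by a double approximation scheme. Theorem~\ref{thm:THM2} already provides a unique bounded pluripotential solution $\f \in PSH(X,\omega)\cap L^{\infty}(X)$ normalized by $\int_X \f\,d\mu=0$, so the only remaining content is to prove that $\f$ is continuous. I will do this by producing a sequence of \emph{continuous} $\omega$-psh solutions of nearby equations that converges uniformly to $\f$; since a uniform limit of continuous functions is continuous, and the uniqueness part of Theorem~\ref{thm:THM2} identifies any such limit with $\f$ after imposing the normalization, this will finish the proof.

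Concretely, pick continuous strictly positive densities $f_k \in C(X)$ with $f_k \to f$ in $L^p(X,\mu_0)$, and let $c_k\to 1$ be normalizing constants so that $\mu_k := c_k f_k \mu_0$ satisfies $\mu_k(X)=\int_X \omega^n$. For each $k$ and each $\e\in (0,1]$, Corollary~\ref{cor:ContSol} applied to the continuous positive volume form $\mu_k$ produces a continuous $\omega$-psh solution $\f_{k,\e}$ of
\[
 (\omega+dd^c \f_{k,\e})^n = e^{\e\f_{k,\e}}\,\mu_k.
\]
Integrating against $\mu_k$ and using the convexity of $\exp$ as in Lemma~\ref{lem:UB} gives $\sup_X \f_{k,\e}\geq 0$ and $\int \f_{k,\e}\,d\mu_k\leq 0$; applying Kolodziej's $L^{\infty}$-estimate from Theorem~\ref{thm:StabEst} to $\psi_{k,\e} := \f_{k,\e}-\sup_X \f_{k,\e}$, whose Monge-Amp\`ere measure is bounded by $e^{\e \sup_X \f_{k,\e}} c_k f_k \mu_0$, and then feeding this back into the bound $\sup_X \f_{k,\e} \leq \|\psi_{k,\e}\|_\infty$ that follows from Jensen's inequality, one obtains a uniform bound $\|\f_{k,\e}\|_{L^{\infty}(X)}\leq M$ independent of $k$ and $\e$.

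I then invoke the weak stability estimate of Theorem~\ref{thm:StabEst} twice, in the derived form $\|u-v\|_{L^{\infty}} \leq C\,\|u-v\|_{L^1}^{1/(n+2)}$ used in the proof of Theorem~\ref{thm:exp}. First, fixing $k$ and sending $\e\searrow 0$, the family $(\f_{k,\e})_\e$ is Cauchy in $L^{\infty}$ and its uniform limit $\f_k$ is a continuous $\omega$-psh pluripotential solution of $(\omega+dd^c \f_k)^n=\mu_k$. Secondly, letting $k\to\infty$, the sequence $(\f_k)$ is uniformly bounded in $L^{\infty}$ hence relatively compact in $L^1(X)$; any $L^1$-cluster value is, by continuity of the complex Monge-Amp\`ere operator along decreasing/regularized sequences (Theorem~\ref{thm:CV}), a pluripotential solution of $(\omega+dd^c u)^n=\mu$, and the stability estimate upgrades this $L^1$-convergence to $L^{\infty}$-convergence along the whole sequence. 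The uniform limit is continuous, and subtracting the constant $\int_X u\,d\mu$ and invoking uniqueness in Theorem~\ref{thm:THM2} identifies it with $\f$. The main delicate point is keeping the $L^{\infty}$-bound on $\f_{k,\e}$ and the stability constants uniform in both $k$ and $\e$ simultaneously; this relies crucially on the uniform control of $\|f_k\|_{L^p}$ and on the exponent $1/n$ in Kolodziej's uniform estimate, which is why the hypothesis $f\in L^p$ with $p>1$ (rather than merely $f\in L^1$) cannot be relaxed in this argument.
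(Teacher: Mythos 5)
Your proposal follows essentially the same route as the paper's own proof: a two-stage approximation (first $\e\searrow 0$ with a continuous positive density, where Corollary~\ref{cor:ContSol} supplies continuous solutions of the perturbed equations $(\omega+dd^c\f)^n=e^{\e\f}\mu_k$, then an approximation of $f$ in $L^p$ by continuous positive densities), with Kolodziej's stability estimate (Theorem~\ref{thm:StabEst}) upgrading $L^1$-compactness to uniform convergence at each stage, so that the limit is continuous, and the normalization $\int_X\f\,d\mu=0$ recovered at the end. This is exactly the scheme of the paper (which runs the $\e\to 0$ limit for continuous positive $\mu$ and then lets $f_j\to f$ in $L^p$); your variant of the normalization (subtract $\int_X u\,d\mu$ and invoke uniqueness) is an inessential difference.

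There is, however, one step whose justification does not close as written, and it is precisely the point you single out as delicate: the uniform $L^{\infty}$ bound on $\f_{k,\e}$. Feeding Kolodziej's estimate $\Vert \psi_{k,\e}\Vert_{\infty}\le A\,e^{\e\sup_X\f_{k,\e}/n}\,\Vert c_k f_k\Vert_{L^p}^{1/n}$ back into the Jensen-type bound $\sup_X\f_{k,\e}\le\Vert\psi_{k,\e}\Vert_{\infty}$ only produces the self-referential inequality $\sup_X\f_{k,\e}\le C\,e^{\e\sup_X\f_{k,\e}/n}$, which is satisfied by arbitrarily large values of $\sup_X\f_{k,\e}$ (the right-hand side grows exponentially in $\sup_X\f_{k,\e}$) and therefore yields no a priori bound, in particular not for $\e$ of order one. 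The standard repair, which is what Lemma~\ref{lem:UB} and the proofs of Theorems~\ref{thm:THM2} and~\ref{thm:exp} use, is to bound $\sup_X\f_{k,\e}$ from above \emph{first}: from $\int_X\f_{k,\e}\,d\mu_k\le 0$ together with $\sup_X\f_{k,\e}\le\frac{1}{\mu_k(X)}\int_X\f_{k,\e}\,d\mu_k+C_{\mu_k}$, where $C_{\mu_k}$ can be taken uniform in $k$ because $\Vert f_k\Vert_{L^p}$ is bounded (H\"older's inequality combined with the uniform integrability of $\omega$-psh functions normalized by $\sup_X\cdot=0$, see \cite{GZ05}). Once $\sup_X\f_{k,\e}\le M_1$ is known, Kolodziej's estimate applied to $\psi_{k,\e}$, whose Monge-Amp\`ere measure is $\le e^{\e M_1}c_k f_k\mu_0$, bounds the oscillation uniformly in $k$ and $\e$, and the rest of your argument goes through unchanged.
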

The plan is to combine the viscosity approach for the family of equations
$(\omega+dd^c \f)^n=e^{\e \f} \mu$, together with the pluripotential tools developed in
\cite{Kol98,Ceg98,GZ05,EGZ09,EGZ11}.

\begin{proof} Assume first that $\mu $ is a continuous positive volume form. 
For $\e>0$ we let $\f_{\e}$ denote the unique viscosity (or equivalently pluripotential) $\omega$-psh 
continuous solution of the equation
$$
(\omega+dd^c \f_{\e})^n=e^{\e \f_{\e}} \mu, 
$$
given by Theorem~\ref{cor:ContSol}.
 As before we see that $M_{\e}:=\sup_X \f_{\e}$ is uniformly bounded. We infer that $(\f_{\e})$ is bounded in $L^1$ and the Monge-Amp\`ere
measures $(\omega+dd^c \f_{\e})^n$ have uniformly bounded densities in $L^{\infty}$.
Once again by Theorem\ref{thm:StabEst}  this family of continuous
$\omega$-psh functions is uniformly Cauchy hence converges to a {\it continuous} pluripotential
solution of $(DMA)_{0,\mu}$.
This pluripotential solution is also a viscosity solution by (\cite{CIL92}, Remark 6.3).
\smallskip
As we already observed in section 1, the solutions of $(DMA)_{0,\mu}$ are unique, up to an additive constant.
It is natural to wonder which solution is reached by the the family $\f_{\e}$. Observe that
$\int _X e^{\e \f_{\e}} \mu =\int_X \mu =\int _X \omega^n$ thus
$$
0=\int_X \frac{e^{\e \f_{\e}}-1}{\e} \mu =\int_X \f_{\e} \mu +o(1)
$$
hence the limit $\f$ of $\f_{\e}$ as $\e$ decreases to zero is the unique solution of $(DMA)_{0,\mu}$
that is normalized by $\int_X \f \, \mu = 0$. \\
Now assume that $\mu = f \mu_0$ has an $L^p-$density with $p > 1$. Let $f_j$ a sequence of smooth positive functions on $X$ such $f_j \to f$ in $L^p (X)$. 

By the previous case there for each $j \in \N$, there exists a continuous solution $\f_j \in PSH (X,\omega)$ to the equation
 $$
(\omega+dd^c \f_{j})^n= f_j \mu_0, 
$$
 with $\int_X \f_j \mu = 0$.
By \cite{GZ05} the sequence $\f_j$ is bounded in $L^1 (X)$ and again by Theorem~\ref{thm:StabEst}, the sequence $(f_j)_{j \in \N}$ is a Cauchy sequence of continuous $\omega-$psh functions for the uniform norm on $X$, hence it converges to a continuous $\omega-$psh function $\f $ which is a solution to  the equation $(DMA)_{0,\mu}$.
\end{proof}

Note that the way we have produced solutions (by approximation through the non flat case)
is independent of \cite{Aub78,Yau78}.

 Now we can prove that the Ricci-flat singular metrics constructed in (\cite{EGZ09}, Theorem 7.5) have continuous potentials. 
\begin{coro}
Let $X$ be a compact $\Q$-Calabi-Yau K\"ahler space.
Then $X$ admits a Ricci-flat singular metric  with
continuous potentials.
\end{coro}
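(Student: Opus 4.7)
The plan is to reduce to Theorem~\ref{thm:flat} via a log resolution, exactly as in \cite{EGZ09} where Ricci-flat singular metrics with merely bounded potentials were constructed; the only new ingredient is that Theorem~\ref{thm:flat} now delivers continuous rather than only bounded solutions.

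First I would fix a log resolution $\pi \colon \tilde X \to X$, so $\tilde X$ is a compact K\"ahler manifold. The $\Q$-Calabi-Yau hypothesis produces, for some integer $m \ge 1$, a nowhere vanishing section of $m K_{X_{reg}}$, hence a natural Ricci-flat volume form $v$ on $X$. Because $X$ has canonical singularities, the pull-back $\pi^* v$ extends to a semi-positive volume form on $\tilde X$ whose density with respect to a smooth non-degenerate volume form $\mu_0$ lies in $L^p(\tilde X,\mu_0)$ for some $p>1$; this is the standard $L^p$-integrability input near the exceptional divisor used in \cite{EGZ09}. Next, I would choose a K\"ahler form $\omega_X$ on $X$ and set $\omega := \pi^* \omega_X$, which is a smooth closed semi-positive $(1,1)$-form on $\tilde X$ vanishing along the exceptional locus $E$ and satisfying $\int_{\tilde X}\omega^n=\int_X \omega_X^n>0$. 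After rescaling $v$ so that $\int_{\tilde X}\pi^* v=\int_{\tilde X}\omega^n$, the triple $(\tilde X,\omega,\pi^* v)$ fits the hypotheses of Theorem~\ref{thm:flat}.

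Applying Theorem~\ref{thm:flat} yields a continuous $\omega$-plurisubharmonic function $\tilde{\f}$ on $\tilde X$ solving $(\omega+dd^c \tilde{\f})^n=\pi^* v$ in the pluripotential sense. Since $\pi$ restricts to a biholomorphism $\tilde X\setminus E \to X_{reg}$ and $\omega$ pulls back from $X$, the function $\tilde{\f}$ is constant on every connected fiber of $\pi$ (on each such fiber $\omega$ vanishes, so $\tilde{\f}$ is a continuous plurisubharmonic function on a compact connected analytic set, hence constant). Therefore $\tilde{\f}$ descends to a continuous function $\f$ on $X$, and the closed positive current $\omega_X+dd^c \f$ is a singular K\"ahler metric with continuous potential satisfying $(\omega_X+dd^c \f)^n=v$, i.e.\ a Ricci-flat singular K\"ahler metric in the sense of \cite{EGZ09}.

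The main obstacle is essentially already resolved by the preceding work: the geometric reductions (existence of a log resolution, $L^p$-integrability of $\pi^* v$ near canonical singularities, descent of $\omega$-psh potentials through $\pi$) are established in \cite{EGZ09}, and the entire improvement over that paper comes from invoking Theorem~\ref{thm:flat} in place of its bounded counterpart. The one point that requires care is the descent from continuity of $\tilde{\f}$ on $\tilde X$ to continuity of $\f$ on $X$, but this follows from the fiberwise constancy observed above, since the pushforward of a continuous function that is constant on fibers of a proper holomorphic surjection with connected fibers is continuous.
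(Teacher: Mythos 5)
Your proposal is correct and follows essentially the same route the paper intends: the corollary is obtained by passing to a log resolution as in \cite{EGZ09} (Theorem 7.5), where the pulled-back Ricci-flat volume form has $L^p$ density ($p>1$) and $\pi^*\omega_X$ is semi-positive and big, and then invoking Theorem~\ref{thm:flat} to upgrade the bounded potential of \cite{EGZ09} to a continuous one, the descent to $X$ being the standard fiberwise-constancy argument. (Minor remark: canonical singularities in fact give $L^\infty$ density and log terminal ones give $L^p$, $p>1$, but either case is covered by the hypothesis of Theorem~\ref{thm:flat}.)
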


\section{Concluding remarks}

\vskip 0.3 cm
\subsection{The continuous Calabi conjecture}

The combination of viscosity methods and pluripotential techniques
yields a soft approach to solving degenerate complex Monge-Amp\`ere
equations of the form
$$
(\omega+dd^c \f)^n=e^{\e \f} \mu
$$
when $\e \geq 0$.

Recall that here $X$ is a compact K\"ahler n-dimensional manifold, 
$\mu$ is a semi-positive volume form with $L^p $-density $p > 1$ and $\omega$ is 
smooth closed $(1,1)$-form whose cohomology class is  semi-positive and big
(i.e. $\{\omega\}^n>0$).

Altogether this provides an alternative and independent approach to Yau's solution of the Calabi conjecture \cite{Yau78}:
we have only used upper envelope constructions (both in the viscosity and pluripotential sense), 
a global (viscosity) comparison principle  and Kolodziej's pluripotential 
techniques (\cite{Kol98, EGZ09}.

It applies to degenerate equations but yields solutions that are merely continuous (Yau's work yields
smooth solutions, assuming the cohomology class $\{\omega\}$ is K\"ahler and the measure $\mu$ is
both positive and smooth). However it is possible to prove that the solutions are H\"older continuous locally in the ample locus $\Omega_{\alpha}$ of the class $\{\omega\}$ (see \cite{DDHKGZ11}).

Note that a third (variational) approach has been studied recently in \cite{BBGZ09}. It applies to even
more degenerate situations where $\mu$ might be singular, providing solutions with less regularity (that belong to the so called
class of finite energy).

\subsection{The case of a big class}

Our approach applies equally well to a slightly more degenerate situation.
We still assume here that $(X,\omega_X)$ is a compact K\"ahler manifold of dimension $n$, but $\mu=f \mu_0$ is merely assumed
to have density $f \geq 0$ in $L^{\infty}$ and moreover the smooth real closed $(1,1)$-form $\omega$ is no longer
assumed to be semi-positive: we simply assume that its cohomology class $\alpha:=[\omega] \in H^{1,1}(X,\R)$
is {\it big}, i.e. contains a K\"ahler current.

It follows from the work of Demailly \cite{Dem92} that one can find a K\"ahler current in $\alpha$ with analytic 
singularities: there exists an $\omega$-psh function $\p_0$ which is smooth in a Zariski open set
$\Omega_{\alpha}$ and has logarithmic singularities of analytic type along $X \setminus \Omega_{\alpha}=\{\p_0=-\infty\}$,
such that $T_0=\omega+dd^c \p_0 \geq \e_0 \omega_X$ dominates the K\"ahler form $\e_0 \omega_X$,
$\e_0>0$.

We refer the reader to \cite{BEGZ10} for more preliminary material on this situation. Our aim here is to show that one can
solve $(DMA)_{1,\mu}$ in a rather elementary way by observing as in the first section that the (unique) solution is the upper envelope of subsolutions.
We let as before
$$
{\mathcal F}:=\left\{ \f \in PSH(X,\omega) \cap L^{\infty}_{loc}(\Omega_{\alpha}) \, / \, (\omega+dd^c \f)^n \geq e^{\f} v
\text{ in } \Omega_{\alpha} \right\}
$$
denote the set of all (pluripotential) subsolutions to $(DMA)_{1,\mu}$ (which only makes sense in $\Omega_{\alpha}$).

Observe that ${\mathcal F}$ is not empty: since $T_0^n$ dominates a volume form and $\mu$ has density in
$L^{\infty} (X)$, the function $\p_0-C$ belongs to ${\mathcal F}$ for $C$ large enough. We assume for
simplicity $C=0$ (so that $\p_0 \in {\mathcal F}$) and set
$$
{\mathcal F}_0:=\{ \f \in {\mathcal F} \, / \, \f \geq \p_0 \}.
$$

\begin{prop}
The class ${\mathcal F}_0$ is uniformly upper bounded on $X$.
It  compact (for the $L^1$-topology).
\end{prop}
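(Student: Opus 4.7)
The plan is three-fold: first establish a uniform upper bound on $\sup_X \varphi$ for any $\varphi\in\mathcal{F}_0$, then invoke the classical compactness theorem for $\omega$-psh functions to get relative compactness in $L^1(X)$, and finally check closedness of $\mathcal{F}_0$ via a decreasing regularized envelope construction, as in Lemma~\ref{lem:UB}.

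For the upper bound, I would fix $\varphi\in\mathcal{F}_0$ and exploit the fact that the complement $X\setminus\Omega_\alpha$ is a proper analytic subset, hence pluripolar, and so carries no mass for the measure $\mu$, whose density lies in $L^\infty$ with respect to a smooth volume form. Integrating the subsolution inequality $(\omega+dd^c\varphi)^n\ge e^\varphi\mu$ over $\Omega_\alpha$ and using the bound on non-pluripolar Monge-Amp\`ere mass from \cite{BEGZ10} gives
$$\int_X e^\varphi\,d\mu=\int_{\Omega_\alpha}e^\varphi\,d\mu\le\int_{\Omega_\alpha}(\omega+dd^c\varphi)^n\le\mathrm{vol}(\alpha).$$
Applying Jensen's inequality to the probability measure $\mu/\mu(X)$ then yields a uniform upper bound on $\int_X\varphi\,d\mu$. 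The $L^1$-compactness of the set of $\omega$-psh functions $\psi$ with $\sup_X\psi=0$ (\cite{GZ05}) provides a uniform constant $C_\mu>0$ with $\int_X\psi\,d\mu\ge-C_\mu$, and applied to $\psi=\varphi-\sup_X\varphi$ this upgrades the $L^1(\mu)$-bound to the desired uniform upper bound on $\sup_X\varphi$.

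For compactness, relative compactness in $L^1(X)$ is immediate from this uniform upper bound together with the standard compactness results of \cite{GZ05}. It remains to prove closedness. Given a sequence $(\varphi_j)\subset\mathcal{F}_0$ converging in $L^1(X)$ to some $\varphi\in PSH(X,\omega)$, I would form the regularized envelopes $\overline{\varphi}_j:=(\sup_{k\ge j}\varphi_k)^*$, a non-increasing sequence of $\omega$-psh functions decreasing to $\varphi$ on $X$, sandwiched between $\varphi_0$ and the uniform upper bound established above. By the pluripotential maximum principle used exactly as in the proof of Lemma~\ref{lem:UB}, each $\overline{\varphi}_j$ is again a pluripotential subsolution, locally bounded on $\Omega_\alpha$. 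The continuity of the complex Monge-Amp\`ere operator along decreasing sequences of locally bounded $\omega$-psh functions (Theorem~\ref{thm:CV}) applied on $\Omega_\alpha$ then lets us pass to the limit in $(\omega+dd^c\overline{\varphi}_j)^n\ge e^{\overline{\varphi}_j}\mu$ to obtain $(\omega+dd^c\varphi)^n\ge e^\varphi\mu$ there, while $\varphi\ge\varphi_0$ is obviously preserved. Hence $\varphi\in\mathcal{F}_0$.

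The delicate point is the upper-bound step. Unlike the semi-positive case of Lemma~\ref{lem:UB}, the subsolution inequality is only available on the Zariski open set $\Omega_\alpha$, and the natural quantity controlling the total Monge-Amp\`ere mass is no longer $\int_X\omega^n$ but rather the volume $\mathrm{vol}(\alpha)$ of the big class. Once the non-pluripolar Monge-Amp\`ere mass estimate of \cite{BEGZ10} is invoked and the fact that $\mu$ ignores the pluripolar complement $X\setminus\Omega_\alpha$ is used, the argument proceeds along the same template as in the semi-positive setting.
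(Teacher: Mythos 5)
Your proof is correct and follows essentially the same route as the paper: convexity of the exponential plus the bound $\int_{\Omega_\alpha}(\omega+dd^c\varphi)^n\le \mathrm{Vol}(\alpha)$ from \cite{BEGZ10} and the uniform integrability constant $C_\mu$ from \cite{GZ05} give the upper bound on $\sup_X\varphi$, and closedness is checked exactly as in Lemma~\ref{lem:UB} via the decreasing regularized envelopes and continuity of the Monge-Amp\`ere operator along decreasing sequences on $\Omega_\alpha$. The only additions are explicit justifications (pluripolarity of $X\setminus\Omega_\alpha$, Jensen with $\mu/\mu(X)$) that the paper leaves implicit.
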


\begin{proof} The proof is the same as for Lemma~\ref{lem:UB}.
We first show that ${\mathcal F}_0$ is uniformly bounded from above (by definition it is
bounded rom below by $\p_0$). We can assume without loss of generality that $\mu$ is
normalized so that $\mu(X)=1$. Fix $\p \in {\mathcal F}_0$. It follows from the convexity
of the exponential that
$$
\exp \left( \int \p \mu \right) \leq \int e^{\p} \mu  \leq \int (\omega+dd^c \p)^n \leq Vol(\alpha).
$$
All integrals here are computed on the Zariski open set $\Omega_{\alpha}$.
We refer the reader to \cite{BEGZ10} for the definition of the volume of a big class.

We infer
$$
\sup_X \p \leq \int \p \mu +C_\mu \leq \log Vol(\alpha)+C_\mu,
$$
where $C_\mu$ is  a uniform constant that only depends on the fact  that all $\omega$-psh functions are integrable
with respect to $\mu$ (see \cite{GZ05}). This shows that ${\mathcal F}_0$ is uniformly bounded from
above by a constant that only depends on $\mu$ and $Vol(\alpha)$.

We now check that ${\mathcal F}_0$ is  compact for the $L^1$-topology.
Fix $\p_j \in {\mathcal F}_0^{\N}$. We can extract a subsequence
that converges in $L^1$ and almost everywhere to a function $\p \in PSH(X,\omega)$.
Since $\p \geq \p_0$, it has a well defined Monge-Amp\`ere measure in $\Omega_{\alpha}$
and we need to check that $(\omega+dd^c \p)^n \geq e^{\p} \mu$.
We proceed in the same way as Lemma\~ref{lem:UB}.
\end{proof}

It follows that
$$
\p:=\sup\{ \f \, / \,  \f \in {\mathcal F}_0 \},
$$
the upper envelope of pluripotential subsolutions to $(DMA)_{1,\mu}$, is a well defined $\omega$-psh function
which is locally bounded in $\Omega_{\alpha}$.

\begin{theo}
The function $\p$ is a pluripotential solution to $(DMA)_{1,\mu}$.   
\end{theo}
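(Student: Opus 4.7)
The plan is to adapt the Perron--balayage argument of Theorem~\ref{thm:MSUB} to the big-class setting, carrying it out locally in the ample locus $\Omega_{\alpha}$ where $\p$ is bounded.

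First, by Choquet's lemma and the stability of $\mathcal F_0$ under finite suprema (a direct consequence of the pluripotential maximum principle, exactly as in Lemma~\ref{lem:UB}), I extract a non-decreasing sequence $\f_j \in \mathcal F_0$ with $\p = (\sup_j \f_j)^*$. Applying Theorem~\ref{thm:CV} to $\f_j \nearrow \p$ on any small coordinate ball $B \Subset \Omega_{\alpha}$ on which $\omega$ admits a smooth local potential, together with $e^{\f_j}\mu \to e^{\p}\mu$ weakly by bounded convergence, one concludes $(\omega + dd^c \p)^n \geq e^{\p}\mu$ on $\Omega_{\alpha}$, so $\p \in \mathcal F_0$; it remains to upgrade this inequality to equality on each $B$.

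Fix such a ball $B \Subset \Omega_{\alpha}$. I would construct a bounded $\omega$-psh function $v$ on a neighbourhood of $\bar B$, continuous up to $\partial B$, solving the local self-referential Dirichlet problem $(\omega + dd^c v)^n = e^{v}\mu$ in $B$ with $v|_{\partial B} = \p|_{\partial B}$; this is obtained by approximating $\mu$ with strictly positive measures $\mu_{\delta}=\mu+\delta\,\beta^n$ ($\beta$ a K\"ahler form), solving each approximating problem by a monotone iteration on the exponential right-hand side together with balayage Corollary~\ref{thm:Balayage}, and passing to the limit $\delta\searrow 0$ via the stability control of Theorem~\ref{thm:StabEst}. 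The pluripotential comparison principle Corollary~\ref{coro:LCP}, applied on the compactly contained sublevel sets $\{v<\p-\eta\}\Subset B$ for each $\eta>0$, then gives $\int_{\{v<\p-\eta\}}(e^{\p}-e^{v})\mu\leq 0$; since $e^{\p}-e^{v}\geq (1-e^{-\eta})e^{\p}$ on this set, $\mu$ is forced to vanish there, and letting $\eta\to 0$ yields $v\geq \p$ $\mu$-almost everywhere on $B$. A local version of the domination principle in the spirit of Theorem~\ref{thm:DP}, together with the control of pluripolar sets in $\Omega_{\alpha}$ afforded by the K\"ahler current $T_0 = \omega + dd^c \p_0 \geq \e_0 \omega_X$, upgrades this to $v\geq \p$ pointwise on $B$.

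Gluing $v$ inside $B$ with $\p$ outside produces $w \in PSH(X,\omega) \cap L^{\infty}_{\text{loc}}(\Omega_{\alpha})$ with $w \geq \p \geq \p_0$, which coincides with the subsolution $\p$ outside $\bar B$ and satisfies $(\omega + dd^c w)^n = e^{w}\mu$ with equality inside $B$; hence $w \in \mathcal F_0$. The maximality of $\p$ in $\mathcal F_0$ forces $w \leq \p$, which combined with $w \geq \p$ gives $w = \p$ and therefore $(\omega + dd^c \p)^n = e^{\p}\mu$ on $B$. Since $\Omega_{\alpha}$ is covered by such balls, $\p$ is a pluripotential solution of $(DMA)_{1,\mu}$ on $\Omega_{\alpha}$. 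The main obstacle is the local step: producing the self-referential solution $v$ in a chart where $\omega$ need not be semi-positive and the density of $\mu$ may vanish, and upgrading the $\mu$-a.e.\ comparison $v \geq \p$ to a pointwise statement; the existence of a K\"ahler current in $\alpha$ with analytic singularities supplies the uniform a priori bounds that make both steps possible.
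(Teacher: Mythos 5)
Your overall skeleton (Perron envelope, local modification on a ball in $\Omega_{\alpha}$, comparison plus maximality) is the right one and is close in spirit to the paper, but the local step is exactly where the paper's argument is both simpler and actually justified, and as written your version has genuine gaps. The paper's proof is the balayage argument of Theorem~\ref{thm:MSUB} carried out locally in $\Omega_{\alpha}$: it never solves the self-referential problem $(\omega+dd^c v)^n=e^{v}\mu$ in a ball. Instead one takes the Choquet sequence $\psi_j\nearrow$ of subsolutions in ${\mathcal F}_0$ and, for each \emph{fixed} $j$, lifts $\psi_j$ on $B\Subset\Omega_{\alpha}$ by Corollary~\ref{thm:Balayage} applied to the frozen right-hand side $e^{\psi_j}\mu$, whose density is bounded measurable near $\bar B$ (here $f\in L^{\infty}$ and $\psi_j\ge\p_0$ is locally bounded in $\Omega_{\alpha}$). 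The modified functions increase almost everywhere to $\p$, and continuity of the Monge--Amp\`ere operator along increasing sequences (Theorem~\ref{thm:CV}) gives $(\omega+dd^c\p)^n=e^{\p}\mu$ on $B$ in the limit. This sidesteps every difficulty you run into: no fixed-point or monotone iteration, no Dirichlet problem with non-continuous boundary data, and no stability estimates.

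Concretely, the gaps in your route are the following. First, the existence of your local solution $v$ of $(\omega+dd^c v)^n=e^{v}\mu$ in $B$ with boundary values $\p|_{\partial B}$ is not provided by any result in the paper: Theorem~\ref{thm:DirPb} and Corollary~\ref{thm:Balayage} treat a fixed right-hand side, and Theorem~\ref{thm:DirPb} requires continuous boundary data, whereas $\p$ is only bounded and upper semi-continuous. Without a boundary statement of the form $\liminf_{x\to\partial B}\left(v-\p\right)\ge 0$ you can guarantee neither $\{v<\p-\eta\}\Subset B$ (which Corollary~\ref{coro:LCP} needs) nor the plurisubharmonic gluing of $v$ with $\p$ across $\partial B$. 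Second, Theorem~\ref{thm:StabEst} is a global statement on the compact manifold for a semi-positive form $\omega$ with normalized supremum; it does not apply to your local approximating problems in a coordinate chart, nor in the big-class setting where $\omega$ is not semi-positive. Similarly, Theorem~\ref{thm:DP} is a global statement, and the ``local version of the domination principle'' you invoke is neither stated nor proved in the paper (in Proposition~\ref{prop:UNIQ} the passage from a $\mu$-a.e.\ inequality to a pointwise one uses precisely the global domination principle). Each of these points could presumably be repaired, but only with machinery the theorem is designed to avoid; replacing the self-referential local solve by the frozen-density balayage along the Choquet sequence removes all of these issues at once.
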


\begin{proof} The proof proceeds by balayage locally in $\Omega_{\alpha}$ as in the proof of Theorem~\ref{thm:MSUB}.
\end{proof}

\begin{rem}
The situation considered above covers in particular the construction
of a K\"ahler-Einstein current on  a variety $V$ with
ample canonical bundle $K_V$ and canonical singularities, since the
canonical volume form becomes, after passing to a desingularisation $X$, a volume form
$\mu =f \mu_0$ with density $f \in L^{\infty}$.

The more general case of log-terminal singularities yields density $f \in L^p$, $p>1$.
One can treat this case by an easy approximation argument:
setting $f_j=\min (f,j) \in L^{\infty}$, one first solves 
$(\omega+dd^c \f_j)^n=e^{\f_j} f_j \mu_0$ and observe (by using the comparison principle)
that the $\f_j's$ form a decreasing sequence which converges to the unique
solution of $(\omega+dd^c \f)^n=e^{\f} f \mu_0$.
\end{rem}

\subsection{More comparison principles}

Let again $B \subset \C^n$ denote the open unit ball
 and let $B'=(1+\eta)B$ with $\eta>0$
be a slightly larger open ball. Let $u, u'\in PSH(B')$ be  
plurisubharmonic functions. By convolution with an adequate non negative kernel
of the form
$\rho_{\epsilon}(z)=\epsilon^{-2n}\rho_1(\frac{z}{\epsilon}) $
we construct $(u_{\epsilon})_{\eta>\epsilon>0}$ a family of smooth
plurisubharmonic functions decreasing to $u$ as $\epsilon$ decreases to $0$.

\begin{lem}\label{mol}
$$ \forall z \in B \  u (z) + u'(z)=
\limsup_{n\to \infty} \sup\{ u'(x)+u_{1/j}(x) | j\ge n, \  |x-z|\le 1/n \}
$$
\end{lem}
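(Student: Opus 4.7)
The plan is to show the two inequalities separately, after first simplifying the right-hand side. Denote the right-hand side by $L(z)$. The key structural observation is that because $u$ is plurisubharmonic and $\rho_\epsilon$ is a radial approximation of the Dirac mass, the family $\epsilon \mapsto u_\epsilon$ is non-increasing, so for fixed $x$ the sequence $j \mapsto u_{1/j}(x)$ is non-increasing and $\sup_{j\ge n} u_{1/j}(x) = u_{1/n}(x)$. Consequently
$$
L(z) = \limsup_{n\to\infty}\, a_n, \qquad a_n := \sup_{|x-z|\le 1/n}\bigl[u'(x) + u_{1/n}(x)\bigr],
$$
which is a much cleaner quantity to analyze.

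For the lower bound $L(z) \ge u(z)+u'(z)$, I would simply evaluate at $x=z$: this gives $a_n \ge u'(z) + u_{1/n}(z)$, and since $u_{1/n}(z) \searrow u(z)$ as $n\to\infty$, passing to the limit yields $L(z) \ge u(z) + u'(z)$.

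For the upper bound $L(z) \le u(z)+u'(z)$, I would use compactness of $\overline{B(z,1/n)}$ together with upper semicontinuity of $x\mapsto u'(x)+u_{1/n}(x)$ (the mollification $u_{1/n}$ is smooth, $u'$ is usc) to pick $x_n \in \overline{B(z,1/n)}$ with $a_n = u'(x_n) + u_{1/n}(x_n)$. Then I would estimate each term: by upper semicontinuity of $u'$, $\limsup_n u'(x_n) \le u'(z)$; and since $u_{1/n}(x_n)$ is the $\rho_{1/n}$-average of $u$ over the ball $B(x_n,1/n)\subset B(z,2/n)$,
$$
u_{1/n}(x_n) \;\le\; \sup_{B(z,2/n)} u,
$$
which tends to $u(z)$ as $n\to\infty$ (this is the standard fact for psh functions: upper semicontinuity gives $\limsup \le u(z)$, and the sub-mean value inequality gives $u(z) \le \sup_{B_r(z)} u$ for every $r>0$). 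Taking the limsup of the sum then gives $L(z)\le u(z)+u'(z)$.

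The main subtlety is the degenerate case where $u(z)=-\infty$ or $u'(z)=-\infty$, in which the limsup-of-sum inequality must be applied carefully. This is harmless here because $u$ and $u'$ are locally bounded above (being psh), so $\limsup u'(x_n)$ and $\limsup u_{1/n}(x_n)$ are both bounded above, and the inequality $\limsup(\alpha_n+\beta_n) \le \limsup \alpha_n + \limsup \beta_n$ holds in $[-\infty,+\infty)$. The only genuinely nontrivial ingredient is the identity $\lim_{r\to 0}\sup_{B_r(z)} u = u(z)$ for psh $u$, which is the one place where plurisubharmonicity (not merely upper semicontinuity) is used.
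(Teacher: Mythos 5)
Your proof is correct, and it follows the same squeeze strategy as the paper — bound the bracketed supremum from below by its value at $z$ using $u_{1/j}(z)\ge u(z)$, and from above by a quantity that tends to $u(z)+u'(z)$ by upper semicontinuity — but the upper bound is organized differently. The paper runs the chain $u(z)+u'(z)\le \sup\{u'(z)+u_{1/j}(z)\,:\, j\ge n\}\le \sup\{u'(x)+u_{1/j}(x)\,:\, j\ge n,\ |x-z|\le 1/n\}\le \sup\{(u+u')(x)\,:\, |x-z|\le 2/n\}$ and concludes by upper semicontinuity of the sum $u+u'$; the last inequality recombines $u'$ and $u$ at the same point, and its justification implicitly uses the sub-mean value inequality $u'(x)\le u'_{1/j}(x)$, so that $u'(x)+u_{1/j}(x)\le (u+u')_{1/j}(x)\le \sup_{\bar B(x,1/j)}(u+u')$. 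You avoid that recombination: after reducing the double supremum to $\sup_{|x-z|\le 1/n}\bigl(u'(x)+u_{1/n}(x)\bigr)$ via monotonicity in $j$, you pick maximizing points $x_n$ and split the limsup, using upper semicontinuity of $u'$ and the crude bound $u_{1/n}(x_n)\le \sup_{B(z,2/n)}u$ separately; this costs you the existence of maximizers and the $-\infty$ bookkeeping, both of which you handle correctly, while the paper's version is shorter and treats all $j\ge n$ at once without the monotonicity reduction. One small correction to your closing remark: the identity $\lim_{r\to 0}\sup_{B_r(z)}u=u(z)$ holds for any upper semicontinuous function (since $z\in B_r(z)$ gives the lower bound for free), so it is not where plurisubharmonicity enters; psh-ness (together with the radial kernel) is what you actually use for the monotone convergence $u_{1/j}\downarrow u$, i.e. in the reduction $\sup_{j\ge n}u_{1/j}(x)=u_{1/n}(x)$ and in the lower bound $u_{1/n}(z)\to u(z)$.
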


\begin{proof}
Indeed, we have, if $2/n <\eta$: 
 \begin{eqnarray*}
u(z)+u'(z)&\le& \sup \{ u'(z)+ u_{1/j}(z) | j\ge n \}
\\
&\le &
\sup\{ u'(x)+u_{1/j}(x) | j\ge n, \  |x-z|\le 1/n \} \\  
&\le &\sup\{u'(x)+ u(x)| \quad |x-z| \le 2/n \}.
 \end{eqnarray*}
Since $u+u'$ is upper semicontinuous, we have: 
$$ u (z) + u'(z)=(u+u')^*(z)= \lim_{n\to \infty} \sup\{ u+u'(x)| \quad |x-z| \le 2/n \}.$$
\end{proof}

\begin{lem} \label{lemcle}
Let $\phi$ a bounded psh function on $B$ and $\mu$ 
a continuous non negative volume form 
such that $e^{-\phi}(dd^c\phi)^n \ge \mu$ in the viscosity sense. 

Let $\psi$ be a bounded psh function and $\nu$ 
a continuous positive volume form, both defined on $B'$
 such that $(dd^c\psi)^n \ge \nu$. 

Then
$\exists C,c >0$ depending only on 
$\| \psi \| _{L^{\infty}}, \| \phi \|_{L^{\infty}}$ 
such that for every $\epsilon \in [0,1]$ 
$\Phi=\phi+\epsilon \psi$ satisfies: 

$$ e^{-\Phi} (dd^c\Phi)^n \ge (1-\epsilon)^n e^{-C\epsilon}  \mu + c \epsilon^n \nu
$$
in the viscosity sense in $ B$. 
\end{lem}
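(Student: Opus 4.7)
The plan is to first establish the claimed inequality in the pluripotential sense on $B$ and then convert it to the viscosity formulation via Proposition \ref{pro:visc=pluripot} (together with its $e^u$-variant used throughout this section); since $\Phi=\phi+\epsilon\psi$ is a sum of two bounded psh functions, it is itself bounded psh on $B$, so the equivalence applies.

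For smooth psh $\phi,\psi$, I would exploit the pointwise identity $dd^c\Phi=dd^c\phi+\epsilon\,dd^c\psi$. Since both terms are positive semidefinite Hermitian forms, the Brunn--Minkowski inequality $\det(X+Y)^{1/n}\geq \det(X)^{1/n}+\det(Y)^{1/n}$ combined with $(a+b)^n\geq a^n+b^n$ for $a,b\geq 0$ gives
\begin{equation*}
(dd^c\Phi)^n\geq (dd^c\phi)^n+\epsilon^n(dd^c\psi)^n
\end{equation*}
pointwise. Inserting the hypotheses $(dd^c\phi)^n\geq e^{\phi}\mu$ and $(dd^c\psi)^n\geq \nu$, multiplying by $e^{-\Phi}$, and using the uniform bounds on $\phi,\psi$ yields
\begin{equation*}
e^{-\Phi}(dd^c\Phi)^n\geq e^{-\epsilon\psi}\mu+\epsilon^n e^{-\Phi}\nu\geq e^{-C\epsilon}\mu+c\epsilon^n\nu
\end{equation*}
with $C:=\|\psi\|_{L^\infty(B')}$ and $c:=e^{-\|\phi\|_{L^\infty(B)}-\|\psi\|_{L^\infty(B')}}$; since $(1-\epsilon)^n\leq 1$, this is in fact stronger than the stated bound.

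For the general bounded case, I would approximate $\phi$ on $B$ and $\psi$ on $B'$ by decreasing sequences of smooth psh functions $\phi_k\searrow \phi$ and $\psi_k\searrow \psi$ via standard convolution (well defined on $\overline B$ for $\psi_k$ since $B'\supsetneq \overline B$, and on progressively larger subdomains of $B$ for $\phi_k$). The smooth pointwise inequality above, applied to $\phi_k$ and $\psi_k$, can be tested against any non-negative continuous function with compact support in $B$; Theorem \ref{thm:CV} (Bedford--Taylor continuity along decreasing sequences of bounded psh functions) then ensures that the three smooth Monge--Amp\`ere currents $(dd^c(\phi_k+\epsilon\psi_k))^n$, $(dd^c\phi_k)^n$, $(dd^c\psi_k)^n$ converge weakly to the respective Monge--Amp\`ere measures of $\Phi$, $\phi$, and $\psi$, so the inequality passes to the weak limit. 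Combined with the pluripotential forms of the two hypotheses (obtained from their viscosity forms via the equivalence) and the $L^\infty$-computation of the previous paragraph, this yields
\begin{equation*}
e^{-\Phi}(dd^c\Phi)^n\geq (1-\epsilon)^n e^{-C\epsilon}\mu+c\epsilon^n\nu
\end{equation*}
in the pluripotential sense on $B$. Finally, since the right-hand side is a continuous non-negative Radon measure and $\Phi$ is bounded psh, Proposition \ref{pro:visc=pluripot} (applied to the measure on the right-hand side, in its $e^u\mu$-version) converts this back into the required viscosity inequality.

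The main obstacle is the simultaneous weak-limit passage for the three Monge--Amp\`ere currents; this is precisely the scope of Theorem \ref{thm:CV}. The algebraic core — pointwise Brunn--Minkowski plus $(a+b)^n\geq a^n+b^n$ — is then immediate once the smooth approximation is set up, and in fact the argument gives the constant $1$ in front of $\mu$, so that the advertised factor $(1-\epsilon)^n$ is merely a weakening of what is actually obtained.
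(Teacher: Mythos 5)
Your proof is correct, but it follows a genuinely different route from the paper. You transfer both hypotheses to the pluripotential side via the viscosity--pluripotential equivalences (Proposition \ref{pro:visc=pluripot} and its $e^{u}\mu$-version), establish the measure inequality $(dd^c\Phi)^n\ge (dd^c\phi)^n+\epsilon^n(dd^c\psi)^n$ for the bounded psh sum, do the elementary $L^\infty$ bookkeeping at the level of Borel measures, and convert back; this is logically sound (the equivalence propositions are proved earlier and do not depend on Lemma \ref{lemcle}, so there is no circularity), and your observation that one actually gets the constant $1$ rather than $(1-\epsilon)^n$ is accurate — the paper's own smooth-case computation gives the same. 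The paper instead stays inside the viscosity framework: for $\psi\in\mathcal{C}^2$ it argues directly with upper test functions (if $q$ touches $\Phi=\phi+\epsilon\psi$ from above at $x_0$, then $q-\epsilon\psi$ touches $\phi$ from above there), applies the viscosity inequality for $\phi$ and the determinant superadditivity pointwise at $x_0$; for general bounded psh $\psi$ it mollifies \emph{only} $\psi$ (this is exactly why $\psi$ is assumed to live on the larger ball $B'$) and passes to the limit using the stability of viscosity subsolutions under relaxed upper limits (\cite{CIL92}, Lemma 6.1 and Remark 6.3) together with Lemma \ref{mol}. What each approach buys: yours is shorter and bypasses the \cite{CIL92} stability machinery and Lemma \ref{mol}, at the cost of re-importing the full Bedford--Taylor apparatus (Theorem \ref{thm:CV} and the pluripotential comparison principle hidden inside the equivalence propositions) — though for the superadditivity you could even avoid the triple weak limit by invoking positivity of the mixed Monge--Amp\`ere measures, $(dd^c\Phi)^n=\sum_k\binom{n}{k}\epsilon^{n-k}(dd^c\phi)^k\wedge(dd^c\psi)^{n-k}$; the paper's proof is heavier but deliberately minimizes pluripotential input, which matters for the stated purpose of this subsection, namely to have arguments available ``where pluripotential tools are less efficient.''
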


\begin{proof} We may assume $\epsilon >0$ and $\nu$ to be smooth.
Let us begin by the case when $\psi$ is of class $C^2$. 
Let $x_0\in B$ and $q\in C^2$ such that $q(x_0)=\Phi (x_0)$ 
and $\Phi-q$ has a local maximum at $x_0$. 
Then, $\phi- (q-\epsilon \psi)$ has a local maximum at $x_0$.

  We deduce:
$$ dd^c (q-\epsilon \psi)_{x_0} \ge 0 $$
$$ e^{- q(x_0) + \epsilon \psi(x_0) } (dd^c (q-\epsilon \psi))_{x_0})^n \ge \mu_{x_0}.$$
Using the inequality $(dd^c q)^n_{x_0} \ge (dd^c (q-\epsilon \psi))_{x_0})^n + \epsilon^n (dd^c\psi)^n$, we conclude. 

\smallskip

We now treat the general case.
Since $\psi$ is defined on $B'$ we can construct by the above classical 
mollification  a sequence of $C^2$ psh functions $(\psi_{1/k})$ converging to $\psi$
as $k$ goes to $+\infty$. 

We know from the proof of Proposition \ref{pro:visc=pluripot} 
that $(dd^c\psi_k)^n \ge ((\nu^{1/n} )_{1/k} )^n=\nu_k$ in both the pluripotential 
and viscosity sense.

We conclude from the previous case  that
 $\Phi_k= \phi + \epsilon \psi_k$ satisfies
$$
c \epsilon^n \nu_k + (1-\epsilon)^n e^{-C\epsilon}  \mu \le e^{-\Phi_k} (dd^c \Phi_k)^n
$$
in the viscosity sense. 
Since $\mu_k>0$, we have:
 $$ c \epsilon^n \nu_k + (1-\epsilon)^n e^{-C\epsilon} 
 \mu -e^{-\Phi_k} (dd^c \Phi_k)_+ ^n \le 0$$ in the viscosity sense. 

By Lemma 6.1 p. 34 and  Remark 6.3 p. 35 in \cite{CIL92}, we conclude that 
$$\bar\Phi=
\limsup_{n\to \infty} \sup\{ \Phi_j(x) | j\ge n, \  |x-z|\le 1/n \} $$
 satisfies the limit inequation
$$ 
e^{-\bar\Phi} (dd^c\bar\Phi)_+^n \ge (1-\epsilon)^n e^{-C\epsilon}  \mu + c \epsilon^n \nu 
$$ 
in the viscosity sense.  
Now Lemma \ref{mol} implies that $\bar \Phi=\Phi$. 
Since $\nu>0$, the proof is complete. 
\end{proof}

\begin{theo}
Let $X$ be a compact K\"ahler manifold 
and $\omega \ge 0$ be a semi-k\"ahler smooth form. 

Then, the global viscosity comparison principle holds 
for $(DMA)_{1,\mu}$ for any non negative continuous  measure $\mu$ with $\mu (X) > 0$. 
\end{theo}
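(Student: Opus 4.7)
The obstruction to applying Theorem~\ref{qcp} directly is that its last step effectively divides by the local continuous density $f$ of $\mu$; when $\mu$ is only assumed non-negative, $f$ may vanish and the conclusion of the doubling argument degenerates into the uninformative tautology $0 \le 0$ at points where $f(\hat x) = 0$. The strategy is to perturb the subsolution locally by a small multiple of $|z|^2$, using Lemma~\ref{lemcle}, so as to obtain a subsolution of an equation with strictly positive right-hand side, and then rerun the doubling argument of Theorem~\ref{qcp} on this perturbed pair. The gain $c'\epsilon^n$ provided by Lemma~\ref{lemcle} is precisely the substitute for the strict positivity $\mu > 0$ used in Theorem~\ref{qcp}.

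Concretely, I would argue by contradiction: suppose $M := \max_X (u-v) > 0$, attained at some $x_0 \in X$. Choose a holomorphic chart $z: U \to B_4 \subset \C^n$ with $z(x_0) = 0$ and a smooth local potential $h$ of $\omega$ on a neighbourhood of $\overline{B_4}$; set $\tilde u := (u+h) \circ z^{-1}$ and $\tilde v := (v+h) \circ z^{-1}$ on $B_4$. By Lemma~\ref{hyp} the pair $(\tilde u, \tilde v)$ is, respectively, a psh viscosity subsolution and a viscosity supersolution of the scalar equation $(dd^c U)^n = e^{U} f \beta_n$ with $f \ge 0$ continuous. Applying Lemma~\ref{lemcle} on $B_3 \subset B_4$ with $\phi = \tilde u$, $\psi(z) = |z|^2$ and $\nu = (dd^c|z|^2)^n = c_n \beta_n$, the function $\tilde u_\epsilon := \tilde u + \epsilon |z|^2$ is for each small $\epsilon \in (0,1)$ a viscosity subsolution on $B_3$ of
$$
 (dd^c U)^n = e^{U} f_\epsilon \beta_n, \qquad f_\epsilon := (1-\epsilon)^n e^{-C\epsilon} f + c' \epsilon^n,
$$
with $f_\epsilon \ge c'\epsilon^n > 0$. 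Since the perturbation vanishes at the origin, $\tilde u_\epsilon(0) - \tilde v(0) = M$.

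Next, I would run the doubling-of-variables argument of Theorem~\ref{qcp} verbatim with $\tilde u_\epsilon$ in place of $\tilde u$. The global penalty $\theta$ on $X^2$ is chosen so that $\theta$ vanishes on a neighbourhood of $(x_0,x_0)$ along the diagonal and exceeds $3(\|u\|_\infty + \|v\|_\infty + \sup_{B_4} |z|^2)$ outside $B_2^2$; this choice is uniform in $\epsilon \in (0,1]$ because the perturbation is bounded on $B_4$. Maximising $\tilde u_\epsilon(x) - \tilde v(y) - \theta(x,y) - (2\alpha)^{-1}|x-y|^2$ on $\overline{B_3}^{\,2}$, the lower bound $m_\alpha \ge M > 0$ forces the maximiser $(x_\alpha, y_\alpha)$ into $B_2^2$, and the Jensen--Ishii principle (Lemma~\ref{main}) together with the matrix analysis following it --- which is purely structural and insensitive to the density --- yields, for any limit point $\hat x_\epsilon = \lim x_\alpha = \lim y_\alpha$, the two statements
$$
\tilde u_\epsilon(\hat x_\epsilon) - \tilde v(\hat x_\epsilon) = M \qquad \text{and} \qquad e^{\tilde u_\epsilon(\hat x_\epsilon)} f_\epsilon(\hat x_\epsilon) \le e^{\tilde v(\hat x_\epsilon)} f(\hat x_\epsilon).
$$

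Finally, substituting the definition of $f_\epsilon$ and rearranging,
$$
c' \epsilon^n \, e^{\tilde u_\epsilon(\hat x_\epsilon)} \le f(\hat x_\epsilon) \, e^{\tilde v(\hat x_\epsilon)} \Bigl[ 1 - (1-\epsilon)^n e^{-C\epsilon} e^{M} \Bigr].
$$
Since $(1-\epsilon)^n e^{-C\epsilon} \to 1$ as $\epsilon \to 0$ and $e^M > 1$, for $\epsilon$ small enough one has $(1-\epsilon)^n e^{-C\epsilon} e^M > 1$, so the bracket on the right is strictly negative; as $f \ge 0$, the right-hand side is non-positive, while the left-hand side is strictly positive, the desired contradiction. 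The main obstacle the plan must confront is verifying that Theorem~\ref{qcp}'s doubling/penalty machinery still applies to the perturbed pair $(\tilde u_\epsilon, \tilde v)$; this is essentially automatic, because the penalty $\theta$ can be taken independent of $\epsilon \in (0,1]$ (the perturbation being uniformly bounded on $B_4$) and because the structural feature of the complex Monge--Amp\`ere Hamiltonian highlighted in the remark after Theorem~\ref{qcp} --- the absence of gradient dependence in complex coordinates --- is untouched by the addition of a quadratic polynomial in $z$.
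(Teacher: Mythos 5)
Your proposal is essentially correct, but it follows a genuinely different route from the paper. The paper's proof never reopens the doubling argument: it perturbs \emph{globally}, replacing the supersolution $\overline u$ by $\overline u+\delta$ (which is then a supersolution for every continuous form $\tilde\nu\ge e^{-\delta}\mu$) and the subsolution $\underline u$ by the convex combination $(1-\epsilon)\underline u+\epsilon\psi$, where $\psi$ is a \emph{global} continuous $\omega$-psh solution of $(\omega+dd^c\psi)^n=\nu$ for a positive continuous reference form $\nu$ (its existence comes from the existence theorems proved earlier); Lemma~\ref{lemcle} then shows both perturbed functions are sub/supersolutions of a common equation with strictly positive continuous density $\tilde\nu$ provided $1\gg\delta\gg\epsilon>0$, so Theorem~\ref{qcp} applies as a black box, and one lets $\delta,\epsilon\to0$. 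You instead perturb only \emph{locally}, adding $\epsilon|z|^2$ in a chart around the maximum point (again via Lemma~\ref{lemcle}, here in its easy smooth case), and rerun the Jensen--Ishii doubling machinery of Theorem~\ref{qcp} to contradict $M>0$. What each approach buys: the paper's argument is short and does not require re-examining the penalization, but it leans on the global existence theory (hence on the normalization $\mu(X)>0$ and on solving a global Monge--Amp\`ere equation); yours is more self-contained, needing neither the existence result nor $\mu(X)>0$, at the price of re-verifying the localized argument for the perturbed pair. Two points in your write-up should be corrected, though neither harms the contradiction: first, since the bump $\epsilon|z|^2$ can raise and move the maximum, the correct statement is $\tilde u_\epsilon(\hat x_\epsilon)-\tilde v(\hat x_\epsilon)\ge M$ rather than $=M$ (this only makes the bracket in your final display more negative); second, the matrix analysis after Lemma~\ref{main} is \emph{not} insensitive to the density --- the step ``the product of the eigenvalues of $H_*$ is $\ge c>0$ uniformly in $\alpha$, hence its smallest eigenvalue is $\ge c\alpha^{n-1}$'' is exactly where strict positivity of the right-hand side enters, and in your setting this bound is supplied, for each fixed $\epsilon$, by $\det H_*\ge e^{\tilde u_\epsilon(x_\alpha)}f_\epsilon(x_\alpha)\ge c'\epsilon^n e^{-\|\tilde u_\epsilon\|_\infty}$; you should say this explicitly rather than claim the analysis is purely structural.
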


\begin{proof} This is a variant of
 the  argument sketched in \cite{IL90} sect. V.3 p. 56. 

Let $\overline{u}$  be a  supersolution 
and  $\underline{u}$ be a 
subsolution. 
Perturb  the supersolution $\overline{u}$ 
setting  $\overline{u}_{\delta}=\overline{u}+\delta$. 
This  $\overline{u}_{\delta}$ is a supersolution 
to $(DMA)_{1, {\tilde w}}$ for every continuous volume form $\tilde w$
such that $\tilde w\ge e^{-\delta} \mu$. 

We  can always assume that $\mu (X) = \int_X \omega^n$. Choose $\nu>0$ a continuous positive volume form such that $\nu (X) = \int_X \omega^n$. 
 We can construct $\psi$ a continuous quasiplurisubharmonic functions 
such that, in the viscosity sense
$$
(\omega+dd^c\psi)^n=\nu.
$$

 Perturb  the 
subsolution $\underline{u}$ setting
 $$
 \underline{u}_{\epsilon}=(1-\epsilon)\underline{u} +\epsilon \psi.
 $$ 
By Lemma \ref{lemcle}, $\underline{u}_{\epsilon}$ satisfies, in the viscosity sense
$$
e^{-(1+\epsilon)u} (\omega+dd^c u)^n \ge 
\left(\frac{1-\epsilon}{1+\epsilon}\right)^ne^{-C\epsilon} \mu + c\left(\frac{\epsilon}{1+\epsilon}\right)^n \nu
$$
 This in turn implies that $\underline{u}_{\epsilon}$ satisfies, in the viscosity sense: 
$$
e^{-u} (\omega+dd^c u)^n \ge e^{-\epsilon \|u \|_{\infty}}  
\left[ \left(\frac{1-\epsilon}{1+\epsilon}\right)^n e^{-C\epsilon} \mu+ c\left(\frac{\epsilon}{1+\epsilon}\right)^n \nu \right].
$$

 Hence $\underline{u}_{\epsilon}$ satisfies, in the viscosity sense: 
$$
e^{-u} (\omega+dd^c u)^n \ge \tilde \nu$$ whenever 
$\tilde \nu \le e^{-\epsilon \|u \|_{\infty}}(
(\frac{1-\epsilon}{1+\epsilon})^ne^{-C\epsilon}v+ c(\frac{\epsilon}{1+\epsilon})^n \nu).
$

Choosing
 $1\gg \delta \gg \epsilon >0$, we find a continuous volume form 
$\tilde \nu >0$ such
that   $\overline{u}_{\delta}$
is a supersolution and $\underline{u}_{\epsilon}$ is a viscosity subsolution 
of $e^{-u}(\omega+dd^c u)^n=\tilde \nu$. Using the viscosity  
comparison principle for $\tilde \nu$, we conclude that
 $\overline{u}_{\delta}\ge \underline{u}_\epsilon$. Letting 
$\delta\to 0$, we infer $\overline{u}\ge \underline{u}$.
\end{proof}

This comparison principle has been inserted here for completeness. 
It could have been used instead of the pluripotential-theoretic arguments to establish existence 
of a viscosity solution in the case $\mu\ge 0$ of Theorem  \ref{thm:exp}. 
This could be useful in dealing with similar problems where pluripotential tools are
less efficient.

\vskip 0.3 cm
\noindent Institut de Math\'ematiques de Toulouse \\
Universit\'e Paul Salatier, \\
118 Route de Narbonne, \\
31062 Toulouse cedex 09

\end{document}